\newcommand{\nocontentsline}[3]{}
\newcommand{\tocless}[2]{\bgroup\let\addcontentsline=\nocontentsline#1{#2}\egroup}
\numberwithin{equation}{section}
\newtheorem{theorem}{Theorem}[section]
\newtheorem{lemma}[theorem]{Lemma}
\newtheorem{proposition}[theorem]{Proposition}
\newtheorem{corollary}[theorem]{Corollary}
\newtheorem{remark}[theorem]{Remark}
\newtheorem{definition}[theorem]{Definition}
\newtheorem{assumption}[theorem]{Assumption}
\renewcommand{\tilde}{\widetilde}          
\DeclareMathSymbol{\leqslant}{\mathalpha}{AMSa}{"36} 
\DeclareMathSymbol{\geqslant}{\mathalpha}{AMSa}{"3E} 
\renewcommand{\leq}{\;\leqslant\;}                   
\renewcommand{\geq}{\;\geqslant\;}                   
\newcommand{\dd}{\text{\rm d}}             
\newcommand{\C}{\mathbb{C}}
\newcommand{\D}{\mathbb{D}}
\newcommand{\R}{\mathbb{R}}
\newcommand{\Z}{\mathbb{Z}}
\newcommand{\N}{\mathbb{N}}
\newcommand{\E}{\mathds{E}}
\renewcommand{\P}{\mathds{P}}
\newcommand{\cjd}{\rangle}
\newcommand{\cjg}{\langle}
\newcommand{\hf}{\frac{_1}{^2}}
\newcommand{\pl}{\partial}
\newcommand{\bbar}{\overline}
\newcommand{\mc}{\mathcal}
\newcommand{\la}{\lambda}
\def\eps{\varepsilon}
\def\T{\mathbb{T}}
\def\bi{\begin{itemize}}
\def\ei{\end{itemize}}
\def\bnum{\begin{enumerate}}
\def\enum{\end{enumerate}}
\def\<#1{\langle #1 \rangle}
\newcommand{\cS}{\mathcal{S}}
\newcommand{\bT}{\mathbf{T}}
\newcommand{\bL}{\mathbf{L}}
\newcommand{\bv}{\mathbf{v}}
\newcommand{\del}{\partial}
\newcommand{\bH}{\mathbf{H}}
\renewcommand{\d}{\mathrm{d}}
\newcommand{\A}{\mathbb{A}}
\newcommand{\bvarphi}{\boldsymbol{\varphi}}
\newcommand{\cC}{\mathcal{C}}
\renewcommand{\Re}{\mathrm{Re}}
\newcommand{\cH}{\mathcal{H}}
\newcommand{\bA}{\mathbf{A}}
\newcommand{\bzeta}{\boldsymbol{\zeta}}
\newcommand{\bs}{\boldsymbol}
\author{Guillaume Baverez}
\address{Beijing International Center for Mathematical Research, Peking University}
\email{guillaume.baverez@bicmr.pku.edu.cn}
\author{Colin Guillarmou}
\address{Universit\'e Paris-Saclay, CNRS,  Laboratoire de math\'ematiques d'Orsay, 91405, Orsay, France.}
\email{colin.guillarmou@math.u-psud.fr}
\author{Antti Kupiainen}
\address{University of Helsinki, Department of Mathematics and Statistics}
\email{antti.kupiainen@helsinki.fi}
\author{R\'emi Rhodes}
\address{Aix Marseille Univ, CNRS, Centrale Marseille, I2M, Marseille, France
and Institut Universitaire de France (IUF)}
\email{remi.rhodes@univ-amu.fr}
\title{Semigroup of annuli in Liouville CFT}
\begin{document}

 \begin{abstract} 
In conformal field theory, the semigroup of annuli with boundary parametrisation plays a special role, in that it generates the whole algebra of local conformal symmetries, the so-called Virasoro algebra.  The subgroup of elements $\A_f=\D\setminus f(\D^\circ)$ 
for contracting biholomorphisms $f:\D\to f(\D)\subset \D^\circ$ with $f(0)=0$ is called the holomorphic semigroup of annuli.
In this article, we construct a differentiable representation of the holomorphic semigroup on the space of bounded operators on the Hilbert space $\mc{H}$ of Liouville Conformal Field Theory. We   show  that it generates under differentiation the positive Virasoro elements ${\bf L}_n,\tilde{{\bf L}}_n$ for $n\geq 0$. We also construct a projective representation of the semigroup of annuli in the space of bounded operators on $\mc{H}$ in terms of Segal amplitudes and show that all Virasoro elements ${\bf L}_n,\tilde{{\bf L}}_n$ for $n\in \Z$ are generated by differentiation of these annuli amplitudes. Finally, we use this to show that the Segal amplitudes for Liouville theory are differentiable with respect to their boundary parametrisations, and the differential is computed in terms of Virasoro generators. 
This paper will serve, in a forthcoming work, as a fundamental tool in the construction of conformal blocks as globally defined holomorphic sections of a holomorphic line bundle on Teichm\"uller space and satisfying the Ward identities. 
  \end{abstract}

\maketitle

\section{Introduction}

\subsection{Liouville  conformal field theory and its conformal blocks}
The Liouville conformal field theory (CFT in short) is a $2$-dimensional CFT whose path integral representation in physics is given by 
\[ \int_{{\rm Maps}(\Sigma,\R)}F(\phi)e^{-S_{\Sigma,g}(\phi)}D\phi\]
where $(\Sigma,g)$ is a Riemannian surface with scalar curvature denoted $K_g$ (and volume form ${\rm dv}_g$) and the action is 
\[ S_{\Sigma,g}(\phi)=\frac{1}{4\pi}\int_{\Sigma}(|d\phi|^2_g+QK_g\phi+\mu e^{\gamma\phi}){\rm dv}_g\]  
for some parameters $\gamma\in (0,2)$, $Q:=\gamma/2+2/\gamma$ and $\mu>0$. Here $F$ denotes a functional on the set of maps on $\Sigma$ and $D\phi$ stands for the ``uniform measure'' (which  is not mathematically defined). 
This theory was introduced by Polyakov \cite{Polyakov81} and has been studied extensively in physics, see for example \cite{DornOtto94,Zamolodchikov96,Teschner_revisited}. 
The natural observables $F(\phi)$ for this theory are products  
\[ F(\phi)=\prod_{j=1}^m V_{\alpha_j}(x_j,\phi)\]
of the functionals $V_{\alpha_j}(x_j,\phi):=e^{\alpha_j\phi(x_j)}$ called \emph{primary fields}, with  real $\alpha_j<Q$ and pairwise distinct points $x_j\in \Sigma$ on the surface. The (formal) path integral representing these observables 
\begin{equation}\label{correlIntro}
\cjg \prod_{j=1}^m V_{\alpha_j}(x_j)\cjd_{\Sigma,g}=\int_{{\rm Maps}(\Sigma,\R)}\prod_{j=1}^mV_{\alpha_j}(x_j,\phi)e^{-S_{\Sigma,g}(\phi)}D\phi
\end{equation}
is called a \emph{correlation function}; when $m=0$, the path integral of the function $F=1$ is called the \emph{partition function}. 
In physics, it is postulated that the 
correlation functions $\cjg \prod_{j=1}^mV_{\alpha_j}(x_j)\cjd_{\Sigma,g}$ and partition functions on any surface $(\Sigma,g)$ of genus $g(\Sigma)$ can be expressed entirely in terms of: 
\begin{enumerate}
\item The $3$-point function on the Riemann sphere $\hat{\C}\simeq \mathbb{S}^2$, called the \emph{structure constant}, 
\[C(\alpha_1,\alpha_2,\alpha_3):=\cjg V_{\alpha_1}(0)V_{\alpha_2}(1)V_{\alpha_3}(\infty)\cjd_{\mathbb{S}^2,g_{\mathbb{S}^2}},\] $g_{\mathbb{S}^2}$ being the round metric. 
\item  Holomorphic functions $\mc{F}_{\bs{\alpha},{\bf p}}(\Sigma,[g])$ in the moduli parameters, called \emph{conformal blocks}, where $\bs{\alpha}=(\alpha_1,\dots,\alpha_m)$ and ${\bf p}=(p_1,\dots,p_{3g(\Sigma)-3+m})\in \R_+^{3g(\Sigma)-3+m}$ are extra parameters associated to the spectrum of a certain operator and $[g]$ denotes the conformal class of $g$, which is viewed as a parameter on the  moduli space of surfaces of genus $g(\Sigma)$ with $m$ marked points. These functions are associated with the algebra of symmetries of the model, called the \emph{Virasoro algebra}. They are characterized   as solutions of certain equations called \emph{Ward identities}.
\end{enumerate}
The factorization, called \emph{conformal bootstrap}, has the form 
\begin{equation}\label{bootstrap1} 
\cjg \prod_{j=1}^mV_{\alpha_j}(x_j)\cjd_{\Sigma,g}= C(g)\int_{\R^{3g(\Sigma)-3+m}_+}\bs{\rho}(\bs{\alpha},{\bf p})|\mc{F}_{\bs{\alpha},{\bf p}}(\Sigma,[g])|^2\d {\bf p}
\end{equation}
where $\bs{\rho}(\bs{\alpha},{\bf p})$ is a product of $2g(\Sigma)-2+m$ structure constants with parameters picked from the collection of $\bs{\alpha}$ and ${\bf p}$, $C(g)$ is an explicit constant depending on the choice of conformal representative $g$ in $[g]$.

The picture drawn above is expected from physics but it is still incomplete mathematically.
Recently, a mathematical construction of the Liouville conformal field theory has been achieved in \cite{DKRV16,DRV16_tori,Guillarmou2019} on all closed surfaces using probabilistic methods, and a proof of the bootstrap factorization formula \eqref{bootstrap1} was shown in \cite{GKRV20_bootstrap}  for the $4$-point function on the sphere and for general surfaces 
in \cite{GKRV21_Segal}. However, in these works, the conformal blocks  $\mc{F}_{\bs{\alpha},{\bf p}}(\Sigma,[g])=\mc{F}_{\bs{\alpha},{\bf p}}(\Sigma,[g],\mc{C})$ depend not only on the surface $(\Sigma,[g])$ but also  on a family $\mc{C}=(\mc{C}_1,\dots,\mc{C}_{3g(\Sigma)-3+m})$ of $3g(\Sigma)-3+m$ simple parametrised curves cutting the surface into geometric elementary pieces, namely pairs of pants, annuli with one marked point and disks with two marked points; see Figure \ref{Genus2}.
\begin{figure}
 \begin{tikzpicture}
 \node[inner sep=0pt] (pant) at (0,0)
{\includegraphics[width=0.2\textwidth]{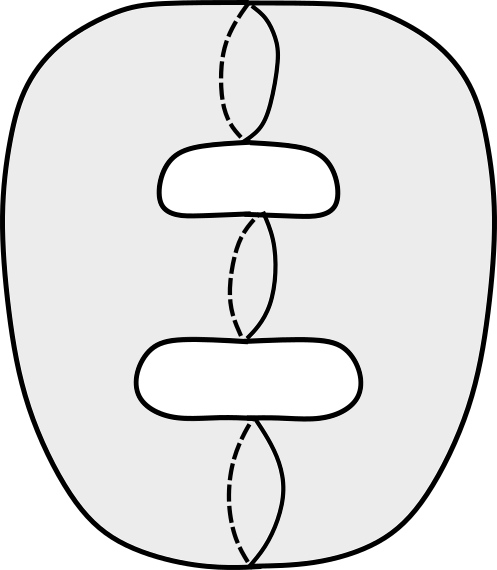}};
 \draw (-1.2,0) node[right,black]{$\Sigma_1$} ;
  \draw (1,0) node[right,black]{$\Sigma_2$} ; 
   \draw (0.2,0) node[right,black]{$\mc{C}_2$} ;
      \draw (0.2,1.5) node[right,black]{$\mc{C}_3$} ;
        \draw (0.2,-1.3) node[right,black]{$\mc{C}_1$} ;
\end{tikzpicture}
 \caption{A genus $2$ surface cut into two pairs of pants $\Sigma_1,\Sigma_2$ along $3$ curves 
        $\mc{C}=(\mc{C}_1,\mc{C}_2,\mc{C}_3)$.}\label{Genus2}
\end{figure}
We call these elementary surfaces geometric building blocks. 
The 
Riemann surface $(\Sigma,[g])$, considered up to biholomorphism, can be parametrised by $3g(\Sigma)-3+m$ complex parameters 
(the moduli space $\mc{M}_{g(\Sigma),m}$) while the cutting curves $\mc{C}$ live in an infinite dimensional space, namely a loop space. In \cite{GKRV21_Segal}, a choice of family $\mc{C}$ of cutting curves on a fixed Riemann surface $(\Sigma_0,[g_0])$ induces a local holomorphic coordinate chart, called plumbing coordinates, on the moduli space $\mc{M}_{g(\Sigma),m}$ near $(\Sigma_0,[g_0])$. From this approach,  $\mc{M}_{g(\Sigma),m}$ can be covered by local holomorphic coordinate charts and it was proved that near $(\Sigma_0,[g_0])$ the conformal blocks can be reduced to local holomorphic functions in these local coordinates patches. 
However, it is important to understand the global picture of the factorization formula \eqref{bootstrap1} and how these local 
functions on moduli space relate  to each other  when going from one coordinate patch to another one: it is postulated in physics that the conformal blocks should be global functions (or sections of line bundles) on the   universal cover of $\mc{M}_{g(\Sigma),m}$, called  Teichm\"uller space $\mc{T}_{g(\Sigma),m}$.
The global construction of the conformal blocks should then lead to a projective unitary representation of the mapping class group in the space of conformal blocks and the Verlinde conjecture in this setting is that this representation is equivalent to the quantisation of Teichm\"uller space, see Teschner \cite{Teschner03,Teschner04,Teschner_Teich}.

Our goal now is to perform the global construction of the conformal blocks and to show that they satisfy the Ward identities. This is done through a precise analysis of the dependence of $\mc{F}_{\bs{\alpha},{\bf p}}(\Sigma,[g],\mc{C})$ on the curves $\mc{C}$ 
(with each $\mc{C}_j$ in a fixed free homotopy class). However, showing differentiability with respect to $\mc{C}$ and computing the variation of the conformal block turns out to be involved, mainly because of the irregularity of the random variables used to define the correlation functions and the conformal blocks. The present paper provides the main tool for this analysis: 
the representation of the \emph{Segal semigroup of annuli} into a space of bounded operators on a well-chosen Hilbert space 
and, more importantly, the differentiability of this representation and the computation of the differential in terms of the Virasoro algebra. 
In the forthcoming companion paper \cite{BGKR2}, we address the global construction of the conformal blocks using the tools developed here, and the representation of the mapping class  group in the space of conformal blocks.

\medskip

Let us now recall this formalism in the probabilistic setting. The mathematical construction of this conformal field theory in \cite{DKRV16,DRV16_tori,Guillarmou2019} 
on all closed surfaces uses probabilistic methods, in particular the Gaussian Multiplicative Chaos, which allows one to define the random measure 
\[ M_\gamma(dx)= e^{\gamma X_g(x)}{\rm dv}_g(x)\]
where $X_g$ is the Gaussian Free Field, a random variable with distributional values and covariance given by the Green's function of the Laplacian $\Delta_g$ on $\Sigma$. 
The correlation functions \eqref{correlIntro} are defined as  expectations of certain functionals of the Gaussian Free Field and are finite (and non-trivial) when the weights satisfy the Seiberg bounds
\begin{equation}\label{Seiberg_bounds_intro} 
\sum_{j=1}^m\alpha_j-Q\chi(\Sigma)>0, \quad \alpha_j<Q,
\end{equation}
$\chi(\Sigma)$ being the Euler characteristic of $\Sigma$.

The proof of the  bootstrap factorization  formulas \eqref{bootstrap1} in 
\cite{GKRV20_bootstrap,GKRV21_Segal} starts from the probabilistic construction and combines it with geometric 
and analytic input. In particular, it was shown that the probabilistic approach fits perfectly well with the axioms 
introduced by Segal. 
In \cite{Segal87}, Segal introduced a  geometric  approach to axiomatize CFT, based on the existence of a Hilbert space $\mc{H}$ and on a correspondence between Riemannian surfaces with parametrised boundary (and possibly marked points with weights) 
 and operators acting on tensor products of $\mc{H}$. In this correspondence, we consider 
an  oriented Riemannian surface $(\Sigma,g)$, $m$ disjoint marked points ${\bf x}=(x_1,\dots,x_m)$ and real analytic parametrisations $\zeta_j:\T\to \pl_j\Sigma$ of the boundary circles (here $\T\subset \C$ denotes the unit circle) collected in $\bs{\zeta}=(\zeta_1,\dots,\zeta_b)$. 
We denote by $S_g:=(\Sigma,g,{\bf x},\bs{\zeta})$ and let 
$\bs{\alpha}=(\alpha_1,\dots,\alpha_m)$ be associated weights satisfying \eqref{Seiberg_bounds_intro}.
To the collection  $(S_g,\bs{\alpha})$ we associate
an element 
$\mc{A}_{\Sigma,g,{\bf x},\bs{\zeta},\bs{\alpha}}=\mc{A}_{S_g,\bs{\alpha}}\in \mc{H}^{\otimes b^+}\otimes (\mc{H}^*)^{\otimes b^-}$, called \emph{amplitude}, that can  alternatively be viewed as a Hilbert-Schmidt operator 
\[\mc{A}_{S_g,\bs{\alpha}}: \mc{H}^{\otimes b^-}\to \mc{H}^{\otimes b^+}\]
where $b^+$ (resp. $b^-$) is the number of \emph{outgoing} (resp. \emph{incoming}) 
boundary circles $\pl_j\Sigma$, where outgoing (resp. incoming) means that $(\dot{\zeta}_j,\nu)$ is positively (resp. negatively)
oriented in $\Sigma$ if $\nu$ denotes the inward pointing vector at $\pl \Sigma$ and $\dot{\zeta}_j:=\pl_{\theta}\zeta_j(e^{{\rm i}\theta})$ is the vector tangent to the curve $\theta\mapsto \zeta_j(e^{{\rm i}\theta})$. When there is no marked points, we simply write $\mc{A}_{S_g}$ with $S_g=(\Sigma,g,\bs{\zeta})$.
In Segal's formalism, these amplitudes must satisfy certain covariance properties under conformal scaling of the metric and diffeomorphism actions, and they must satisfy a gluing property. Roughly speaking, if we write 
$\bs{\zeta}=(\zeta_1,\dots,\zeta_b)$ for the collection of boundary parametrisation and if we assume there are no marked points to simplify the notation,  a surface $(\Sigma^1,g^1,\bs{\zeta}^1)$ can be glued to another one $(\Sigma^2,g^2,\bs{\zeta}^2)$ by identifying an incoming boundary circle $\pl_k\Sigma^1$ to an outgoing one $\pl_j\Sigma^2$ via the parametrisation, i.e. setting $\zeta^2_j(e^{{\rm i}\theta})=\zeta^1_k(e^{{\rm i}\theta})$, and assuming that the metric $g^1$ and $g^2$ glue smoothly. 
The resulting surface $\Sigma$ has an induced collection of boundary parametrisations $\bs{\zeta}^1$ and $\bs{\zeta}^2$, and an induced metric $g$. The gluing axiom  states that 
the amplitude $\mc{A}_{\Sigma,g,\bs{\zeta}}$ of the glued surface can be written as a certain composition of the amplitude 
$\mc{A}_{\Sigma^2,g^2,\bs{\zeta}^2}$ and $\mc{A}_{\Sigma^1,g^1,\bs{\zeta}^1}$, namely by pairing the $k$-th 
$\mc{H}^*$ component of $\mc{A}_{\Sigma^1,g^1,\bs{\zeta}^1}\in  \mc{H}^{\otimes b_1^+}\otimes (\mc{H}^*)^{\otimes b_1^-}$ with the $j$-th $\mc{H}$ component of  $\mc{A}_{\Sigma^2,g^2,\bs{\zeta}^2}\in  \mc{H}^{\otimes b_2^+}\otimes (\mc{H}^*)^{\otimes b_2^-}$.
For example, if $b_1^-=1$ and $b^+_2=1$, then $\mc{A}_{\Sigma,g,\bs{\zeta}}$ is simply the composition of the operators 
\[ \mc{A}_{\Sigma,g,\bs{\zeta}}=\mc{A}_{\Sigma^1,g^1,\bs{\zeta}^1}\circ \mc{A}_{\Sigma^2,g^2,\bs{\zeta}^2}: \mc{H}^{\otimes b^-_2}\to \mc{H}^{\otimes b^+_1}.\]

The probabilistic construction for the Liouville CFT of the Segal amplitudes is  carried out  in \cite{GKRV21_Segal} 
and it is shown that they satisfy the gluing axioms and the required conformal covariance. The Hilbert space of the theory is
\begin{equation}\label{def_Hilbert}
\mc{H}=L^2(\R \times \Omega_\T,\mu_0), \quad \Omega_\T:=(\R^2)^{\N}, \quad \mu_0:=\d c\otimes \prod_{n=1}^\infty e^{-\frac{x_n^2}{2}-\frac{y_n^2}{2}} \frac{\d x_n\d y_n}{2\pi}
\end{equation}
and the amplitude $\mc{A}_{S_g,\bs{\alpha}}$  is defined as a 
conditional expectation in terms of the Gaussian Free Field and the Gaussian multiplicative chaos measure. 
In this introduction,  we do not give the explicit formula but refer to Definition \ref{def:amp} instead.
If $\D:=\{z\in \C\,|\, |z|\leq 1\}$, the annulus $\A_{e^{-t}}:=\D\setminus e^{-t}\D^\circ$ equipped with the flat metric $g_{\A}=|\dd z|^2/|z|^2$ and 
the parametrisations $\bs{\zeta}=(\zeta_1,\zeta_2)$ given by
$\zeta_2(e^{{\rm i}\theta})=e^{-t+{\rm i}\theta}$ for the interior boundary (incoming) and $\zeta_1(e^{{\rm i}\theta})=e^{{\rm i}\theta}$ for the exterior one (outgoing), generates a contraction semigroup (which is not Hilbert-Schmidt)
\[ e^{-t{\bf H}}:=\frac{e^{-t\frac{  c_{\rm L}}{12}}}{\sqrt{2}\pi}\mc{A}_{\A_{e^{-t}},g_{\A},\bs{\zeta}}:\mc{H}\to \mc{H},  \quad {\rm c}_L=1+6Q^2>25\]
where the generator ${\bf H}$, called the \emph{Hamiltonian}, is an unbounded self-adjoint operator. This operator has continuous spectrum 
and can be diagonalized by a continuous family of eigenstates 
\[  \Psi_{Q+ip,\nu,\tilde{\nu}}, \quad p\in \R_+, \nu, \tilde{\nu}\in \mc{T}\]
where $\mc{T}$ denotes the set of Young diagrams, i.e. the finite decreasing sequences $\nu(1)\geq \dots \geq \nu(k)$ of positive integers. These states belong to the weighted spaces $\bigcap_{\eps>0}e^{-\eps |c|}\mc{H}$ and provide a spectral 
resolution of the identity, analogous to the plane wave eigenbasis $e^{ipc}$ for the Laplacian $-\pl_p^2$ on $L^2(\R_+)$. 
Moreover the eigenstates have the form 
\[\Psi_{Q+ip,\nu,\tilde{\nu}}={\bf L}_{-\nu(k)}\dots {\bf L}_{-\nu(1)}\tilde{{\bf L}}_{-\tilde{\nu}(k')}\dots \tilde{{\bf L}}_{-\tilde{\nu}(1)}\Psi_{Q+ip}\]
where $({\bf L}_n)_{n\in \Z}$  and $(\tilde{{\bf L}}_n)_{n\in \Z}$ are two commuting  Lie algebras of operators  acting on $\mc{H}$, called Virasoro algebra, satisfying 
\[ [{\bf L}_n,{\bf L}_{m}]=(n-m){\bf L}_{n+m} +\frac{c_{\rm L}}{12}(n^3-n)\delta_{n,-m},\]
and similarly for $\tilde{{\bf L}}_n$.  The state $\Psi_{Q+ip}$ (no Young diagrams) is called a primary state, it is killed by 
${\bf L}_n$ and $\tilde{{\bf L}}_n$ if $n>0$ and satisfies ${\bf H}\Psi_{Q+ip}=\frac{1}{2}(Q^2+p^2)\Psi_{Q+ip}$.
This diagonalization and the proof of Segal's axioms in the probabilistic setting are the heart of the proof of the bootstrap formulas 
\eqref{bootstrap1}   in \cite{GKRV20_bootstrap,GKRV21_Segal}. Both in the local picture   \cite{GKRV21_Segal} or the global picture in the forthcoming work \cite{BGKR2}, the conformal blocks are constructed from 
the Segal amplitudes of the geometric building blocks decomposing the surfaces, 
namely from the  evaluation of these amplitudes on the basis elements $\Psi_{Q+ip,\nu,\emptyset}$ for $\nu \in \mc{T}$. 
The variation of the conformal blocks under deformation of the geometric  building-block  decomposition relies on the analysis of the variation of the Segal amplitude of a surface  under deformation of its boundary parametrisation. 
On the other hand, since any boundary component of a surface $\Sigma$  possesses an annular neighborhood (see Figure \ref{var_annulus}), studying the variation of the boundary of a surface boils down to studying the variations of the boundary parametrisation of an annulus amplitude.
 \begin{figure}[h] 
 \begin{tikzpicture}
 \node[inner sep=0pt] (pant) at (0,0)
{\includegraphics[width=0.4\textwidth]{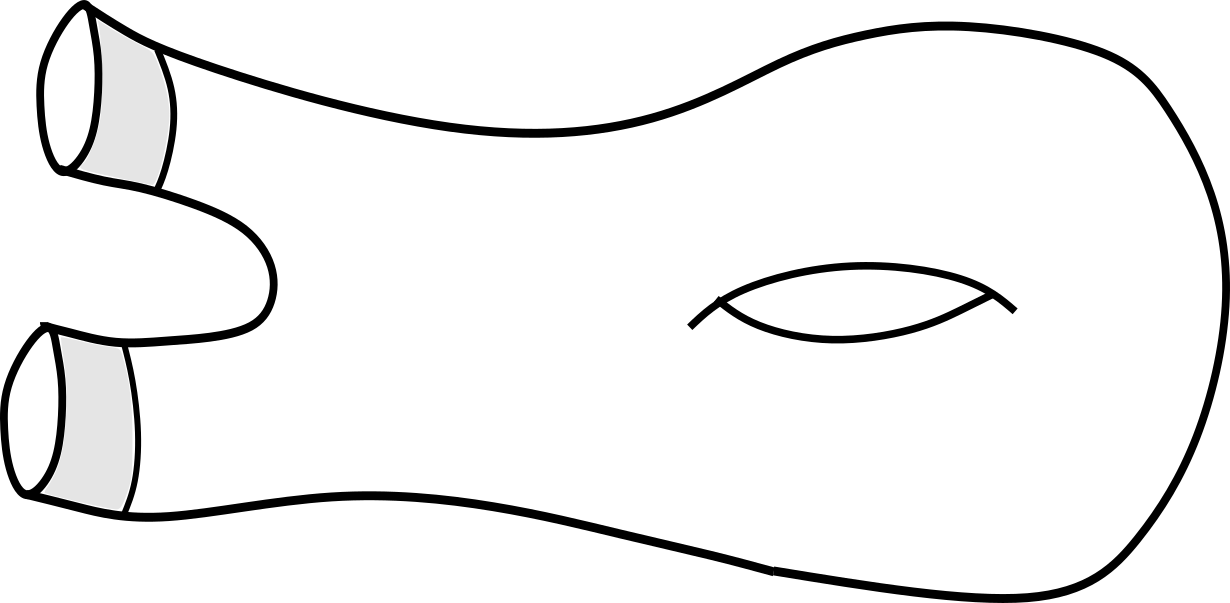}};
\end{tikzpicture}
\caption{Varying boundaries by gluing annuli}\label{var_annulus}
\end{figure}
This is the main motivation of the present work, which focuses on Segal amplitudes of annuli. This family of  annulus  amplitudes in turn generates a representation of the whole Virasoro algebra and a 
remarkable quantisation of the local conformal symmetries.

Let us now discuss the annuli amplitudes and the link  with the Virasoro algebra. In \cite{BGKRV}, the Virasoro operators ${\bf L}_n$ and $\tilde{{\bf L}}_n$ appear as generators of Markov 
semigroups on $\mc{H}$ constructed from the following probabilistic representation: first consider the random variable in $H^{-s}(\T)$ for all $s>0$
\[ \varphi(\theta):=\sum_{n=1}^\infty \frac{x_n+iy_n}{2\sqrt{n}}e^{in\theta}+\sum_{n=1}^\infty \frac{x_n-iy_n}{2\sqrt{n}}e^{-in\theta}\]
where $x_n,y_n$ are viewed as i.i.d. Gaussians (sampled with the Gaussian measure appearing in \eqref{def_Hilbert}), and identify 
$\mc{H}$ with $L^2(H^{-s}(\T))$ using the push-forward of $\mu_0$ by the map $(c,(x_n,y_n)_n)\in \R\times \Omega_\T 
\mapsto c+\varphi\in H^{-s}(\T)$.
Then consider
the meromorphic vector field  ${\rm v}=-(z\pl_z+\eps z^{n+1})\pl_z$ on $\C\setminus \{0\}$ with $\eps\in \C$ small and $n\geq 0$, 
and  let $f_t$ be its flow at time $t$, defined by $\partial_tf_t(z)=v(f_t(z))$ with $f_0(z)=z$.  Define the family of operators, for $t\geq 0$,
\begin{equation}\label{def_T_t}
 \begin{array}{l}
 {\bf T}_t: \mc{H}\to \mc{H}, \\
 {\bf T}_tF(c+\varphi)= |f_t'(0)|^{\frac{Q^2}{2}}\E_\varphi \Big[F\Big(c+X\circ f_t+Q\log\frac{|f_t'|}{|f_t|}\Big)|_{\T})e^{-\mu e^{\gamma c}\int_{\D\setminus f_t(\D)}\frac{e^{\gamma X}}{|x|^{\gamma Q}}\dd x}\Big],
 \end{array}
 \end{equation}
where $X=X_\D +P\varphi$ for $X_\D$ the Dirichlet Gaussian Free Field on $\D$ (a Gaussian variable 
with covariance $\E[X_\D(x)X_\D(x'))]=\log\frac{|1-x\bar{x}'|}{|x-x'|}$) and $P\varphi$ the harmonic extension of $\varphi$ to the disk $\D$, and $\E_\varphi$ means expectation conditionally on $\varphi$. Here, we use the fact that $f_t(\D)\subset \D^\circ$ for $t>0$, and define the random measure $e^{\gamma X}\dd x$ using the Gaussian multiplicative chaos method of \cite{Kahane85}. These operators ${\bf T}_t$ are shown to generate a semigroup $e^{-t\bH_{\rm v}}$ with generator 
\[ \bH_{\rm v}= {\bf L}_0+\tilde{{\bf L}}_0+\eps {\bf L}_n+\bar{\eps}\tilde{{\bf L}}_n.\]
The elements ${\bf L}_{-n}$ for $n>0$ are then constructed by taking the adjoints of ${\bf L}_n$ and similarly for $\tilde{{\bf L}}_{-n}$.

In the present work, we pursue this study and relate these semigroups to the Segal amplitudes of the annuli 
$\A_{f_t}:=\D\setminus f_t(\D)$ and, more generally, we construct a quantisation of the semigroup $\mc{S}$ of 
  biholomorphic contractions $f:\D\to f(\D)\subset \D^\circ$ with $f(0)=0$ and a representation (see \eqref{defS} for the precise definition of $\cS$)
\begin{equation}\label{def_repTf}  
f\in \mc{S} \mapsto {\bf T}_f \in \mc{L}(\mc{H})
\end{equation}
defined by replacing $f_t$ by $f$ in \eqref{def_T_t} (see Definition \ref{defTf}). This semigroup is called \emph{Segal's semigroup of holomorphic annuli}, in reference to the paper  \cite[Section 2]{Segal87} in which Segal makes
 the observation that this semigroup plays a fundamental role in CFT. It was also independently studied by Neretin \cite{Ner87,Ner1990}. For practical reasons, for $\eps>0$ we will define $\mc{S}_\eps$  as the subset of elements $f\in \mc{S}$ that admits a holomorphic extension to $(1+\eps)\D^\circ$ and are smooth on $(1+\eps)\T$, and we identify the tangent space to $\mc{S}_\eps$ to be the space of holomorphic vector fields ${\rm v}=v(z)\pl_z=\sum_{n\in \N}v_nz^{n+1}\pl_z$ in $(1+\eps)\D$. It can be equipped with a Fr\'echet topology using seminorms \eqref{seminorm}. 

A first important intermediate result of the paper is a representation of Segal's holomorphic annuli and its 
differentiability.
\begin{theorem}[Representation of Segal semigroup of annuli]\label{th:main}
The map ${\bf T}: f\mapsto {\bf T}_f$ is a representation of the semigroup $\mc{S}$ in the sense that for $f_1,f_2\in \mc{S}$
\[ {\bf T}_{f_1\circ f_2}={\bf T}_{f_1}\circ {\bf T}_{f_2}.\]
Moreover for $\eps>0$, the map $f\mapsto {\bf T}_f$ is differentiable as a map 
\[ {\bf T}: \mc{S}_\eps \to \mc{L}(\mc{D}(\mc{Q}))\]
where $\mc{D}(\mc{Q})\subset \mc{H}$ is the domain of the quadratic form $\mc{Q}(F)=\cjg {\bf H}F,F\cjd_{\mc{H}}$ associated to the Hamiltonian 
${\bf H}$. Moreover, the differential is given by: for a tangent vector ${\rm v}=v(z)\pl_z=\sum_{n\geq 0}v_nz^{n+1}\pl_z$ to $\mc{S}_\eps$ 
and $F\in \mc{D}(\mc{Q})$
\[ D_{\rm v} {\bf T}_{f}F=-{\bf T}_f{\bf H}_{\rm w}F \]
where ${\rm w}=w(z)\pl_z$ with $w(z)=v(z)/f'(z)=-\sum_{n=0}^\infty w_nz^{n+1}$ and ${\bf H}_{\rm w}$ is the operator 
\[ {\bf H}_{\rm w}=\sum_{n\geq 0}w_n{\bf L}_n+\bbar{w}_n\tilde{{\bf L}}_n: \mc{D}(\mc{Q})\to \mc{D}'(\mc{Q})\]
with $\mc{D}'(\mc{Q})$ the dual space to $\mc{D}(\mc{Q})$.
\end{theorem}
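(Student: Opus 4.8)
The plan is to establish the representation (semigroup) identity first, and then to deduce differentiability and the differential formula from it by a chain-rule argument, using as a black box the infinitesimal generators computed in \cite{BGKRV}.

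For the identity ${\bf T}_{f_1\circ f_2}={\bf T}_{f_1}\circ{\bf T}_{f_2}$, I would argue directly from the probabilistic formula \eqref{def_T_t}. The annulus attached to $f_1\circ f_2$ decomposes as $\A_{f_1\circ f_2}=\A_{f_1}\cup f_1(\A_{f_2})$, glued along the curve $f_1(\T)$. The nested conditional expectation defining ${\bf T}_{f_1}({\bf T}_{f_2}F)$ is then matched to ${\bf T}_{f_1\circ f_2}F$ via the domain Markov property of the Gaussian Free Field: conditionally on its boundary values on $f_1(\T)$, the field $X$ restricted to $f_1(\D)$ is an independent Dirichlet GFF on $f_1(\D)$ plus the harmonic extension of those values, which is exactly the law fed into ${\bf T}_{f_2}$ after pullback by $f_1$. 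Under this identification the composition $X\circ f_1\circ f_2$ becomes $X\circ(f_1\circ f_2)$, the GMC mass over $\D\setminus(f_1\circ f_2)(\D)$ splits additively along the decomposition of the annulus, the field insertions $Q\log(|f'|/|f|)$ assemble by conformal covariance, and the anomaly prefactors multiply correctly since $(f_1\circ f_2)'(0)=f_1'(0)f_2'(0)$. Equivalently, this is an instance of the gluing axiom for the Segal amplitudes of \cite{GKRV21_Segal} combined with conformal covariance under the biholomorphism $f_1$.

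For differentiability and the formula $D_v{\bf T}_fF=-{\bf T}_f{\bf H}_wF$, I would exploit the $\eps$-room built into $\mc{S}_\eps$. Given $f\in\mc{S}_\eps$ and a tangent direction $v$, set $w:=v/f'=-\sum_{n\geq0}w_nz^{n+1}$ and let $h_s$ be the flow of the holomorphic vector field $w$, so that $h_0=\mathrm{id}$ and $\pl_s h_s|_{s=0}=w$. Because $f$ extends holomorphically to $(1+\eps)\D^\circ$, the curve $f_s:=f\circ h_s$ stays in $\mc{S}$ for $s$ near $0$ and satisfies $\pl_s f_s|_{s=0}=f'\cdot w=v$, so $s\mapsto{\bf T}_{f_s}$ realizes the directional derivative. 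The representation property then gives ${\bf T}_{f_s}={\bf T}_f\circ{\bf T}_{h_s}$, whence
\[
D_v{\bf T}_f=\frac{d}{ds}{\bf T}_{f\circ h_s}\Big|_{s=0}={\bf T}_f\circ\frac{d}{ds}{\bf T}_{h_s}\Big|_{s=0}.
\]
It remains to identify $s\mapsto{\bf T}_{h_s}$ as the semigroup generated by $-{\bf H}_w$. For the two-parameter family $w=-z-\eps z^{n+1}$ this is precisely the generator computation of \cite{BGKRV}; since $w\mapsto{\bf H}_w$ and the leading part of the difference quotient are $\R$-linear in $w$, one extends this to general $w$ and obtains $\frac{d}{ds}{\bf T}_{h_s}|_{s=0}=-{\bf H}_w=-\sum_{n\geq0}(w_n{\bf L}_n+\bbar{w}_n\tilde{\bf L}_n)$, which yields the stated formula after substitution.

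The hard part will be the functional-analytic content hidden in the two displayed derivatives: that $f\mapsto{\bf T}_f$ is genuinely differentiable as a map into $\mc{L}(\mc{D}(\mc{Q}))$, and that $-{\bf T}_f{\bf H}_w$ is a bounded operator on $\mc{D}(\mc{Q})$. The operator ${\bf H}_w$ is only bounded $\mc{D}(\mc{Q})\to\mc{D}'(\mc{Q})$, since it involves the unbounded generators ${\bf L}_n$, so one must show that ${\bf T}_f$ is smoothing, mapping $\mc{D}'(\mc{Q})$ back into $\mc{D}(\mc{Q})$ with norm controlled by the modulus of $\A_f$; this is exactly where the strict contraction $f(\D)\Subset\D^\circ$ is essential, the gap between $f(\T)$ and $\T$ producing, through the spectral resolution by the eigenstates $\Psi_{Q+ip,\nu,\tilde\nu}$, the exponential damping that absorbs the polynomial growth of ${\bf H}_w$. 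Controlling the difference quotient $\tfrac1s({\bf T}_{f_s}-{\bf T}_f)$ in this operator norm, uniformly for small $s$, then requires estimates on the $s$-derivatives of the singular ingredients of \eqref{def_T_t}—the GMC integral over the shrinking region $\D\setminus f_s(\D)$, the composition $X\circ f_s$, and the conformal anomaly—which is the genuinely delicate analytic step and presumably occupies the technical core of the argument.
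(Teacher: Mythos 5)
Your first part (the semigroup identity) is essentially the paper's own argument, Proposition \ref{comprule}: domain Markov property plus conformal invariance of the Dirichlet GFF and of harmonic extensions, additive splitting of the GMC mass with the change of variables $x=f(y)$, and the multiplicativity of $|f'(0)|$. That part is fine. Your observation that one must show ${\bf T}_f$ is smoothing, mapping $\mc{D}'(\mc{Q})$ into $\mc{D}(\mc{Q})$, is also on target (it is Lemma \ref{estTfstar}, proved by writing ${\bf T}_f=e^{-t_0{\bf H}}{\bf T}_{f_{t_0}}e^{-t_0{\bf H}}$ rather than by spectral resolution).

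The differentiability argument, however, has a genuine structural gap, not just deferred estimates. The exact factorization coming from the semigroup law is ${\bf T}_{f+sv}={\bf T}_f\circ{\bf T}_{g_s}$ with $g_s:=f^{-1}\circ(f+sv)$; you replaced $g_s$ by the flow $h_s$ of the autonomous field $w=v/f'$, which agrees with $g_s$ only to first order in $s$. Differentiating $s\mapsto{\bf T}_{f\circ h_s}$ at $s=0$ therefore does not yield the differential of ${\bf T}$ at $f$ (nor even the Gateaux derivative along $f+sv$) unless you can show $\|{\bf T}_{f+sv}-{\bf T}_{f\circ h_s}\|_{\mc{L}(\mc{D}(\mc{Q}))}=o(s)$, i.e.\ a local Lipschitz estimate for ${\bf T}$ in this operator norm. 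No such estimate is available a priori (the paper only proves continuity of $f\mapsto{\bf T}_f$ in $\mc{L}(\mc{H})$), and obtaining one is essentially equivalent to the differentiability you are trying to prove, so the chain-rule reduction is circular. The paper sidesteps this precisely by working with the exact curve $g_s$; the price, explicitly flagged in Section \ref{naf}, is that $g_s$ is in general \emph{not} the flow of any autonomous holomorphic vector field, which forces the non-autonomous flow \eqref{naflow}, its piecewise-autonomous approximation, the integrated identity \eqref{integrated1}, the a priori bounds of Lemmas \ref{estTfstar}--\ref{commut}, right-differentiability, and then a mean-value-theorem upgrade to genuine differentiability.

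A second, related gap: for general $f\in\mc{S}_\eps$ and $v\in{\rm Hol}^\bullet_\eps(\D)$ the field $w=v/f'$ need not be Markovian, so its flow $h_s$ need not remain in $\mc{S}$ at all (then ${\bf T}_{h_s}$ is undefined), and the identification ${\bf T}_{h_s}=e^{-s{\bf H}_w}$ from \cite{BGKRV} is only available under the coercivity condition \eqref{condQnice}. Your proposal to extend the generator computation from $w=-z-\eps z^{n+1}$ to general $w$ ``by $\R$-linearity'' is unsound, since $w\mapsto{\bf T}_{h_s}$ is nonlinear in $w$; what the paper actually does is use the full coercive case from \cite{BGKRV}, prove directional differentiability for all directions $v$ with $v/f'$ coercive, and then reach arbitrary $v$ through the affine decomposition $v=w+(v-w)$ with $w(z)=-\omega zf'(z)+v(z)$ coercive for $\omega$ large, combined with continuity of $(f,v)\mapsto D_r{\bf T}_f(v)$. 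Without these two ingredients your plan cannot produce the statement as claimed.
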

We refer to Theorem \ref{th:differentiabiliteTf} for more details. 

The next question we address is the link between ${\bf T}_f$ and the Liouville amplitude of the annulus $\A_f:=\D\setminus f(\D^\circ)$ for $f\in \mc{S}$. 
This is important to show the differentiability of Segal amplitudes with respect to the boundary parametrisation, and the fact that 
$f\mapsto {\bf T}_f$ is a semigroup is extremely helpful for this task.
For $f\in \mc{S}_\eps$ with $\eps>0$, we prove in Corollary \ref{cor:annulus_propagator} that  the following identity holds 
\[ {\bf T}_fF(\tilde{\varphi}_1)=\frac{e^{-\frac{c_\mathrm{L}}{12}W(f,g_f)} }{ \sqrt{2}\pi }
\int_{H^{-s}(\T)} \mc{A}_{\mathbb{A}_f,g_f,\bzeta}(\tilde{\varphi}_1,\tilde{\varphi}_2)F(\tilde{\varphi}_2) \,\dd\mu_0(\tilde{\varphi}_2)
\]
where $g_f=e^{\omega}g_\A$ is any smooth metric conformal to $g_\A=|\dd z|^2/|z|^2$ on $\A_f$ such that 
$g_f=|\dd z|^2/|z|^2$ near the exterior boundary $\T$ of $\A_f$ and $g_f=f_*(|\dd z|^2/|z|^2)$ near the interior boundary $f(\T)$ of $\A_f$, and 
$W(f,g_f)$ is an explicit deterministic term defined in \eqref{def_of_W}.

The semigroup $\mc{S}$ is said ``of holomorphic annuli" because the outer boundary is prescribed to be the identity map and the inner boundary is parametrised by a holomorphic function in the disc. This is further reflected in the fact that the derivative involves only the operators ${\bf L}_n$, $\tilde{{\bf L}}_n$ with $n\geq 0$. To obtain the whole Virasoro algebra as variations of amplitudes, 
we have to extend this analysis to general annuli.

For an annulus $A$ equipped with a complex structure $J$ and  with parametrised analytic boundary $\bs{\zeta}^A=(\zeta_1,\zeta_2)$ with $\zeta_1$ outgoing and $\zeta_2$ incoming, there is a unique flat metric $g^A=e^{\omega}|\dd z|^2$ compatible with the complex structure of $\C$ with $\zeta_j^*(g^A)=\dd\theta^2$ on $\T$ for $j=1,2$. The set of such complex annuli with parametrised boundary forms a semigroup under gluing where the product $(A_1,\bs{\zeta}^{A_1})\cdot(A_2,\bs{\zeta}^{A_2})$ is the annulus obtained by gluing 
the outgoing component of $A_2$ with the incoming one of $A_1$.
We show in Section \ref{projective_rep} (see Proposition \ref{prop:proj_rep} for a more precise statement) that the map 
\[ (A,\zeta^A)\mapsto \mc{A}_{A,g^A,\bs{\zeta}^A}\in \mc{L}(\mc{H})\]
defines a projective representation of the full semigroup of annuli with parametrised boundary. 
 
Finally, in Theorem \ref{derivative_meromorphic_vf},  we prove that   the annuli $\A_f$, with boundary given by the circle $\T$ and the curve $f(\T)$ 
if $f:\T \to f(\T)\subset \D^\circ$ is analytic, have amplitudes that are differentiable at $f_0:z\mapsto rz$ for $0<r<1$ as a function of  $f$ and their  differentials  generate all ${\bf L}_n$ for $n\in \Z$. As a quite direct corollary (using Segal's gluing property of amplitudes), we obtain our main result, which is the differentiability of Segal's amplitudes with respect to the boundary parametrisations.
If $S_g=(\Sigma,g,{\bf x},\bs{\zeta})$ is a Riemannian surface with admissible metric, marked points, analytic parametrisations $\bs{\zeta}=(\zeta_1,\dots,\zeta_b)$ of the $b$ boundary components, we can glue a disk $\D=\mc{D}_j$ at each $\pl_j\Sigma$ using the parametrisation $\zeta_j:\T\to \pl_j\Sigma$, producing 
a closed Riemann surface $(\hat{\Sigma},\hat{J})$ ($\hat{J}$ denotes its complex structure) 
with $b$ new marked points given by the centers of the $\mc{D}_j$, called filling of $\Sigma$. For $\bs{f}=(f_1\dots,f_b)$ a family of biholomorphisms in the space ${\rm Hol}_\eps(\A)$ of holomorphic functions defined on the annulus $\{z\in \C \,|\, |z|\in [\frac{1}{1+\eps},1+\eps]\}$ and smooth up to the boundary (for $\eps>0$ sufficiently small), with $f_j$ close to the identity, we consider the surface 
$\Sigma^{\bs{f}}\subset \hat{\Sigma}$ (diffeomorphic to $\Sigma$) with boundary components $\zeta_j\circ f_j(\T)$ 
obtained from deforming $\zeta_j$ to $\zeta_j\circ f_j$; the new parametrisation is denoted $\bs{\zeta}^{\bs{f}}=(\zeta_1\circ f_1,\dots,\zeta_{b}\circ f_b)$. We say that a metric $g^{\bs{f}}$ is admissible on $\Sigma^{\bs{f}}$ if it is compatible with $\hat{J}$ and $(\zeta_j\circ f_j)^*g^{\bs{f}}=|\dd z|^2/|z|^2$ near $\T$, we write $\hat{g}$ for the extension of $g$ by $|\dd z|^2/|z|^2$ on each pointed disk $\mc{D}_j\setminus\{0\}$, called filling metric. We define 
\[S_g\cdot \bs{f}:=(\Sigma^{\bs{f}},g^{\bs{f}},{\bf x},\bs{\zeta}^{\bs{f}})\] 
and view the family $\bs{f}$ of biholomorphisms as acting on the Riemannian surface $S_g$, although only the Riemann surface $S\cdot\bs{f}:=(\Sigma^{\bs{f}},\hat{J},{\bf x},\bs{\zeta}^{\bs{f}})$ is canonically defined if $S:=(\Sigma,J,{\bf x},\bs{\zeta})$. The action of $\bs{f}$ on the Riemannian surface $S_g$ involves to make a choice of admissible metric $g^{\bs{f}}$ for each $\bs{f}$.
The space ${\rm Hol}_\eps(\A)$ is equipped with a Fr\'echet topology using the seminorms \eqref{seminorm}, just as $\mc{S}_\eps$ above. Finally, the Liouville action $ S_{\rm L}^0$ is given by (recall $K_g$ denotes the scalar curvature of $g$)
\begin{equation}\label{SL0intro}
S_{\rm L}^0(\Sigma,g_0,g):=\frac{1}{96\pi}\int_{\Sigma}(|d\omega|_{g_0}^2+2K_{g_0}\omega) {\rm dv}_{g_0},
\end{equation}
if $g_0,g$ are two conformally related metrics, i.e. $g=e^{\omega}g_0$, for some smooth $\omega:\Sigma\to \R$. 
We prove:

\begin{theorem}[Differential of Segal amplitudes]\label{intro_diff_amplitudes}
Let $S_g:=(\Sigma,g,{\bf x},\bs{\zeta})$ be a Riemannian surface  with $b$ incoming analytic parametrised boundaries, $\bs{\alpha}$ some weights attached to ${\bf x}$ satisfying \eqref{Seiberg_bounds_intro}, and let 
$(\hat{\Sigma},\hat{J})$ be the filling of $\Sigma$ and $\hat{g}$ the filling metric.
There is a neighborhood $\mc{U}$ of  ${\rm Id}=({\rm Id},\dots,{\rm Id})$ in
 ${\rm Hol}_\eps(\A)^b$ such that, if $\bs{f}\in \mc{U}\mapsto g^{\bs{f}}$ is a $C^1$ family of admissible metrics on $\Sigma^{\bs{f}}$ with $g^{\bs{f}}=g$ outside a small neighborhood of $\pl \Sigma$, then 
\[  \mc{A}_{S_g\cdot,\bs{\alpha}}: \bs{f} \in \mc{U}\mapsto \mc{A}_{S_g\cdot \bs{f},\bs{\alpha}}\in \mc{L}(\mc{D}(\mc{Q})^{\otimes b},\C)\]
is differentiable, with $S_g\cdot \bs{f}=(\Sigma^{\bs{f}},g^{\bs{f}},\bf{x},\bs{\zeta}^{\bs{f}})$. The  differential at $\bs{f}={\rm Id}$ given by: for any ${\bf u}\in \mc{D}(\mc{Q})^{\otimes b}$,
\[\begin{split} 
D_{\bv} \mc{A}_{S_g\cdot,\bs{\alpha}}({\rm Id}){\bf u}=&-\mc{A}_{S_g,\bs{\alpha}} \big( {\bf H}_{{\rm v}_1}\otimes {\rm Id}\otimes \dots \otimes {\rm Id}\big){\bf u}-\dots-\mc{A}_{S_g,\bs{\alpha}}\big( {\rm Id}\otimes\dots \otimes {\rm Id}\otimes {\bf H}_{{\rm v}_b}\big){\bf u}\\
& -c_{\rm L}\Big(\sum_{j=1}^b\frac{{\rm Re}(v_{j0})}{12}+
D_{\bv}\mc{S}_{S_g\cdot }({\rm Id})\Big)\mc{A}_{S_g,\bs{\alpha}}{\bf u}
\end{split}\]
for $\bv=({\rm v}_1,\dots,{\rm v}_b)\in {\rm Hol}_\eps(\A)^b$ and ${\rm v}_j=\sum_{n\in \Z}v_{jn}z^{n+1}\pl_z$,
where ${\bf H}_{{\rm w}}:=\sum_{n\in \Z}w_n{\bf L}_n+ \bar{w}_n\tilde{{\bf L}}_{n}$ if ${\rm w}$ is the holomorphic vector field 
${\rm w}=-\sum_{n\in \Z}w_nz^{n+1}\pl_z$ defined near $\T$, and $\mc{S}_{S_g\cdot }:\mc{U}\to \R$ is the map  defined by $\mc{S}_{S_g\cdot \bs{f}}:=S_{\rm L}^0(\Sigma^{\bs{f}},g^{\bs{f}},\hat{g})$.
\end{theorem}

This theorem builds on Theorem \ref{th:main} and
will be fundamental in the global construction of conformal blocks on Teichm\"uller space 
in \cite{BGKR2} and the proof that they satisfy the Ward identities postulated in physics.

\vskip 2mm
\noindent\textbf{Data availability statement.}  No data attached to this research.

\vskip 2mm
\noindent\textbf{ Conflict of interest.}  The authors declare no conflicts of interest.

\vskip 2mm
\noindent\textbf{Acknowledgements.}  We thank D. Radnell,  E. Schippers, J. Teschner and V. Vargas for useful discussions. G. Baverez is supported by the National Natural Science Foundation of China (Grant No. 12526204).   A. Kupiainen acknowledges the support of the ERC Advanced Grant 741487 and of Academy of Finland. R. Rhodes is partially supported by the Institut Universitaire de France (IUF). R. Rhodes  acknowledges the support of the ANR-21-CE40-0003.

\section{Background}

First, we recall some material from \cite{Guillarmou2019,GKRV20_bootstrap,GKRV21_Segal} that will be used to construct the Liouville amplitudes.
Along the paper, we typically use the variable $z$ for the complex coordinate in $\C$ and $x$ for the Euclidean coordinate in $\R^2$, keeping in mind that $z$ is identified with $x$ via $x=({\rm Re}(z),{\rm Im}(z))$. The Euclidean metric is denoted by $|\dd z|^2$.

\subsection{Geometric background} We begin by reviewing the geometric background on compact Riemann surfaces, 
Green's functions, determinants of Laplacians, that will be needed for the construction of Liouville conformal field theory.\\

\noindent \textbf{Closed Riemann surface.}  
A closed Riemann surface $(\Sigma,J)$ is a smooth oriented compact surface $\Sigma$ with no  boundary, equipped with a complex structure $J$, i.e. $J \in {\rm End}(T\Sigma)$ with $J^2=-{\rm Id}$. Alternatively, this amounts to having a 
set of charts $\omega_j:U_j\to \D\subset \C$ such that $\omega_j\circ \omega_{k}^{-1}$ are biholomorphic where defined, where $\D=\{z\in \C\,| \, |z|<1\}$ is the unit disk. 
For each $k$, one has $J=\omega_k^*J_\C$ with $J_\C$ the canonical complex structure given by $J_\C\pl_{x_1}=\pl_{x_2}, J_\C\pl_{x_2}=-\pl_{x_1}$ in $\C$ if $z=x_1+{\rm i}x_2$.
The orientation on $\Sigma$ is a non-vanishing $2$-form $w_\Sigma\in C^\infty(\Sigma;\Lambda^2T^*\Sigma)$, and by convention we  require it to be compatible with the complex structure, i.e. $(\omega_j^{-1})^*w_\Sigma=e^{f_j} dx_1\wedge dx_2$ in $\D$ 
for some function $f_j$.\\

\noindent \textbf{Compact Riemann surface with parametrised analytic boundary.}  A compact Riemann surface $(\Sigma,J)$ with real analytic boundary $\partial\Sigma=\sqcup_{j=1}^{b}\pl_j \Sigma$ (here $\pl_j\Sigma$ are the connected components, which are analytic circles) is a 
compact oriented surface with smooth boundary with a family of charts 
 $\omega_j:V_j\to \omega_j(V_j)\subset \C$ for $j=1,\dots,j_0$ where $\cup_j V_j$ is an open cover of $\Sigma$ and $\omega_k \circ \omega_j^{-1}$ are biholomorphic where  defined, and $\omega_j(V_j\cap\pl \Sigma)$ is a real analytic curve if $V_j\cap\pl\Sigma\not=\emptyset$. 
Using the Riemann uniformisation theorem, we can moreover assume that for $j\in [1,b]$, $V_j$ are neighborhoods of $\pl_j \Sigma$ with 
$\omega_j(V_j)=\mathbb{A}_\delta$ where $\mathbb{A}_{\delta}=\{z\in \C\,|\, |z|\in [\delta,1]\}$ for some $\delta\in (0,1)$ with $\omega_j(\pl_j\Sigma)=\{|z|=1\}$, while for all other $j$, $V_j$ are open sets not intersecting $\pl \Sigma$ satisfying $\omega_j(V_j)=\D\subset \C$. The charts induce a complex structure $J$, as in the closed case, and we shall often write $(\Sigma,J)$ for the Riemann surface with boundary.
 The boundary circles $\pl_j\Sigma$ all inherit an orientation from the orientation of $\Sigma$: indeed, take 
the $1$-form $-\iota_{\pl_j\Sigma}^*(i_{\nu}\omega_\Sigma)$  where $i_\nu$ is the interior product with a non-vanishing  inward-pointing vector field $\nu$ to $\Sigma$ and $\iota_{\pl_j\Sigma}:\pl_j\Sigma\to \Sigma$ is the natural inclusion. 
In the chart given by the annulus $\mathbb{A}_\delta$, the orientation is then given by $\dd\theta$ on the unit circle parametrised by $(e^{{\rm i}\theta})_{\theta\in [0,2\pi]}$. We equip the Riemann surfaces with analytic boundary with a set 
 $\bs{\zeta}=(\zeta_1,\dots,\zeta_b)$ of analytic parametrisations of the boundary circles $\pl_1\Sigma,\dots,\pl_b\Sigma$
 \[\zeta_j:\T \to \pl_j\Sigma.\] 
We denote by $S=(\Sigma,J,\bs{\zeta})$ the Riemann surface $(\Sigma,J)$ with parametrised boundary  $\bzeta$.
We say that the boundary $\pl_j\Sigma$ is \textbf{outgoing} if the orientation $(\zeta_j)_*(\dd\theta)$ is the orientation of $\pl_j\Sigma$ induced by that of $\Sigma$ as described above,  otherwise the parametrised boundary $\pl_j\Sigma$ is called \textbf{incoming}. An analytic parametrisation $\zeta_j:\T\to \pl_j\Sigma$ also induces a holomorphic chart by holomorphically extending $\zeta_j$ in an annular neighborhood of $\T\subset \C$.\\

 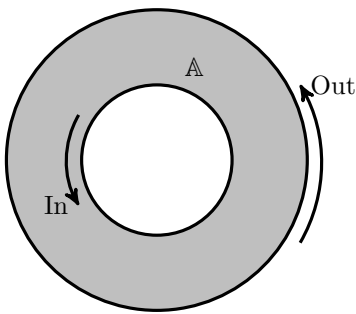
\begin{figure}[h] \label{pic2}
\begin{tikzpicture}[scale=1]
\draw[fill=lightgray,draw=black,very thick] (0,0) circle (2) ;
\draw[fill=white,draw=black,very thick] (0,0) circle (1) ;
\draw[->,>=stealth',draw=black,very thick] (150:1.2cm) arc[radius=1.2, start angle=150, end angle=210] node[left,black]{In};
\draw[->,>=stealth',draw=black,very thick] (-30:2.2cm) arc (-30:30:2.1cm) node[right,black]{Out};
\coordinate (Z1) at (1.5,0) ;
\coordinate (Z2) at (-1,0) ;
\coordinate (Z3) at (-0.8,1.3) ;
\coordinate (Z4) at (-0.7,-0.9) ;
 \draw (0.5,1) node[above]{$ \mathbb{A}$} ;
\end{tikzpicture}
\caption{In/out boundaries on an annulus $\mathbb{A}$ }
\end{figure}

\noindent \textbf{Marked points.} An ordered set of disjoint points ${\bf x}=(x_1,\dots,x_m)\in (\Sigma^\circ)^m$ in the interior $\Sigma^\circ$ of a Riemann surface (closed or with analytic boundary) $\Sigma$ is called a set of marked points. When considered as a subset of $\Sigma$, this set of points is denoted by $\{{\bf x}\}=\{x_1,\dots,x_m\}\subset \Sigma$.\\

\noindent \textbf{Metrics.} A Riemannian metric $g$ on a Riemann surface $(\Sigma,J)$, closed or with parametrised analytic boundary, 
is said to be compatible with its complex structure if  $g:=\omega_j^*(e^{f_j}|\dd z|^2)$ for each holomorphic chart 
$\omega_j:V_j\to \D$, for some smooth functions $f_j$. Any smooth metric $g$ on an oriented compact surface $\Sigma$
induces a complex structure $J$ compatible with $g$ (by constructing local holomorphic charts, or via the Hodge star operator).
If in addition, for each $j$ one has $g=\omega_j^*(|\dd z|^2/|z|^2)$ on an annular neighborhood $V_j$ of the boundary circle $\pl_j\Sigma$, where $\omega_j$ is the holomorphic extension of $\zeta_j^{-1}$ to $V_j$, we say that the metric is \textbf{admissible}.\\

We can then define the following notion of Riemann(ian) surfaces:
 \begin{definition}
Let $(\Sigma,J)$ be either a closed Riemann surface, or a compact Riemann surface with parametrised analytic boundary and parametrisation $\bzeta=(\zeta_1,\dots,\zeta_b)$ and let 
$g(\Sigma)$ be its genus. Let  ${\bf x}=(x_1,\dots,x_m)$ be a possibly empty collection of marked points. Let 
$b^+$ be the number of outgoing parametrised boundary circles and $b^-$ the number of incoming parametrised ones, 
where $b^++b^-=b$. We say that:
\begin{enumerate}
\item $S=(\Sigma,J,{\bf x},\bzeta)$ is a Riemann surface of type  $(g(\Sigma),m,b^+,b^-)$
\item $S_g=(\Sigma,g,{\bf x},\bzeta)$ is a Riemannian surface of type  $(g(\Sigma),m,b^+,b^-)$ if $g$ is compatible with $J$
\item $S_g=(\Sigma,g,{\bf x},\bzeta)$ is an admissible Riemannian surface of type  $(g(\Sigma),m,b^+,b^-)$ if $g$ is an admissible 
Riemannian metric. 
\end{enumerate}
If $(m=0,b\not=0)$ we simply write $S=(\Sigma,J,\bzeta)$, if $(m\not=0,b=0)$ we write $S=(\Sigma,J,{\bf x})$, while if $(m,b)=(0,0)$ we write $S=(\Sigma,J)$.
\end{definition}

\noindent \textbf{Gluing surfaces.}  
On a Riemann surface $S=(\Sigma,J,{\bf x},\bzeta)$ (not necessarily connected)  
with parametrised analytic boundary and marked points, if $\zeta_j :\T\to \pl_j\Sigma$ is an outgoing parametrised boundary component 
and $\zeta_k:\T\to \pl_k\Sigma$ an incoming one, we can define a new Riemann surface $\Sigma^{\#}$ by gluing/identifying $\pl_j\Sigma$ with $\pl_k\Sigma$ as follows: we identify $\pl_j\Sigma$ with $\pl_k\Sigma$ by setting $\zeta_j(e^{{\rm i}\theta})=\zeta_k(e^{{\rm i}\theta})$. This produces a complex structure $J^\#$ on $\Sigma^{\#}$ (see \cite[Section 3.1.3.]{GKRV21_Segal} for details), which depends on $\zeta_j,\zeta_k$ and 
the parametrisation $\bzeta^\#$ obtained by removing $\zeta_j,\zeta_k$ from $\bzeta$ induces an analytic parametrisation of $\pl \Sigma\#$. The marked points ${\bf x}$ can still be viewed as marked point in $\Sigma^\#$.   
If $g$ is an admissible metric on $\Sigma$, then it descends as a smooth admissible metric on $\Sigma^\#$.

\subsection{Laplacians and their determinant, Green's functions}

Let $(\Sigma,J)$ be a closed Riemann surface and $g$ be a compatible Riemannian metric with associated volume form ${\rm dv}_g$. 
Integration of functions $f$ will be denoted $\int_{\Sigma} f(x)  {\rm dv}_g(x)$ and $L^2(\Sigma,{\rm dv}_g)$ is the space of square integrable functions with respect to ${\rm v}_g$. In this context, we consider the standard non-negative Laplacian $\Delta_{g}=\dd^*\dd$ where $\dd^*$ is the $L^2$ adjoint of the exterior derivative $\dd$. It is self-adjoint and has discrete spectrum $(\la_n)_{n\in \N}\subset [0,\infty)$ with $\la_0=0$ and $\la_n\to \infty$ as $n\to \infty$. In the case of a Riemann surface with analytic parametrised boundary $(\Sigma,J,\bzeta)$ 
and $g$   a compatible Riemannian metric, we denote $\Delta_{g,{\rm D}}=\dd^*\dd$ where $\dd^*$ is as above but we consider the Dirichlet self-adjoint extension, i.e. the domain of $\Delta_{g,{\rm D}}$ consists of $H^2(\Sigma)\cap H_0^1(\Sigma)$ functions. Its spectrum is a discrete set $(\la_{n,{\rm D}})_{n\in \N^*}\subset (0,\infty)$.\\ 

\noindent \textbf{Determinant of Laplacian.}
The determinant of the Laplacian $\det(\Delta_{S_g})$ on a compact Riemannian manifold $S_g=(\Sigma,g)$ was introduced by Ray-Singer \cite{Ray-Singer}, we refer to that article for more details. First, in the closed case, 
we define the determinant of $\Delta_{S_g}$ by 
\[ \det(\Delta_{S_g})=\exp(-\pl_s\zeta_{S_g}(s)|_{s=0})\]
where $\zeta_{S_g}(s):=\sum_{n=1}^\infty \la_n^{-s}$ is the spectral zeta function of $\Delta_{S_g}$, which admits a meromorphic continuation from ${\rm Re}(s)\gg 1$ to $s\in \C$ and is holomorphic at $s=0$. Here $(\la_n)_{n\in \N^*}$ are the non-zero eigenvalues of $\Delta_{S_g}$ counted with multiplicities. 
Second, if $S_g=(\Sigma,g)$ is compact with boundary, the determinant is defined in a similar way as $\det (\Delta_{S_g,{\rm D}}): = e^{- \zeta'_{g,{\rm D}}(0)}$ where $\zeta_{S_g,{\rm D}}$   is the spectral zeta function of $\Delta_{S_g,{\rm D}}$ with Dirichlet boundary conditions defined for ${\rm Re}(s)\gg 1$ by $\zeta_{S_g,{\rm D}}(s):=\sum_{n=1}^\infty \la_{n,{\rm D}}^{-s}$. The function $\zeta_{S_g,{\rm D}}(s)$ admits a meromorphic extension to $s\in \C$ that is holomorphic at $s=0$. We shall sometimes use the notation $\Delta_g$ and $\Delta_{g,{\rm D}}$ when the underlying surface is fixed and there is no possible confusion.\\  

\noindent \textbf{Green's functions.} The Green function $G_{S_g}$ of the Laplacian $\Delta_g$ on a closed Riemannian 
surface $S_g=(\Sigma,g)$ is the integral kernel of the resolvent operator $R_{S_g}:L^2(\Sigma)\to L^2(\Sigma)$ satisfying $\Delta_{g}R_{S_g}=2\pi ({\rm Id}-\Pi_0)$, $R_{S_g}^*=R_{S_g}$ and $R_{S_g}1=0$, where $\Pi_0$ is the orthogonal projection in $L^2(\Sigma,{\rm dv}_g)$ on $\ker \Delta_{g}$ (the constants). By integral kernel, we mean that for each $f\in L^2(\Sigma,{\rm dv}_g)$
\[ R_{S_g}f(x)=\int_{\Sigma} G_{S_g}(x,x')f(x'){\rm dv}_g(x').\] 
The Laplacian $\Delta_{g}$ has an orthonormal basis of real-valued eigenfunctions $(e_n)_{n\in \N}$ in $L^2(\Sigma,{\rm dv}_g)$ with associated eigenvalues $\la_j\geq 0$; we set $\la_0=0$ and $e_0=({\rm v}_g(\Sigma))^{-1/2}$ where ${\rm v}_g(\Sigma)=\int_\Sigma \dd {\rm v}_g$ is the volume.  The Green function then admits the following Mercer's representation in $L^2(\Sigma\times\Sigma, {\rm dv}_g \otimes {\rm dv}_g)$
\begin{equation}
G_{S_g}(x,x')=2\pi \sum_{n\geq 1}\frac{1}{\lambda_n}e_n(x)e_n(x').
\end{equation}
When the surface $\Sigma$ is fixed and we want to describe the dependence of the Green function in terms of $g$, we shall 
often write $G_g$ instead of $G_{S_g}$ to simplify notations.

Similarly, on a surface with smooth boundary $\Sigma$, we will consider the Green function with Dirichlet boundary conditions $G_{S_g,{\rm D}}$ associated to the Laplacian $\Delta_{g}$. In this case, the associated resolvent operator
\[ R_{S_g,{\rm D}}f(x)=\int_{\Sigma} G_{S_g,{\rm D}}(x,x')f(x'){\rm dv}_g(x')\] 
solves $\Delta_{g}R_{S_g,{\rm D}}=2\pi {\rm Id}$. We note that, contrary to the closed case, the Dirichlet Green's function is conformally invariant, 
i.e. $G_{S_g',{\rm D}}(x,x')=G_{S_g,{\rm D}}(x,x')$ if $g'=e^{\omega}g$ for some $\omega\in C^\infty(\Sigma)$. In particular $G_{S_g,{\rm D}}$ and $R_{S_g,{\rm D}}$ only depend on the complex structure $(\Sigma,J)$ but not on $g$, we thus simplify the notation to $G_{S,{\rm D}}$ and $R_{S,{\rm D}}$ in that case.\\

\noindent \textbf{Poisson operator.} Let $S=(\Sigma,J,\bs{\zeta})$ be a compact Riemann surface with  parametrised analytic boundary 
$\partial\Sigma=\bigcup_{j=1}^b\pl_j \Sigma$ having $b$ connected components.
 By  abuse of notation, we denote by $(\cdot,\cdot)_2$ the pairing between real-valued functions in $(H^{s}(\T))^{b}$ with real-distributions in $(H^{-s}(\T))^{b}$ for $s>0$, which is the extension of the following pairing  on $C^\infty(\T)^b$ functions
\[(u,f)_2= \sum_{j=1}^b \frac{1}{2\pi}\int_{0}^{2\pi}u_j\bar{f}_j\d \theta.\]
For  $\bs{\tilde \varphi}=(\tilde\varphi_1,\dots,\tilde\varphi_{b})\in (H^{s}(\T))^{b}$ with $s\in\R$, we denote by 
$P_{S}\tilde{\bs{\varphi}}$ the harmonic extension  of $\tilde{\bs{\varphi}}$ on $\Sigma$ defined by 
\begin{equation}\label{def:harmonic_extension}
\Delta_g P_{S}\tilde{\bs{\varphi}}=0 \textrm{ in } \Sigma^\circ, \quad  
(P_{S}\tilde{\bs{\varphi}})|_{\pl_j\Sigma}=\tilde\varphi_j\circ \zeta_j^{-1}  \textrm{ for }j=1,\dots,b.
\end{equation}
where $g$ is any Riemannian metric compatible with $J$. The boundary value is understood in the following weak sense: for all $u\in C^\infty(\T)$ real-valued and $\omega_j$ the holomorphic extension of $\zeta_j^{-1}$ to a neighborhood of $\pl_j\Sigma$
\[\lim_{r\to 1^-}\int_{0}^{2\pi}P_{S}\tilde{\bs{\varphi}}(\omega_j^{-1}(re^{{\rm i}\theta}))u(e^{{\rm i}\theta})d\theta =2\pi (\tilde\varphi_j,u)_{2}.\]
We call $P_{S}:H^s(\T)\to C^\infty(\Sigma^\circ)$ the Poisson operator of $S$.

We also consider the Poisson operator on a surface $S=(\Sigma,J,\bs{\zeta})$ with $i_0$ parametrised analytic embedded non-overlapping curves in $\Sigma^\circ$: $\xi_i:\T\to \mc{C}_i$ for $i=1,\dots,i_0$, gathered in $\bs{\xi}=(\xi_1,\dots,\xi_{i_0})$. The surface  $\Sigma\setminus \cup_i\mc{C}_i$ can be compactified into a smooth surface $\Sigma_{\mc{C}}$ with analytic boundary (possibly disconnected if some $\mc{C}_i$ are separating). The new boundaries of $\Sigma_{\mc{C}}$ are pairs of copies of $\mc{C}_i$, parametrised by $\xi_i$. For $\bs{\tilde \varphi}=(\tilde\varphi_1,\dots,\tilde\varphi_{i_0})\in (H^{s}(\T))^{i_0}$ with $s\in\R$, we let $P_{S,\bs{\xi}}\tilde{\bs{\varphi}}$ be  the harmonic extension of $\bs{\tilde \varphi}$ on $S=(\Sigma,J,\bzeta)$ with boundary value $0$ on $\partial\Sigma$, i.e.
\[\Delta_g P_{S,\bs{\xi}}\tilde{\bs{\varphi}}=0 \textrm{ in } \Sigma\setminus \bigcup_i\mc{C}_i, \quad  P_{S,\bs{\xi}}\tilde{\bs{\varphi}}|_{\pl \Sigma}=0,\quad    (P_{S,\bs{\xi}}\tilde{\bs{\varphi}})|_{\mc{C}_i}=\tilde{\varphi}_i\circ \xi_i^{-1} \textrm{ for } i=1,\dots,i_0,\]
where the restriction on $\mc{C}_i$ is understood in the weak sense as above, and $g$ is any compatible smooth metric with $J$ on $\Sigma$.\\

\noindent \textbf{Dirichlet-to-Neumann maps.}
Let $S=(\Sigma,J,\bs{\zeta})$ be a surface  with parametrised boundary, with $b$ boundary  components, and $g$ a metric compatible with $J$ on $\Sigma$. The DN map $\mathbf{D}_{S}: C^\infty(\T)^{b}\to C^\infty(\T)^{b}$ is defined by
\[\mathbf{D}_{S}\tilde{\bs{\varphi}}=(-\partial_{\nu } P_{S}\tilde{\bs{\varphi}}|_{\pl_j\Sigma}\circ\zeta_j)_{j=1,\dots,b}\]
where $\nu$ is the inward unit normal vector fields to $\pl_j\Sigma$ with respect to any admissible metric $g$ and $\tilde{\bs{\varphi}}\in   C^\infty(\T)^{b}$.
Since $P_S\tilde{\bs{\varphi}}$ does not depend on the conformal representative metric $g$ representing the complex structure of $\Sigma$, $\mathbf{D}_{S}$ only depends on the complex structure $J$ of $\Sigma$, on $\bzeta$ but not on $g$.
Note that, by  Green's  formula,
\begin{equation}\label{Greenformula}
\int_{\Sigma} |\dd P_{S}\tilde{\bs{\varphi}}|_g^2{\rm dv}_g = -\sum_{j=1}^b\int_{\pl_j\Sigma}
\pl_{\nu}(P_{S}\tilde{\bs{\varphi}})\tilde{\varphi}_j\circ \zeta_j^{-1} \dd\ell_g=
2 \pi ( \tilde{\bs{\varphi}},\mathbf{D}_{S}\tilde{\bs{\varphi}})_2
\end{equation}
where $\nu$ is the inward unit normal vector with respect to $g$ and $\dd\ell_g$ is the measure induced by the metric $g$ on $\pl\Sigma$.
By \eqref{Greenformula}, $\mathbf{D}_{S}$ is a non-negative symmetric operator with kernel $\ker {\bf D}_{S}=\R {\bf 1}$ where ${\bf 1}= (1, \dots, 1)$.
We also observe that $\mathbf{D}_{S}$ is conformally invariant in the following sense:
\begin{lemma}\label{DNmapInv}
Let $S=( \Sigma,J,\bzeta)$ and $S'=(\Sigma',J',\bzeta')$ be two Riemann surfaces with analytic parametrised boundary and $\psi: (\Sigma,J)\to (\Sigma',J')$ be a biholomorphism such that $\psi\circ \zeta_k=\zeta_k'$ for $k=1,\dots,b$; here $\bzeta=(\zeta_1,\dots,\zeta_b)$ and $\bzeta'=(\zeta_1',\dots,\zeta_b')$. Then the following holds true  
\[  \mathbf{D}_{S}=\mathbf{D}_{S'}.\]
\end{lemma} 
\begin{proof} We first observe that $(P_{S'} \tilde{\bs{\varphi}})\circ \psi=P_S\tilde{\bs{\varphi}}$, and then by a change of variables
\[\int_{\Sigma} |\dd P_S\tilde{\bs{\varphi}}|_{g}^2{\rm dv}_{g}=
\int_{\Sigma} |\dd P_{S'}\tilde{\bs{\varphi}}|_{g'}^2{\rm dv}_{g'}.\qedhere\]
\end{proof} 

For $S=(\Sigma,J,\bs{\zeta})$ a Riemann surface with parametrised analytic boundary and $\bs{\xi}=(\xi_1,\dots,\xi_{i_0})$ some analytic parametrised interior embedded non-overlapping curves $\xi_i:\T\to \mc{C}_i$ as above. 
The DN map  associated to $\bs{\xi}$ is the operator $\mathbf{D}_{S,\bs{\xi}}:C^\infty(\T)^{i_0}\to C^\infty(\T)^{i_0}$, given for $\tilde{\bs{\varphi}}\in C^\infty(\T)^{i_0}$ by $((\mathbf{D}_{S,\bs{\xi}}\tilde{\bs{\varphi}})_i)_{i=1,\dots,i_0}$ with the operator    given by     
\begin{equation}\label{defDSigmaC}
\mathbf{D}_{S,\bs{\xi}}\tilde{\bs{\varphi}}=-((\partial_{\nu_-} P_{S,\bs{\xi}}\tilde{\bs{\varphi}})|_{\mc{C}_i}\circ \xi_i+(\partial_{\nu_+} P_{S,\bs{\xi}}\tilde{\bs{\varphi}})|_{\mc{C}_i}\circ \xi_i)_{i=1,\dots,i_0}.
\end{equation}
Here $\partial_{\nu_\pm}$ denote the two inward unit normal derivatives with respect to any metric $g$ compatible with $J$ and such that $\xi_i^*g=\dd \theta^2$ for each $i=1,\dots,i_0$.

The operator $\mathbf{D}_{S,\bs{\xi}}$ is a symmetric unbounded operator on $L^2(\T)^{i_0}$, it is positive definite and invertible as a map $H^s(\T)^{i_0}\to H^{s-1}(\T)^{i_0}$ for all $s\in \R$. Its quadratic form is equal (by Green's formula), for smooth $\tilde{\bs{\varphi}}$, to
\[ (\mathbf{D}_{S,\bs{\xi}}\tilde{\bs{\varphi}},\tilde{\bs{\varphi}})_2=
\frac{1}{2\pi}\int_{\Sigma} |\dd P_{S,\bs{\xi}}\tilde{\bs{\varphi}}|_{g}^2{\rm dv}_{g}.
\] 
By \cite[Theorem 2.1]{carron}, $\mathbf{D}_{S,\bs{\xi}}$ is invertible and for $i=1,\cdots, i_0$ one has 
\begin{equation}\label{DNmapandGreen}
\begin{split}
(\mathbf{D}_{S,\bs{\xi}}^{-1}\tilde{\bs{\varphi}})_{i} (e^{{\rm i}\theta})=& \sum_{j=1}^{i_0}
\frac{1}{2\pi}\int_{\T}G_{S,{\rm D}}(\xi_i(e^{{\rm i}\theta}),\xi_j(e^{{\rm i}\theta'})) \tilde{\varphi}_j(e^{{\rm i}\theta'})  \dd\theta'
\end{split}
\end{equation}  
We denote by $\mathbf{D}:C^{-\infty}(\T)\to C^{-\infty}(\T)$ the Fourier multiplier by $|n|$, i.e.
\[ \forall \tilde{\varphi}=\sum_{n\in \Z}e^{in\theta}\varphi_n\in C^{-\infty}(\T), \,\, {\bf D}\tilde{\varphi}=\sum_{n\in \Z}|n|e^{in\theta}\varphi_n\]
and we extend this action to $C^{-\infty}(\T)^b$ for any $b\in \N$ as a diagonal action.
In particular it is an unbounded self-adjoint operator $\mathbf{D}$ on $L^2(\T)^{b}$ associated to  the quadratic form
 \[
\forall \tilde{\bs{\varphi}} \in C^\infty(\T;\R)^{b},\quad (\mathbf{D}\tilde{\bs{\varphi}},\tilde{\bs{\varphi}})_2:= 2\sum_{j=1}^{b}\sum_{n>0} n|\varphi_{jn}|^2 \textrm{ with }\tilde{\varphi}_j(e^{{\rm i}\theta})=c+\sum_{n\not=0}\varphi_{jn}e^{in\theta}.
\]
By \cite[Lemma 4.1]{GKRV21_Segal}, we have that  $(\mathbf{D}_{S}-\mathbf{D}):C^{-\infty}(\T)^b\to C^\infty(\T)^b$ 
and $(\mathbf{D}_{S,\bs{\xi}}-2\mathbf{D}):C^{-\infty}(\T)^b\to C^\infty(\T)^b$ are smoothing operators, and 
\begin{equation}\label{continuityDN-D}
\begin{split}
& 1)\,\, \tilde{\bs{\varphi}} \mapsto ((\mathbf{D}_S-\mathbf{D})\tilde{\bs{\varphi}},\tilde{\bs{\varphi}})_2 \textrm{ is continuous on }H^s(\T)^b\textrm{ for all }s\in \R,\\
& 2)\textrm{ the Fredholm determinant  }{\det}_{\rm Fr}(\mathbf{D}_{S,\bs{\xi}}(2\Pi_0+2{\bf D})^{-1}) \textrm{ is well-defined}.
\end{split}
\end{equation}
where $\Pi_0(\tilde\varphi_1,\dots,\tilde\varphi_{i_0}):=  (( \tilde\varphi_1,1)_2,\dots,(\tilde\varphi_{i_0},1 )_2)$ and ${\bf D}$ acts on 
$C^{-\infty}(\T)^{i_0}\to C^{-\infty}(\T)^{i_0}$ as above (diagonally).

\subsection{Gaussian Free Fields and regularisations}\label{SectionGFF} In the case of a connected closed Riemann surface $(\Sigma,J)$ with a compatible metric $g$, the Gaussian Free Field (GFF in short) of $S_g=(\Sigma,g)$ is defined as follows. Let $(a_n)_n$ be a sequence of i.i.d. real-valued Gaussians   $\mc{N}(0,1)$ with mean $0$ and variance $1$, defined on some probability space  $(\Omega,\mc{F},\mathbb{P})$ and $(e_n)_{n\in \N}$ be an orthonormal basis of eigenfunctions of $\Delta_g$ in $L^2(\Sigma,{\rm dv}_g)$ with eigenvalues $\la_n\geq 0$, and we take the convention that $\la_0=0$ and 
 $e_0(x)=1/\sqrt{{\rm v}_g(\Sigma)}$. 
The Gaussian Free Field with vanishing mean in the metric $g$ is defined by the random series
\begin{equation}\label{GFFong}
X_{S_g}:= \sqrt{2\pi}\sum_{n\geq 1}a_n\frac{e_n}{\sqrt{\la_n}} 
\end{equation}
 where the sum converges almost surely, for any $s<0$ in the Sobolev space  $H^{s}(\Sigma)$, defined by
\[ H^{s}(\Sigma):=\{f=\sum_{n\geq 0}f_ne_n\,|\, \|f\|_{s}^2:=|f_0|^2+\sum_{n\geq 1}\lambda_n^{s}|f_n|^2<+\infty\}.\] 
This Hilbert space is independent of $g$, only its norm depends on a choice of $g$.
The covariance is then the Green function when viewed as a distribution, which we write with a slight abuse of notation
\[\mathbb{E}[X_{S_g}(x)X_{S_g}(x')]= \,G_{S_g}(x,x').\]
When the surface $(\Sigma,J)$ is fixed and no confusion is possible, we simplify the notation and write $X_{g}$ instead of $X_{S_g}$.
We denote the Liouville field by $\phi_{S_g}:=c+X_{S_g}$ where $c\in\R$ is a constant that stands for the constant mode of the field, 
and we also use the shortcut notation $\phi_g$.  

In the case of a surface with parametrised analytic boundary $S=(\Sigma,J,\bzeta)$, 
the Dirichlet Gaussian free field (with covariance $G_{S,{\rm D}}$) will be denoted $X_{S,{\rm D}}$. It is defined similarly to the sum \eqref{GFFong} with the $(e_n)_{n\in \N^*}$  and $(\lambda_n)_{n\in \N^*}$ replaced by the normalised eigenfunctions $(e_{n,{\rm D}})_{j\in \N^*}$ and ordered eigenvalues $(\lambda_{n,{\rm D}})_{n\in \N^*}$ of the Laplacian with Dirichlet boundary conditions, the sum being convergent  almost surely  in the Sobolev space  $H^{s}(\Sigma)$ (for all $s\in (-1/2,0)$) defined by
\[ H^{s}_{\rm D}(\Sigma):=\{f=\sum_{n\geq 1}f_ne_{n,{\rm D}}\,|\, \|f\|_{s}^2:=\sum_{n\geq 1}\lambda_{n,{\rm D}}^{s}|f_n|^2<+\infty\}.\] 
In this context, we  will always consider the harmonic extension $P_S \tilde{\bs{\varphi}}$ of a boundary field $ \tilde{\bs{\varphi}}\in H^{s}(\T)^b$ 
defined by \eqref{def:harmonic_extension}.
In the case of the surface $S$ with boundary, the Liouville field will be denoted by  
\begin{equation}\label{defliouvillefield}
\phi_S:= X_{S,{\rm D}}+P_S \tilde{\bs{\varphi}}. 
\end{equation}
Hence $\phi_S$ depends on boundary data $\tilde{\bs{\varphi}}$ and on $S=(\Sigma,J,\bzeta)$ in this case but is not depending on a choice of compatible metric $g$.

As Gaussian Free Fields are distributions and therefore require regularisations with respect to a Riemannian metric $g$. We then introduce a regularisation procedure, which we call \textbf{$g$-regularisation}. Let $(\Sigma,J)$ be a closed Riemann surface  equipped with a compatible Riemannian metric $g$ and denote by $d_g$ the associated distance. 
For certain random distributions $h$ on $\Sigma$ and for $\eps>0$ small, we define a regularisation $h_{\eps}$ of $h$ by averaging on geodesic circles of radius $\eps>0$: let $x\in \Sigma$ and let $\mc{C}_g(x,\eps)$ be the geodesic circle of center $x$ and radius $\eps>0$, and let $(f^n_{x,\eps})_{n\in \N} \in C^\infty(\Sigma)$ be a sequence with $||f^n_{x,\eps}||_{L^1}=1$ 
which is given by $f_{x,\eps}^n=\theta^n(d_g(x,\cdot)/\eps)$ where $\theta^n(r)\in C_c^\infty((0,2))$ is non-negative, 
supported near $r=1$ and such that $f^n_{x,\eps}{\rm dv}_g$ 
converges weakly to the uniform probability measure 
$\mu_{x,\eps}$
on $\mc{C}_g(x,\eps)$ as $n\to \infty$. 
If the pairing $\langle h, f_{x,\eps}^n\rangle$ converges almost surely to a random variable $h_\eps(x)$ that has a modification which is continuous in the parameters $(x,\eps)$, we will say that $h$ admits a $g$-regularisation $(h_\eps)_\eps$. This is the case for the GFF $X_{S_g}$ on a closed surface or the Dirichlet GFF $X_{S,{\rm D}}$ on a surface with boundary, see \cite[Lemma 3.2]{Guillarmou2019}. We will denote by $X_{g,\eps}$, $X_{g,{\rm D},\eps}$ their respective $g$-regularisations
and $\phi_{g,\eps}$ the $g$-regularisation of the Liouville field $\phi_{S_g}$ (closed case) or $\phi_{S}$ (case with boundary).

\subsection{Construction of Liouville Conformal Field Theory} 
 In this section we recall briefly the construction of Liouville path integral and correlation functions on closed surfaces from the Gaussian free field. 
 
\noindent \textbf{Gaussian multiplicative chaos.} For $\gamma\in\R$ and $h$ a random distribution admitting a  $g$-regularisation $(h_\eps)_\eps$, we define the measure 
\begin{equation}\label{GMCg}
M^{g,\eps}_{\gamma}(h,\dd x):= \eps^{\frac{\gamma^2}{2}}e^{\gamma h_{ \eps}(x)}{\rm dv}_g(x).
\end{equation}
Of particular interest for us is the case when $h=X_g$ or $h=X_{S,{\rm D}}$ (and consequently $h=\phi_{g}$ and $h=\phi_{S}$ too). In that case, for $\gamma\in (0,2)$,  the random measures above converge  as $\eps\to 0$ in probability and weakly in the space of Radon measures towards    non-trivial  random measures respectively denoted by $M^g_\gamma(X_g,\dd x)$, $M^g_\gamma(X_{S,{\rm D}},\dd x)$, $M^g_\gamma(\phi_g,\dd x)$ and $M^g_\gamma(\phi_S,\dd x)$; this is a standard fact and the reader may consult \cite{Kahane85,rhodes2014_gmcReview,Guillarmou2019} for further details. Clearly, when $\Sigma$ has no boundary, we have the relation $M^g_\gamma(\phi_g,\dd x)=e^{\gamma c}M^g_\gamma(X_{g},\dd x)$ and, in the case when $\Sigma$ has a  boundary, then $M^g_\gamma(\phi_S,\dd x)= e^{\gamma P_S  \tilde{\bs{\varphi}} (x)} M^g_\gamma(X_{S,{\rm D}},\dd x)$.

Also, from  \cite[Lemma 3.2]{Guillarmou2019}  we recall that there exist $W_g,W_{g,{\rm D}}\in C^\infty(\Sigma)$ such that 
\begin{equation}\label{varYg}
\lim_{\eps \to 0}\E[X^2_{g,\eps}(x)]+\log\eps=W_g(x),\quad \lim_{\eps \to 0}\E[X^2_{g,{\rm D},\eps}(x)]+\log\eps=W_{g,{\rm D}}(x)
\end{equation}
uniformly over the compact subsets of $\Sigma$. Moreover, in the case of the Dirichlet GFF,  one has the following relation if $g'=e^{\omega}g$ for some $\omega \in C^\infty(\Sigma)$ (observe that the Dirichlet GFF is the same for $g$ and $g'$): 
\begin{equation}\label{varYgconformal}
W_{g',{\rm D}}(x)= W_{g,{\rm D}}(x)+\frac{\omega(x)}{2}
\end{equation}
which leads directly to the scaling relation 
\begin{equation}\label{scalingmeasure}
M^{g'}_\gamma(X_{S,{\rm D}},\dd x)= e^{\frac{\gamma}{2}Q \omega(x)} M^g_\gamma(X_{S,{\rm D}},\dd x).
\end{equation}

\noindent\textbf{Liouville CFT on closed Riemann surfaces.}
  Let  $S_g=(\Sigma,g)$ be a closed Riemannian surface of genus $g(\Sigma)$, $g$ being compatible with a complex structure $J$, and let  $\gamma\in(0,2)$, $\mu>0$ and $Q=\frac{\gamma}{2}+\frac{2}{\gamma}$. Let $K_g$ be the scalar curvature of $g$. Recall that the Liouville field is $\phi_g=c+X_g$ with $c\in\R$. For $F:  H^{s}(\Sigma)\to\R$ (with $s<0$) a bounded continuous functional, we set 
\begin{align}\label{def:pathintegralF}
\cjg F(\phi)\cjd_{S_g}:=& \big(\frac{{\rm v}_{g}(\Sigma)}{\det(\Delta_{g})}\big)^\hf  \int_\R  \E\Big[ F( \phi_g) \exp\Big( -\frac{Q}{4\pi}\int_{\Sigma}K_{g}\phi_g\,{\rm dv}_{g} - \mu   M^g_\gamma(\phi_g,\Sigma)  \Big) \Big]\,\dd c . 
\end{align}
 By \cite[Proposition 4.1]{Guillarmou2019}, this quantity  defines a measure and moreover the partition function defined as  the total mass of this measure, i.e. $\cjg 1 \cjd_ {S_g}$, is finite  if and only if the genus $g(\Sigma)\geq 2$.\\

\noindent \textbf{Vertex operators.} On a Riemannian surface $(\Sigma,g)$ with or without boundary, we introduce the regularised vertex operators, for fixed $\alpha\in \R$ (called {\it weight}) and $x\in \Sigma^\circ$, (when $\pl\Sigma\not=\emptyset$, we use the field \eqref{defliouvillefield})
\begin{equation*}
V_{\alpha,g,\eps}(x)=\eps^{\alpha^2/2}  e^{\alpha  \phi_{g,\eps}(x) } .
\end{equation*}

Next, if the surface $\Sigma$ is closed, the correlation functions on $S_g=(\Sigma,g)$ are defined by the limit
\begin{equation}\label{defcorrelg}
\cjg \prod_i V_{\alpha_i,g}(x_i) \cjd_ {S_g}:=\lim_{\eps \to 0} \: \cjg \prod_i V_{\alpha_i,g,\eps}(x_i) \cjd_ {S_g}
\end{equation}
where we have fixed $m$ distinct points $x_1,\dots,x_m$  on $\Sigma$ with respective associated weights $\alpha_1,\dots,\alpha_m\in\R$. Non triviality of correlation functions are then summarized in the following proposition (see \cite[Proposition 4.4]{Guillarmou2019}):

 \begin{proposition}\label{limitcorel} Let $ x_1,\dots,x_m$ be distinct points on a closed surface $\Sigma$ and $ (\alpha_1,\dots,\alpha_m)\in\R^m$.
The limit \eqref{defcorrelg} exists and is non zero if and only if the weights $(\alpha_1,\dots,\alpha_m)$ obey the following Seiberg bounds
 \begin{align}\label{seiberg1}
 & \sum_{i}\alpha_i + 2 Q (g(\Sigma)-1)>0,\\ 
 &\forall i,\quad \alpha_i<Q\label{seiberg2}.
 \end{align}
 \end{proposition}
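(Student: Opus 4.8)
The plan is to isolate the constant mode $c$ of the Liouville field $\phi_g=c+X_g$, use a Cameron--Martin (Girsanov) shift to convert the product of vertex operators into a deterministic interaction together with a shift of the chaos measure, and then to read off the two bounds separately from (i) the convergence of the resulting $c$-integral and (ii) the almost sure finiteness of a shifted chaos mass.

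First I would insert $\phi_g=c+X_g$ into \eqref{def:pathintegralF}--\eqref{defcorrelg}. The constant $c$ appears linearly: the product $\prod_iV_{\alpha_i,g,\eps}(x_i)$ contributes $e^{c\sum_i\alpha_i}$, while the curvature term $e^{-\frac{Q}{4\pi}\int_\Sigma K_g\phi_g\,{\rm dv}_g}$ has $c$-dependence $e^{-Qc\,\chi(\Sigma)}$ by Gauss--Bonnet ($\frac{1}{4\pi}\int_\Sigma K_g\,{\rm dv}_g=\chi(\Sigma)$), so the total $c$-linear coefficient is $s:=\sum_i\alpha_i-Q\chi(\Sigma)=\sum_i\alpha_i+2Q(\mathfrak{g}-1)$. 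I would then handle the regularized vertex product by Girsanov's theorem: setting $H_\eps=\sum_i\alpha_iX_{g,\eps}(x_i)$, the normalization $\prod_i\eps^{\alpha_i^2/2}$ is precisely what Wick-orders $e^{H_\eps}$, and by \eqref{varYg} the leftover diagonal terms converge to the finite constants $\prod_ie^{\frac{\alpha_i^2}{2}W_g(x_i)}$, while the off-diagonal covariances $\E[X_{g,\eps}(x_i)X_{g,\eps}(x_j)]\to G_g(x_i,x_j)$ produce finite factors $\prod_{i<j}e^{-\alpha_i\alpha_jG_g(x_i,x_j)}$ (finite since the points are distinct). The net effect is that $X_g$ is shifted by $h:=\sum_i\alpha_iG_g(x_i,\cdot)$, so the interaction mass becomes $Z_\eps:=\int_\Sigma e^{\gamma h(y)}M^{g,\eps}_\gamma(X_g,\dd y)\to Z$; since $G_g(x_i,y)=-\log|y-x_i|+O(1)$, one has $e^{\gamma h(y)}\asymp\prod_i|y-x_i|^{-\gamma\alpha_i}$ near the insertions.

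With these reductions the $c$-integral factorizes, and substituting $u=e^{\gamma c}$ gives $\int_\R e^{sc}e^{-\mu e^{\gamma c}Z}\,\dd c=\gamma^{-1}(\mu Z)^{-s/\gamma}\Gamma(s/\gamma)$. This converges as $c\to-\infty$ (i.e. $u\to 0$) if and only if $s>0$, which is exactly \eqref{seiberg1}; for $s\le 0$ the quantity is $+\infty$. Performing the integral leaves the correlation equal, up to the finite deterministic factors above, to a multiple of $\Gamma(s/\gamma)\mu^{-s/\gamma}\E[Z^{-s/\gamma}]$, so it remains only to decide when $\E[Z^{-s/\gamma}]\in(0,\infty)$. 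As $-s/\gamma<0$ this is a negative moment of a positive random variable; positivity is automatic, and since $Z$ dominates the chaos mass of a fixed region, $\E[Z^{-s/\gamma}]$ is bounded by a negative moment of the chaos mass, which is finite. The only genuine question is whether $Z<\infty$ almost surely, which by the singularity $e^{\gamma h(y)}\asymp\prod_i|y-x_i|^{-\gamma\alpha_i}$ reduces to the a.s. finiteness of $\int_{B(x_i,r_0)}|y-x_i|^{-\gamma\alpha_i}M^g_\gamma(X_g,\dd y)$ at each $x_i$; the multifractal behaviour of chaos gives that this holds if and only if $\gamma\alpha_i<\gamma Q$, i.e. $\alpha_i<Q$, which is \eqref{seiberg2}, whereas if some $\alpha_i\ge Q$ then $Z=+\infty$ a.s. and $Z^{-s/\gamma}=0$, so the correlation vanishes. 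Combining the two directions yields the stated equivalence.

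I expect the main obstacle to be the chaos integrability analysis near the marked points: showing $\int_{B(x_i,r_0)}|y-x_i|^{-\gamma\alpha_i}M^g_\gamma(\dd y)<\infty$ a.s. exactly when $\alpha_i<Q$ (and diverges for $\alpha_i\ge Q$) requires controlling the multifractal and negative-moment structure of Gaussian multiplicative chaos, not merely its expectation, whose naive threshold $\gamma\alpha_i<2$ is the wrong one. This, together with the finiteness of $\E[Z^{-s/\gamma}]$, is the genuinely probabilistic heart of the argument, while the $c$-integration producing \eqref{seiberg1} is elementary.
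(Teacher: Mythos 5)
Your proposal is correct and is essentially the same argument as the proof behind this statement: the paper itself does not prove the proposition but quotes \cite[Proposition 4.4]{Guillarmou2019}, whose proof is exactly your route — extraction of the zero mode $c$, Cameron--Martin shift at the insertions, the $\Gamma$-function integral in $c$ producing the bound \eqref{seiberg1}, and the a.s. finiteness analysis of the GMC mass with singularities $|y-x_i|^{-\gamma\alpha_i}$ giving the threshold $\alpha_i<Q$ of \eqref{seiberg2} (together with finiteness and positivity of negative GMC moments). Two inessential slips worth fixing: the off-diagonal Girsanov factors should be $e^{+\alpha_i\alpha_j G_g(x_i,x_j)}$, not $e^{-\alpha_i\alpha_j G_g(x_i,x_j)}$, and the Gaussian (non-constant-mode) part $e^{-\frac{Q}{4\pi}\int_\Sigma K_g X_g\,{\rm dv}_g}$ of the curvature term must also be absorbed into the Cameron--Martin shift — it adds a smooth bounded contribution to $h$, so none of the thresholds change.
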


Notice that in the case of a surface with boundary and if $g'=e^{\omega}g$, then for $F:H^s(\Sigma)\to \R$ bounded continuous such that $\cjg F\cjd_{S_g}$ exists, then the relation \eqref{varYgconformal} gives
\begin{equation}\label{scalingvertex}
\cjg V_{\alpha,g',\eps}(x)F\cjd_{S_g}=(1+o(1)) e^{\frac{\alpha^2}{4} \omega(x)}\cjg  V_{\alpha,g,\eps}(x)F\cjd_{S_g} 
\end{equation}
when $\eps$ goes to $0$. The same holds for products of vertex operators:
\[\cjg \prod_{j=1}^mV_{\alpha_j,g',\eps}(x_j) F\cjd_{S_g}=(1+o(1))e^{\sum_{j=1}^m \frac{\alpha_j^2}{4} \omega(x_j)} \cjg  \prod_{j=1}^mV_{\alpha_j,g,\eps}(x_j)F\cjd_{S_g}.\]

\subsection{Liouville Amplitudes}
Here we recall the notion of amplitudes. They will serve to decompose the path integral on closed Riemann surfaces 
into composition of operators. Roughly speaking, non-intersecting embedded closed simple curves on a compact Riemann surface decompose the surface into several pieces with boundaries (i.e. the curves). For each piece, one can place boundary conditions on each connected  component of the boundary and define the path integral on this piece conditionally on the boundary values: this defines an amplitude.  Gluing these amplitudes back together allows one to recover the path integral on the original Riemann surface. This is the content of Segal's axioms as proved in \cite{GKRV21_Segal}.\\

\noindent \textbf{Hilbert space of Liouville CFT.}  The Hilbert space encodes the boundary conditions for the Segal amplitudes. 
We denote by $\T:=\{z\in \C\,|\, |z|=1\}$ the unit circle. 
A generic (real-valued) field $\tilde\varphi\in H^s(\T)$ (for $s<0$) will be decomposed into its constant mode $c$ and orthogonal part
\begin{equation}\label{param}
\tilde\varphi=c+\varphi,\quad \varphi(\theta)=\sum_{n\not=0}\varphi_ne^{in\theta}
\end{equation}
with $(\varphi_n)_{n\not=0}$  its other Fourier coefficients, which will be themselves parametrised by $\varphi_n=\frac{x_n+iy_n}{2\sqrt{n}}$  and $\varphi_{-n}=\frac{x_n-iy_n}{2\sqrt{n}}$  for $n>0$, with real $x_n,y_n$'s.  We equip $H^s(\T) $ for $s<0$ with the cylinder sigma algebra and the measure 
\begin{align}\label{Pdefin}
 \mu_0:=\dd c\otimes  \P_{\T} \quad \text{ with }\quad  
 \P_\T:=\bigotimes_{n\geq 1}\frac{1}{2\pi}e^{-\frac{1}{2}(x_n^2+y_n^2)}\dd x_n\dd y_n.
\end{align}
The Liouville Hilbert space is  $\mc{H}:=L^2(H^s(\T),\mu_0)$ with Hermitian product denoted by $\langle\cdot,\cdot\rangle_{\mc{H}}$.
Since for smooth real-valued $\tilde{\varphi}\in C^\infty(\T)$, we have 
\begin{equation}\label{defmathbfD}
(\mathbf{D}\tilde{\varphi},\tilde{\varphi})_2:= 2\sum_{n>0} n|\varphi_{n}|^2=\frac{1}{2}\sum_{n>0}((x_{n})^2+(y_{n})^2).
\end{equation}
the measure $\P_{\T}$ is the Gaussian measure associated to the covariance operator ${\bf D}$.
For $Q\in \R$, the Hilbert space $\mc{H}$ carries two commuting, unitary representations of the Heisenberg Lie algebra with generators $(\bA_n,\tilde{\bA}_n)_{n\in\Z}$ defined as follows: for $n\in\Z_{>0}$, $\pl_n:=\sqrt{n}(\pl_{x_n}-i\pl_{y_n})$ and $\pl_{-n}=\pl_n^*$ if $n>0$, we set
\begin{equation}\label{Heisenberg}
\begin{split}
\bA_n:=\frac{i}{2}\del_n,\quad\quad  \bA_0:=\frac{i}{2}(\del_c+Q), \quad  \bA_{-n}:=\frac{i}{2}(\del_{-n}-2n\varphi_n)\\
\tilde{\bA}_n:= \frac{i}{2}\del_{-n}, \quad  \tilde{\bA}_0=\frac{i}{2}(\del_c+Q),\quad   \tilde{\bA}_{-n}:=\frac{i}{2}(\del_n-2n\varphi_{-n}).
\end{split}
\end{equation}
For $n$ positive (resp. negative), $\bA_n$ is called an \emph{annihilation} (resp. \emph{creation}) operator. These operators will be important for what follows, when we introduce the Virasoro algebra.

In \cite{GKRV21_Segal} the definition of amplitudes for admissible Riemannian surfaces with parametrised boundary and marked points carrying weights satisfying the Seiberg conditions was introduced. 
In this subsection, we extend this definition to general Riemannian surfaces $S_g=(\Sigma,g, {\bf x},\bs{\zeta})$ with weights $\bs{\alpha}=(\alpha_1,\dots,\alpha_m)$, i.e. we do not assume that the metric $g$ is admissible with respect to $(J,\bs{\zeta})$. 
We define the amplitude $\mc{A}_{S_g,\bs{\alpha}}$ as follows. 

\begin{definition}{ \bf{(Amplitudes)}}\label{def:amp}
Let $S=(\Sigma,J, {\bf x},\bs{\zeta})$ be a Riemann surface of type $(g(\Sigma),m,b^+,b^-)$ with $b$ analytic parametrised boundary circles and $m$ marked points, let $\bs{\alpha}=(\alpha_1,\dots,\alpha_m)$ be weights satisfying the Seiberg bound $\alpha_i<Q$ for each $i$. Let $g$ be a compatible Riemannian metric, not necessarily admissible.\\
\noindent {\bf (A)}
If $\partial\Sigma=\emptyset$, then we define the amplitude associated to a continuous nonnegative function $F: H^{s}(\Sigma)\to \R^+$ for some $s<0$ by 
\begin{equation}\label{defampzerobound} 
\mc{A}_{S_g,\bs{\alpha}}(F):=\lim_{\eps\to 0}\langle F(\phi_g)\prod_{i=1}^m V_{\alpha_i,g,\eps}(x_i) \rangle_{\Sigma,g}
\end{equation}
using \eqref{def:pathintegralF} with the field $\phi_g= c+X_{g}$ defined using the GFF of Section \ref{SectionGFF}, and \eqref{defcorrelg} for the $g$-regularisation.
When $F=1$, the amplitude is the correlation function and is written $\mc{A}_{S_g,\bs{\alpha}}$. 
 \vskip 3mm
 
\noindent {\bf (B)}  If  $\partial\Sigma$ has $b>0$ boundary  components, $\mc{A}_{S_g,\bs{\alpha}}$ is
  a function $(F,\tilde{\bs{\varphi}})\mapsto \mc{A}_{S_g,\bs{\alpha}}(F,\tilde{\bs{\varphi}})$ of  the boundary fields $\tilde{\bs{\varphi}}:=(\tilde\varphi_1,\dots,\tilde\varphi_b)\in (H^{s}(\T))^b$  and of continuous nonnegative functions $F:H^s(\Sigma)\to \R^+$ for $s\in (-1/2,0)$.  
For  $\phi_S= X_{S,{\rm D}}+P_S \bs{\tilde \varphi}$, it is  defined by 
\begin{align}\label{amplitude}
 \mc{A}_{S_g,\bs{\alpha}}(F,\tilde{\bs{\varphi}}) :=\lim_{\eps\to 0}Z_{\Sigma,g}\mc{A}^0_{S}(\tilde{\bs{\varphi}})
 \E \big[F(\phi_S)\prod_{i=1}^m V_{\alpha_i,g,\eps}(x_i)e^{-\frac{Q}{2\pi}\int_{\partial\Sigma}k_g\phi_S\dd \ell_g -\frac{Q}{4\pi}\int_\Sigma K_g\phi_S\dd {\rm v}_g -\mu M_\gamma^g (\phi_S,\Sigma)}\big]
\end{align}
where the expectation $\E$ is over the Dirichlet GFF $X_{S,{\rm D}}$, $M_\gamma^g (\phi_S,\Sigma)$ is defined as the limit of \eqref{GMCg} as $\eps\to 0$ and\footnote{Note that $k_g=0$ when $g$ is assumed to be admissible.}
 \begin{align}\label{znormal}
 Z_{\Sigma,g}:=\det (\Delta_{g,{\rm D}})^{-\hf}\exp\big(\frac{1}{8\pi}\int_{\partial\Sigma}k_g\,\dd\ell_g\big),
 \end{align}
$k_g$ being the geodesic curvature of $\pl \Sigma$, and $\mc{A}^0_{S}(\tilde{\bs{\varphi}})$ 
the free field amplitude defined by  
\begin{align}\label{amplifree}
\mc{A}^0_{S}(\tilde{\bs{\varphi}}):=e^{-\frac{1}{2}( \tilde{\bs{\varphi}}, (\mathbf{D}_S-\mathbf{D})  \tilde{\bs{\varphi}})_2}.
\end{align}
For $F=1$, we write $ \mc{A}_{S_g,\bs{\alpha}}( \tilde{\bs{\varphi}})$.
 \end{definition}

The definitions above trivially extend to the situation when $F$ is no  longer assumed to be nonnegative but with the further requirement that $\mc{A}_{S_g,\bs{\alpha}}(|F|)<\infty$ in the case $\partial\Sigma=\emptyset$ and $ \mc{A}_{S_g,\bs{\alpha}}(|F|,\tilde{\bs{\varphi}})<\infty$, $(\dd c\otimes \P_\T)^{\otimes b}$ almost everywhere in the case $\partial\Sigma\not=\emptyset$. By \cite[Theorem 4.4]{GKRV21_Segal}, if $|F|$ is bounded 
\begin{equation}\label{L^2prop_of_amplitudes} 
\sum_{j=1}^m \alpha_j-Q\chi(\Sigma)>0 \Longrightarrow \mc{A}_{S_g,\bs{\alpha}}(F)\in 
L^2(H^{s}(\T)^b,\mu_0^{\otimes b});
\end{equation}
Here $\chi(\Sigma)$ is the Euler characteristic of $\Sigma$ and the left-hand side condition together with $\alpha_j<Q$ for all $j$ is called the \textbf{Seiberg bounds}. 
 
\subsection{Main properties of amplitudes}
Next we state the main properties of this (extended) definition of amplitudes, i.e. the Weyl covariance and the diffeomorphism invariance.  In comparison with \cite{GKRV21_Segal} where only admissible metrics are considered, our formulas will present some further boundary terms, which will be of utmost importance in this paper. Also, in what follows, we will identify functions on $\pl \Sigma=\sqcup_{j=1}^b \pl_j\Sigma$ with functions on $\sqcup_{j=1}^b \T$ by using the parametrisations of the boundary circles: if $\bzeta=(\zeta_1,\dots,\zeta_b)$ are analytic parametrisations $\zeta_j:\T\to \pl\Sigma_j$ of the boundary circles of an admissible surface $(\Sigma,g)$, we shall use the bold notation  $\bs{u}$ to consider a function 
$u\in C^\infty(\pl \Sigma)$ in the parametrisation coordinates 
\[ \bs{u}:=(u\circ \zeta_1,\dots,u\circ \zeta_b) \in C^\infty(\T)^b.\]
Let us also introduce the Liouville anomaly functional: if $g=e^{\omega}g_0$ are two conformal metrics, we let  
\begin{equation}\label{SL0}
S_{\rm L}^0(\Sigma,g_0,g):=\frac{1}{96\pi}\int_{\Sigma}(|d\omega|_{g_0}^2+2K_{g_0}\omega) {\rm dv}_{g_0}.
\end{equation}
where $K_{g_0}$ is the scalar curvature of $g_0$. We record the following properties, which are straightforward to check: if $g'=e^{\omega'}g_0$ and $g=e^{\omega}g_0$, we have ($\nu$ is the inward unit normal for $g_0$)
\begin{align}
& S_{\rm L}^0(\Sigma, g,g')= S_{\rm L}^0(\Sigma, g,g_0)+S_{\rm L}^0(\Sigma, g_0,g')+\frac{1}{48\pi}\int_{\pl \Sigma}\omega'  \pl_\nu\dd \ell_{g_0} \omega\label{IdSL0_1},\\
& S_{\rm L}^0(\Sigma, g_0,g)= -S_{\rm L}^0(\Sigma, g,g_0)-\frac{1}{48\pi}\int_{\pl \Sigma}\omega \pl_\nu \omega \dd \ell_{g_0} .\label{IdSL0_2}
\end{align}

\begin{proposition}\label{Weyl} Let $S=(\Sigma,J, {\bf x},\bs{\zeta})$ be a Riemann surface with parametrised analytic boundary and marked points, let $g_0$ be an admissible Riemannian metric and let  $g=e^{\omega}g_0$  with $\omega\in C^\infty (\Sigma)$. 
Define $S_{g_0}:=(\Sigma,g_0, {\bf x},\bs{\zeta})$ and $S_{g}:=(\Sigma,g, {\bf x},\bs{\zeta})$, and let $\bs{\alpha}$ be weights satisfying the Seiberg bounds.
If  $\omega_\pl:=\omega|_{\pl \Sigma}$, the following properties hold with $c_{\rm L}:=1+6Q^2$:\\
{\bf 1) Weyl covariance:}   
\begin{align*}
\mc{A}_{S_g,\bs{\alpha}}(F,\tilde{\bs{\varphi}})
=&
e^{ -\frac{Q}{2}(\mathbf{D}\bs{\omega}_\pl,  \tilde{\bs{\varphi}})_2-\frac{Q^2}{8}(\mathbf{D}\bs{\omega}_\pl,\bs{\omega}_\pl )_2}
\exp\Big(c_{\rm L}S_{\rm L}^0(\Sigma,g_0,g)-\sum_{i=1}^m\Delta_{\alpha_i}\omega(x_i) \Big)  \\
&\times  \mc{A}_{S_{g_0},\bs{\alpha}}\big(F(\cdot- \tfrac{Q}{2} \omega  ),\tilde{\bs{\varphi}}+\tfrac{Q}{2}\bs{\omega}_\pl\big)
\end{align*}
where $\Delta_{\alpha}:= \frac{\alpha}{2}(Q- \frac{\alpha}{2})$ is the conformal weight and $S_{\rm L}^0$ is defined in \eqref{SL0}.\\

{\bf 2) Diffeomorphism invariance:} Let $\psi:\Sigma\to\psi(\Sigma)$ be an  orientation preserving diffeomorphism and 
$\psi(S_g):=(\psi(\Sigma),\psi_*g,\psi({\bf x}),\psi\circ \bs{\zeta})$, with $\psi\circ \bs{\zeta}:=(\psi\circ \zeta_1,\dots,\psi\circ \zeta_b)$. We have
 \begin{align*} 
\mc{A}_{S_g,\bs{\alpha}}(F, \tilde{\bs{\varphi}})= \mc{A}_{\psi(S_g),\bs{\alpha}}\big( F(\cdot \circ\psi),\tilde{\bs{\varphi}}\big).
\end{align*}
\end{proposition}

\begin{proof}  Let $b$ be the number of connected components of the boundary. For any $u\in C^\infty(\pl \Sigma)$ we 
denote by $Pu$ its harmonic extension in $(\Sigma,J)$.
First we decompose   $\omega=\omega_0+P_S\omega_\pl$ with  $\omega_0\in  C^\infty(\Sigma)$ vanishing at $\pl \Sigma$ and $\omega_\pl=\omega|_{\pl \Sigma}$.
If $g_0':=e^{\omega_0}g_0$, we first prove that 
\begin{equation}\label{firstWeyl}
\mc{A}_{S_g,\bs{\alpha}}(F,\tilde{\bs{\varphi}})=   G(\omega) e^{ -\frac{Q}{2}(\mathbf{D}\bs{\omega}_\pl,  \tilde{\bs{\varphi}})_2}
\mc{A}_{S_{g_0'},\bs{\alpha}}\big(F(\cdot- \tfrac{Q}{2}P_S\omega_\pl ),\tilde{\bs{\varphi}}+\tfrac{Q}{2}\bs{\omega}_\pl\big)  
\end{equation}
where $G(\omega)$ is a deterministic function given by 
\[ G(\omega):=e^{-\frac{c_{\rm L}}{48\pi}(\int_{\pl\Sigma} \omega \pl_{\nu}\omega \dd\ell_{g_0}+2\pi({\bf D}_{S}{\bs{\omega}_\pl},\bs{\omega}_\pl)_2) -\frac{Q^2}{8}(\mathbf{D}\bs{\omega}_\pl,\bs{\omega}_\pl )_2 +c_{\rm L}S_{\rm L}^0(\Sigma,g_0',g)-\sum_i\Delta_{\alpha_i}P_S\omega_\pl(x_i)}.\]
Since  $k_{g}=e^{-\frac{P\omega_\pl}{2}}(k_{g_0'}-\tfrac{1}{2}\partial_\nu P_S\omega_\pl)$   with $\nu$ the $g_0$-unit inward normal at $\partial\Sigma$, the boundary term  in  \eqref{amplitude} becomes
\begin{equation}\label{boundgeod}
\begin{split}
\frac{Q}{2\pi}\int_{\partial\Sigma}k_{g}P_S\tilde{\bs{\varphi}}\,\dd \ell_{g}=&\frac{Q}{2\pi}\int_{\partial\Sigma}(k_{g_0'}- \tfrac{1}{2}\partial_\nu P_S\omega_\pl )P_S\tilde{\bs{\varphi}}\,\dd \ell_{g_0'}\\
=&\frac{Q}{2\pi}\int_{\partial\Sigma}k_{g_0'}P_S\tilde{\bs{\varphi}}\,\dd \ell_{g_0'}+\frac{Q}{2} ( \mathbf{D}_S\bs{\omega}_\pl,\tilde{\bs{\varphi}})_2. 
\end{split}
\end{equation}
Also, from the relation for curvatures $K_{g}=e^{-P_S\omega_\pl}(K_{g_0'} +\Delta_{g_0'}P_S\omega_\pl)=e^{-P_S\omega_\pl} K_{g_0'}$ since $P_S\omega_\pl$ is harmonic, we deduce that the term involving the curvature in  \eqref{amplitude} reads 
\begin{align*}
\frac{Q}{4\pi}\int_{\Sigma} K_{g} (X_{S,{\rm D}} +P_S\tilde{\bs{\varphi}})\dd {\rm v}_{g}=& \frac{Q}{4\pi}\int_{\Sigma}  K_{g_0'} (X_{S,{\rm D}}+P_S\tilde{\bs{\varphi}})\dd {\rm v}_{g_0'}.
\end{align*}
Now recall from \eqref{scalingmeasure} that 
\[M_\gamma^{g}(X_{S,{\rm D}},\dd x)=e^{ \frac{\gamma Q}{2} P_S\omega_\pl(x)}M_\gamma^{g_0'}(X_{S,{\rm D}},\dd x)\] 
since $g=e^{P\omega_\pl}g_0'$.
Therefore, combining with \eqref{scalingvertex}, the expectation in  \eqref{amplitude}  can be written as 
\begin{align*}
&e^{ -\frac{Q}{2}(\bs{\omega}_\pl, \mathbf{D}_S\tilde{\bs{\varphi}})_2 }e^{\sum_i \frac{\alpha_i^2}{4} P_S\omega_\pl(x_i) } 
\\
& \times \E \big[ F(\phi_S)\prod_{i=1}^mV_{\alpha_i,g_0'}(x_i)e^{-\frac{Q}{4\pi}\int_\Sigma K_{g_0'} \phi_S\dd {\rm v}_{g_0'}-\frac{Q}{2\pi}\int_{\partial\Sigma}k_{g_0'}P_S\tilde{\bs{\varphi}}\dd \ell_{g_0'}-\mu  M_\gamma^{g_0'}(X_{S,{\rm D}}+P_S(\tilde{\bs{\varphi}}+\frac{Q}{2}\bs{\omega}_\pl) ,\Sigma) }\big].
\end{align*}
We deduce that
\begin{align*}
\mc{A}_{S_g,\bs{\alpha}}(F,\tilde{\bs{\varphi}})=&\frac{Z_{\Sigma,g}\mc{A}^0_{S}(\tilde{\bs{\varphi}})}{Z_{\Sigma,g_0'}\mc{A}^0_{S}(\tilde{\bs{\varphi}}+\frac{Q}{2}\bs{\omega}_\pl)}
\mc{A}_{S_{g_0'},\bs{\alpha}}\big(F(\cdot-  \frac{Q}{2}P_S\omega_\pl ),\tilde{\bs{\varphi}}+\frac{Q}{2}\bs{\omega}_\pl\big)\\
& \times e^{\frac{Q^2}{8\pi}\int_\Sigma K_{g_0'} P_S\omega_\pl \dd {\rm v}_{g_0'}
+\frac{Q^2}{4\pi}\int_{\pl \Sigma}k_{g_0'}\omega \dd\ell'_{g_0}}
e^{- \frac{Q}{2}(\bs{\omega}_\pl, \mathbf{D}_S \tilde{\bs{\varphi}})_2 }e^{-\sum_i\Delta_{\alpha_i} P_S\omega_\pl(x_i) }.
\end{align*}
The ratio $\mc{A}^0_{S}(\tilde{\bs{\varphi}})/\mc{A}^0_{S}(\tilde{\bs{\varphi}}+\frac{Q}{2}\omega_\pl)$ produces
\[ \frac{\mc{A}^0_{S}(\tilde{\bs{\varphi}})}{\mc{A}^0_{S}(\tilde{\bs{\varphi}}+\frac{Q}{2}\bs{\omega}_\pl)}=e^{ \frac{Q}{2}((\mathbf{D}_S-\mathbf{D})\bs{\omega}_\pl,  \tilde{\bs{\varphi}})_2  +\frac{Q^2}{8}(\bs{\omega}_\pl, (\mathbf{D}_S -\mathbf{D})\bs{\omega}_\pl)_2}\]
and, observing that
\[ \begin{gathered}
\frac{Q^2}{8\pi}\int_\Sigma K_{g_0'} P_S\omega_\pl \dd {\rm v}_{g_0'}=\frac{6Q^2}{96\pi}\int_\Sigma  2K_{g_0'}( P_S\omega_\pl) \dd {\rm v}_{g_0'},
\\
\textrm{ and}\quad  \frac{Q^2}{8}(\mathbf{D}_S \bs{\omega}_\pl, \bs{\omega}_\pl)_2=\frac{6Q^2}{96\pi}\int_\Sigma |\dd P_S\omega_\pl|^2_{g_0'} \dd {\rm v}_{g_0'},
\end{gathered}\]
we end up with
\begin{equation}\label{amplitudesgg_0'}
\begin{split}
\mc{A}_{S_g,\bs{\alpha}}(F,\tilde{\bs{\varphi}})=&\frac{Z_{\Sigma,g}}{Z_{\Sigma,g_0'}}e^{ -\frac{Q}{2}(\mathbf{D}\bs{\omega}_\pl,   \tilde{\bs{\varphi}})_2-\frac{Q^2}{8}(\mathbf{D}\bs{\omega}_\pl,\bs{\omega}_\pl )_2+\frac{Q^2}{4\pi}\int_{\pl \Sigma}k_{g_0'}\omega_\pl \dd\ell_{g_0} }
e^{6Q^2S_{\rm L}^0(\Sigma,g_0', g)-\sum_i\Delta_{\alpha_i}P_S\omega_\pl(x_i)} \\
& \times   
\mc{A}_{S_{g_0'},\bs{\alpha}}\big(F(\cdot- \frac{Q}{2}P_S\omega_\pl ),\tilde{\bs{\varphi}}+\frac{Q}{2}\bs{\omega}_\pl\big) .
\end{split}\end{equation}
Then we use the variations of the regularised Laplacian (see  \cite[Eq (1.17)]{OsgoodPS88}) given by 
\[\frac{Z_{\Sigma,g}}{Z_{\Sigma,g_0'}}=\exp\Big(S_{\rm L}^0(\Sigma,g_0', g)+\frac{1}{24\pi}\int_{\pl \Sigma}k_{g_0'}\omega\, \dd\ell_{g_0'}\Big)\]
and we remark that $k_{g_0'}=-\frac{1}{2}\pl_{\nu}\omega_0=-\frac{1}{2}\pl_{\nu}(\omega-P_S\omega_\pl)$ and $\dd \ell_{g_0'}=\dd \ell_{g_0}$, so that 
\[\int_{\pl \Sigma}k_{g_0'}\omega\, \dd\ell_{g_0'}=-\frac{1}{2}\int_{\pl \Sigma}\omega \pl_{\nu}\omega\, \dd\ell_{g_0}
-\pi({\bf D}_{S}\bs{\omega}_\pl,\bs{\omega}_\pl)_2.\]
Combining the  last two equations with \eqref{amplitudesgg_0'} shows \eqref{firstWeyl}.
Next, we can use  \cite[Proposition 4.7]{GKRV21_Segal} which says that
\[\mc{A}_{S_{g_0'},\bs{\alpha}}(F,\tilde{\bs{\varphi}})=\mc{A}_{S_{g_0},\bs{\alpha}}(F(\cdot-\omega_0),\tilde{\bs{\varphi}})\exp\Big(c_{\rm L}S_{\rm L}^0(\Sigma,g_0,g'_0)-\sum_i\Delta_{\alpha_i}\omega_0(x_i) \Big)\] 
and we use the following identity (recall \eqref{IdSL0_1}) 
\[S_{\rm L}^0(\Sigma,g_0',g)+S_{\rm L}^0(\Sigma,g_0,g'_0)=
S_{\rm L}^0(\Sigma,g_0,g)+\frac{1}{48\pi}\int_{\pl \Sigma}\omega \pl_\nu\omega \,\dd\ell_{g_0}+\frac{1}{24}({\bf D}_S \bs{\omega}_\pl,\bs{\omega}_\pl)_2\]
to deduce the Weyl covariance. For diffeomorphism invariance, this follows easily from the relation $\phi_S=\phi_{\psi(S)}\circ \psi$.
 \end{proof}

\subsection{Gluing of amplitudes}
 
In this subsection, we summarise the results obtained in  \cite[Section 5]{GKRV21_Segal}  about the gluing of  amplitudes, which identifies amplitudes as a functor on the category of 2-cobordisms. The twist here, in comparison with  \cite[Section 5]{GKRV21_Segal}, is that the metric will not be supposed to be admissible along the cutting curve. 

Let $S_g=(\Sigma,g,{\bf x},\bzeta)$ be an admissible Riemannian surface with $b$ boundary components and $S=(\Sigma,J,{\bf x},\bzeta)$ the underlying Riemann surface with parametrised boundary and marked points, let $\bs{\alpha}$ be weights attached to the marked points and satisfying Seiberg bounds. Let $\zeta:\T\to\cC\subset\Sigma$ be an analytic curve such that cutting along $\cC$ produces two surfaces $S^1_{g_1}=(\Sigma_1,g_1,{\bf x}_1,\bzeta_1)$ and $S^2_{g_2}=(\Sigma_2,g_2,{\bf x}_2,\bzeta_2)$, where $g_i=g|_{\Sigma_i}$, ${\bf x}_i={\bf x}\cap\Sigma_i$, and let $\bs{\alpha}_i$ be the weights in $\bs{\alpha}$ attached to ${\bf x}_i $ for $i=1,2$. 
Note that $g_i$ ($i=1,2$) are not  necessarily admissible along $\mc{C}$. Let $\bs{\zeta}_i$ collect the parametrisations of the boundary components of $\Sigma_i$ (thus including $\zeta$). Boundary fields on $\Sigma_i$ will then be written as $(\tilde{\bvarphi}_i,\tilde{\varphi})\in H^{s}(\T)^{b_i+1}$ for $s\in (-1/2,0)$, the last entry corresponding to the boundary field on $\cC$.  The surface $\Sigma_i$ has $b_i+1$ boundary components, with $b=b_1+b_2$ and  the boundary component coming from $\cC$ parametrised by $\zeta$.

\begin{proposition}\label{glue1}
Let $F_1,F_2$ be measurable  nonnegative functions respectively  on $H^{s}(\Sigma_1)$ and  $H^{s}(\Sigma_2)$ for $s<0$ and let us denote by $F_1\otimes F_2$ the functional on $H^{s}(\Sigma)$ defined by  \[F_1\otimes F_2(\phi_S):=F_1(\phi_{S}|_{\Sigma_1})F_2(\phi_S|_{\Sigma_2}).\] 
Then $\mu_0^{\otimes b}$-a.e. $\tilde{\bvarphi}=(\tilde{\bvarphi}_1,\tilde{\bvarphi}_2)\in H^{s}(\T)^{b_1+b_2}$, we have
\[\mc{A}_{S_g,\bs{\alpha}}(F_1\otimes F_2,\tilde{\bvarphi})=C\int \mc{A}_{S^1_{g_1},\bs{\alpha}_1}(F_1,(\tilde{\bvarphi}_1,\tilde{\varphi}))\mc{A}_{S^2_{g_2},\bs{\alpha}_2}(F_2,(\tilde{\bvarphi}_2,\tilde{\varphi}))\d\mu_0(\tilde{\varphi})\]
where $C= \frac{1}{(\sqrt{2} \pi)} $ if $\partial\Sigma \not =\emptyset$ and $C= \sqrt{2} $ if $\partial\Sigma =\emptyset$.
\end{proposition}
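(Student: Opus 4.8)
The plan is to reduce the statement to the \emph{admissible} gluing formula already established in \cite[Section 5]{GKRV21_Segal}, and then to absorb the discrepancy using the Weyl covariance of Proposition~\ref{Weyl}, which was proved above precisely for metrics that are not admissible along the cutting curve. Since $g$ is admissible only along $\partial\Sigma$ but not necessarily along the interior curve $\cC$, I would first choose a conformal factor $\rho\in C^\infty(\Sigma)$ supported in a collar neighborhood of $\cC$ disjoint from $\partial\Sigma$ and from the marked points ${\bf x}$, such that $\hat g:=e^{-\rho}g$ satisfies $\zeta^*\hat g=|dz|^2/|z|^2$ on an annular neighborhood of $\cC$. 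Such $\rho$ exists because $g=e^{\sigma}|dz|^2/|z|^2$ in the holomorphic coordinate induced by $\zeta$ near $\cC$, and one takes $\rho=\sigma$ near $\cC$ with a cutoff. Then $\hat g$ is admissible everywhere (it coincides with $g$ near $\partial\Sigma$), and so are the restrictions $\hat g_i:=\hat g|_{\Sigma_i}$ along both $\partial_i\Sigma$ and $\cC$. Writing $\rho_i:=\rho|_{\Sigma_i}$ and $\rho_\cC:=\rho|_\cC$ (viewed on $\T$ via $\zeta$), one has $\rho|_{\partial\Sigma}=0$ and $\rho(x_j)=0$, while $\rho_\cC$ need not vanish.

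Next I would apply the admissible gluing of \cite[Section 5]{GKRV21_Segal} to $(\Sigma,\hat g)$ cut into the admissible pieces $(\Sigma_i,\hat g_i)$, obtaining $\cA_{\Sigma,\hat g}(F_1\otimes F_2,\tilde{\bvarphi})=C\int\cA_{\Sigma_1,\hat g_1}(F_1,(\tilde{\bvarphi}_1,\tilde\varphi))\,\cA_{\Sigma_2,\hat g_2}(F_2,(\tilde{\bvarphi}_2,\tilde\varphi))\,\d\mu_0(\tilde\varphi)$ with the same constant $C$ as in the statement. I then convert each amplitude to the metric $g$ via Proposition~\ref{Weyl}. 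On $\Sigma$, since $\rho$ vanishes on $\partial\Sigma$ and at the $x_j$, this gives simply
\[
\cA_{\Sigma,g}(F_1\otimes F_2,\tilde{\bvarphi})=e^{c_{\rm L}S_{\rm L}^0(\Sigma,\hat g,g)}\,\cA_{\Sigma,\hat g}\big((F_1\otimes F_2)(\cdot-\tfrac Q2\rho),\tilde{\bvarphi}\big),
\]
and $(F_1\otimes F_2)(\cdot-\tfrac Q2\rho)=F_1(\cdot-\tfrac Q2\rho_1)\otimes F_2(\cdot-\tfrac Q2\rho_2)$. On each $\Sigma_i$ the boundary value of $\rho$ is $0$ on $\partial_i\Sigma$ and $\rho_\cC$ on $\cC$, so Proposition~\ref{Weyl} produces, besides the bulk anomaly $e^{c_{\rm L}S_{\rm L}^0(\Sigma_i,\hat g_i,g_i)}$, the boundary Gaussian factor $e^{-\frac Q2(\mathbf{D}\rho_\cC,\cdot)_2-\frac{Q^2}8(\mathbf{D}\rho_\cC,\rho_\cC)_2}$ together with a shift of the boundary field on $\cC$ by $\tfrac Q2\rho_\cC$.

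Collecting factors, the bulk anomaly is additive across the interior curve, $S_{\rm L}^0(\Sigma,\hat g,g)=S_{\rm L}^0(\Sigma_1,\hat g_1,g_1)+S_{\rm L}^0(\Sigma_2,\hat g_2,g_2)$, since $S_{\rm L}^0$ in \eqref{SL0} is a pure bulk integral and $\cC$ has measure zero; hence all $c_{\rm L}S_{\rm L}^0$ terms cancel. It then remains to treat, under the integral, the product over $i=1,2$ of the factors $e^{\frac Q2(\mathbf{D}\rho_\cC,\tilde\varphi)_2-\frac{Q^2}8(\mathbf{D}\rho_\cC,\rho_\cC)_2}$ coming from inverting the covariance on the two pieces, namely $e^{Q(\mathbf{D}\rho_\cC,\tilde\varphi)_2-\frac{Q^2}4(\mathbf{D}\rho_\cC,\rho_\cC)_2}$, against the substitution $\tilde\varphi=\psi+\tfrac Q2\rho_\cC$ that removes the boundary shift. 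Since $\mathbf{D}$ of \eqref{defmathbfD} is exactly the operator governing the Gaussian measure $\mu_0$ of \eqref{Pdefin}, the Cameron--Martin formula gives $\d\mu_0(\psi+\tfrac Q2\rho_\cC)=e^{-Q(\mathbf{D}\rho_\cC,\psi)_2-\frac{Q^2}4(\mathbf{D}\rho_\cC,\rho_\cC)_2}\d\mu_0(\psi)$ (the constant mode is shifted under the Lebesgue factor $\d c$ and contributes trivially because $\mathbf{D}$ kills constants). A direct computation shows this Radon--Nikodym density cancels the remaining exponential exactly, leaving $C\int\cA_{\Sigma_1,g_1}(F_1,(\tilde{\bvarphi}_1,\psi))\,\cA_{\Sigma_2,g_2}(F_2,(\tilde{\bvarphi}_2,\psi))\,\d\mu_0(\psi)$, which is the desired identity.

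The only genuinely delicate point, and the step I would write out most carefully, is the bookkeeping of the quadratic boundary terms: one must check that the two one-sided Weyl-covariance factors in $\mathbf{D}\rho_\cC$ combine with the Cameron--Martin Jacobian of $\mu_0$ to produce exactly $1$, with no leftover determinant or residual $\rho_\cC$-dependence. This succeeds precisely because the reference operator $\mathbf{D}$ is simultaneously the covariance structure of $\mu_0$ and the operator appearing in the boundary term of Proposition~\ref{Weyl}, and because $\rho_\cC$ is smooth, hence lies in the Cameron--Martin space. The case $\partial\Sigma=\emptyset$ is handled identically, using the closed-surface Weyl anomaly (only the bulk $S_{\rm L}^0$ term survives and $\tilde{\bvarphi}$ is absent) and the corresponding constant $C=\sqrt2$.
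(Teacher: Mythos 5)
Your proposal is correct and takes essentially the same route as the paper's own (sketched) proof: reduce to the admissible gluing of \cite[Prop.~5.1 and 5.2]{GKRV21_Segal} by means of the Weyl covariance of Proposition~\ref{Weyl}, then absorb the leftover boundary Gaussian factor along $\cC$ through a Girsanov/Cameron--Martin shift of the field integrated over the cut, exactly as the paper does. Your write-up simply fills in the bookkeeping (the choice of $\rho$, the additivity of $S_{\rm L}^0$ across $\cC$, and the exact cancellation of the $\mathbf{D}\rho_\cC$ terms against the Cameron--Martin density) that the paper leaves implicit.
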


We can also consider the case of self-gluing: we assume that the surface cut along $\cC$ is the compactification $\Sigma_{\mc{C}}$ of the connected open surface $\Sigma\setminus\cC$, with $b+2$ boundary components parametrised by $\bzeta_\cC:=(\bzeta,\zeta,\zeta)$, in which case the boundary fields on $\Sigma_\cC$ will be parametrised by $(\tilde{\bvarphi}',\tilde{\varphi}_1,\tilde{\varphi}_2)$, the last two entries corresponding to the boundary field on $\cC$.
\begin{proposition}\label{glue2}
For all $F$ measurable  nonnegative on $H^{s}(\Sigma)$ (with $s<0$) and $\mu_0^{\otimes b}$-a.e. $\tilde{\bvarphi}'\in(H^{s}(\T))^{b}$, we have
\[\mc{A}_{S_g,\bs{\alpha}}(F,\tilde{\bvarphi}')=C\int\mc{A}_{\Sigma_\cC,g,{\bf x},\bzeta_\cC,\bs{\alpha}}(F,\tilde{\bvarphi}',\tilde{\varphi},\tilde{\varphi})\d\mu_0 (\tilde{\varphi})\]
where $C= \frac{1}{(\sqrt{2} \pi)} $ if $\partial\Sigma \not =\emptyset$ and $C= \sqrt{2} $ if $\partial\Sigma =\emptyset$.
\end{proposition}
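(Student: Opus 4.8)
The plan is to mimic the proof of Proposition \ref{glue1}, the only structural difference being topological: cutting along the non-separating curve $\cC$ produces a single connected surface $\Sigma_\cC$ carrying two copies of $\cC$ in its boundary, both parametrized by $\zeta$, rather than two disjoint pieces. The analytic engine is unchanged and rests on the domain Markov property of the Gaussian Free Field. First I would expand both sides through Definition \ref{def:amp}(B) at fixed regularization $\epsilon$, writing the left-hand amplitude in terms of the Dirichlet GFF $X_{g,D}$ on $\Sigma$ and the harmonic extension $P\tilde{\bvarphi}'$, and the right-hand amplitude in terms of the Dirichlet GFF on $\Sigma_\cC$ and the harmonic extension $P_{\mc{C}}$ of the boundary data $(\tilde{\bvarphi}',\tilde\varphi,\tilde\varphi)$; since $F$ is nonnegative, Tonelli's theorem justifies exchanging the $\tilde\varphi$-integral with the expectation and the $\epsilon\to0$ limit.

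The core step is the Markov decomposition. Conditioning $X_{g,D}$ on its trace $\tilde\varphi:=X_{g,D}|_\cC$ along $\cC$, the field splits as $X_{g,D}\overset{\mathrm{law}}{=}X_{g,D}^{\Sigma_\cC}+P_{\mc{C}}\tilde\varphi$, where $X_{g,D}^{\Sigma_\cC}$ is the Dirichlet GFF on $\Sigma_\cC$ (vanishing on $\partial\Sigma$ and on both copies of $\cC$) and is independent of $\tilde\varphi$. Hence $\phi_g=X_{g,D}+P\tilde{\bvarphi}'$ is realized on the right-hand side as the Liouville field on $\Sigma_\cC$ with boundary data $(\tilde{\bvarphi}',\tilde\varphi,\tilde\varphi)$, so that $F$, the vertex operators $V_{\alpha_i,g,\epsilon}(x_i)$ and the chaos measure $M_\gamma^g(\phi_g,\Sigma)$ agree verbatim once the integration over $\tilde\varphi$ is inserted. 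The next task is to identify the marginal law of $\tilde\varphi$: it is a centered Gaussian whose covariance is the restriction of $G_{g,D}$ to $\cC\times\cC$, governed by $\mathbf{D}_{\Sigma,\mc{C}}^{-1}$ through \eqref{DNmapandGreen}. Using \eqref{continuityDN-D} and the fact that $\mathbf{D}_{\Sigma,\mc{C}}-2\mathbf{D}$ is smoothing, the Radon--Nikodym density of this law with respect to $\mu_0$ is proportional to $e^{-\frac12(\tilde\varphi,(\mathbf{D}_{\Sigma,\mc{C}}-2\mathbf{D})\tilde\varphi)_2}$ times a Fredholm-determinant factor; evaluating $\cA^0_{\Sigma_\cC,g}(\tilde{\bvarphi}',\tilde\varphi,\tilde\varphi)$ at equal data on the two $\cC$-copies collapses the free operator $\mathbf{D}+\mathbf{D}=2\mathbf{D}$, so this density reconstructs exactly the ratio $\cA^0_{\Sigma_\cC,g}/\cA^0_{\Sigma,g}$ of free-field amplitudes arising when both sides are expanded.

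The remaining normalization is then purely deterministic. I would match the ratio $Z_{\Sigma,g}/Z_{\Sigma_\cC,g}$ of Dirichlet-determinant prefactors via the Burghelea--Friedlander--Kappeler gluing formula for $\det(\Delta_{g,D})$ under cutting along $\cC$; this produces precisely the Fredholm determinant appearing in the trace-law density, so that all $\mathbf{D}_{\Sigma,\mc{C}}$-dependent factors cancel and only the universal constant $C$ survives, the two values $1/(\sqrt2\pi)$ and $\sqrt2$ reflecting the different handling of the constant mode and the partition-function normalization according to whether $\partial\Sigma\neq\emptyset$ or $\partial\Sigma=\emptyset$. The one genuinely new point relative to the admissible setting of \cite{GKRV21_Segal} is the treatment of the geodesic-curvature terms along $\cC$: since the two copies of $\cC$ in $\partial\Sigma_\cC$ carry opposite inward normals, their geodesic curvatures are opposite and, the trace $\tilde\varphi$ being common to both copies, the boundary integrals $\frac{1}{8\pi}\int k_g\,\dd\ell_g$ and $\frac{Q}{2\pi}\int k_g\phi_g\,\dd\ell_g$ over the two copies cancel, so that cutting creates no net anomaly even when $g$ is not admissible along $\cC$. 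The main obstacle is the Gaussian bookkeeping of the middle paragraph: ensuring that the trace-law density, the free amplitudes $\cA^0$, and the BFK determinant ratio combine with the exact numerical constants so that the identity holds pointwise $\mu_0^{\otimes b}$-almost everywhere, and not merely up to an undetermined multiplicative factor.
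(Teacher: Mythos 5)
Your strategy is viable, but it is genuinely different from---and much heavier than---the route the paper takes. What you propose is essentially a re-derivation, in the non-admissible setting, of the self-gluing proof of \cite[Prop.~5.2]{GKRV21_Segal}: Markov property of the GFF along $\cC$, identification of the law of the trace through $\mathbf{D}_{\Sigma,\cC}$ and its density $e^{-\frac{1}{2}(\tilde\varphi,(\mathbf{D}_{\Sigma,\cC}-2\mathbf{D})\tilde\varphi)_2}$ with respect to $\mu_0$, and a BFK-type matching of determinant prefactors. The paper instead uses the admissible-case gluing of \cite[Prop.~5.1 and 5.2]{GKRV21_Segal} as a black box: it first applies the Weyl covariance formula (Proposition \ref{Weyl}) to replace $g$ by a conformally equivalent metric admissible near $\cC$, glues with the known result, then applies Weyl covariance again to restore $g$; the only residue of non-admissibility is a Girsanov-type factor $e^{-\frac{Q}{2}(\mathbf{D}_{\Sigma,\cC}\omega_\pl,\tilde{\bs{\varphi}})_2-\frac{Q^2}{8}(\mathbf{D}_{\Sigma,\cC}\omega_\pl,\omega_\pl)_2}$, which is absorbed by shifting the glued boundary field $\tilde{\bs{\varphi}}\mapsto\tilde{\bs{\varphi}}-\frac{Q}{2}\omega_\pl$ under the $\mu_0$-integral. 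This reduction buys exactly what you flag as your ``main obstacle'': the trace-law density, the collapse of the free amplitudes, the Fredholm/BFK determinant ratio and the universal constants $1/(\sqrt{2}\pi)$ and $\sqrt{2}$ are all inherited verbatim from the admissible case, so no constant ever needs to be recomputed. Your observation that the geodesic-curvature terms along the two copies of $\cC$ cancel (opposite inward normals, common trace) is correct, and it is the honest geometric reason the statement survives non-admissibility; the paper encodes the same fact implicitly in the Weyl--Girsanov step. Two caveats if you wanted to push your route through: (i) the boundary data on the two copies of $\cC$ must be the trace of the full field $\phi_g$, i.e. $X_{g,D}|_\cC+(P\tilde{\bvarphi}')|_\cC$, not of $X_{g,D}$ alone, so a Cameron--Martin shift of the $\tilde\varphi$-variable is needed before the free amplitudes collapse as you describe; (ii) the determinant identity you invoke must be established for a metric that is merely smooth, not admissible, across $\cC$---true in dimension $2$, but an extra lemma your write-up currently assumes rather than proves.
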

 
\noindent {\it Proof of Prop. \ref{glue1} and \ref{glue2}.}
Both propositions above are proved in \cite{GKRV21_Segal} under the condition that the metrics under consideration are admissible along the cutting curve $\mc{C}$. It is straightforward to deduce the case of non-admissible metrics.  For this, the first step of the argument  just consists in applying the Weyl anomaly (see Proposition \ref{Weyl}) to replace the metrics $g_1,g_2$ on $\Sigma_1,\Sigma_2$ by  conformally  equivalent admissible ones along the boundary $\cC$. We can then  apply   \cite[Prop. 5.1 and 5.2]{GKRV21_Segal} to glue the amplitudes on $\Sigma_1,\Sigma_2$ and get an amplitude on $\Sigma$ equipped with a metric obtained by  gluing the two  admissible metrics on both surfaces $\Sigma_1,\Sigma_2$. Once this is done,  we re-apply  the Weyl anomaly to recover the original metric $g$ on $\Sigma$. We are then left with a further boundary Girsanov type term of the form 
$$e^{ -\frac{Q}{2}(\mathbf{D}_{S,\cC}\bs{\omega}_\pl,  \tilde{\bs{\varphi}})_2-\frac{Q^2}{8}(\mathbf{D}_{S,\cC}\bs{\omega}_\pl,\bs{\omega}_\pl )_2}$$
obtained by gathering both similar terms coming from applying the Weyl anomaly on $\Sigma_1,\Sigma_2$. Applying the Girsanov transform to this term has the effect of shifting the boundary field $\tilde{\bs{\varphi}}$ to $\tilde{\bs{\varphi}}-\frac{Q}{2}\bs{\omega}_\pl$, hence our claim.
\qed

\section{Annulus propagators}\label{Annulus_Propagator}

In this section, we shall focus on the particular case when $\Sigma$ is an annulus with parametrised boundary, the surface is then of type $(0,0,1,1)$. We start with an observation that the gluing of amplitudes  allows us to construct a projective representation of the semigroup of annuli. We shall see below that for a subfamily of these annuli, called holomorphic annuli, there is an exact (non-projective) representation. 

\subsection{Amplitudes of annuli and representation of Segal semigroup}\label{projective_rep}

Let $S=(A,J^{A},\bzeta^A)$ be an annulus  and $\bzeta^A=(\zeta^A_1,\zeta^A_2)$ the analytic parametrisation of the two boundary circles. Let $g_0$ be an admissible metric on $A$. 
There is a unique minimizer $\omega_A\in H_0^1(A)$ to the functional 
\[\omega\in H_0^1(A)\mapsto S_{\rm L}^0(A,g_0,e^{\omega}g_0)\] 
which satisfies the elliptic equation $\Delta_{g_0}\omega_A=K_{g_0}$, and $\omega_A\in C^\infty(A)$ by elliptic regularity. This provides a metric $g_A=e^{\omega_A}g_0$ with  scalar curvature $K_{g_A}=0$ but the geodesic curvature $k_{g_A}$ of the boundary is no  longer $0$. We note that $g_A$ does not depend on the choice of admissible metric $g_0$: indeed, any other admissible   metric $g'_0$ has the form $e^{\omega_0}g_0$ for some $\omega_0\in C_c^\infty(A^\circ)$ and we can use the formula, when $\omega|_{\pl A}=0$,
\[S_{\rm L}^0(A, g'_0,e^{\omega}g'_0)=S_{\rm L}^0(A, g'_0,g_0)+S_{\rm L}^0(A, g_0,e^{\omega+\omega_0}g_0)\] 
to deduce that the minimizer of $\omega\mapsto S_{\rm L}^0(A, g'_0,e^{\omega}g'_0)$ is $\omega'_A=-\omega_0+\omega_A$, thus producing the same metric $e^{\omega'_A}g_0'=g_A$ as before. We call $g_S$ the uniform metric on $S=(A,J_\C,\bzeta^A)$.

For two annuli $S_1=(A_1,J^{A_1}, \bs{\zeta}^{A_1})$ and $S_2=(A_2,J^{A_2}, \bs{\zeta}^{A_2})$ of type $(0,0,1,1)$, with  
we denote by $A_1\#A_2$ the  gluing of $A_1$ with $A_2$ by identifying the outgoing boundary $\pl_1A_2$ of $A_2$ to the incoming one $\pl_2A_1$ of $A_1$ via $\zeta^{A_2}_1(e^{{\rm i}\theta})=\zeta^{A_1}_2(e^{{\rm i}\theta})$, 
and $g_{S_1}\# g_{S_2}$ the metric obtained by gluing $g_{S_1}$ with $g_{S_2}$; note that this metric is piecewise smooth and continuous with a jump of the derivative at the gluing curve. The glued annulus $A_1\# A_2$ is equipped with the glued complex structure $J^{A_1\#A_2}$
and with  the boundary parametrisation 
$\bs{\zeta}^{A_1\#A_2}=(\zeta^{A_1}_1,\zeta^{A_2}_2)$ if $\bs{\zeta}^{A_j}=(\zeta^{A_j}_{1},\zeta^{A_j}_{2})$ for $j=1,2$, and let $S_1\#S_2:=(A_1\#A_2,J^{A_1\# A_2},\bzeta^{A_1\#A_2})$.
The gluing map endows the set   of annuli with a semigroup structure.
Consider the functional
\[\Omega(S_1,S_2)=S_{\rm L}^0(A_1\#A_2,g_{S_1\#S_2},g_{S_1}\# g_{S_2}).\]
\begin{proposition}\label{prop:proj_rep}
The following map 
\[ S=(A,J^A,\bs{\zeta}^A) \mapsto \mc{A}_{A,g_S,\bs{\zeta}^A}\in \mc{L}(\mc{H})\]
is a projective representation of the semigroup of annuli into the space of bounded operators, in the sense that
\[  \mc{A}_{A_1,g_{S_1},\bs{\zeta}^{A_1}}\circ  \mc{A}_{A_2,g_{S_2},\bs{\zeta}^{A_2}}=e^{-c_{\rm L}\Omega(S_1,S_2)}\mc{A}_{A_1 \# A_2,g_{S_1\# S_2},\bs{\zeta}^{A_1\#A_2}} 
\]
where $S_1=(A_1,J^{A_1},\bzeta^{A_1})$ and $S_2=(A_2,J^{A_2},\bzeta^{A_2})$
and $\Omega$ satisfies the cocycle relation
\[ \Omega(S_1\# S_2,S_3)+\Omega(S_1,S_2)=\Omega(S_1,S_2\# S_3)+\Omega(S_2,S_3).\]
\end{proposition}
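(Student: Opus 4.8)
The plan is to read off the composition formula from Segal's gluing axiom (Proposition \ref{glue1}) and then correct the resulting metric to the canonical flat one by the Weyl anomaly, the whole content of $\Omega$ being the anomaly created by smoothing the metric across the gluing curve. First I would apply Proposition \ref{glue1} with $F_1=F_2=1$ to $(A_1,g^{A_1},\bs{\zeta}^{A_1})$ and $(A_2,g^{A_2},\bs{\zeta}^{A_2})$, glued along the incoming boundary of $A_1$ and the outgoing boundary of $A_2$ via $\zeta^{A_2}_1=\zeta^{A_1}_2$. Since operator composition corresponds precisely to integrating the two kernels against the gluing field $\dd\mu_0$, this identifies $\mc{A}_{A_1,g^{A_1},\bs{\zeta}^{A_1}}\circ\mc{A}_{A_2,g^{A_2},\bs{\zeta}^{A_2}}$ (up to the explicit constant $C$ of Proposition \ref{glue1}, absorbed into the normalization identifying amplitudes with operators on $\mc{H}$) with the amplitude $\mc{A}_{A_1\# A_2,\,g^{A_1}\# g^{A_2},\,\bs{\zeta}^{A_1\#A_2}}$ of the glued annulus carrying the \emph{glued} metric $g^{A_1}\# g^{A_2}$.

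The second step converts this glued metric into the canonical flat metric $g^{A_1\# A_2}$. The metric $g^{A_1}\# g^{A_2}$ is flat off the gluing curve and admissible near the two outer boundaries, but only continuous there (its normal derivative jumps), so it is not $g^{A_1\#A_2}$; being compatible with the same complex structure and agreeing with $g^{A_1\#A_2}$ near $\pl(A_1\#A_2)$, it is conformal to it, $g^{A_1}\# g^{A_2}=e^{\omega}g^{A_1\#A_2}$ with $\omega\in H_0^1(A_1\#A_2)\cap C^0$ vanishing on the boundary. Feeding this into the extension of amplitudes to non-admissible metrics by the Weyl anomaly (the formula defining $\mc{A}_{\Sigma,g}$ in terms of $\mc{A}_{\Sigma,g_0}$ at the start of this subsection, whose boundary Girsanov terms of Proposition \ref{Weyl} drop out because $\omega|_{\pl}=0$) produces a factor $e^{\pm c_{\rm L}\Omega(A_1,A_2)}$ with $\Omega(A_1,A_2)=S_{\rm L}^0(A_1\#A_2,g^{A_1\#A_2},g^{A_1}\# g^{A_2})$ as in \eqref{SL0}, the sign being fixed by the convention relating amplitude kernels to operators and matching the $e^{-c_{\rm L}\Omega}$ of the statement. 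For membership in $\mc{L}(\mc{H})$ I would note that any annulus of positive modulus factors through a standard round annulus $\A_{e^{-t}}$, whose amplitude is (up to an explicit scalar) the contraction $e^{-t{\bf H}}$, composed with boundary-reparametrization collars controlled by the operators ${\bf T}_f$ of Theorem \ref{th:main}; boundedness follows.

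For the cocycle relation I would exploit that the gluing of metrics is associative and yields the same canonically-metrized triple annulus $(A_1\#A_2\#A_3,g^{A_1\#A_2\#A_3})$ irrespective of the order of gluing, so that composition of the associated operators is associative. Computing $\mc{A}_{A_1}\circ\mc{A}_{A_2}\circ\mc{A}_{A_3}$ in the two bracketings $(\mc{A}_{A_1}\circ\mc{A}_{A_2})\circ\mc{A}_{A_3}$ and $\mc{A}_{A_1}\circ(\mc{A}_{A_2}\circ\mc{A}_{A_3})$ via the composition formula gives the \emph{same} nonzero operator $\mc{A}_{A_1\#A_2\#A_3,\,g^{A_1\#A_2\#A_3}}$ multiplied respectively by $e^{-c_{\rm L}(\Omega(A_1,A_2)+\Omega(A_1\# A_2,A_3))}$ and $e^{-c_{\rm L}(\Omega(A_2,A_3)+\Omega(A_1,A_2\# A_3))}$. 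Equating the scalar prefactors (legitimate because the operator is nonzero) is exactly the claimed identity. Alternatively one derives it directly from the composition rule \eqref{IdSL0_1} for $S_{\rm L}^0$ together with the associativity of the conformal factors relating the successively glued metrics to $g^{A_1\#A_2\#A_3}$.

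The main obstacle is the second step: one must make sense of the amplitude for the merely continuous glued metric $g^{A_1}\# g^{A_2}$ and identify the anomaly of smoothing out its kink along the gluing curve with $\Omega$. This is where the boundary terms of the Weyl anomaly (Proposition \ref{Weyl}) and the variational characterization of $g^A$ as the minimizer of $\omega\mapsto S_{\rm L}^0(A,g_0,e^{\omega}g_0)$ over $H_0^1$ must be tracked carefully, and where the correct sign and the gluing constant $C$ are pinned down.
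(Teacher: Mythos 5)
Your overall strategy --- Segal gluing plus Weyl anomaly plus the cocycle property of $S_{\rm L}^0$ --- is the same as the paper's, but the order in which you perform your two main steps creates a genuine gap. Proposition \ref{glue1} is stated for a surface $(\Sigma,g,\dots)$ carrying a smooth admissible metric which is then \emph{cut} along $\cC$, the pieces inheriting the smooth restrictions $g_i=g|_{\Sigma_i}$. It does not apply to the pair $(A_1,g^{A_1})$, $(A_2,g^{A_2})$: since the flat metrics $g^{A_j}$ have nonvanishing geodesic curvature at the gluing circles, $g^{A_1}\# g^{A_2}$ has a jump of normal derivative across the gluing curve, so there is no smooth metric on $A_1\# A_2$ restricting to $g^{A_1}$ and $g^{A_2}$, and the object $\mc{A}_{A_1\# A_2,\,g^{A_1}\# g^{A_2}}$ that your step 1 outputs only exists through the Weyl-extension definition. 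In other words, the identity asserted in your step 1 is precisely what has to be proved; you flag the obstacle in your last paragraph but do not resolve it. The resolution is to reverse your two steps, which is what the paper does: first apply Proposition \ref{Weyl} on each $A_j$ to trade $g^{A_j}$ for an admissible metric $g_j$ (picking up $e^{c_{\rm L}S_{\rm L}^0(A_j,g_j,g^{A_j})}$, with no Girsanov boundary terms because the conformal factors lie in $H_0^1$); admissible metrics \emph{do} glue smoothly, so Proposition \ref{glue1} applies and yields $\mc{A}_{A_1\# A_2,\,g_1\# g_2}$; a second application of the extension formula passes to $g^{A_1\# A_2}$, and the cocycle identity \eqref{IdSL0_1} (boundary integrals again vanishing) collapses the three $S_{\rm L}^0$ terms into $\Omega(A_1,A_2)$. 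You have every ingredient for this repair, but as written the pivotal step invokes Proposition \ref{glue1} outside its hypotheses.

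Two smaller points. For boundedness on $\mc{H}$, your factorization through a round annulus with ${\bf T}_f$-collars is close to circular: writing a general annulus amplitude as such a composition is the content of Lemma \ref{lem_model_form} together with Lemma \ref{lemmamero} (and for a general annulus you need $\bT_{f_{\rm out}}^*\circ \bT_{f_{\rm in}}$, not just operators of the form ${\bf T}_f$), and those results come later and themselves rest on Proposition \ref{glue1}. The paper instead reads boundedness directly off the kernel estimate of \cite[Theorem 4.4]{GKRV21_Segal}, namely $|\mc{A}_{A,g^{A},\bs{\zeta}^A}(c_1,\varphi_1,c_2,\varphi_2)|\leq e^{-a(c_1-c_2)^2}B(\varphi_1,\varphi_2)$ with $B\in L^2(\Omega_\T^2)$, combined with the $L^2(\R)$-boundedness of Gaussian convolution; this is short and self-contained and is what you should use. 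On the other hand, your argument for the cocycle relation (associativity of operator composition, then equating scalar prefactors against the common nonzero operator, nonvanishing being guaranteed by positivity of the amplitude kernel) is correct and is a legitimate alternative to the paper's route, which obtains the relation directly from the cocycle identity for $S_{\rm L}^0$.
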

\begin{proof}
The fact that $\mc{A}_{A,g_{S},\bs{\zeta}^A}$ is bounded on $\mc{H}$ follows from \cite[Theorem 4.4]{GKRV21_Segal}, which produces a pointwise bound on the amplitude: there is $a>0$ such that
\[ |\mc{A}_{A,g_S,\bs{\zeta}^A}(c_1,\varphi_1,c_2,\varphi_2)|\leq e^{-a(c_1-c_2)^2}B(\varphi_1,\varphi_2)\]
with $B\in L^2(\Omega_\T^2)$. For $F,G\in \mc{H}$, we thus have by Cauchy--Schwarz inequality
\[ |\cjg\mc{A}_{A,g_S,\bs{\zeta}^A}F,G\cjd_{\mc{H}}| \leq \int_{\R^2}\|B\|_{L^2(\Omega_\T^2)}e^{-a(c_1-c_2)^2}\|F(c_1,\cdot)\|_{L^2(\Omega_\T)}
\|G(c_2,\cdot)\|_{L^2(\Omega_\T)}\d c_1\d c_2 \]
and we can use the fact that the convolution operator on $\R$ by $e^{-ac^2}$ is bounded on $L^2(\R,\d c)$ to deduce that there is $C>0$ such that 
\[|\cjg\mc{A}_{A,g_S,\bs{\zeta}^A}F,G\cjd_{\mc{H}}|\leq C \|B\|_{L^2(\Omega_\T^2)} \|F\|_{\mc{H}}  \|G\|_{\mc{H}}.\]
We can use Proposition \ref{Weyl} to write $\mc{A}_{A_j,g_{S_j},\bs{\zeta}^{A_j}}= e^{c_{\rm L}S^0_{\rm L}(A_j,g_j,g_{S_j})}\mc{A}_{A_j,g_j,\bs{\zeta}^{A_j}}$ for some admissible metric $g_j$ conformal to $g_{S_j}$, then we can apply the gluing result 
of Proposition \ref{glue1} for admissible metrics, and then write the result in terms of $g_{S_1\#S_2}$, which makes the additional term $e^{c_{\rm L}S_{\rm L}^0(A_1\#A_2,g_{S_1\#S_2},g_1\#g_2)}$ appearing. Gathering everything and using the cocycle property of $S_{\rm L}^0$, one obtains the announced result. The cocycle relation on $\Omega$ follows from the same reasoning.
\end{proof}

\subsection{Functional spaces} \label{sec:funct_space}
Let ${\rm Hol}(\D)$ be the complex vector space of holomorphic maps on the unit disk $f:\D\to\C$ which are smooth up to the boundary $\T=\pl\D$. 
Every such $f$ has a power series expansion 
\[f(z)=\sum_{n=-1}^\infty f_nz^{n+1},\]
providing this way  complex linear coordinates $f_n:{\rm Hol}(\D)\to\C$.  For $\eps\geq 0$ fixed, we introduce the countable family of seminorms (for $k\geq 0$) 
\begin{equation}\label{seminorm}
\|f\|^2_{\eps,k}=\sum_{n=-1}^\infty (1+\eps)^{2|n|+2}(1+|n|)^{2k}|f_n|^2 + \sum_{n=-\infty}^{-2} (1+\eps)^{2|n|}n^{2k}|f_n|^2.
\end{equation}
We endow ${\rm Hol}(\D)$ with the family of seminorms $( \|\cdot\|^2_{\eps=0,k})_{k\geq 0}$ for $\eps=0$ (only the indices $n\geq -1$ actually contribute), which turn it   into a Fr\'echet space. Let ${\rm Hol}^\bullet(\D)$ be the closed subspace of ${\rm Hol}(\D)$ that consists of functions $f$ such that $f(0)=0$, equipped with the induced topology.

Next, for $\eps>0$ small, we consider the subspace ${\rm Hol}_\eps(\D)$ of ${\rm Hol}(\D)$ that consists of  holomorphic functions on the open disk $(1+\eps)\D$ that are smooth up to the boundary  of $(1+\eps)\D$. This space is a Frechet space by using the semi-norms $( \|\cdot\|^2_{\eps,k})_{k\geq 0}$.
Let ${\rm Hol}^\bullet_\eps(\D):={\rm Hol}^\bullet(\D)\cap {\rm Hol}_\eps(\D)$.

 We also consider the annuli $\A_{\delta_1,\delta_2}:=\{z\in \C\, |\, |z|\in [\delta_1,\delta_2]\}$ if $\delta_1<\delta_2$. We use the shortcut $\A_\delta$ for $\A_{\delta,1}$ if $\delta<1$.
Then we define ${\rm Hol}_\eps(\A)$ as the complex vector space of holomorphic maps $f:\A_{\delta,\delta^{-1}} \to\C$ smooth up to the boundary of $\A_{\delta,\delta^{-1}}$ with $\delta=(1+\eps)^{-1}$. Each function $f\in {\rm Hol}_\eps(\A)$ has a Laurent series expansion 
\[f(z)=\sum_{n=-\infty}^\infty f_nz^{n+1}\]
converging in the annulus.
The topology on ${\rm Hol}_\eps(\A)$ is then induced by the family of seminorms $( \|\cdot\|^2_{\eps,k})_{k\geq 0}$.

\subsection{Quantisation of Segal's semigroup of holomorphic annuli} 
Segal's semigroup of holomorphic annuli is the open subset of ${\rm Hol}^\bullet(\D)$ defined by
\begin{equation}\label{defS}
\mc{S}:=\{f\in {\rm Hol}^\bullet(\D)\,\,\, |\, f \textrm{ is a biholomorphism onto its image and } f(\D)\subset\D^\circ\}
\end{equation}
where $\D^\circ$ stands for the interior of the unit disk $\D$. As an open subset of $\mathrm{Hol}^\bullet(\D)$, $\cS$ comes equipped with a structure of infinite dimensional manifold modeled on $\mathrm{Hol}^\bullet(\D)$. Moreover, the composition 
$\cS\times\cS\to\cS$, which maps $(f_1,f_2)\mapsto f_1\circ f_2$, is holomorphic, turning $\cS$  into an analytic semigroup. We will also need to consider the following sets 
\begin{equation}\label{S_eps_S_+}
 \mc{S}_\eps:={\rm Hol}_\eps^\bullet (\D)\cap \mc{S}, \quad  
 \mc{S}_>:=\cup_{\eps\in (0,1)}\mc{S}_\eps=\{f\in\mc{S}\, |\, f_{|_\T}\text{ is analytic}.\}
\end{equation}
and we equip $ \mc{S}_\eps$ with the topology induced by ${\rm Hol}_\eps^\bullet (\D)$. 
 Now we quantise  $\mc{S}$ along the lines described in  \cite{BGKRV} in the sense that we give a representation of $\mc{S}$ into a space of bounded operators acting on the Liouville Hilbert space $\mc{H}$, using   the Gaussian free field in $\D$ and Gaussian multiplicative chaos theory. 
Let $X_\D:=X_{S,{\rm D}}$ be the GFF on $S=(\D,J_\C,\zeta^\D)$ with Dirichlet condition at the boundary $\T$, $J_\C$ being
the canonical complex structure on $\C$ and $\zeta^\D(e^{{\rm i}\theta}):=e^{{\rm i}\theta}$. Let $X=X_\D+P\tilde\varphi$, with a boundary field $\tilde \varphi=c+\varphi\in H^s(\T)$ ($s<0$), with $P\tilde{\varphi}:=c+P_S\varphi$ being the harmonic extension of $\tilde{\varphi}$ in $(\D,J_\C,\zeta^\D)$. We write $M_\gamma$ for the Gaussian multiplicative chaos measure $M^{g=|\dd z|^2}_\gamma$. In what follows, $\E_\varphi$ stands for conditional expectation with respect to the field $\varphi$. Recall now the following definition introduced in \cite{BGKRV}.
\begin{definition}\label{defTf}
For each $f\in \mc{S}$,  the Liouville quantisation of $f$ is the operator $\bT_f$ defined by  (for $s<0$)
\begin{align*}
& \bT_f: C_b^0(H^{s}(\T))\to L^\infty(H^{s}(\T)),\\ 
&\bT_fF(\tilde{\varphi}):=|f'(0)|^\frac{Q^2}{2}\E_\varphi\Big[F\Big(\big(X \circ f +Q\log\Big|\frac{f'}{f}\Big|\big)\big|_\T\Big)e^{-\mu\int_{\A_f}|x|^{-\gamma Q}M_\gamma (X, \dd x) }\Big], \end{align*}
where $\A_f=\D\setminus f(\D^\circ)$ and $C_b^0(H^{s}(\T))$ denotes the space of bounded continuous maps on $H^s(\T)$, and $X=X_\D+P\tilde\varphi$.
\end{definition}

The above definition provides a representation of  $\mc{S}$ in the following sense.
 \begin{proposition}\label{comprule}
The map $f\in \mathcal{S}\mapsto {\bf T}_f$ obeys the composition rule   ${\bf T}_{f_1}\circ  {\bf T}_{f_2}={\bf T}_{f_1\circ f_2}$.
\end{proposition}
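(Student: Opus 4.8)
The plan is to read the composition rule geometrically: the annulus $\A_f=\D\setminus f(\D^\circ)$ lies between $\T$ and $f(\T)$, while $f(\A_g)=f(\D)\setminus f(g(\D^\circ))$ lies between $f(\T)$ and $f(g(\T))$, and the two glue along $f(\T)$ into $\A_{f\circ g}=\D\setminus (f\circ g)(\D^\circ)$. Correspondingly, $\bT_f\bT_g$ should reassemble into a single amplitude over $\A_{f\circ g}$. Concretely, I would first unfold both operators using the tower property of conditional expectation: writing $Y=Y_\D+P\tilde\varphi$ for the field of $\bT_f$ and $X=X_\D+P\tilde\psi$ for the field of $\bT_g$ (with $Y_\D,X_\D$ independent Dirichlet GFFs and $\tilde\psi=(Y\circ f+Q\log|f'/f|)|_\T$ the boundary field produced by $\bT_f$), the quantity $\bT_f\bT_gF(\tilde\varphi)$ becomes a single double expectation over $(Y_\D,X_\D)$, carrying the prefactor $|f'(0)|^{Q^2/2}|g'(0)|^{Q^2/2}=|(f\circ g)'(0)|^{Q^2/2}$ since $g(0)=0$; this already matches the prefactor of $\bT_{f\circ g}$.

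The heart of the argument is to realize this double expectation as a single expectation against one GFF $Z=X_\D+P\tilde\varphi$ on $\D$. Using the domain Markov property of $Z$ across the analytic curve $f(\T)$, I would couple so that $Z|_{\A_f}$ equals the restriction of $Y$ (both have the law of $\mathrm{GFF}_\D+P\tilde\varphi$ on $\A_f$); then, conditionally on $Z|_{f(\T)}$, the inner field $Z|_{f(\D)}$ is an independent Dirichlet GFF on $f(\D)$ plus the harmonic extension of $Z|_{f(\T)}$. By conformal invariance of the Dirichlet GFF, $X_\D\circ f^{-1}$ has the law of this inner Dirichlet GFF, and conformal invariance of harmonic extension turns $P\tilde\psi\circ f^{-1}$ into $P_{f(\D)}(Y|_{f(\T)})+Q\,h$, where $h$ is the harmonic function on $f(\D)$ with boundary values $\log|f'(f^{-1}(\cdot))/\cdot|$. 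The key computation is that $h\circ f(z)=\log|z f'(z)/f(z)|$ (the apparent singularity at the origin cancels because $zf'/f$ is holomorphic and non-vanishing on $\D$), which yields the pointwise identity $X=(Z+Q\,h)\circ f$ on $\D$.

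With this identity I would check the three remaining matchings. For the field fed into $F$: $X\circ g+Q\log|g'/g|=Z\circ(f\circ g)+Q\,h\circ f\circ g+Q\log|g'/g|$, and substituting $h\circ f(z)=\log|zf'(z)/f(z)|$ together with the chain rule $(f\circ g)'=(f'\circ g)g'$ collapses the two logarithmic terms into $Q\log|(f\circ g)'/(f\circ g)|$, exactly the argument of $F$ in $\bT_{f\circ g}$. For the GMC terms, on $\A_f$ the coupling gives $M_\gamma(Y,\cdot)=M_\gamma(Z,\cdot)$, while on $\A_g$ the conformal covariance of Gaussian multiplicative chaos under $w=f(z)$ produces the Jacobian weight $|f'|^{2+\gamma^2/2}$; using the defining relation $\gamma Q=2+\tfrac{\gamma^2}{2}$ and $h\circ f=\log|zf'/f|$, the weight $|z|^{-\gamma Q}e^{\gamma Q\,h\circ f(z)}$ is turned into $|f(z)|^{-\gamma Q}|f'(z)|^{2+\gamma^2/2}$, so that $\int_{\A_g}|z|^{-\gamma Q}M_\gamma(X,dz)=\int_{f(\A_g)}|w|^{-\gamma Q}M_\gamma(Z,dw)$. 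Adding the contribution from $\A_f$ reconstructs $\int_{\A_{f\circ g}}|w|^{-\gamma Q}M_\gamma(Z,dw)$, completing the identification with $\bT_{f\circ g}F(\tilde\varphi)$.

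I expect the main obstacle to be the rigorous justification of the Markov decomposition and the associated coupling at the level of the renormalized objects: one must ensure the conditioning on $Z|_{f(\T)}$ is legitimate for the distributional field, that Fubini applies to the double expectation (which follows from the pointwise $L^2$ bounds on amplitudes quoted in the proof of Proposition \ref{prop:proj_rep}), and that the GMC conformal covariance holds for the circle-average regularization with the singular weight $|z|^{-\gamma Q}$ — here it is essential that the origin lies in $f(g(\D^\circ))$, so this weight stays bounded on all the annuli involved and no extra divergence arises at $z=0$.
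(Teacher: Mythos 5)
Your proposal is correct and is essentially the paper's own proof read in the opposite direction: where you fold $\bT_f\circ\bT_g$ together into a single expectation, the paper unfolds $\bT_{f\circ g}$ via the domain Markov property of the GFF at the curve $f(\T)$, conformal invariance of the Dirichlet GFF and of harmonic extensions, the vanishing of the harmonic correction term (their $\tilde\delta\equiv 0$ is exactly your identity $h\circ f(z)=\log|zf'(z)/f(z)|$), and the GMC change of variables using $\gamma Q=2+\tfrac{\gamma^2}{2}$ to absorb the Jacobian — precisely your ingredients. No gaps.
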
  

\begin{proof}
We have 
\begin{equation*}
\bT_{f_1\circ f_2}F(\tilde{\varphi}):=|(f_1\circ f_2)'(0)|^\frac{Q^2}{2}\E_\varphi\Big[F\Big(\big(X\circ (f_1\circ f_2)+Q\log\Big|\frac{(f_1\circ f_2)'}{f_1\circ f_2}\Big|\big)\big|_\T\Big)e^{-\mu\int_{\A_{f_1\circ f_2}}|x|^{-\gamma Q}M_\gamma (X, \dd x) }\Big].
\end{equation*}
\textbf{Step 1.} We first consider the case $\mu=0$. We use the Markov property of the GFF along with conformal invariance. When $A \subset \D$, we denote $X_A$ the Dirichlet GFF on $(A,J_\C)$ and  $P_A (v)$ the harmonic extension in $(A,J_\C)$ of a function $v$ defined on $\partial A$ . By conformal invariance of the Dirichlet GFF, we have
\begin{equation}\label{GFFConfInv}
(X_{f_1 (\D)} (f_1(z)))_{z \in \D} \overset{({\rm Law})}{=}  (X_{\D} (z))_{z \in \D} 
\end{equation}
and also standard complex analysis gives for any function $v$ defined on $\D$ that for $z \in \D$ 
\begin{equation}\label{PfConfInv}
P_{f_1 (\D)} (  v|_{\partial f_1(\D)} )(f_1(z))= P_{\D} (  v \circ f_1 |_{\T}  ) (z)  .
\end{equation}
Indeed in the above, both functions are harmonic and have the same boundary value. Therefore, conditioning on $X_{\D}\circ f_1 |_\T $, we get by the Markov property
\begin{align*}
& \E_{ X_{\D}\circ f_1 |_\T   }\Big[F\Big(\big(X\circ (f_1\circ f_2)+Q\log\Big|\frac{(f_1\circ f_2)'}{f_1\circ f_2}\Big|\big)\big|_\T\Big) \Big] = \E_{  X_{\D}\circ f_1|_\T  }[F(Y)]], \\
& Y:=  \tilde X_{f_1(\D)} \circ (f_1 \circ f_2)+P_{\D} ( \tilde{\varphi}|_{\T}) \circ (f_1\circ f_2)+P_{f_1 (\D)} (  X_{\D} |_{\partial f_1(\D)}  ) \circ (f_1 \circ f_2)   +Q\log\Big|\frac{(f_1\circ f_2)'}{f_1\circ f_2}\Big|\big)\big|_\T
\end{align*}
where $\tilde X_{f_1(\D)}$ is a Dirichlet GFF on $f_1(\D)$, independent from $X_{\D}\circ f_1|_\T$. Using the previous considerations, the latter quantity is equal to
\begin{align*}
 \E_{ X_{\D}\circ f_1|_\T  }\Big[F\Big(   \big(\tilde X_{\D} \circ f_2+P_{\D} (\tilde{\varphi} |_{\T}) \circ (f_1\circ f_2) +P_{\D} (  X_\D \circ f_1 |_\T) \circ f_2   +Q\log\Big|\frac{(f_1\circ f_2)'}{f_1\circ f_2}\Big|\big)\big|_\T\Big) \Big]
\end{align*}
where $\tilde X_{\D}$ is independent from $X_{\D}$. Now we write
\begin{align*}
  &(\tilde X_{\D} \circ f_2 +P_{\D} (  X_\D \circ f_1 |_\T) \circ f_2  +Q\log\Big|\frac{(f_1\circ f_2)'}{f_1\circ f_2}\Big|)|_\T\\
  &=  (\tilde X_{\D} \circ f_2 +P_{\D} (  (X_\D \circ f_1 + Q \log \Big|\frac{f_1'}{f_1}\Big| )|_\T) \circ f_2   +
 Q \log \Big|\frac{f_2'}{f_2}\Big| +\delta ) |_\T 
\end{align*}
where $\delta$ is defined on $\T$ by
 \begin{equation*}
\delta= -Q \log \Big|\frac{f_2'}{f_2}\Big|  +Q\log\Big|\frac{(f_1\circ f_2)'}{f_1\circ f_2}\Big|  -QP_{\D} \big( \log \Big|\frac{f_1'}{f_1}\Big|_\T \big)  \circ f_2  .
\end{equation*}
One can notice that $\delta= \tilde{\delta} \circ f_2$ where $\tilde{\delta}$ is defined on $\D$ as
\begin{equation*}
\tilde{\delta}(z)= Q \log \Big|\frac{f_1'(z) z}{f_1 (z)}\Big|  -QP_{\D} ( \log \Big|\frac{f_1'}{f_1}\Big|_\T ) (z).
\end{equation*}
Now $\tilde \delta$ is harmonic on $\D$ and equal to $0$ (recall that $f_1$ has a unique simple zero at $0$) on $\T$ hence it is zero and so is $\delta$. Gathering the above considerations, we get that 

\begin{align*}
& \E_{ X_{\D}\circ f_1  |_\T  }\Big[F\Big(\big(X\circ (f_1\circ f_2)+Q\log\Big|\frac{(f_1\circ f_2)'}{f_1\circ f_2}\Big|\big)\big|_\T\Big)  \Big] \\
& =  \E_{ X_{\D}\circ f_1|_\T  }\Big[F\Big(\big(  (\tilde X_{\D} \circ f_2 +P_{\D} (  (X_\D \circ f_1 +P_{\D} (\varphi |_{\T}) \circ f_1 + Q \log \Big|\frac{f_1'}{f_1}\Big|\big) \big|_\T) \circ f_2   +
 Q \log \Big|\frac{f_2'}{f_2}\Big|  \big) \big|_\T \Big)  \Big].
\end{align*}

Finally we note that for all $z \in \D$ we have $P_{\D} ( (P_{\D} (\varphi |_{\T}) \circ f_1 )|_{\T})(z)= P_{\D} ( \varphi |_{\T})(f_1(z))$ (two harmonic functions with the same boundary values) for all $z\in \D$ hence $(P_{\D} ( (P_{\D} (\varphi |_{\T}) \circ f_1 ))|_{\T}) \circ f_2) |_{\T}= P_{\D} ( \varphi |_{\T}) \circ (f_1\circ f_2) |_{\T}$. This yields the result.

\textbf{Step 2.} Now, we turn to the potential part, i.e. the case $\mu\not=0$. Recall that the Markov property of the GFF says that for $z \in f_1(\D)$ we have
\begin{equation*}
X_\D(z)=  \tilde{X}_{f_1(\D)}(z)+ P_{ f_1(\D)  }(  X_\D|_{\partial f_1(\D)}) (z)
\end{equation*}
hence, by conformal invariance (\eqref{GFFConfInv} and \eqref{PfConfInv}), we have for $x\in \D$
\begin{equation*}
(X_\D \circ f_1 )(x)=  \tilde{X}_{\D}(x)+ P_\D(  X_\D \circ f_1|_{\T} ) (x).
\end{equation*}
We also have the identity
\begin{equation*}
\log \Big|\frac{f_1'(x)}{f_1(x)}\Big|=    P_\D( \log \Big|\frac{f_1'}{f_1}\Big|{|_{\T}}   ) (x)+    \log \frac{1}{|x|}.
\end{equation*}
Then we can proceed along the same lines as previously to take care of the potential part by noticing that
\begin{equation*}
\int_{\A_{f_1\circ f_2}}|x|^{-\gamma Q}M_\gamma (X, \dd x)= \int_{\A_{f_1}}|x|^{-\gamma Q}M_\gamma (X, \dd x)+\int_{f_1(\D) \setminus f_1 (f_2(\D))}|x|^{-\gamma Q}M_\gamma (X, \dd x)
\end{equation*}
and making the change of variable $x=f_1(y)$ in the second integral to get that 
\[\begin{split}
\int_{f_1(\D) \setminus f_1 (f_2(\D))}|x|^{-\gamma Q}M_\gamma (X, \dd x)& =\int_{\D  \setminus  f_2(\D)}\Big(\frac{|f_1'(x)|}{|f_1(x)|}\Big)^{\gamma Q}M_\gamma (X, \dd x)\\
&=\int_{\D  \setminus  f_2(\D)}|x|^{-\gamma Q}M_\gamma (X+ QP_\D( \log \Big|\frac{f_1'}{f_1}\Big|{|_{\T}}   ) , \dd x).
\end{split}\]
Here we have used the conformal change of the GMC, see for instance \cite[Theorem 2.8]{Berestycki_lqggff}.
\end{proof}

 \subsection{Integral kernels of propagators are amplitudes}\label{sub:intk}
In this subsection, we compute the integral kernels of the operator $\bT_f$ for those $f \in \mc{S}$ that are analytic across the boundary of $\D$, namely for $f\in \mc{S}_>$ (recall \eqref{S_eps_S_+}). 
More precisely, we relate the operator $\bT_f$ to the amplitude of the annulus $\A_f$ equipped with some specific metric and boundary parametrisation. To identify these objects, we need to introduce some material. Let us choose $f\in \mc{S}_>$.   
 We define the curves $\mc{C}$, the simply connected domain $\D_f$ and the annulus $\A_f$ by 
 \[ \mc{C}:=f(\T)\subset \D^\circ, \quad \D_f:=f(\D),\quad \mathbb{A}_f:=\D\setminus \D^\circ_f,\]
see Figure \ref{pic1}. We equip the annulus $\mathbb{A}_f$ with the metric 
 \begin{equation}\label{gA}
 g_{\mathbb{A}}=\frac{|\dd z|^2}{|z|^2}
 \end{equation} 
and we consider the boundary parametrisation $\bs{\zeta}_f=(\zeta_1,\zeta_2)$ with $\zeta_1(e^{{\rm i}\theta})=e^{{\rm i}\theta}$ for $\pl_1\A_f=\T$ 
and $\zeta_2(e^{{\rm i}\theta})=f(e^{{\rm i}\theta})$ for 
$\pl_2\mathbb{A}_f=\mc{C}$. 
The metric $g_{\mathbb{A}}$ has scalar curvature equal to $0$, it is not admissible on $\mathbb{A}_f$ near $\mc{C}$ 
but it is admissible near $\T$. 

 \begin{figure}[h]\begin{tikzpicture}
\draw[fill=lightgray,draw=blue,very thick] (0,0) circle (2) ;
\coordinate (Z1) at (1.5,0) ;
\coordinate (Z2) at (0,0) ;
\coordinate (Z3) at (-0.8,1.3) ;
\coordinate (Z4) at (-0.7,-0.9) ;
\draw[>=latex,draw=red,very thick,fill=white] (Z1) to[out=120,in=40] (Z2) to [out=220,in=40](Z3) to [out=220,in=70](Z4) to [out=250,in=300](Z1);
  \draw (Z2) node[above,red]{$ \mc{C}$} ;
\draw (2,0) node[right,blue]{$ \T$} ;
 \draw (0.5,0.5) node[above]{$ \mathbb{A}_f$} ;
 \draw (0.3,-0.7) node[above]{$ \D_f$} ;
\end{tikzpicture}
\caption{Grey area: the annulus $\A_f$. White area: the image $f(\D)=\D_f$.} \label{pic1}
\end{figure}
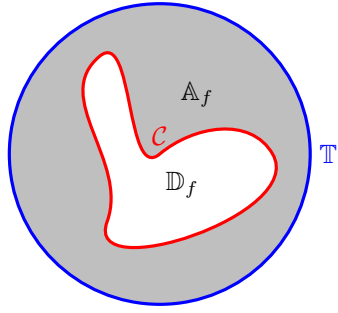  
For $\eps>0$ small enough, the map 
\[f: \A_{1,1+\eps}\to f(\A_{1,1+\eps})\subset \D\] 
is a holomorphic chart near the interior boundary $\pl_2\mathbb{A}_f=\mc{C}$ of $\mathbb{A}_f$. 
Let us consider the harmonic function $h$ defined on a neighborhood of $ \D_f$ by 
\begin{equation}\label{defh_t}
\forall |z|\leq 1+\eps , \quad h(f(z))=\log\frac{|zf'(z)|}{|f(z)|}.
\end{equation} 
The pull-back of  $g_\mathbb{A}$ near $\mc{C}$ under the map $f$ is
\begin{equation}\label{f_t*g_A}
f^*g_\mathbb{A}= e^{2h\circ f} g_{\mathbb{A}}.
\end{equation}
 Any admissible metric $g$ on $\mathbb{A}_f$ with parametrisation $\bzeta=(\zeta_1,\zeta_2)$ of the boundary as above can be written in the form
\begin{equation}\label{defg_tadmissible}
 g_f=e^{-2 \chi_f}g_{\mathbb{A}}
 \end{equation}
where $\chi_f \in C^\infty(\mathbb{A}_f)$ is equal to $h$ near $\mc{C}$ and $0$ near $\T$. A possible choice, 
that depends smoothly on $f$ near a given $f_0\in \mc{S}_>$, is to take $\chi_f=h\chi$ where $\chi \in C^\infty(\mathbb{A}_f)$ equals $1$ near $\mc{C}$ and $0$ near $\T$ and does not depend on $f$ but only on $f_0$. For later use, we define
\begin{equation}\label{defomegat}
\omega(e^{{\rm i}\theta}):=h(f(e^{{\rm i}\theta}))=\log \frac{|f'(e^{{\rm i}\theta})|}{|f(e^{{\rm i}\theta})|}
\end{equation} 
and $\bs{\omega}=(0,\omega)\in C^\infty(\T)^2$.

The main results  of this subsection are the following proposition and its corollary, which relate   the integral kernel of ${\bf T}_f$  to   the amplitude of the annulus $\mathbb{A}_f$. 
\begin{proposition}\label{compTf}
If $f\in \mc{S}_>$ then 
\begin{equation}\label{prop_amplitudeA_t^v}
{\bf T}_fF(\tilde{\varphi}_1)= C_f
\int F(\tilde{\varphi}_2)e^{Q(\mathbf{D} \omega ,  \tilde{ \varphi}_2 )_2} \mc{A}_{\mathbb{A}_f,g_{\mathbb{A}},\bzeta_f}(\tilde{\bs{\varphi}}-Q\bs{\omega})\dd\mu_0(\tilde{\varphi}_2)
\end{equation}
where $\tilde{\bs{\varphi}}-Q\bs{\omega}=(\tilde{\varphi}_1,\tilde{\varphi}_2-Q\omega)$, with $\omega$ given by \eqref{defomegat} and 
\[C_f=\frac{1}{\sqrt{2}\pi}e^{\frac{c_{\rm L}}{12}\log|f'(0)|+\frac{1}{12}({\bf D}\omega,\omega)_2}.
\]
\end{proposition}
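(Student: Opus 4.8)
The plan is to compute the left-hand side directly from Definition \ref{defTf} and match it term by term against the annulus amplitude \eqref{amplitude}, using the Markov property of the disk GFF across the curve $\mc{C}=f(\T)$. First I would write $X=X_\D+P_\D\tilde\varphi_1$ and decompose $X_\D$ according to the splitting $\D=\A_f\cup\mc{C}\cup\D_f$: conditionally on the trace $X|_{\mc{C}}$, the restriction $X|_{\A_f}$ is an independent Dirichlet GFF on $\A_f$ plus the harmonic extension of $(\tilde\varphi_1,X|_{\mc{C}})$, i.e. exactly the Liouville field $\phi_{g_\A}$ entering the amplitude of $(\A_f,g_\A,\bzeta_f)$ with outer datum $\tilde\varphi_1$ and inner datum $X|_{\mc{C}}$; the restriction $X|_{\D_f}$ does not enter the integrand and is integrated out, while the law of the interface trace retains the full-disk Green function. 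Next I would match the potential: by the conformal covariance of Gaussian multiplicative chaos (as in \eqref{scalingmeasure}) the weight $|x|^{-\gamma Q}$ is precisely the Jacobian $e^{\frac{\gamma Q}{2}\sigma}$ for $g_\A=e^{\sigma}|dz|^2$ with $\sigma=-2\log|z|$, so that $\int_{\A_f}|x|^{-\gamma Q}M_\gamma(X,dx)=M_\gamma^{g_\A}(\phi_{g_\A},\A_f)$ and the GMC terms on both sides coincide.

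The delicate deterministic factors come from three sources, which I would treat in turn. (i) The metric $g_\A$ is not admissible along $\mc{C}$, so the amplitude \eqref{amplitude} carries a boundary term $e^{-\frac{Q}{2\pi}\int_{\mc{C}}k_{g_\A}\phi_{g_\A}\,d\ell}$ absent from $\bT_f$. Writing $g_\A=e^{2\chi_f}g_f$ with $g_f$ admissible and $\chi_f=h$ near $\mc{C}$ (recall \eqref{defg_tadmissible}), and using that $h\circ f=P_\D\omega$ is the harmonic extension of $\omega$ (so the normal derivative of $h$ at $\mc{C}$ is the Dirichlet-to-Neumann map of $\D_f$), together with the conformal invariance of the DN map (Lemma \ref{DNmapInv}) and the identity $\mathbf{D}_{\D}=\mathbf{D}$, I would show
\[ \int_{\mc{C}}k_{g_\A}\phi_{g_\A}\,d\ell=-2\pi\,(\mathbf{D}\omega,\,X|_{\mc{C}}\circ f)_2 . \]
(ii) I then introduce the integration variable $\tilde\varphi_2:=(X\circ f+Q\log|f'/f|)|_\T=X|_{\mc{C}}\circ f+Q\omega$, the argument of $F$, so that the inner boundary datum becomes $\tilde\varphi_2-Q\omega$, matching $\mc{A}_{\A_f,g_\A,\bzeta_f}(\tilde\varphi_1,\tilde\varphi_2-Q\omega)$. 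The shift $\tilde\varphi_2\mapsto\tilde\varphi_2-Q\omega$ of the integration variable produces a Cameron--Martin factor $e^{2Q(\mathbf{D}\omega,\tilde\varphi_2)_2-Q^2(\mathbf{D}\omega,\omega)_2}$ relative to $\mu_0$ (the constant mode of $\omega$ vanishes, since $(h\circ f)(0)=0$ forces $\omega$ to have zero average). Combining this with the curvature factor, which contributes $e^{-Q(\mathbf{D}\omega,\tilde\varphi_2)_2+Q^2(\mathbf{D}\omega,\omega)_2}$, the $Q^2(\mathbf{D}\omega,\omega)_2$ terms cancel and leave exactly the prefactor $e^{Q(\mathbf{D}\omega,\tilde\varphi_2)_2}$ of the statement.

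The heart of the matter, and the step I expect to be the main obstacle, is the Gaussian normalization (iii). The law of $X|_{\mc{C}}\circ f$ is a Gaussian whose covariance is the disk Dirichlet Green function restricted to $\mc{C}$; by \eqref{DNmapandGreen} this equals $\mathbf{D}_{\D,\mc{C}}^{-1}=(\mathbf{D}_{\A_f}^{22}+\mathbf{D}_{\D_f})^{-1}=(\mathbf{D}_{\A_f}^{22}+\mathbf{D})^{-1}$, where $\mathbf{D}_{\A_f}^{22}$ is the DN operator of $\A_f$ on $\mc{C}$ with Dirichlet condition on $\T$; this is precisely the covariance of the measure $\mc{A}^0_{\A_f,g_\A}(\tilde\varphi_1,\cdot)\,d\mu_0$, so the free-field amplitude $\mc{A}^0$ and the reference measure $\mu_0$ are reproduced, the conditional GMC expectation assembles into $\mc{A}_{\A_f,g_\A,\bzeta_f}$, and one is left with $C_f=|f'(0)|^{Q^2/2}\,N/Z_{\A_f,g_\A}$, with $N$ the ratio of Gaussian normalizations and $Z_{\A_f,g_\A}$ the determinant prefactor in \eqref{znormal}. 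It remains to prove $N/Z_{\A_f,g_\A}=\tfrac{1}{\sqrt2\pi}|f'(0)|^{1/12}e^{\frac1{12}(\mathbf{D}\omega,\omega)_2}$, i.e. to isolate the free-boson ($c=1$) conformal anomaly; this is where the $\tfrac1{12}$-coefficients arise. Indeed, writing $\frac{c_{\rm L}}{12}\log|f'(0)|=\frac{Q^2}{2}\log|f'(0)|+\frac1{12}\log|f'(0)|$, the $\frac{Q^2}{2}$ part is supplied by the explicit factor $|f'(0)|^{Q^2/2}$ in Definition \ref{defTf}, whereas the residual $\frac1{12}\log|f'(0)|$ and $\frac1{12}(\mathbf{D}\omega,\omega)_2$ must be extracted from the ratio of ${\det}'\Delta$-type regularized determinants (a Polyakov--Alvarez/BFK gluing across $\mc{C}$, with $|f'(0)|$ the conformal radius of $\D_f$ at $0$), and the factor $\frac{1}{\sqrt2\pi}$ from the constant-mode Gaussian integral, in agreement with the gluing constant of Proposition \ref{glue1}. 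As a consistency check I would specialize to round annuli $f(z)=rz$, where $\omega\equiv0$ and $C_f=\frac{1}{\sqrt2\pi}r^{c_{\rm L}/12}$ recovers the normalization of the Hamiltonian semigroup $e^{-t\mathbf{H}}$; alternatively the constant could be pinned down by combining the multiplicativity of Proposition \ref{comprule} with the cocycle of Proposition \ref{prop:proj_rep}.
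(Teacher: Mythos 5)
Your proposal is correct and follows essentially the same route as the paper's proof in Appendix \ref{Proofkernel}: conditioning the disk GFF on its trace along $\mc{C}=f(\T)$ and identifying the law of the trace through the DN map $\mathbf{D}_{\D,\mc{C}}$ (the paper's Lemma \ref{LoiX} plus the Markov property), Cameron--Martin shifts for $P_\D\tilde\varphi_1\circ f$ and $Q\omega$ together with the DN quadratic-form identities reassembling $\mc{A}^0_{\A_f}$ (Lemmas \ref{shiftP}, \ref{GFFQshift}, \ref{colin}), the curvature identity relating $k_{g_\A}$ on $\mc{C}$ to $\mathbf{D}\omega$, and finally the BFK-type gluing formula of \cite[Lemma 5.4]{GKRV21_Segal} combined with the Polyakov--Alvarez anomaly \cite[Eq (1.17)]{OsgoodPS88} for the constant. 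The one step you leave schematic --- extracting $\tfrac{1}{12}\log|f'(0)|+\tfrac{1}{12}(\mathbf{D}\omega,\omega)_2$ from the ratio of regularized determinants --- is carried out in the paper by exactly the method you name.
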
 
This proposition is proved below. For now, let us notice that, 
combining this proposition with Proposition \ref{Weyl}, we obtain the following:
\begin{corollary}\label{cor:annulus_propagator}
For $f\in \mc{S}_>$   
\[ \sqrt{2}\pi e^{\frac{c_\mathrm{L}}{12}W(f,g_f)} {\bf T}_fF(\tilde{\varphi}_1)= 
\int \mc{A}_{\mathbb{A}_f,g_f,\bzeta_f}(\tilde{\varphi}_1,\tilde{\varphi}_2)F(\tilde{\varphi}_2) \,\dd\mu_0(\tilde{\varphi}_2)
\]
where $g_f$ is any admissible metric on $(\A_f,J_\C,\bzeta_f)$  and
\begin{equation}\label{def_of_W}
W(f,g):= -\log|f'(0)|-\Big(\mathbf{D}\log \frac{|f'|}{|f|}|_{\T},\log \frac{|f'|}{|f|}|_{\T}\Big)_2-12S_{\rm L}^0(\mathbb{A}_f,g,g_{\mathbb{A}}),
\end{equation}
with the functional $S_{\rm L}^0$  given by \eqref{SL0}.
\end{corollary}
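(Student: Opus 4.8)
The plan is to derive Corollary~\ref{cor:annulus_propagator} from Proposition~\ref{compTf} by a single application of the Weyl covariance for amplitudes. Proposition~\ref{compTf} already expresses ${\bf T}_f$ as an integral against the amplitude $\mc{A}_{\A_f,g_\A,\bzeta_f}$ of the \emph{non-admissible} metric $g_\A=|dz|^2/|z|^2$, so the only thing left to do is to trade $g_\A$ for an arbitrary admissible metric $g_f$. Since by \eqref{defg_tadmissible} we have $g_\A=e^{2\chi_f}g_f$ with $\chi_f\in C^\infty(\A_f)$ equal to $h$ near $\mc{C}$ and to $0$ near $\T$, I would apply Proposition~\ref{Weyl} with the admissible metric $g_f$ as reference $g_0$ and $g=g_\A=e^{2\chi_f}g_f$, taking $F=1$ so that the amplitudes are read as boundary kernels. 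The crucial bookkeeping point is that the boundary value of the conformal factor is $(2\chi_f)|_{\pl\A_f}=(0,2\omega)=2\bs\omega$ in the parametrization coordinates, because $\chi_f\circ\zeta_2=h\circ f=\omega$ on $\mc{C}$ and $\chi_f\circ\zeta_1=0$ on $\T$; moreover there are no marked points on $\A_f$, so the weight terms $\Delta_{\alpha_i}$ drop out.

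Carrying this out, Weyl covariance gives (using that $\mathbf{D}$ and $(\cdot,\cdot)_2$ act componentwise and $\bs\omega=(0,\omega)$, so that $(\mathbf{D}\bs\omega,\cdot)_2$ only sees the second boundary field), after substituting $\tilde{\bs\varphi}\mapsto\tilde{\bs\varphi}-Q\bs\omega$ and simplifying the two quadratic terms in $\bs\omega$,
\begin{equation*}
\mc{A}_{\A_f,g_\A,\bzeta_f}(\tilde{\bs\varphi}-Q\bs\omega)
= e^{-Q(\mathbf{D}\omega,\tilde\varphi_2)_2+\frac{Q^2}{2}(\mathbf{D}\omega,\omega)_2+c_{\rm L}S_{\rm L}^0(\A_f,g_f,g_\A)}\,
\mc{A}_{\A_f,g_f,\bzeta_f}(\tilde\varphi_1,\tilde\varphi_2).
\end{equation*}
Inserting this into the formula of Proposition~\ref{compTf} and recalling the Girsanov-type factor $e^{Q(\mathbf{D}\omega,\tilde\varphi_2)_2}$ present there, I would observe the key cancellation: the field-dependent exponential $e^{Q(\mathbf{D}\omega,\tilde\varphi_2)_2}$ exactly cancels the factor $e^{-Q(\mathbf{D}\omega,\tilde\varphi_2)_2}$ produced by the anomaly, leaving a purely deterministic prefactor in front of $\int \mc{A}_{\A_f,g_f,\bzeta_f}(\tilde\varphi_1,\tilde\varphi_2)F(\tilde\varphi_2)\,\dd\mu_0(\tilde\varphi_2)$.

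It then remains to check that the surviving deterministic constant equals $\frac{1}{\sqrt2\pi}e^{-\frac{c_{\rm L}}{12}W(f,g_f)}$. Collecting $C_f=\frac{1}{\sqrt2\pi}e^{\frac{c_{\rm L}}{12}\log|f'(0)|+\frac{1}{12}(\mathbf{D}\omega,\omega)_2}$ together with the new factor $e^{\frac{Q^2}{2}(\mathbf{D}\omega,\omega)_2+c_{\rm L}S_{\rm L}^0(\A_f,g_f,g_\A)}$ and comparing with the definition \eqref{def_of_W} of $W$, the $\log|f'(0)|$ and $S_{\rm L}^0$ contributions match termwise, while the coefficient of $(\mathbf{D}\omega,\omega)_2$ is
\begin{equation*}
-\frac{c_{\rm L}}{12}+\frac{1}{12}+\frac{Q^2}{2}=0,
\end{equation*}
which vanishes precisely because $c_{\rm L}=1+6Q^2$. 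This identity is the heart of the computation. I do not expect any serious obstacle here: the only real work is the careful sign and factor bookkeeping in the exponentials, and the one genuinely structural observation is that the random boundary term \emph{must} cancel — which it does exactly because the boundary shift $\tilde{\bs\varphi}\mapsto\tilde{\bs\varphi}+Q\bs\omega$ built into Weyl covariance is conjugate to the weight $e^{Q(\mathbf{D}\omega,\tilde\varphi_2)_2}$ coming from Proposition~\ref{compTf}.
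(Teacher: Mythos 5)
Your proposal is correct and follows essentially the same route as the paper, which obtains the corollary precisely by combining Proposition \ref{compTf} with the Weyl covariance of Proposition \ref{Weyl}; your bookkeeping (boundary conformal factor $2\bs{\omega}=(0,2\omega)$, cancellation of the field-dependent term $e^{Q(\mathbf{D}\omega,\tilde\varphi_2)_2}$, and the identity $\tfrac{1}{12}+\tfrac{Q^2}{2}=\tfrac{c_{\rm L}}{12}$) all checks out. The paper leaves this computation implicit, so your write-up simply makes explicit the step the authors summarize in one sentence.
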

 
The proof of Proposition \ref{compTf} relies on several  lemmas. To simplify the notations, in these lemmas we use the shortcut notations 
$\D,\D_f,\A_f$  for the Riemann surfaces (with boundary parametrisation) $(\D,J_\C,\zeta^\D(e^{{\rm i}\theta})=e^{{\rm i}\theta})$, $(\D_f=f(\D),J_\C,f\circ \zeta^\D)$ and  $(\A_f,J_\C,\bs{\zeta}_f)$
where $J_\C$ is the canonical complex structure on  $\C$; e.g. we write $P_\D \tilde{\varphi},P_{\D_f}\tilde{\varphi},P_{\A_f}\tilde{\bs{\varphi}}$ for the harmonic extensions of $\tilde{\varphi}\in C^\infty(\T)$ and $\tilde{\bs{\varphi}}\in C^{-\infty}(\T)^2$ on $(\D,J_\C,\zeta^\D)$,  $(\D_f=f(\D),J_\C,f\circ \zeta^\D)$ and $(\A_f,J_\C,\bs{\zeta}_f)$, 
we write ${\bf D}_{\A_f}$ for the Dirichlet-to-Neumann operator of $(\A_f,J_\C,\bs{\zeta}_f)$ acting  on  $C^{-\infty}(\T)^2$ and ${\bf D}_{\D,\xi_f}$
the Dirichlet-to-Neumann operator acting on $C^{-\infty}(\T)$ (corresponding to a field on $\mc{C}=f(\T)$) defined as in \eqref{defDSigmaC} with $\xi_f(e^{{\rm i}\theta})=f(e^{{\rm i}\theta})$ the parametrisation of $\mc{C}$.
Our first step is the following lemma proved in \cite[Lemma 5.3]{GKRV21_Segal} (see eq. (5.6) there):
\begin{lemma}\label{LoiX}
If $F$ is a bounded measurable functional on $H^{s}(\T)$ for $s<0$, we have
\begin{equation}\label{densityDN}
\E[F(X_{\D}\circ f |_{\T})]=\frac{1}{ \pi^{1/2}\det(\mathbf{D}_{\D,\xi_f}(2\mathbf{D}_0)^{-1})^{-1/2} }\int F( \tilde{ \varphi})e^{-\frac{1}{2}(\tilde{ \varphi}, (\mathbf{D} _{\D,\xi_f} -2\mathbf{D})\tilde{ \varphi})_2}\dd\mu_0  (\tilde{ \varphi})
\end{equation}
where $\mathbf{D}_0:=\Pi_0+\mathbf{D}$  on $H^s(\T) $ and $ \Pi_0(\tilde\varphi):=  ( \tilde\varphi,1)_2$. Here 
$\mc{C}=f(\T)$.
\end{lemma}

Next we claim:
\begin{lemma}\label{shiftP}
Let $\tilde\varphi \in H^{s}(\T)$ for $s<0$ and define the function $p(e^{{\rm i}\theta}):=  P_\D\tilde\varphi ( f(e^{{\rm i}\theta}))$ on $\T=\pl \D$. Let $F$ be a bounded measurable functional on $H^{s}(\T)$. Then the following holds 
\begin{equation}\label{densityDN-shift}
\E[F((X_\D\circ f+P_\D\tilde\varphi \circ f)|_{\T})]= \E\Big[e^{(\mathbf{D}_{\D,\xi_f}p,X_\D\circ f)_2-\tfrac{1}{2}(\mathbf{D}_{\D,\xi_f}p,p)_2}F(X_\D\circ f|_{\T} )\Big]
\end{equation}
where the expectation is taken with respect to the Dirichlet GFF $X_\D$.
\end{lemma}
\begin{proof} Recall  from \eqref{DNmapandGreen} that the inverse of the DN map $\mathbf{D}_{\D,\xi_f}$ is the Green function $G_{\D,{\rm D}}$ (with Dirichlet condition at $\T=\pl \D$) restricted to the curve $\mc{C}$, i.e. for $u\in C^\infty(\T)$ 
\begin{equation}\label{girsgreen}
(\mathbf{D}_{\D,\xi_f}u,G_{\D,{\rm D}}(f(\cdot),f(e^{{\rm i}\theta}))_2=u(e^{{\rm i}\theta}).
\end{equation}
Our lemma then follows from the Cameron-Martin theorem, by applying this with $u=p$.
 \end{proof}

\begin{lemma}\label{GFFQshift}
Let $g_\D=|\dd z|^2$ be the Euclidean metric and define $\phi_{\D}:=X_\D+P_\D\tilde\varphi_1$ for $\tilde\varphi_1\in H^{s}(\T)$ for $s<0$. If $F$ is a bounded measurable functional on $H^{s}(\T)$, then 
\begin{align*}
&\E_{\varphi_1}\Big[F\big(\big(\phi_{\D}\circ f+Q\log\frac{|f'|}{|f|}\big)\big|_{\T}\big)\Big]\\
&=Ce^{-\frac{Q^2}{2}(\mathbf{D}_{\D,\xi_f}\omega,\omega)_2}\int F( \tilde{ \varphi}_2) 
e^{-\frac{Q}{2\pi}\int_{\mc{C}}k_{g_\mathbb{A}}( \tilde{ \varphi}_2\circ f^{-1}) \,\dd \ell_{g_\mathbb{A}}+Q(\mathbf{D}_{\mathbb{A}_f}(0,\omega),  \tilde{\bs{\varphi}} )_2 -\frac{1}{2}(\tilde{\bs{\varphi}} , (\mathbf{D}_{\mathbb{A}_f} - \mathbf{D})\tilde{\bs{\varphi}})_2}\dd\mu_0  (\tilde{ \varphi}_2) 
\end{align*}
where  $C:=\pi^{-1/2}\det(\mathbf{D}_{\D,\xi_f}(2\mathbf{D}_0)^{-1})^{1/2}$ and $\tilde{ \bs{\varphi}}=(\tilde\varphi_1,\tilde\varphi_2)$.
\end{lemma}

\begin{proof}  
Recall that $h$ is defined by \eqref{defh_t} and $\omega(e^{{\rm i}\theta})=\log \frac{|f'(e^{{\rm i}\theta})|}{|f(e^{{\rm i}\theta})|}$. First we need to compute the geodesic curvature of the  Riemannian  manifold $(\mathbb{A}_f,g_{\mathbb{A}})$ along its boundary. We claim
\begin{equation}\label{curvature}
k_{g_{\mathbb{A}}}\phantom{}_{|\partial \D}=0\quad \text{ and }\quad  k_{g_{\mathbb{A}}}\circ f(e^{{\rm i}\theta})=-\frac{|f(e^{{\rm i}\theta})| } {|f'(e^{{\rm i}\theta})| }  \partial_{r}(h\circ f(re^{{\rm i}\theta}))|_{r=1}.
\end{equation}
Indeed, using \eqref{f_t*g_A} and the relation $k_{e^{2u}g_{\mathbb{A}}}=e^{-u}(k_{g_{\mathbb{A}}}-\partial_\nu u)$ if $u\in C^\infty(\Sigma)$ 
with $\nu$ the inward normal vector with respect to $g_{\mathbb{A}}$, we get \eqref{curvature}.

Next, using \eqref{girsgreen} and the Cameron-Martin theorem, we get
\begin{align*}
\E_{\varphi_1}[F((\phi_{\D}\circ f+Q\log\frac{|f'|}{|f|})|_{\T})]
&=
\E_{\varphi_1}\Big[F((X_\D\circ f+P_\D\tilde\varphi_1\circ f)|_{\T})e^{ Q(\mathbf{D}_{\D,\xi_f}\omega,X_\D\circ f)_2}\Big]e^{-\frac{Q^2}{2}(\mathbf{D}_{\D,\xi_f}\omega,\omega)_2}.
\end{align*}
Now observe that
\begin{equation}\label{identitykg_A}
\mathbf{D}_{\D,\xi_f}\omega= -k_{g_\mathbb{A}}\circ f-\partial_\nu P_{\mathbb{A}_f}(0,\omega)|_{\mc{C}}\circ f
=-k_{g_\mathbb{A}}\circ f+\pi_2(\mathbf{D}_{\mathbb{A}_f}(0,\omega))
\end{equation}
where $\pi_2:C^\infty(\T)\times C^\infty(\T)\to C^\infty(\T)$ is the projection on the second component.
Indeed this follows from $(-\partial_\nu P_{\D_f}\omega)\circ f=-k_{g_\mathbb{A}}\circ f$ ($\nu$ is the inward pointing unit vector on $\pl \D_f)$, which in turn is a consequence of \eqref{curvature} and the fact that $P_{\D_f}\omega=h$. Therefore
\begin{align*}
\E_{\varphi_1}&[F((\phi_{\D}\circ f+Q\log\frac{|f'|}{|f|})|_{\T})]\\
=&\E_{\varphi_1}\Big[F((X_\D\circ f+P_\D\tilde\varphi_1\circ f)|_{\T})e^{-\frac{Q}{2\pi}\int_{\mc{C}}k_{g_\mathbb{A}}X_\D\,\dd \ell_{g_\mathbb{A}} +Q(\mathbf{D}_{\mathbb{A}_f}(0,\omega) ,(0, X_\D\circ f))_2}\Big] e^{-\frac{Q^2}{2}(\mathbf{D}_{\D,\xi_f}\omega,\omega)_2}\\
=&\E_{\varphi_1}\Big[F(X_\D\circ f|_{\T})e^{-\frac{Q}{2\pi}\int_{\mc{C}}k_{g_\mathbb{A}}(X_\D-P_\D\tilde\varphi_1)\,\dd \ell_{g_\mathbb{A}} +Q(\mathbf{D}_{\mathbb{A}_f}(0,\omega),(0, X_\D\circ f))_2}
e^{(\mathbf{D}_{\D,\xi_f}p,X_\D\circ f)_2-\tfrac{1}{2}(\mathbf{D}_{\D,\xi_f}p,p)_2}\Big] 
\\&\times e^{-\frac{Q^2}{2}(\mathbf{D}_{\D,\xi_f}\omega,\omega)_2-Q(\mathbf{D}_{\mathbb{A}_f}(0,\omega) ,(0, p))_2}
\end{align*}
where we have used Lemma \ref{shiftP} in the last equality to shift the term $p=P_\D\tilde\varphi_1\circ f$. 
Finally we can use Lemma \ref{LoiX} to obtain
\begin{align}
\label{computation1}
 &\E_\varphi  [F((\phi_{\D}\circ f+Q\log\frac{|f'|}{|f|})|_\T)]=  C e^{-\frac{Q^2}{2}(\mathbf{D}_{\D,\xi_f}\omega,\omega)_2-Q(\mathbf{D}_{\mathbb{A}_f}(0,\omega) ,(0, p))_2}e^{\frac{Q}{2\pi}\int_{\mc{C}}k_{g_\mathbb{A}}P_\D\tilde\varphi_1\,\dd \ell_{g_\mathbb{A}}}\\
 & \times \int F(\tilde{ \varphi}_2)e^{-\frac{Q}{2\pi}\int_{\mc{C}}k_{g_\mathbb{A}}\tilde{\varphi}_2\circ f^{-1}\,\dd \ell_{g_\mathbb{A}} +Q(\mathbf{D}_{\mathbb{A}_f}(0,\omega) ,(0, \tilde{ \varphi}_2))_2}  e^{(\mathbf{D}_{\D,\xi_f}p,\tilde{ \varphi}_2)_2-\frac{1}{2}(\mathbf{D}_{\D,\xi_f}p,p)_2-\frac{1}{2}(\tilde{ \varphi}_2, (\mathbf{D} _{\D,\xi_f} -2\mathbf{D})\tilde{ \varphi}_2)_2}\dd\mu_0  (\tilde{ \varphi}_2)\nonumber
\end{align}
where $C=\pi^{-1/2}\det(\mathbf{D}_{\D,\xi_f}(2\mathbf{D}_0)^{-1})^{1/2} $.
We need to simplify this expression. For this, we need the following lemma:
\begin{lemma}\label{colin}
For $\tilde{\varphi}_1,\tilde{\varphi}_2\in C^\infty(\T)$ and $p:=P_{\D}\tilde{\varphi}_1\circ f\in C^\infty(\T)$, one has 
\begin{align*}
( {\bf D}_{\D,\xi_f}(\tilde{\varphi}_2-p),(\tilde{\varphi}_2-p))_2=&({\bf D}_{\mathbb{A}_f}(\tilde{\varphi}_1,\tilde{\varphi}_2),(\tilde{\varphi}_1,\tilde{\varphi}_2))_2 -( {\bf D}\tilde{\varphi}_1,\tilde{\varphi}_1)_2
+( {\bf D}\tilde{\varphi}_2,\tilde{\varphi}_2)_2  \\
(\mathbf{D}_{\D,\xi_f}\omega,p)_2=&-(\mathbf{D}_{\mathbb{A}_f}(0,\omega), (\tilde{\varphi}_1,0))_2.
\end{align*}
\end{lemma}

\begin{proof}
Recall that the DN map as quadratic form is conformally invariant and equal to the Dirichlet energy of the harmonic extension by \eqref{Greenformula} and Lemma \ref{DNmapInv}. 
For the second identity in the Lemma, we use the definition of the DN maps, Green's formula twice and $\Delta_{\D} P_{\D}\tilde{\varphi}_1=0$ in $\D$  (thus $P_{\D}\tilde{\varphi}_1$ is smooth in $\D$) to get 
\[\begin{split}
(\mathbf{D}_{\D,\xi_f}\omega,p)_2+(\mathbf{D}_{\mathbb{A}_f}(0,\omega), (\tilde{\varphi}_1,0))_2=& 
\int_{\mathbb{A}_f}\nabla P_{\mathbb{A}_f}(0,\omega).\nabla P_{\D}\tilde{\varphi}_1 \dd x+\int_{\D_f}\nabla P_{\D_f}\omega\cdot \nabla 
P_{\D}\tilde{\varphi}_1 \dd x=0.
\end{split}\]
For the first identity, we use
\begin{equation}\label{usefulidentity} 
P_{\mathbb{A}_f}(0,p)=P_{\D}\tilde{\varphi}_1-P_{\mathbb{A}_f}(\tilde{\varphi}_1,0), \quad P_{\D_f}p=P_{\D}\tilde{\varphi}_1.
\end{equation}
Then,
\[ ( {\bf D}_{\D,\xi_f}(\tilde{\varphi}_2-p),(\tilde{\varphi}_2-p))_2=\int_{\mathbb{A}_f}|\nabla P_{\mathbb{A}_f}(0,\tilde{\varphi}_2-p)|^2\dd x +\int_{\mathbb{D}_f}|\nabla P_{\mathbb{D}_f}(\tilde{\varphi}_2-p)|^2\dd x .\]
We have, using \eqref{usefulidentity}, that for $\tilde{\bs{ \varphi}}=(\tilde{\varphi}_1,\tilde{\varphi}_2)$
\[\begin{split}
\int_{\mathbb{A}_f}|\nabla P_{\mathbb{A}_f}(0,\tilde{\varphi}_2-p)|^2\dd x =&\int_{\mathbb{A}_f}|\nabla P_{\mathbb{A}_f}(0,\tilde{\varphi}_2)|^2\dd x +\int_{\mathbb{A}_f}
|\nabla P_{\mathbb{A}_f}(0,p)|^2 \dd x -2\int_{\mathbb{A}_f}\nabla P_{\mathbb{A}_f}(0,\tilde{\varphi}_2)\cdot\nabla P_{\mathbb{A}_f}(0,p)\dd x \\
=&
 \int_{\mathbb{A}_f}|\nabla P_{\mathbb{A}_f}(0,\tilde{\varphi}_2)|^2\dd x +\int_{\mathbb{A}_f}|\nabla P_{\D}\tilde{\varphi}_1|^2\dd x +\int_{\mathbb{A}_f}|\nabla P_{\mathbb{A}_f}(\tilde{\varphi}_1,0)|^2 \dd x\\
 &  -2\int_{\mathbb{A}_f}\nabla P_{\mathbb{A}_f}(0,\tilde{\varphi}_2)\cdot \nabla P_{\D}\tilde{\varphi}_1\dd x  -2\int_{\mathbb{A}_f}\nabla P_{\D}\tilde{\varphi}_1\cdot \nabla P_{\mathbb{A}_f}(\tilde{\varphi}_1,0)\dd x \\
 & +2\int_{\mathbb{A}_f}\nabla P_{\mathbb{A}_f}(0,\tilde{\varphi}_2)\cdot\nabla P_{\mathbb{A}_f}(\tilde{\varphi}_1,0)\dd x \\
=&
({\bf D}_{\mathbb{A}_f}\tilde{\bs{ \varphi}},\tilde{\bs{ \varphi}})_2+\int_{\mathbb{A}_f}|\nabla P_{\D}\tilde{\varphi}_1|^2\dd x -2\int_{\mathbb{A}_f}\nabla P_{\D}\tilde{\varphi}_1\cdot \nabla P_{\mathbb{A}_f}\tilde{\bs{ \varphi}}\, \dd x .
\end{split}\]
Next, we compute 
\[\int_{\mathbb{D}_f}|\nabla P_{\mathbb{D}_f}(\tilde{\varphi}_2-p)|^2\dd x = \int_{\D_f}|\nabla P_{\D}\tilde{\varphi}_1|^2\dd x+ \int_{\D_f}|\nabla P_{\D_f} \tilde{\varphi}_2|^2\dd x -2\int_{\D_f}\nabla P_{\D}\tilde{\varphi}_1\cdot \nabla P_{\D_f}\tilde{\varphi}_2 \,\dd x.\]
Therefore, gathering all terms and using that $P_{\D_f}\tilde{\varphi}_2+P_{\mathbb{A}_f}\tilde{\bs{ \varphi}}=P_{\D}\tilde{\varphi}_1$ in $\D$,
\[\begin{split}
( {\bf D}_{\D,\xi_f}(\tilde{\varphi}_2-p),(\tilde{\varphi}_2-p))_2
=&
({\bf D}_{\mathbb{A}_f}\tilde{\bs{ \varphi}},\tilde{\bs{ \varphi}})_2+\int_{\D}|\nabla P_{\D}\tilde{\varphi}_1|^2\dd x
-2\int_{\mathbb{A}_f}|\nabla P_{\D}\tilde{\varphi}_1|^2\dd x + \int_{\D_f}|\nabla P_{\D_f}\tilde{\varphi}_2|^2\dd x \\
=& ({\bf D}_{\mathbb{A}_f}\tilde{\bs{ \varphi}},\tilde{\bs{ \varphi}})_2 -\int_{\D}|\nabla P_{\D}\tilde{\varphi}_1|^2\dd x + \int_{\D_f}|\nabla P_{\D_f}\tilde{\varphi}_2|^2\dd x \\
=&  ({\bf D}_{\mathbb{A}_f}\tilde{\bs{ \varphi}},\tilde{\bs{ \varphi}})_2 -( {\bf D}\tilde{\varphi}_1,\tilde{\varphi}_1)_2+ \int_{\D_f}|\nabla P_{\D_f}\tilde{\varphi}_2|^2\dd x 
\end{split}\]
where ${\bf D}$ is the DN map of $\D$.

Now, we can use Lemma \ref{DNmapInv} and the expression \eqref{Greenformula} of the DN map in terms of Dirichlet energy to deduce that 
\[\int_{\D_f}|\nabla P_{\D_f}\tilde{\varphi}_2|^2\dd x:=( {\bf D}\tilde{\varphi}_2,\tilde{\varphi}_2)_2.\]
This ends the proof of Lemma \ref{colin}.
\end{proof}
 
Using the first claim of Lemma \ref{colin}, we deduce from \eqref{computation1} that
\begin{equation}\label{computation2}
\begin{split}
\E_{\varphi_1} \Big[F\big(\big(\phi_{\D}\circ f+Q\log\frac{|f'|}{|f|}\big)\big|_{\T}\big)\Big]=& C \int F(\tilde{ \varphi}_2)e^{-\frac{Q}{2\pi}\int_{\mc{C}}k_{g_\mathbb{A}}(\tilde{ \varphi}_2\circ f^{-1}-P_\D\tilde\varphi_1)\,\dd \ell_{g_\mathbb{A}} +Q(\mathbf{D}_{\mathbb{A}_f}(0,\omega) ,(0, \tilde{ \varphi}_2))_2}\\
&\quad \times  e^{-\frac{Q^2}{2}(\mathbf{D}_{\D,\xi_f}\omega,\omega)_2-Q(\mathbf{D}_{\mathbb{A}_f}(0,\omega) ,(0, p))_2}  e^{-\frac{1}{2}(\tilde{ \bs{\varphi}} , (\mathbf{D} _{\mathbb{A}_f} - \mathbf{D})\tilde{ \bs{\varphi}})_2}\dd\mu_0  (\tilde{ \varphi}_2) .
\end{split}
\end{equation}
Also, we can use \eqref{identitykg_A} and the second identity of Lemma \ref{colin} to get the relation
$$-\frac{1}{2\pi}\int_{\mc{C}}k_{g_\mathbb{A}}P_\D\tilde\varphi_1\,\dd \ell_{g_\mathbb{A}} +(\mathbf{D}_{\mathbb{A}_f}(0,\omega), (0, p) )_2 =(\mathbf{D}_{\D,\xi_f}\omega,p)_2=-(\mathbf{D}_{\mathbb{A}_f}(0,\omega), (\tilde{\varphi}_1,0))_2$$
which we can plug in \eqref{computation2} to obtain
\begin{align*}
\E_{\varphi_1}&\Big[F\big(\big(\phi_{\D}\circ f+Q\log\frac{|f'|}{|f|}\big)\big|_{\T}\big)\Big]\\
=&C e^{-\frac{Q^2}{2}(\mathbf{D}_{\D,\xi_f}\omega,\omega)_2}\int F(\tilde{ \varphi}_2)e^{-\frac{Q}{2\pi}\int_{\mc{C}}k_{g_\mathbb{A}}\tilde{ \varphi}_2\circ f^{-1}\,\dd \ell_{g_\mathbb{A}} +Q(\mathbf{D}_{\mathbb{A}_f}(0,\omega),\tilde{ \bs{\varphi}})_2 -\frac{1}{2}(\tilde{ \bs{\varphi}} , (\mathbf{D} _{\mathbb{A}_f} - \mathbf{D})\tilde{ \bs{\varphi}})_2}\dd\mu_0  (\tilde{ \varphi}_2) .
\end{align*}
This completes the proof of Lemma \ref{GFFQshift}.
\end{proof}

\begin{proof}[Proof of Proposition \ref{compTf}]
By the Markov property of the GFF, the law of the field $X_\D$ in $\A_f$, conditionally on the boundary values $X_\D\circ f=\tilde{\varphi}_2$, is $X_{\A_f}+P_{\A_f} (0,\tilde{\varphi}_2)$. Therefore, for bounded measurable functional $G$, 
$$\E_{ \tilde{ \bs{\varphi}}}[G(X_\D+P_\D\tilde{\varphi}_1)]=\E_{ \tilde{ \bs{\varphi}}}[G(X_{\A_f}+P_{\A_f} \tilde{ \bs{\varphi}}+P_{\A_f} (0,p))]$$ where we have used the relation $P_\D\tilde{\varphi}_1=P_{\A_f} ( \tilde{\varphi}_1,p)$ and the notation $\E_{ \tilde{ \bs{\varphi}}}$ for expectation of the GFF conditionally on its boundary values $\tilde{ \bs{\varphi}}$. Then we can use this relation together with Lemma \ref{GFFQshift}  to get, for bounded measurable functionals $F,G$  with $G$ depending only on the field on $\A_f$ (expectation is over the Dirichlet GFF on $\D$),
\begin{align*}
&\E \Big[F\big(\big(\phi_{\D}\circ f+Q\log\frac{|f'|}{|f|}\big)\big|_{\T}\big)G(\phi_{\D})\Big]\\
&=
Ce^{-\frac{Q^2}{2}(\mathbf{D}_{\D,\xi_f}\omega,\omega)_2}\int F( \tilde{ \varphi}_2)H( \tilde{\bs{\varphi}}) e^{-\frac{Q}{2\pi}\int_{\mc{C}}k_{g_\mathbb{A}}( \tilde{ \varphi}_2\circ f^{-1} )\,\dd \ell_{g_\mathbb{A}}+Q(\mathbf{D}_{\A_f}(0,\omega),  \tilde{\bs{ \varphi}} )_2 -\frac{1}{2}(\tilde{ \bs{\varphi}} , (\mathbf{D} _{\A_f} - \mathbf{D})\tilde{ \bs{\varphi}})_2}\dd\mu_0  (\tilde{ \varphi}_2) 
\end{align*}
with $\phi_{\D}=X_\D+P_\D\tilde\varphi_1$ as  above and 
\[H(\tilde{\bs{\varphi}})=\E[G(X_{\A_f}+P_{\A_f}  \tilde{ \bs{\varphi}} -QP_{\A_f}\bs{\omega})],\]
where the expectation is over the Dirichlet GFF on $\A_f$ and $\bs{\omega}=(0,\omega)$.
In particular, it follows that  
\begin{align*}
&\E_{\varphi_1}\Big[F\big(\big(\phi_{\D}\circ f+Q\log\frac{|f'|}{|f|}\big)\big|_{\T}\big)\exp\big(-\mu \int_{\A_f}|x|^{-\gamma Q}M_\gamma(\phi_{\D}, \dd x)\big)\Big]\\
&=
Ce^{-\frac{Q^2}{2}(\mathbf{D}_{\D,\xi_f}\omega,\omega)_2}\int F( \tilde{ \varphi}_2)H( \tilde{\bs{\varphi}}) e^{-\frac{Q}{2\pi}\int_{\mc{C}}k_{g_\mathbb{A}}( \tilde{ \varphi}_2\circ  f^{-1} )\,\dd \ell_{g_\mathbb{A}}+Q(\mathbf{D}_{\A_f}\bs{\omega} ,  \tilde{\bs{ \varphi}} )_2 -\frac{1}{2}(\tilde{ \bs{\varphi}} , (\mathbf{D} _{\A_f} - \mathbf{D})\tilde{ \bs{\varphi}})_2}\dd\mu_0  (\tilde{ \varphi}_2) \\
&= Ce^{J(\omega)}\int F( \tilde{ \varphi}_2)H( \tilde{\bs{\varphi}}) e^{-\frac{Q}{2\pi}\int_{\mc{C}}k_{g_\mathbb{A}}((\tilde{ \varphi}_2-Q\omega)\circ  f^{-1} )\,\dd \ell_{g_\mathbb{A}} +Q(\mathbf{D}\bs{\omega} ,  \tilde{\bs{ \varphi}} )_2-\frac{1}{2}(\tilde{ \bs{\varphi}}-Q\bs{\omega} , (\mathbf{D} _{\A_f} - \mathbf{D})(\tilde{ \bs{\varphi}}-Q\bs{\omega}))_2}\dd\mu_0  (\tilde{ \varphi}_2),
\end{align*}
with (below $\phi_{\A_f}=X_{\A_f}+P_{\A_f}\tilde{\bs{\varphi}}$ and we use \eqref{scalingmeasure})
\[
H( \tilde{\bs{\varphi}})=\E_{ \tilde{ \bs{\varphi}}}\big[e^{-\mu \int_{\A_f}|x|^{-\gamma Q}M_\gamma(\phi_{\A_f}-QP_{\A_f}\bs{\omega},\dd x)}\big] =\E_{ \tilde{ \bs{\varphi}}}[e^{-\mu M^{g_\A}_\gamma(\phi_{\A_f}-QP_{\A_f}\bs{\omega},\A_f)}] ,\] 
and (using \eqref{identitykg_A} in the last identity below)
\[\begin{split}
J(\omega)= &-\frac{Q^2}{2\pi}\int_{\mc{C}}k_{g_\A}\omega\circ f^{-1} d\ell_{g_\A}-\frac{Q^2}{2}((\mathbf{D}_{\D,\xi_f}+{\bf D})\omega,\omega)_2+\frac{Q^2}{2}(\mathbf{D} _{\mathbb{A}_f}\bs{\omega},\bs{\omega})_2\\
 =& -\frac{Q^2}{2}({\bf D}\omega,\omega)_2-\frac{Q^2}{4\pi}\int_{\mc{C}}k_{g_\A}\omega\circ f^{-1} \dd\ell_{g_\A}.
\end{split}\]
 Let us finally compute (using \eqref{curvature})
 \[ \int_{\mc{C}}k_{g_\A}\omega\circ f^{-1} d\ell_{g_\A}=-\int_0^{2\pi} \pl_r \Big(\log|r\frac{f'(re^{{\rm i}\theta})}{f(re^{{\rm i}\theta})}|\Big) 
\log|\frac{f'(re^{{\rm i}\theta})}{f(re^{{\rm i}\theta})}| d\theta =-2\pi ({\bf D}\omega,\omega)_2\]
which implies that $J(\omega)=0$.    
Finally we can use \cite[Lemma 5.4]{GKRV21_Segal} to obtain that
\[ C=\pi^{-1/2}\det(\mathbf{D}_{\D,\xi_f}(2\mathbf{D}_0)^{-1})^{1/2}=\frac{1}{\sqrt{2}\pi}\Big(
\frac{\det(\Delta_{\D,{\rm D}})}{\det(\Delta_{\A_f,{\rm D}})\det(\Delta_{\D_f,{\rm D}})}\Big)^{1/2}\]  
where $\Delta_{\A_f,{\rm D}}, \Delta_{\D_f,{\rm D}}$ and $\Delta_{\D,{\rm D}}$ are the Laplacians for the flat metric $g_\D=|\dd z|^2$ with Dirichlet conditions.
Using \cite[Eq (1.17)]{OsgoodPS88} with $\int_{\mc{C}}k_{g_\A}=2\pi (\mathbf{D}\omega,1)_2=0$ and $f^*g_\D=|f'(z)|^2|\dd z|^2$, it follows that (recall definition \eqref{znormal}):
\begin{align*}
&\Big(\frac{\det(\Delta_{\D,{\rm D}})}{\det(\Delta_{\D_f,{\rm D}})}\Big)^{1/2}= e^{\frac{1}{24\pi} \int_{\D}|\nabla \log |f'||^2 +\frac{1}{12\pi}\int_{\T}\log|f'| }= e^{\frac{1}{24\pi} \int_{\D}|\nabla \log |f'||^2\dd x +\frac{1}{6}\log|f'(0)|},\\
& \det(\Delta_{\A_f,{\rm D}})^{-1/2} =Z_{\A_f,g_\A}e^{\frac{1}{24\pi}\int_{\mathbb{A}_f}|\nabla \log |x||^2 \dd x+\frac{1}{12\pi}\int_{\mc{C}}k_{g_\A}\log |x|\dd\ell_{g_\A}+\frac{1}{4}-\frac{1}{8\pi}\int_{\mc{C}}\pl_{\nu}\log|x|\dd\ell_{g_\A}} 
\end{align*}
Notice that, by integration by parts and using that $f(z)/z$ is holomorphic in $\D$,
\[ \begin{gathered}
 \frac{1}{24\pi}\int_{\D}|\nabla \log |f'(x)||^2\,\dd x = \frac{1}{24\pi}\int_\T \log |f'(e^{{\rm i}\theta})|\mathbf{D}(\log |f'(e^{{\rm i}\theta})|)\dd \theta,\\ 
 \frac{1}{4\pi}\int_{\mc{C}}(\frac{k_{g_\A}}{3}\log |x|-\frac{1}{2}\pl_{\nu}\log|x|)\dd\ell_{g_\A} =
-\frac{1}{4\pi}\int_\T \Big(\frac{1}{3}{\bf D} \omega(e^{{\rm i}\theta})\log |f(e^{{\rm i}\theta})|\Big)\dd \theta-\frac{1}{4}.
\end{gathered}\]
and 
\[\begin{split}
\frac{1}{24\pi}\int_{\mathbb{A}_f}|\nabla \log |x||^2 \dd x= &-\frac{1}{24\pi}\int_{\mc{C}}(\pl_{\nu}\log|x|)\log|x|\dd \ell_{g_\D} \\
= &-\frac{1}{24\pi}\int_{\T}(\log |f(e^{{\rm i}\theta})|{\bf D}\log |f(e^{{\rm i}\theta})|+\log |f(e^{{\rm i}\theta})|)\dd\theta ,
\end{split}\]
Moreover, using that the DN map  ${\bf D}$ on $\D$ is self-adjoint and that $\omega(e^{{\rm i}\theta})=\log |f'(e^{{\rm i}\theta})/f(e^{{\rm i}\theta})|$, the sum of the first two lines is equal to 
\[   \frac{1}{24\pi}\int_\T  \log |f'(e^{{\rm i}\theta})f(e^{{\rm i}\theta})|{\bf D} \omega(e^{{\rm i}\theta})\dd\theta -\frac{1}{12}\log |f'(0)|.\]
Summing everything, we have shown the formula 
\[C=\frac{1}{\sqrt{2}\pi}e^{\frac{1}{12}\log|f'(0)|+\frac{1}{12}({\bf D}\omega,\omega)_2}Z_{\A_f,g_\A}.\]
Gathering the above computations, we deduce the formula \eqref{prop_amplitudeA_t^v}, and the proof of Proposition  \ref{compTf} is complete.
\end{proof}

 \subsection{Regularity properties of the quantised Segal semigroup}
 
The connection of the operators ${\bf T}_f$ with amplitudes will be instrumental  in obtaining  regularity estimates for the operators ${\bf T}_f$. To begin with, we claim:
\begin{proposition}\label{boundTf}
For $f\in \mc{S}_>$, the operator ${\bf T}_f$ extends to a bounded operator on $\mc{H}$.  
\end{proposition}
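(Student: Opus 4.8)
The plan is to deduce boundedness directly from the identification of ${\bf T}_f$ with an annulus amplitude, reusing the mechanism already exploited in the boundedness half of the proof of Proposition \ref{prop:proj_rep}. First I would fix an admissible metric $g_f$ on $\mathbb{A}_f$ and invoke Corollary \ref{cor:annulus_propagator}, which for $f\in\mc{S}_+$ expresses ${\bf T}_f$ as a deterministic multiple of the integral operator with kernel $\mc{A}_{\mathbb{A}_f,g_f,\bzeta_f}$,
\[ {\bf T}_fF(\tilde\varphi_1)=\frac{e^{-\frac{c_\mathrm{L}}{12}W(f,g_f)}}{\sqrt{2}\pi}\int \mc{A}_{\mathbb{A}_f,g_f,\bzeta_f}(\tilde\varphi_1,\tilde\varphi_2)F(\tilde\varphi_2)\,\dd\mu_0(\tilde\varphi_2). \]
The scalar prefactor is finite: since $f\in\mc{S}_+$ is a biholomorphism with $f(0)=0$, the derivative $f'(0)$ is nonzero, $f'$ is nonvanishing on $\D$ and $f$ is nonvanishing on $\T$, so $\log\frac{|f'|}{|f|}$ is smooth on $\T$ and all three terms in \eqref{def_of_W} are finite. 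Hence it suffices to prove that the integral operator with kernel $\mc{A}_{\mathbb{A}_f,g_f,\bzeta_f}$ is bounded on $\mc{H}$, and one is free to choose whichever admissible representative $g_f$ is convenient since the left-hand side does not depend on it.

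For the kernel I would use the pointwise estimate of \cite[Theorem 4.4]{GKRV21_Segal}, which (exactly as quoted in the proof of Proposition \ref{prop:proj_rep}) produces a constant $a>0$ and a function $B\in L^2(\Omega_\T^2)$ with
\[ |\mc{A}_{\mathbb{A}_f,g_f,\bzeta_f}(c_1,\varphi_1,c_2,\varphi_2)|\leq e^{-a(c_1-c_2)^2}B(\varphi_1,\varphi_2). \]
Writing $\tilde\varphi_j=(c_j,\varphi_j)$, applying Cauchy--Schwarz in the $\varphi$-variables gives, for $F,G\in\mc{H}$,
\[ |\cjg\mc{A}_{\mathbb{A}_f,g_f,\bzeta_f}F,G\cjd_{\mc{H}}|\leq \|B\|_{L^2(\Omega_\T^2)}\int_{\R^2}e^{-a(c_1-c_2)^2}\|F(c_1,\cdot)\|_{L^2(\Omega_\T)}\|G(c_2,\cdot)\|_{L^2(\Omega_\T)}\,\d c_1\d c_2, \]
and since convolution on $L^2(\R,\d c)$ against the integrable kernel $c\mapsto e^{-ac^2}$ is bounded, the right-hand side is controlled by $C\|B\|_{L^2(\Omega_\T^2)}\|F\|_{\mc{H}}\|G\|_{\mc{H}}$. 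Multiplying by the finite constant from Corollary \ref{cor:annulus_propagator} yields the boundedness of ${\bf T}_f$.

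The step to be careful about — and the reason one cannot simply quote the $L^2$ statement \eqref{L^2prop_of_amplitudes} — is that for the annulus $\mathbb{A}_f$ with no marked points one has $\chi(\mathbb{A}_f)=0$ and $\sum_j\alpha_j=0$, so the strict Seiberg inequality fails and the amplitude is not Hilbert--Schmidt. The crux is therefore the pointwise estimate rather than membership of the kernel in $L^2$: what rescues boundedness is the Gaussian factor $e^{-a(c_1-c_2)^2}$ in the difference of the two constant modes, which turns the zero-mode part of the operator into a bounded convolution instead of a divergent diagonal. Thus the argument is essentially a verbatim repetition of the boundedness half of Proposition \ref{prop:proj_rep}, now carried out for the admissible representative $g_f$ and rescaled by the finite deterministic factor supplied by Corollary \ref{cor:annulus_propagator}.
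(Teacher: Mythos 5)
Your proof is correct and follows essentially the same route as the paper: both identify ${\bf T}_f$ (up to the finite deterministic factor from Corollary \ref{cor:annulus_propagator}) with the annulus amplitude kernel, invoke the pointwise bound of \cite[Theorem 4.4]{GKRV21_Segal} with the Gaussian factor $e^{-a(c_1-c_2)^2}$, apply Cauchy--Schwarz in the $\varphi$-variables, and conclude via boundedness of Gaussian convolution on $L^2(\R,\d c)$. Your additional remarks — checking finiteness of $W(f,g_f)$ and explaining why the Hilbert--Schmidt property \eqref{L^2prop_of_amplitudes} is unavailable for the annulus — are accurate and make explicit what the paper leaves implicit.
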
  

\begin{proof}
In \cite[Theorem 4.4]{GKRV21_Segal}, the following bound is proved:  $\exists a>0$, $\forall R>1$, $\exists C_R>0$ so that,  writing  $\tilde{\varphi}_1= c_1+\varphi_1$ and $\tilde{\varphi}_2= c_2+\varphi_2$ (using the convention \eqref{param})  
\[ 
\mc{A}_{\mathbb{A}_f,g_f,\bzeta_f}(\tilde{\varphi}_1,\tilde{\varphi}_2) \leq C_Re^{-R(c_1+c_2)_+-a(c_1-c_2)^2}A(\varphi_1,\varphi_2),\quad \textrm{ with }A \in L^2(\Omega_\T^2)
\]
where $c_+=\max(c,0)$. Using Cauchy-Schwarz, we thus write for $F\in L^2(\R\times \Omega_\T)$ 
 \begin{equation} \label{CS_inequality}
|{\bf T}_fF(c_1,\varphi_1)|\leq C_R\int_{\R}K(c_1-c_2) \Big(\int_{\Omega_\T}|F(c_2,\varphi_2)|^2\dd\mathbb{P}_\T(\varphi_2)\Big)^{1/2}\Big(\int_{\Omega_\T}|A(\varphi_1,\varphi_2)|^2\dd \mathbb{P}_\T(\varphi_2)\Big)^{1/2}\dd c_2
\end{equation}
where $K(c)=e^{-ac^2}$. Using that the convolution by $K$ is a bounded operator on $L^2(\R)$, there exists $C>0$ such that
\[\|{\bf T}_fF\|_{\mc{H}}\leq C\|A\|_{L^2(\Omega_\T^2)}\|F\|_{\mc{H}}.\qedhere\]
\end{proof}

 Given two topological vector spaces $E,F$, we use the standard notation $\mc{L}(E,F)$ for the space of continuous linear operators $E\to F$.

\begin{proposition}\label{contTf}
For each $\epsilon>0$, the mapping $f\in \mc{S}_\epsilon\mapsto  \mathbf{T}_f\in\mc{L}(\mc{H},\mc{H})$ is continuous. 
\end{proposition}

\begin{proof}
By Proposition \ref{boundTf}, for $f\in \mc{S}_>$,  $ \mathbf{T}_f$ extends to a bounded operator on $\mathcal{H}$. 	
Corollary \ref{cor:annulus_propagator} shows that the continuity of the map follows from the continuity of the operator with  integral kernel the amplitude associated to $ \mathbf{T}_f$, since  the prefactor $W(f,g_f)$ defined in \eqref{def_of_W}  is continuous in $f$. So, we are going to prove the continuity of the operator with  integral kernel given by the amplitude associated to $ \mathbf{T}_f$. Recall that the amplitude is given by the expression 
\begin{equation*}
\mc{A}_{\mathbb{A}_f,g_f,\bzeta_f}(\tilde{\varphi}_1,\tilde{\varphi}_2)= C(f) e^{ -\frac{1}{2}(\tilde{ \bs{\varphi}} , (\mathbf{D} _{\A_f} - \mathbf{D})\tilde{ \bs{\varphi}})_2}  \E_{\tilde{\bs{\varphi}}} \Big  [ e^{-\frac{Q}{4\pi}\int_{\A_f}K_{g_f}(X_{\A_f,{\rm D}}+P\tilde{ \bs{\varphi}}){\rm dv}_{g_f}} e^{- \mu M_\gamma^{g_f} (X_{\A_f,{\rm D}}+P\tilde{ \bs{\varphi}},  \A_f)   }  \Big]
\end{equation*}
where $\tilde{\bs{\varphi}}= (\tilde \varphi_1, \tilde \varphi_2)=  (c_1+\varphi_1,c_2+\varphi_2)$,   $C(f)=\det(\Delta_{\A_f,g_f,{\rm D}})^{-1/2}$   and $X_{\A_f,{\rm D}}$ is the Dirichlet GFF on  $ \A_f$ equipped with the canonical complex structure $J_\C$. The map $f\mapsto C(f)$ is continuous (indeed differentiable): indeed, if $\Sigma$ is a fixed surface with boundary, it is standard (see \cite{Ray-Singer}) that the determinant of Laplacian $g\mapsto \det \Delta_{\Sigma, g,{\rm D}}$ is continuous as a function of the metric $g$ on $\Sigma$, thus by choosing a diffeomorphism $\psi_f:\A_{f_0}\to \A_f$ depending continuously on $f$ one can use that $\det \Delta_{\A_f,g_f,{\rm D}}=\det \Delta_{\A_{f_0},\psi_f^*g_f,{\rm D}}$ to deduce the continuity of $f\mapsto C(f)$.
Let us consider a   sequence $(f_n)_n$ that converges to $f$ in  $\mc{S}_\epsilon$. In the sequel and to simplify notation, we simply denote 
$(\Sigma_n,g_n,\bzeta_n):=(\A_{f_n},g_{f_n},\bzeta_{f_n})$ and $(\Sigma,g,\bzeta):=(\A_{f},g_f,\bzeta_f)$, where $\bzeta_{f_n},\bzeta_{f}$ are the parametrisations of respectively $\A_{f_n}$ and $\A_f$ induced by $({\rm Id},f_n)$ and $({\rm Id},f)$. We also let $S_n:=(\Sigma_n,J_\C,\bs{\zeta}_n)$ and $S:=(\Sigma,J_\C,\bzeta)$ where $J_\C$ is the canonical complex structure on $\C$, let $X_n$ and $X$ denote the Dirichlet GFFs on $\Sigma_{n}$ and $\Sigma$. Similarly let $P_n \tilde{\bs{\varphi}}:=P_{S_n} \tilde{\bs{\varphi}}$ and $P \tilde{\bs{\varphi}}:=P_S \tilde{\bs{\varphi}}$ denote the harmonic extensions on $\Sigma_n$ and $\Sigma$ of the boundary fields $\tilde{\bs{\varphi}}$ (recall \eqref{def:harmonic_extension}).
The corresponding measures $ {\rm dv}_{g_n}$ and $ {\rm dv}_{g}$ are denoted  ${\rm dv}_{n}$ and $ {\rm dv}$ and we finally denote 
 $K_n=K_{g_n}$ and $K=K_g$ the scalar curvatures of $g_n$ and $g$.

Let $\eps>0$. We consider the set $\Sigma^{\eps}$ defined as the subset of $\Sigma$ at Euclidean distance $\geq \eps$ from the boundary (recall that the boundary is $\T\cup f(\T)$) of $\Sigma$. For $n$ large enough, we have $\Sigma^{\eps} \subset \Sigma_n$. We will show that the amplitudes can be split into two parts, one coming from the behaviour near the boundary and the other corresponding to the contribution from the interior, the interior/boundary distinction being those points at distance larger/less than $\eps$ from the boundary.  We will show that the operator norm of the near boundary contribution is smaller than $\eps$, uniformly   in $n$, whereas the bulk contribution converges as $n$ tends to $\infty$. For this,   we decompose the amplitudes as the sum $\mc{A}_{\Sigma_n, g_n,\bzeta_n}( \tilde{\bs{\varphi}})= A^{(1)}_{n,\eps}+A^{(2)}_{n,\eps}$  where
\begin{equation*}
 A^{(1)}_{n,\eps}= C(f_n)e^{ -\frac{1}{2}(\tilde{ \bs{\varphi}} , (\mathbf{D} _{S_n} - \mathbf{D})\tilde{ \bs{\varphi}})_2}   \E_{ \tilde{\bs{\varphi}}} \left  [  e^{-\frac{Q}{4\pi}\int_{\Sigma_{n}}K_{n} (X_{n}+P_n\tilde{ \bs{\varphi}} ){\rm dv}_{n} }   e^{-  \mu M_{\gamma}^{g_n} (X_{n}+P_n\tilde{ \bs{\varphi}}, \Sigma^{\eps})   }    \right]
\end{equation*}
and 
\begin{align*}
 A^{(2)}_{n,\eps}=& C(f_n)e^{ -\frac{1}{2}(\tilde{ \bs{\varphi}} , (\mathbf{D} _{S_n} - \mathbf{D})\tilde{ \bs{\varphi}})_2}  
 \E_{ \tilde{\bs{\varphi}} } \left  [   e^{-\frac{Q}{4\pi}\int_{\Sigma_{n}}K_{n} (X_{n}+P_n\tilde{ \bs{\varphi}}){\rm dv}_{n} } \big( e^{-  \mu M_{\gamma}^{g_n} (X_{n}+P_n\tilde{ \bs{\varphi}} ,  \Sigma_{n})   } -  e^{- \mu  M_{\gamma}^{g_n} (X_{n}+P_n\tilde{ \bs{\varphi}}, \Sigma^{\eps})   }  \big)  \right].
\end{align*}
We first treat the term $ A^{(2)}_{n,\eps}$. Using the inequality  $1-e^{- x} \leq   (x^{\eta}\wedge 1)$ for $\eta\in (0,1)$, we deduce that
\begin{align}\label{boundA2n}
| A^{(2)}_{n,\eps}| \leq  & e^{ -\frac{1}{2}(\tilde{ \bs{\varphi}} , (\mathbf{D} _{S_n} - \mathbf{D})\tilde{ \bs{\varphi}} )_2}  
\\
&\times \E_{ \tilde{\bs{\varphi}}} \left  [  e^{-\frac{Q}{4\pi}\int_{\Sigma_n}K_{n} (X_{n}+P_n\tilde{ \bs{\varphi}}){\rm dv}_{n} } \big(\big(\mu M_{\gamma }^{g_n} (X_{n}+P_n\tilde{ \bs{\varphi}} ,  \Sigma_{n} \setminus \Sigma^{\eps})\big)^\eta\wedge 1\big)   e^{-  \mu M_{\gamma }^{g_n} (X_{n}+P_n\tilde{ \bs{\varphi}} , \Sigma^{\eps})   }   \right] .\nonumber
\end{align}
We will show that $\iint | A^{(2)}_{n,\eps}|^2 (\dd \mu_0)^{\otimes 2}\leq C\eps^{1/p}$ (for some $p>1$ and some irrelevant constant $C>0$) so that the operator with integral kernel $A^{(2)}_{n,\eps}$ has operator norm $\mathcal{H}\to\mathcal{H}$ less than $C\eps^{1/(2p)}$. Let us denote by $\Sigma_n'$  the double of $\Sigma_{n}$ obtained by gluing $(\Sigma_n,g_n,\bzeta_n)$ with 
$(\Sigma_n,g_n,\bzeta_n\circ o)$ where $o(z)=1/z$, where the incoming boundary of $(\Sigma_n,g_n,\bzeta_n)$ is glued to the outgoing one of $(\Sigma_n,g_n,\bzeta_n\circ o)$ and conversely. This glued surface is a torus equipped with an antiholomorphic involution $\tau$ fixing two embedded circles, the left copy of $\Sigma_n$ is a subset of $\Sigma'_n$ and the right copy becomes $\tau(\Sigma_n)$ the other half. 
We use the same notations $g_n,K_n,{\rm v}_n$ on the doubled surface $\Sigma_n'$. 
By the gluing lemma \cite[Proposition 5.2]{GKRV21_Segal} applied to the right hand side of \eqref{boundA2n}, we have
\begin{align*}
\iint & | A^{(2)}_{n,\eps}|^2 (\dd \mu_0)^{\otimes 2}\\
\leq &C'(f_n)\int \E\Big[e^{-\frac{Q}{4\pi}\int_{\Sigma'_n}K_n(c+X'_n){\rm dv}_{n}}\big(\mu^\eta M_{\gamma}^{g_n} (c+X'_{n} ,  \Sigma_n \setminus \Sigma^\eps)^\eta\wedge 1\big)\,
\big( \mu^\eta M_{\gamma}^{g_n} (c+X'_{n} , \tau( \Sigma_n \setminus \Sigma^{\eps}))^\eta\wedge 1\big)\,
\\
&   \qquad \qquad\qquad \qquad  \times e^{-  \mu M_{\gamma }^{g_n} (c+X'_{n}, \Sigma^{\eps}) -\mu M_{\gamma }^{g_n} (c+X'_{n},\tau  (\Sigma^{\eps}))   }  \Big]  \,\dd c 
   \\
   \leq &C'(f_n)   \int \E\Big[e^{-\frac{Q}{4\pi}\int_{\Sigma'_n}K_n(c+X'_n){\rm dv}_{n}}\mu^\eta M_{\gamma }^{g_n} (c+X'_{n} ,  \Sigma_{n} \setminus \Sigma^{\eps})^\eta
   e^{  -\mu M_{\gamma}^{g_n} (c+X'_{n},\tau  (\Sigma^{\eps}))   }  \Big]  \,\dd c 
\end{align*}
where $C'(f_n):=\big(\frac{{\rm v}_{g_n}(\Sigma_n')}{\det(\Delta_{\Sigma_n',g_n})}\big)^\hf$ is a metric dependent constant, and $X'_n$ is the   GFF on $\Sigma_n'$ in the glued metric $g_n$. On the torus $\Sigma_n'$, the Gauss-Bonnet formula gives $\int_{\Sigma_n'}K_n {\rm dv}_{n}=0$, so that the $c$ contribution from the integral in the first exponential term is $0$. Furthermore, applying the Cameron-Martin theorem to the curvature term involving the GFF   gives
\begin{multline*}
\iint  | A^{(2)}_{n,\eps}|^2 (\dd \mu_0)^{\otimes 2}  \leq  C'(f_n) e^{\frac{Q^2}{32\pi^2}V'(f_n)}
\\
\times \int \E\Big[ \big(\mu M_{\gamma }^{g_n} (c+X'_{n}+G'(f_n) ,  \Sigma_n \setminus \Sigma^\eps)\big)^\eta 
   e^{  -\mu M_{\gamma }^{g_n} (c+X'_{n}+G'(f_n),\tau  (\Sigma^\eps))   }  \Big]  \,\dd c 
\end{multline*}
where $G'(f_n)(x):=\int_{\Sigma_n'} K_n(x')G'_n(x,x')  {\rm dv}_{n}(x')$ and $V'(f_n):=\int_{\Sigma_n'}K_{n}  G'(f_n)  {\rm dv}_{n} $, with $G'_n$ the Green function on $(\Sigma_n',g_n)$. 
Next, we perform the change of variables $y=e^{\gamma c}M_{\gamma }^{g_n} (X'_{n}+G'(f_n),\tau  (\Sigma^\eps))$, integrate out the $y$ variable and use    H\"older's inequality   to get  \begin{align*}
\iint &  | A^{(2)}_{n,\eps}|^2 (\dd \mu_0)^{\otimes 2} 
\\
 \leq & C'(f_n) e^{\frac{Q^2}{32\pi^2}V'(f_n)} \frac{1}{\gamma}\Gamma(\eta)  \E\Big[ M_{\gamma}^{g_n} (X'_{n}+G'(f_n) ,  \Sigma_n \setminus \Sigma^\eps)^\eta 
    M_{\gamma }^{g_n} (X'_{n}+G'(f_n),\tau  (\Sigma^\eps))^{-\eta   }  \Big]   
    \\
 \leq & C'(f_n) e^{\frac{Q^2}{32\pi^2}V'(f_n)} 
 \frac{1}{\gamma}\Gamma(\eta) 
  \E\Big[ M_{\gamma}^{g_n} (X'_{n}+G'(f_n) ,  \Sigma_n \setminus \Sigma^\eps)^{p \eta}\Big]^{1/p} \E\Big[    M_{\gamma }^{g_n} (X'_{n}+G'(f_n),  \Sigma^\eps)^{-\eta q  }  \Big]   ^{1/q}
\end{align*}
where $p,q> 1$ are conjugate exponents. We choose $p$ in such a way that $p\eta=1$. Furthermore, since $f_n\to f$ in $C^\infty$ there is a family of diffeomorphism $\psi_n:\Sigma'_n\to \Sigma'$ where $\Sigma'$ is the double of $\Sigma$ (defined as $\Sigma_n'$ but with $f$ replacing $f_n$) and $(\psi_n)_*g_n$ is a smooth family of metrics on $\Sigma'$ converging in $C^\infty$ to the metric on $\Sigma'$ induced by $g$.
We can then apply Lemma \ref{limit_Green} and deduce  that $G'(f_n)$ has a limit in $L^\infty$, thus
\begin{equation}\label{estgreen}
\sup_n\sup_{x\in \Sigma'_n }|G'(f_n)(x)|\leq C
\end{equation}
for some $C>0$. Then
\begin{align*}
\iint    | A^{(2)}_{n,\eps}|^2 (\dd \mu_0)^{\otimes 2} 
 \leq  C'(f_n) e^{\frac{Q^2}{32\pi^2}V'(f_n)}  \frac{1}{\gamma}\Gamma(\eta)e^{2\gamma C }  \E\Big[ M_{\gamma }^{g_n} (X'_{n} ,  \Sigma_n \setminus \Sigma^\eps) \Big]^{1/p} \E\Big[    M_\gamma^{g_n} (X'_{n} ,  \Sigma^\eps)^{-\eta q  }  \Big]   ^{1/q}.
\end{align*}
We have to evaluate both expectations above. First, by item (1) of Lemma \ref{lemgreen} below and for some constant $C$ (which may change along lines), 
 \begin{align*}
 \E\Big[ M_{\gamma}^{g_n} (X'_{n} ,  \Sigma_n \setminus \Sigma^\eps) \Big]   \leq C {\rm v}_n( \Sigma_n \setminus \Sigma^\eps  )  \leq C \eps.
\end{align*}
Next, we can choose a non-empty open set $\mc{O}\subset  \Sigma^\eps$ over which $\chi_f=0$  (recall \eqref{defg_tadmissible}) and we get
$$\E\Big[    M_{\gamma}^{g_n} (X'_{n},  \Sigma^\eps)^{-\eta q  }  \Big] \leq \E\Big[    M_{\gamma}^{g_{\A}} (X'_{n} ,  \mc{O})^{-\eta q  }  \Big] .$$
The last expectation can then be shown to be bounded independently of $n$ using the standard argument of Kahane's convexity  inequality (see e.g. \cite{rhodes2014_gmcReview}), by using item (2) of Lemma \ref{lemgreen}.  All in all, and taking into account that $C'(f_n)$  is bounded uniformly in $n$ (as it depends continuously on the metric \cite{Ray-Singer} and $f_n\to f$ in $C^\infty$), we get 
\[\iint   | A^{(2)}_{n,\eps}|^2 (\dd \mu_0)^{\otimes 2} \leq C\eps^{1/p}.\]
 This proves that the operator with integral kernel $A^{(2)}_{n,\eps}$ can be made arbitrarily small in operator norm with $\eps$, uniformly in  $n$.
 
We can write similarly  $\mc{A}_{\Sigma,g,\bzeta}( \tilde{\bs{\varphi}})=  A^{(1)}_{\eps}+A^{(2)}_{\eps}$  where   $A^{(2)}_{\eps}$ yields an operator on $\mc{H}$ whose norm is less or equal to $C \eps^{1/(2p)}$ and  $A^{(1)}_{\eps} $ is given by 
\begin{equation*}
A^{(1)}_{\eps} = C(f)e^{ -\frac{1}{2}(\tilde{ \bs{\varphi}}, (\mathbf{D}_{S} - \mathbf{D})\tilde{ \bs{\varphi}})_2}  \E_{ \tilde{\bs{\varphi}}} \left  [ e^{-\frac{Q}{4\pi}\int_{\Sigma}K(X+P\tilde{ \bs{\varphi}}){\rm dv}  }   e^{-  \mu M_{\gamma }^{g} (X+P\tilde{ \bs{\varphi}},\Sigma^\eps)   }  \right].
\end{equation*}
We now estimate the difference $A^{(1)}_{n,\eps}-A^{(1)}_{\eps}$. For simplicity and because the map $f\in  \mc{S}_\epsilon\mapsto  C(f) $ is continuous, 
 it suffices to consider the case where $C(f)$ is replaced by $1$, which we do now. 
 We write $A^{(1)}_{n,\eps}-A^{(1)}_{\eps}= B^{(1)}_{n,\eps}+B^{(2)}_{n,\eps} $ where 
\[ \begin{split}
B^{(1)}_{n,\eps}= &e^{ -\frac{1}{2}(\tilde{ \bs{\varphi}}  , (\mathbf{D}_{S} - \mathbf{D})\tilde{ \bs{\varphi}})_2}   \E_{ \tilde{\bs{\varphi}}} \Big[ e^{-\frac{Q}{4\pi}\int_{\Sigma_n}K_{n} (X_n+P_n\tilde{ \bs{\varphi}} ){\rm dv}_{n} -  \mu M_{\gamma }^{g_n} (X_n +P_n \tilde{ \bs{\varphi}},\Sigma^\eps)   }  \Big]
 \\
 &-e^{ -\frac{1}{2}(\tilde{ \bs{\varphi}}  , (\mathbf{D}_{S} - \mathbf{D})\tilde{ \bs{\varphi}})_2} \E_{ \tilde{\bs{\varphi}} }  \Big[   e^{-\frac{Q}{4\pi}\int_{\Sigma}K (X+P\tilde{ \bs{\varphi}} ){\rm dv} -  \mu M_{\gamma }^g (X+P\tilde{ \bs{\varphi}},\Sigma^\eps)   }  \Big]  
\end{split}\]
and 
\begin{equation*}
B^{(2)}_{n,\eps} = ( e^{ -\frac{1}{2}(\tilde{ \bs{\varphi}}  , (\mathbf{D}_{S_n} - \mathbf{D})\tilde{ \bs{\varphi}} )_2} - e^{ -\frac{1}{2}(\tilde{ \bs{\varphi}} , (\mathbf{D}_{S} - \mathbf{D})\tilde{ \bs{\varphi}})_2} )
 \E_{ \tilde{\bs{\varphi}} } \Big  [  e^{-\frac{Q}{4\pi}\int_{\Sigma_n}K_{n} (X_n+P_n\tilde{ \bs{\varphi}} ){\rm dv}_{n} - \mu M_{\gamma}^{g_n} (X_n+P_n \tilde{ \bs{\varphi}} , \Sigma^\eps)   }  \Big].
\end{equation*}
Concerning $B^{(2)}_{n,\eps}$, we can apply again the Cameron-Martin theorem to get that
\begin{equation}\label{exprb2n}
\begin{split}
B^{(2)}_{n,\eps}=& ( e^{ -\frac{1}{2}(\tilde{ \bs{\varphi}} , (\mathbf{D}_{S_n} - \mathbf{D})\tilde{ \bs{\varphi}} )_2} - e^{ -\frac{1}{2}(\tilde{ \bs{\varphi}}  , (\mathbf{D}_{S} - \mathbf{D})\tilde{ \bs{\varphi}} )_2} ) \\
& \times e^{\frac{Q^2}{32\pi^2}V(f_n)-\frac{Q}{4\pi}\int_{\Sigma_n}K_{n}P_n\tilde{ \bs{\varphi}}{\rm dv}_{n} }  \E_{ \tilde{\bs{\varphi}}} \Big  [    e^{- \mu  M_{\gamma }^{g_n} (X_n+P_n \tilde{ \bs{\varphi}} +G(f_n), \Sigma^\eps)   }  \Big]
\end{split}
\end{equation}
where $G(f_n)(x):=\int_{\Sigma_n} K_{n}(x')G_n(x,x')  {\rm dv}_{n}(x')$ and $V(f_n):=\int_{\Sigma_n}K_{n}  G(f_n)  {\rm dv}_{n} $, with $G_n$ the Green function on $\Sigma_n$ with Dirichlet boundary conditions (and similarly for $G(f)$ and $V(f)$ for the corresponding quantities associated to $f$ and $X$ on $\Sigma$). Let us now single out the zero mode of the two boundary fields as $\tilde{ \bs{\varphi}}=(c_1,c_2)+  \bs{\varphi}$ and let us set $y:=c_2-c_1$. Note then that $P_n\tilde{ \bs{\varphi}}=c_1+P_ny+P_n \bs{\varphi}$ so that 
\begin{equation}\label{minorcurv}
\int_{\Sigma_n}K_{n}P_n\tilde{ \bs{\varphi}}{\rm dv}_{n}=\int_{\Sigma_n}K_{n}(c_1+P_ny+P_n \bs{\varphi}){\rm dv}_{n}\geq -C|y|-C_N\|  \bs{\varphi}\|_{H^{-N}}
\end{equation} 
where we have used Gauss-Bonnet to remove the $c_1$-contribution, the bound $|P_ny|\leq |y|$ as a harmonic map and item (5) of Lemma \ref{lemgreen}, for arbitrary $  N>0$ and some constant $C_{N} >0$ uniform in $n$. Here and below, $\|\cdot\|_{H^{-N}}$ denotes the $(H^{-N}(\T))^2$ norm.
Then we plug  this estimate in the expression \eqref{exprb2n} for $B^{(2)}_{n,\eps}$,  use again   bounds uniform in $n$ for $V(f_n)$,  and bound the expectation by $1$.  The fact that $B^{(2)}_{n,\eps}\to 0$ in $\mc{L}(\mc{H})$ as $n\to \infty$ then follows from item (6) of Lemma \ref{lemgreen}.

Next we focus on  $B^{(1)}_{n,\eps}$ and we want to prove it converges to $0$ in $\mc{L}(\mc{H})$ as $n\to \infty$. Applying Cameron-Martin gives
\[ \begin{split}
B^{(1)}_{n,\eps}= &e^{ -\frac{1}{2}(\tilde{ \bs{\varphi}} , (\mathbf{D}_{S} - \mathbf{D})\tilde{ \bs{\varphi}})_2}   e^{\frac{Q^2}{32\pi^2}V(f_n)-\frac{Q}{4\pi}\int_{\Sigma_n}K_{n}  P_n\tilde{ \bs{\varphi}}{\rm dv}_{n} } \E_{ \tilde{\bs{\varphi}}} \Big[   e^{- \mu  M_{\gamma }^{g_n} (X_n+P_n \tilde{ \bs{\varphi}} +G(f_n), \Sigma^\eps)   }  \Big]
 \\
 &-e^{ -\frac{1}{2}(\tilde{ \bs{\varphi}} , (\mathbf{D}_{S} - \mathbf{D})\tilde{ \bs{\varphi}})_2} e^{\frac{Q^2}{32\pi^2}V(f)-\frac{Q}{4\pi}\int_{\Sigma}K  P\tilde{ \bs{\varphi}}{\rm dv} } \E_{ \tilde{\bs{\varphi}} }  \Big[   e^{-  \mu M_{\gamma }^{g} (X+P\tilde{ \bs{\varphi}} +G(f), \Sigma^\eps)   }  \Big].
\end{split}\]
Arguing as for $B^{(2)}_{n,\eps}$, we can show that the operator with integral kernel 
$$
e^{ -\frac{1}{2}(\tilde{ \bs{\varphi}}  , (\mathbf{D}_{S} - \mathbf{D})\tilde{ \bs{\varphi}})_2}  e^{ -\frac{Q}{4\pi}\int_{\Sigma_n}K_{n}  P_n\tilde{ \bs{\varphi}}{\rm dv}_{n} } \E_{ \tilde{\bs{\varphi}} } \Big[   e^{-  \mu M_{\gamma }^{g_n} (X_n+P_n \tilde{ \bs{\varphi}} +G(f_n), \Sigma^\eps)   }  \Big]
$$
is bounded as an operator $\mc{H}\to\mc{H}$ and, because the map $f\in \mc{S}_>\mapsto V(f)$ is continuous, we can thus replace $V(f_n)$ by $V(f)$. As a consequence, we can replace $V(f)$ by 1 without loss of generality. 

In the same way, we want to replace the curvature  term $\int_{\Sigma_n}K_{n}  P_n\tilde{ \bs{\varphi}}{\rm dv}_{n} $ by $\int_{\Sigma_n}K  P\tilde{ \bs{\varphi}}{\rm dv} $. So we have to evaluate the cost of this replacement. Using again the decomposition $\tilde{ \bs{\varphi}}=(c_1,c_2)+  \bs{\varphi}$ as before, and by bounding the expectation by $1$, this cost is less than $e^{ -\frac{1}{2}(\tilde{ \bs{\varphi}}  , (\mathbf{D}_{S} - \mathbf{D})\tilde{ \bs{\varphi}})_2}C_{N,n}(|y|+\|  \bs{\varphi}\|_{H^{-N}})e^{C(|y|+\|  \bs{\varphi}\|_{H^{-N}})}$, for some sequence $C_{N,n}\to 0$ as $n\to\infty$ by item (5) of Lemma \ref{lemgreen}, and thus converges to $0$ as $n\to\infty$ in $\mathcal{L}(\mc{H})$ by item (6) of Lemma \ref{lemgreen}. This means that we can estimate $B^{(1)}_{n,\eps}$ by the "difference of expectations" times $ e^{ -\frac{1}{2}(\tilde{ \bs{\varphi}} , (\mathbf{D}_{S} - \mathbf{D})\tilde{ \bs{\varphi}})_2}   e^{C(|y|+\|  \bs{\varphi}\|_{H^{-N}})}$, up to a term tending to $0$ as $n\to\infty$ in $\mathcal{L}(\mc{H})$. We call $\tilde{B}^{(1)}_{n,\eps}$ this quantity. We will show that  $\tilde{B}^{(1)}_{n,\eps}$  tends to $0$ in $L^2(\mu_0^{\otimes 2})$, which is equivalent to the convergence in Hilbert-Schmidt norm for the operator with this kernel.

For $c_1$ positively  large, the two potentials in each expectation will make them arbitrarily small, uniformly in $n$. Indeed, using  the inequality $e^{ -\frac{1}{2}(\tilde{ \bs{\varphi}} , (\mathbf{D}_{S} - \mathbf{D})\tilde{ \bs{\varphi}})_2} \leq e^{-C_0|c_1-c_2|^2/2}$ for some $C_0>0$ (see  \cite[Lemma 4.5]{GKRV21_Segal}), we get
\[\begin{gathered}
 \iint \mathbf{1}_{c_1>A} |\tilde{B}^{(1)}_{n,\eps}|^2 \dd \mu_0^{\otimes 2}
 \leq e^{-\kappa A}
 \iint e^{\kappa c_1} e^{-C_0|c_1-c_2|^2+C|c_1-c_2|+ C\|\bs{\varphi}\|_{H^{-N}}} F( \tilde{\bs{\varphi}}) \dd \mu_0^{\otimes 2}( \tilde{\bs{\varphi}}),\\ 
\textrm{ with } F( \tilde{\bs{\varphi}}):=\big| \E_{ \tilde{\bs{\varphi}} }  \big[   e^{-  \mu M_{\gamma }^{g} (X+P\tilde{ \bs{\varphi}} +G(f), \Sigma^\eps)   }  \big]   \big|^2+
\big|\E_{ \tilde{\bs{\varphi}}} \big[   e^{- \mu  M_{\gamma }^{g_n} (X_n+P_n \tilde{ \bs{\varphi}} +G(f_n), \Sigma^\eps)   }  \big]\big|^2
\end{gathered}\]
The further exponential term $e^{\kappa c_1}$ comes from the inequality $\mathbf{1}_{c_1>A}\leq e^{\kappa (c_1-A)}$. 
The two expectations then behave in the same way, which can be seen using Kahane's comparison inequalities for GMC  combined with items (2) and (5) of Lemma \ref{lemgreen}. Using again \eqref{minorcurv} and bounding the Green's function term by a constant, we deduce that
\begin{align*}
&\iint \mathbf{1}_{c_1>A} |\tilde{B}^{(1)}_{n,\eps}|^2 \dd \mu_0^{\otimes 2}
\\
&  \qquad \leq 2e^{-\kappa A}\iint  e^{\kappa c_1} e^{-C_0|c_1-c_2|^2 + C\|\bs{\varphi}\|_{H^{-N}}+C|c_1-c_2|}  \Big| \E_{ \tilde{\bs{\varphi}} }  \Big[   e^{-  \mu e^{\gamma c_1}e^{-\gamma|c_1-c_2|-C_N\gamma\|  \bs{\varphi}\|_{H^{-N}}}M_{\gamma }^{g} (X, \Sigma^\eps)   }  \Big]   \Big|^2 
   \dd \mu_0^{\otimes 2}.
\end{align*}
We use Jensen's inequality to drop the square inside the expectation. Then, making the change of variables $y=c_2-c_1$ and $v=\mu e^{\gamma c_1}e^{|c_1-c_2|-C_N\|  \bs{\varphi}\|_{H^{-N}(\T)}}M_{\gamma }^{g} (X, \Sigma^\eps)  $, we obtain 
\[
\iint \mathbf{1}_{c_1>A} |\tilde{B}^{(1)}_{n,\eps}|^2 \dd \mu_0^{\otimes 2}
 \leq
 C'\int   e^{-C_0|y|^2 +(C+\kappa)|y|}  \dd y\E  \Big[    M_{\gamma }^{g} (X, \Sigma^\eps)  ^{-\frac{\kappa}{\gamma}}  \Big]     
\iint   e^{C' \|\bs{\varphi}\|_{H^{-N}}}\P^{\otimes 2}(\dd \bs{\varphi} ).
\]
for some constant $C'>0$ independent of $n$.
It is a standard fact in GMC theory (see \cite{rhodes2014_gmcReview}) that the expectation with the inverse power of GMC is finite. We check that the other terms are also finite so that this quantity can be made arbitrarily small for large $A$. The fact that 
$e^{C' \|\bs{\varphi}\|_{H^{-N}}}$ is integrable can be checked by bounding
\[ \int   e^{C' \|\varphi\|_{H^{-N}}}\P(\dd \varphi)\leq  \prod_{n=1}^\infty \int e^{-\frac{1}{2}(x_n^2+y_n^2)+C_n(|x_n|+|y_n|)}\frac{\dd x_n\dd y_n}{2\pi}\leq e^{\frac{1}{2}\sum_n C_n^2}<\infty\]  
where $C_n=\mc{O}(|n|^{-N})$.
For negatively large $c_1$, the difference of expectations produce a decaying term of the form $e^{\gamma c_1}$, which allows us to adapt the previous argument. Indeed,  using the inequalities $|1-e^{-x}|\leq |x|^{1/2}$ for $x\geq 0$, and $(a+b)^2\leq 2a^2+2b^2$,  we deduce
\begin{align*}
\Big| \E_{ \tilde{\bs{\varphi}}} \big[ &  e^{- \mu  M_{\gamma }^{g_n} (X_n+P_n \tilde{ \bs{\varphi}} +G(f_n), \Sigma^\eps)   }  \big]
 - \E_{ \tilde{\bs{\varphi}} }  \big[   e^{-  \mu M_{\gamma }^{g} (X+P\tilde{ \bs{\varphi}} +G(f), \Sigma^\eps)   }  \big] \Big| ^2
 \\
 \leq & 2 \mu  \Big(\E_{ \tilde{\bs{\varphi}}} \big[     M_{\gamma }^{g_n} (X_n+P_n \tilde{ \bs{\varphi}} +G(f_n), \Sigma^\eps)     \big]
 + \E_{ \tilde{\bs{\varphi}} }  \big[      M_{\gamma }^{g} (X+P\tilde{ \bs{\varphi}} +G(f), \Sigma^\eps)     \big] \Big)
 \\
  \leq & 2C \mu  e^{\gamma c_1} e^{|y|}\Big(\E_{ \tilde{\bs{\varphi}}} \big[     M_{\gamma }^{g_n} (X_n+P_n  \bs{\varphi} +G(f_n), \Sigma^\eps)     \big]
 + \E_{ \tilde{\bs{\varphi}} }  \big[      M_{\gamma }^{g} (X+P  \bs{\varphi} +G(f), \Sigma^\eps)     \big] \Big).
\end{align*}
Next we use  item (3) and  (5) of Lemma \ref{lemgreen} to get rid of the terms $G(f_n)$ and $G(f)$, and the harmonic extensions, so as to get for $A>0$
\begin{align*}
\iint \mathbf{1}_{c_1<-A} |\tilde{B}^{(1)}_{n,\eps}|^2 \dd \mu_0^{\otimes 2}
 \leq
2C \iint  \mathbf{1}_{c_1<-A} e^{\gamma c_1}  e^{ -C_0|y|^2}   e^{C|y|} \dd c_1 \dd y  \iint e^{C\|  \bs{\varphi}\|_{H^{-N}(\T)}}  
\P^{\otimes 2}(\dd \bs{\varphi} )\times ({\rm v}_n+{\rm v})(\Sigma^\epsilon),
\end{align*}
where we have used again $e^{ -\frac{1}{2}(\tilde{ \bs{\varphi}} , (\mathbf{D}_{S} - \mathbf{D})\tilde{ \bs{\varphi}})_2} \leq e^{-C_0|c_1-c_2|^2/2}$. As before, this term can be made arbitrarily small for large $A$, uniformly in $n$.

To complete the proof, it thus remains to evaluate the integral over the region where $c_1$ belongs to a compact set, namely the quantity (after using again the estimate $e^{ -\frac{1}{2}(\tilde{ \bs{\varphi}} , (\mathbf{D}_{S} - \mathbf{D})\tilde{ \bs{\varphi}})_2} \leq e^{-C_0|c_1-c_2|^2/2}$)
\begin{align*}
 \int_{|c_1|\leq A}\int_\R  \iint e^{ -C_0|y|^2} e^{C(|y|+\|  \bs{\varphi}\|_{H^{-N}(\T)})}  D_n(c_1,y,\bs{\varphi})\dd c_1 \dd y\P^{\otimes 2}(\dd \bs{\varphi} )
\end{align*}
with 
$$D_n(c_1,y,\bs{\varphi}):=\Big| \E_{ \tilde{\bs{\varphi}}} \big[    e^{- \mu  M_{\gamma }^{g_n} (X_n+P_n \tilde{ \bs{\varphi}} +G(f_n), \Sigma^\eps)   }  \big]
 - \E_{ \tilde{\bs{\varphi}} }  \big[   e^{-  \mu M_{\gamma }^{g} (X+P\tilde{ \bs{\varphi}} +G(f), \Sigma^\eps)   }  \big] \Big| ^2 .$$
We have to show that it converges to $0$ as $n\to\infty$. This follows from the dominated convergence theorem. Indeed, $D_n$ is bounded by $2$ and, furthermore, it converges almost surely in $(c_1,y,\bs{\varphi})$ to $0$. Up to the convergences  of the harmonic extensions and the curvature terms, treated with items (4) and (3)  of Lemma \ref{lemgreen}, this amounts to showing that the expectation  of the exponential of the GMC with the field $X_n$ converges to that with the field $X$. Because the function $x\mapsto \exp(-x)$ is convex,  this follows from  the combination of  Kahane's inequalities with item (2) of Lemma \ref{lemgreen}.
\end{proof}

We next prove the technical Lemma that has been used in the proposition above. As above, $\|\cdot\|_{H^{-N}}$ denotes the $H^{-N}(\T)^2$ norm.
\begin{lemma}\label{lemgreen}
 Let $(f_n)_n$ be a sequence    converging in $\mc{S}$ towards  $f$ as above and $(\Sigma_{n},g_n):=(\A_{f_n},g_{f_n})$, $(\Sigma,g):=(\A_f,g_f)$, and denote $S=(\A_f,J_\C,\bzeta_f)$ and $S_n=(\A_{f_n},J_\C,\bzeta_{f_n})$. Let $P_n \tilde{\bs{\varphi}} $ and $P \tilde{\bs{\varphi}} $   denote the harmonic extensions of $\tilde{\bs{\varphi}}\in H^s(\T)^2$ for $s<0$ on $\Sigma_{n}$ and $\Sigma$,  $ {\rm dv}_n$ and $ {\rm dv}$   the corresponding Riemannian measures (resp. boundary lengths), $G_n$ and $G$ the corresponding Dirichlet Green's functions. Let   $\Sigma^\eps=\{x\in \Sigma\,|\, d_{\R^2}(x,\pl \Sigma)\geq \eps\}$ for $\eps>0$ small. Then
 \begin{enumerate}
\item there exists $C>0$ such that for all $n$, ${\rm v}_n( \Sigma_{n} \setminus \Sigma^{\eps}  )   \leq C\eps$,
\item there exists a sequence $(a_n)_n$ such that $a_n\to 0$ as $n\to\infty$ and $\sup_{(\Sigma^{\eps})^2}|G_n-G|\leq  a_n$,
\item   $\sup_{x\in \Sigma^{\eps}}|\int_{\Sigma_n}G_n(x,x')K_{g_n}(x'){\rm dv}_n(x')-\int_{\Sigma}G(x,x')K_g(x'){\rm dv}(x')|\to 0$ as $n\to\infty$, where $K_{g_n}$ and $K_g$ are the scalar curvatures of $g_n$ and $g$,
\item the sequence $\sup_{\Sigma^{\eps}}|P_n \tilde{\bs{\varphi}} -P \tilde{\bs{\varphi}} |$ converges to $0$ when $n\to\infty$ for any $\tilde{\bs{\varphi}} \in  (H^{s}(\T))^2 $ with $s<0$,
\item for fixed $N\in\R$, we have the estimates, for some constants $C_N>0$ and $C_{N,n}>0$ such that $C_{N,n}\to 0$ as $n\to\infty$ for fixed $N$,
\begin{align*}
{\rm i)} & & \sup_n\big|\int_{\Sigma_{n}}K_{g_n}P_n\tilde{ \bs{\varphi}} {\rm dv}_{n}\big| \leq & C_N \|\tilde{ \bs{\varphi}} \|_{H^{-N}},
\\
{\rm ii)}  & &  \big|\int_{\Sigma_{n}}K_{g_n}P_n\tilde{ \bs{\varphi}} {\rm dv}_{n}-\int_{\Sigma}K_gP\tilde{ \bs{\varphi}} {\rm dv}\big| \leq & C_{N,n} \|\tilde{ \bs{\varphi}} \|_{H^{-N}},
 \\
{\rm iii)}  & &  \sup_{n}\sup_{x\in \Sigma^{\eps}}|P_n\tilde{ \bs{\varphi}}|+|P\tilde{ \bs{\varphi}}|\leq  & C_{N} \|\tilde{ \bs{\varphi}} \|_{H^{-N}},
\end{align*}
\item for each $C>0$ and $N>0$, the operator on $\mc{H}$ with integral kernel  given by 
\[(e^{ -\frac{1}{2}(\tilde{ \bs{\varphi}}  , (\mathbf{D} _{S_n} - \mathbf{D})\tilde{ \bs{\varphi}} )_2}- e^{ -\frac{1}{2}(\tilde{ \bs{\varphi}}  , (\mathbf{D} _{S} - \mathbf{D})\tilde{ \bs{\varphi}} )_2})  e^{C(|y|+ \| \bs{\varphi} \|_{H^{-N}})}
\] 
goes to $0$ in $\mc{L}(\mc{H})$ as $n\to \infty$, where $y=c_1-c_2$ if $\tilde{\bs{\varphi}}=(c_1,c_2)+\bs{\varphi}$ with the two fields in $\bs{\varphi}$ orthogonal to constants on $\T$. Furthermore, the family of operators on $\mc{H}$ with integral kernels  given by 
$$(e^{ -\frac{1}{2}(\tilde{ \bs{\varphi}}  , (\mathbf{D}_{S_n} - \mathbf{D})\tilde{ \bs{\varphi}})_2}+e^{ -\frac{1}{2}(\tilde{ \bs{\varphi}}  , (\mathbf{D}_{S} - \mathbf{D})\tilde{ \bs{\varphi}})_2})e^{C(|y|+\|  \bs{\varphi}\|_{H^{-N}})}$$ is bounded in $\mc{L}(\mc{H})$ uniformly in $n$.
\end{enumerate}
\end{lemma}
\begin{proof} (1) is a direct application of Lebesgue theorem since ${\bf 1}_{\Sigma_{n}}{\rm dv}_n\to {\bf 1}_{\Sigma}{\rm dv}$ as Radon measures   in $\C$ and ${\rm v}(\Sigma\setminus \Sigma^{\eps})\leq C\eps/2$ for some constant $C>0$ independent of $\eps$.\\
(2) First, since $g_n$ is conformal to $|\dd x|^2$, we notice that $G_n(x,x')$ (resp. $G(x,x')$) are
 also equal to the Dirichlet Green function of the Euclidean Laplacian $\Delta$ on $(\Sigma_n,|\dd x|)$ (resp. $(\Sigma,|\dd x|$). 
 Let $G_{\D^*}(x,x')$ be the Dirichlet Green function 
on the exterior $\D^*$ of the unit disk on the Riemann sphere $\hat{\C}$. Let $\chi,\tilde{\chi},\hat{\chi}\in C_c^\infty(\C)$ be equal to $1$ in $\A_{\delta,\delta^{-1}}$ for $\delta<1$ close to $1$ and supported in $\A_{\delta',{\delta'}^{-1}}$ for $\delta'<\delta$ also close to $1$, and $\tilde{\chi}=1$ on support of $\chi$ while $(1-\hat{\chi})=1$ on support of $(1-\chi)$. Then on $\Sigma_n=\A_{f_n}$
\begin{align*} &\Delta_x ( (\tilde{\chi}(f_n^{-1}(x))G_{\D^*}(f_n^{-1}(x),f_n^{-1}(x'))(\chi(f_n^{-1}(x')))+(1-\hat{\chi}(f_n^{-1}(x))G_\D(x,x')(1-\chi(f_n^{-1}(x')))\\
&= \delta(x-x')+Q_{f_n}(x,x')
\end{align*}
where $\delta(x-x')$ is the Dirac mass on the diagonal (the kernel of the identity) and $Q_{f_n}\in C^\infty(\Sigma_n \times \Sigma_n)$ is vanishing on $\pl( \Sigma_n \times \Sigma_n)$. Let us call $G_{f_n}^0$ the operator with the integral kernel $G_{f_n}^0(x,x')$ above such that $\Delta G_{f_n}^0={\rm Id}+Q_{f_n}$, and similarly 
$G_f^0$ such that $\Delta G_{f}^0={\rm Id}+Q_{f}$. The operators $Q_{f_n}$ and $Q_f$ are smoothing, thus ${\rm Id}+Q_{f_n}$ and ${\rm Id}+Q_{f}$ are Fredholm of index $0$ on $L^2(\Sigma_n)$ and $L^2(\Sigma)$, and their integral kernels vanish respectively on the boundary $\pl (\Sigma_n\times \Sigma_n)$ and $\pl (\Sigma\times \Sigma)$.  
Then, there is a finite rank smoothing operator $W_f=\sum_{j=1}^Nu_j\otimes v_j$ such that $u_j,v_j\in C^\infty(\Sigma)$ vanish on $\pl \Sigma$ and 
such that $\Delta (G^0_{f}+W_f)={\rm Id}+Q'_f$, where $Q'_f$ is smoothing, its integral kernel vanishes on $\pl (\Sigma\times \Sigma)$ and $({\rm Id}+Q'_f)$ is invertible on all Sobolev spaces $H^k(\Sigma)$ for $k\in \N$ (the $v_j$ are in $\ker ({\rm Id}+Q_f)$ and $\Delta u_j=v_j$ with $v_j|_{\pl \Sigma}=0$). Moreover $({\rm Id}+Q'_f)^{-1}={\rm Id}+L_f$ with $L_f$ an operator with smooth integral kernel   vanishing on $\pl (\Sigma \times \Sigma)$: this follows from the identity $Q'_f+L_f+Q'_fL_f=Q'_f+L_f+L_fQ'_f=0$ which in turn gives 
\[ L_f=-Q'_f+(Q'_f)^2 +Q'_fL_fQ'_f.\]
 It is convenient to work on the 
fixed space $L^2(\A_f)$ and we can use a family of diffeomorphisms $\psi_n:\Sigma_n\to \Sigma$ such that $\psi_n\to {\rm Id}$ as $n\to \infty$ uniformly on all compact subset $\Omega\subset \Sigma^\circ$. We will  conjugate our operators on $\Sigma_n$ by the pull-back $\psi_n^*:L^2(\Sigma)\to L^2(\Sigma_n)$.
We have that $(\psi_n)_*\Delta(\psi_n)^* ((\psi_n)_*G^0_{f_n}\psi_n^*+W_{f})={\rm Id}+Q'_{f_n}$ for some smoothing operator $Q'_{f_n}$ on $\Sigma$ converging to $Q'_f$ in $\mc{L}(L^2(\Sigma))$ as $n\to \infty$. Moreover, the integral kernel of $Q'_{f_n}$ converges in any $H^N(\Sigma \times \Sigma)$ for all $N\in \N$. 
 By continuity of the inverse, $({\rm Id}+Q'_{f_n})^{-1}=:{\rm Id}+L_{f_n}$ is invertible on $L^2(\Sigma)$, $L_{f_n}$ is smoothing with integral kernel vanishing on $\pl (\Sigma \times \Sigma)$ (same argument as for $L_f$), $L_{f_n}\to L_f$ in $\mc{L}(L^2(\Sigma))$. As above we have 
 $L_{f_n}=-Q'_{f_n}+(Q'_{f_n})^2 +Q'_{f_n}L_{f_n}Q'_{f_n}$, and $Q'_{f_n}\to Q'_f$ as  maps $H^{-N}(\Sigma)\to H^N(\Sigma)$ for all $N>0$. We deduce that $L_{f_n}\to L_f$ as  a map $H^{-N}(\Sigma)\to H^N(\Sigma)$.
We then obtain on $\Sigma_n$
\[ \Delta (G^0_{f_n}+(\psi_n)^*W_{f}(\psi_n)_*)({\rm Id}+(\psi_n)^*L_{f_n}(\psi_n)_*)={\rm Id}\]
on $\Sigma_n$. Since $W_n:=(\psi_n)^*W_{f}(\psi_n)_*$ and $L_n:=(\psi_n)^*L_{f_n}(\psi_n)_*$ have integral kernels vanishing on $\pl(\Sigma_n \times \Sigma_n)$, we deduce that the Dirichlet Green function on $\Sigma_n$ is the integral kernel of the operator
\[ G_n=(G^0_{f_n}+W_n)({\rm Id}+L_n).\]
Similarly the Dirichlet Green function of $\Sigma$ is
\[ G=(G^0_{f}+W_{f})({\rm Id}+L_{f}).\]
Notice now that $G_{\D^*}(x,x')=-\log|x-x'|+F^*(x,x')$ with $F^*$ smooth and similarly $G_{\D}(x,x')=-\log|x-x'|+F(x,x')$ with $F$ smooth.
Therefore, on any compact subset $\Omega\subset \Sigma^\circ$, we have  for $n$ large enough 
\[ \begin{split}
G^0_{f_n}(x,x')-G^0_f(x,x')=& \big(\tilde{\chi}(f_n^{-1}(x))\chi(f_n^{-1}(x'))+(1-\hat{\chi}(f_n^{-1}(x)))(1-\chi(f_n^{-1}(x')))\big)
\log|x-x'|\\
&- \Big(\tilde{\chi}(f^{-1}(x))\chi(f^{-1}(x'))+(1-\hat{\chi}(f^{-1}(x)))(1-\chi(f^{-1}(x')))\big)
\log|x-x'|\\
& +H_n(x,x')
\end{split}\] 
with $H_n\in C^\infty(\Omega\times \Omega)$ converging in $C^N(\Omega\times \Omega)$ norm to $0$.
Notice that $G^0_{f_n}-G^0_f=H_n(x,x')$ in a neighborhood of the diagonal of $\Omega\times \Omega$, it is thus smooth in $\Omega\times \Omega$ and $\sup_{x,x'\in \Omega}|(G-G_n)|(x,x')\to 0$ as $n\to \infty$.\\
(3) Using the family of diffeomorphism $\psi_n:\Sigma_n \to \Sigma$ as above to work on the fixed surface $\Sigma$,  
(3) is a consequence of Lemma \ref{limit_Green} since $(\psi_n)_*g_n\to g$ in $C^\infty(\Sigma)$.\\
(4) The Poisson kernels $P_n(x,y)$ and $P(x,y)$ defined by $P_nu(x)=\int_{\pl \Sigma_n}P_{n}(x,y)u(y)\dd\ell_n(y)$ 
for $u\in C^\infty(\pl \Sigma_n)$ and $Pu(x)=\int_{\pl \Sigma}P(x,y)u(y)\dd\ell(y)$ for $u\in C^\infty(\pl \Sigma)$ are obtained from the Dirichlet Green's function by 
\[ P(x,y)=\pl_{\nu'}G(x,x')|_{x'=y}, \quad P_n(x,y)=\pl_{\nu'}G_n(x,x')|_{x'=y}\]
where $\nu'$ is the inward unit normal vector acting on the right variable on respectively $\pl \Sigma$ and $\pl \Sigma_n$   and $\dd\ell,\dd\ell_n$ are the induced measures on $\pl \Sigma$ and $\pl \Sigma_n$. Using the parametrisation of the boundary of $\Sigma_n,\Sigma$ and using our description of the difference of Green's function above, we directly see that since $f_n\to f$ in $C^\infty$
\[  \| P(\cdot, f(\cdot))- P_n(\cdot,f_n(\cdot))\|_{C^1(\Sigma^\eps\times \T)}+ \| P(\cdot, \cdot)- P_n(\cdot,\cdot)\|_{C^1(\Sigma^\eps\times \T)}\to 0\]
as $n\to \infty$. This shows (4).\\
(5) The integral kernel $(x,x')\mapsto P_n(x,f_n(x'))$ converges to $(x,x')\mapsto P(x,f(x'))$ in $C^\infty(\Omega\times \T)$ 
if $\Omega\subset \Sigma^\circ$ is any fixed compact set, and similarly $(x,x')\mapsto P_n(x,x')$ converges to $(x,x')\mapsto P(x,x')$ in $C^\infty(\Omega\times \T)$. Notice that $K_{g_n}$ is supported in a fixed compact set $\Omega$ of $\Sigma^\circ$ for all $n$ large enough as $g_n=(f_n)_*g_{\A}$ in $f_n(\A_{1,\delta^{-1}})$ for some $\delta<1$ (recall \eqref{defg_tadmissible}) and $K_{g_\A}=0$. These two facts together and the fact that $K_{g_n}\to K_g$ in $L^1(\Omega)$ imply (5).\\
(6) The operator ${\bf D}_{S_n}-{\bf D}$ is smoothing (i.e. with smooth integral kernel), its integral kernel in $C^\infty((\T\sqcup \T)\times (\T\sqcup \T))$ viewed as a $2\times 2$ matrix with values in $C^\infty(\T^2)$ can be computed by 
\[ \begin{split}
({\bf D}_{S_n}-{\bf D})_{jk} (\theta,\theta')= &\pl_\nu\pl_{\nu'}(G_{n}(x,x')-G^0_{f_n}(x,x'))|_{x=\zeta_j(e^{{\rm i}\theta}),x'=\zeta_k(e^{{\rm i}\theta'})}\\
=& \pl_\nu\pl_{\nu'}(W_n(x,x')+W_nL_n(x,x')+G^0_{f_n}L_n(x,x'))|_{x=\zeta_j(e^{{\rm i}\theta}),x'=\zeta_k(e^{{\rm i}\theta'})}
\end{split}
\]
where $\zeta_1(e^{{\rm i}\theta})=e^{{\rm i}\theta}$ and $\zeta_2(e^{{\rm i}\theta})=f_n(e^{{\rm i}\theta})$, $\nu,\nu'$ are the unit normal 
inward-pointing vectors in the left and right variables. The same holds with $f_n$ replaced by $f$.
Since $f_n\to f$ in the $C^\infty(\T)$ topology, our description above of $G_{f_n}L_{n},W_{n}$ and $L_{n}$ and the fact that, for each $N\in \N$, their integral kernels converge in  $C^N$ norms respectively to $G_{f}L_{f},W_{f}$ and $L_f$ implies that the integral kernel of 
${\bf D}_{S_n}-{\bf D}$ converges in $C^N$ norm to the integral kernel of ${\bf D}_{S}-{\bf D}$. The boundedness on $\mc{H}$ of the operator 
$\mc{A}^0_{S}$ with integral kernel $\mc{A}^0_{S}(\tilde{\bs{\varphi}})=e^{ -\frac{1}{2}(\tilde{ \bs{\varphi}} , (\mathbf{D} _S - \mathbf{D})\tilde{ \bs{\varphi}})_2}$ follows from the estimate proved in \cite[Lemma 4.5]{GKRV21_Segal}: $\exists a>0$  and $C>0$ such that for each $\tilde{\varphi}_j=c_j+\varphi_j$, with $\varphi_j\in H^{-s}(\T)$ orthogonal to constants and $c_j\in \R$,
\begin{equation}\label{A0bound} 
|\mc{A}^0_{S}(c_1+\varphi_1,c_2+\varphi_2)|\leq Ce^{-a(c_1-c_2)^2}.
\end{equation}
Moreover, the proof of these estimates  shows  that they are locally uniform with respect to $f$.
Using \eqref{A0bound} we have  for all $\tilde{\bs{\varphi}}=(c_1+\varphi_1,c_2+\varphi_2) \in H^{-s}(\T)^2$
\[ |e^{ -\frac{1}{2}(\tilde{ \bs{\varphi}} , (\mathbf{D} _{S} - \mathbf{D})\tilde{ \bs{\varphi}})_2}-e^{ -\frac{1}{2}(\tilde{ \bs{\varphi}} , (\mathbf{D} _{S_n} - \mathbf{D})\tilde{ \bs{\varphi}})_2}|\leq C e^{-a(c_1-c_2)^2} |e^{ -\frac{1}{2}(\tilde{ \bs{\varphi}} , (\mathbf{D} _{S_n} - \mathbf{D}_S)\tilde{ \bs{\varphi}})_2}-1|.\]
Now by the discussion above and using that $(\mathbf{D} _{S_n} - \mathbf{D}_S)$ kills the constant $(1,1)$, 
we have for each $N$ fixed
\[\begin{split}
|(\tilde{ \bs{\varphi}} , (\mathbf{D} _{S_n} - \mathbf{D}_{S})\tilde{ \bs{\varphi}})_2|
& \leq \eps_n\|\tilde{\bs{\varphi}}-(\frac{c_1+c_2}{2},\frac{c_1+c_2}{2})\|_{H^{-N}}^2\\
& \leq \eps_n(\frac{(c_1-c_2)^2}{2}+\|\varphi_1\|^2_{H^{-N}(\T)}+\|\varphi_2\|^2_{H^{-N}(\T)})
\end{split}\]
for some $\eps_n \to 0$ as $n\to \infty$ thus (writing $\varphi_j(\theta)=\sum_{k\not=0}\varphi_{jk}e^{ik\theta}$ as in \eqref{param})
\begin{align*}
& |(\mc{A}^0_{S_n}-\mc{A}^0_S)(\tilde{\bs{\varphi}})|\leq e^{-a\frac{(c_1-c_2)^2}{2}}B_n(\varphi_1,\varphi_2)\\
& B_n(\varphi_1,\varphi_2):=\max\Big(e^{\eps_n \sum_{j=1}^2\sum_{k=1}^\infty \frac{|\varphi_{jk}|^2}{k^{2N}}}-1, 1-e^{-\eps_n
\sum_{j=1}^2\sum_{k=1}^\infty \frac{|\varphi_{jk}|^2}{k^{2N}}}\Big).
\end{align*}
A direct calculation  using the estimate on $B_n$ shows that there is $C>0$ such that
\[  \|B_n\|_{L^2(\Omega_\T^2)} \leq C\eps_n.\]
Therefore, by Cauchy-Schwarz, for each $F,G\in \mc{H}$ 
\[\begin{split} 
|\cjg (\mc{A}^0_{S_n}-\mc{A}^0_S)F,G\cjd_{\mc{H}}|\leq & \|B_n\|_{L^2(\Omega_\T^2)}\int_{\R^2}e^{-\frac{a}{2}(c_1-c_2)^2}
\|F(c_1,\cdot)\|_{L^2(\Omega_\T)}\|G(c_2,\cdot)\|_{L^2(\Omega_\T)}\d c_1\d c_2\\
 \leq & C\eps_n \|F\|_{\mc{H}}\|G\|_{\mc{H}}
\end{split}\]
using the fact that the convolution operator by the Gaussian $e^{-ac^2/2}$ is bounded on $L^2(\R)$.
This proves the claim.
\end{proof}

\subsection{Holomorphic flows of Markovian vector fields and Virasoro algebra}\label{sec:Markovian_operator}

We first recall the construction in \cite{BGKRV} of a representation of the Virasoro algebra with central charge $c_{\rm L}=1+6Q^2$ into the Hilbert space $\mc{H}$. 

First, a holomorphic vector field ${\rm v}=v(z)\pl_z$ on a neighborhood of $\D$ is called \textbf{Markovian} if  
\begin{equation}\label{eq:def_markovian}
\Re(\bar{z}v(z))<0,\qquad \forall z\in\T. 
\end{equation} 
This condition ensures that its flow, defined by the ODE $\pl_tf_t(z)=v(f_t(z))$ with $f_0(z)=z$, is a family of holomorphic univalent maps with 
$f_{t+s}(\D)\subset f_t(\D)$ for all $t,s\geq 0$ and so that there is $z_0\in \D$ with $f_t(\D)\to z_0$ as $t\to \infty$; in particular $z_0$ is the unique zero of $v$. In what follows we will assume that $v(0)=0$, i.e. $z_0=0$. A particularly important case is when $v(z)=v_0(z):=-z$, in which case $f_t(z)=e^{-t}z$ is the flow of dilations. 
   
To the geometric semigroup $(f_t)_{t\geq0}\subset \mc{S}$, we associate a Markovian semigroup on $\cH$ in the following way.
For $F\in \mc{H}=L^2(H^{s}(\T),\mu_0)$ ($s<0$), we define the operator 
\[ \begin{array}{l}
 P_t^0 : \mc{H}\to \mc{H} , \\
 P^0_tF(c,\varphi):=|f_t'(0)|^{\frac{Q^2}{2}}\E_{\varphi}[F\big(c+(X\circ f_t+Q\log|f_t'/f_t|)|_\T\big)]
\end{array}\]
where $\E_\varphi$ means expectation conditionally on $\varphi$ (i.e. the expectation is in the $X_\D$ random variable; recall the notation $X=X_\D+P\tilde\varphi$). Note that this corresponds to taking $\mu=0$ and $f=f_t$ in Definition \ref{defTf}.

If $\omega:=-{\rm Re}(v'(0))>0$ is large enough (in the sense 
\begin{equation}\label{omegalargeenough}
\omega > K\sup_{z\in\D}|v(z)-{\rm Re}(v'(0))z|
\end{equation} for some $K>0$ large enough), it is shown in \cite[Theorem 2.8]{BGKRV} that $P_t^0=e^{-t{\bf H}^{0}_{\rm v}}$ is a continuous contraction semigroup on $\mc{H}$ with generator 
\[\bH_{\rm v}^0=\bL_{\rm v}^0+\tilde{\bL}_{\rm v}^0\] 
where, if $v(z)=-\sum_{n=0}^\infty v_nz^{n+1}$ and ${\rm v}=v(z)\pl_z$,
\[\bL_{\rm v}^0=\sum_{n=0}^\infty v_n\bL_n^0, \quad \tilde\bL_{\rm v}^0=\sum_{n=0}^\infty \bar{v}_n\tilde{\bL}_n^0.\]
Here $(\bL_n^0)_{n\in \Z}$ and $(\tilde\bL_n^0)_{n\in \Z}$ are two commuting unitary representations of the Virasoro algebra, as densely defined operators on $\cH$, given by (using the notations \eqref{Heisenberg} and Wick notation $:\!\bA_p\bA_q\!:$, meaning we put annihilation operators to the right)
\begin{equation}\label{defL0}
\bL_n^0:=-iQ(n+1)\bA_n+\sum_{m\in\Z}:\!\bA_m\bA_{n-m}\!:, \quad \tilde{\bL}_n^0:=-iQ(n+1)\tilde{\bA}_n+\sum_{m\in\Z}:\!\tilde{\bA}_m\tilde{\bA}_{n-m}\!:.
\end{equation}
For all $n,m\in\Z$, one has
\begin{align*}
&[\bL_n^0,\bL_m^0]=(n-m)\bL_{n+m}^0+\frac{c_L}{12}(n^3-n)\delta_{n,-m},\\
&[\tilde{\bL}_n^0,\tilde{\bL}_m^0]=(n-m)\tilde{\bL}_{n+m}^0+\frac{c_L}{12}(n^3-n)\delta_{n,-m},\\
&[\bL_n^0,\tilde{\bL}_m^0]=0,\qquad(\bL_n^0)^*=\bL_{-n}^0,\qquad(\tilde{\bL}_n^0)^*=\tilde{\bL}_{-n}^0.
\end{align*}

For $\mu\not=0$, one can also consider the Liouville semigroup ${\bf T}_{f_t}:\mc{H}\to \mc{H}$ associated to $v$. It is proved in \cite[Theorem 1.1]{BGKRV} that it is a continuous contraction semigroup on $\mc{H}$ of the form
\begin{equation}\label{def:propagatorvLiouville} 
{\bf T}_{f_t}=e^{-t\bH_{\rm v}}
\end{equation}
with generator 
\begin{equation}\label{defopHv}
\bH_{\rm v}=\sum_{n=0}^\infty v_n\bL_n+\sum_{n=0}^\infty \bar{v}_n\tilde{\bL}_n, \quad \textrm{ if } v(z)=-\sum_{n=0}^\infty v_nz^{n+1} , \, {\rm v}=v(z)\pl_z
\end{equation}
where 
\[ \bL_n =\bL_n^0+ \frac{\mu}{2}\int_0^{2\pi}e^{in\theta}e^{\gamma \varphi(\theta)}\dd \theta, \quad \tilde\bL_n =\tilde\bL_n^0+ \frac{\mu}{2}\int_0^{2\pi}e^{-in\theta}e^{\gamma \varphi(\theta)}\dd \theta. \]
Here $\bL_n,\tilde{\bL}_n$ and $\bH_{\rm v}$ are unbounded operators on $\mc{H}$, mapping continuously $\mc{D}(\mc{Q})\to \mc{D}'(\mc{Q})$ where $\mc{D}(\mc{Q})$ is the domain of the quadratic form $\mc{Q}$
\begin{equation}\label{FQ}
\mc{Q}(F)=\int_{\R}\Big(\frac{1}{2} \|\pl_cF\|_{L^2(\Omega_\T)}^2+\frac{Q^2}{2}\|F\|_{L^2(\Omega_\T)}^2+ 
\sum_{n\geq 1}(\|{\bf A}_nF\|_{L^2(\Omega_\T)}^2+\|\tilde {\bf A}_nF\|_{L^2(\Omega_\T)}^2)+\mu e^{\gamma c}\cjg VF,F\cjd_{L^2(\Omega_\T)}\Big) \dd c 
\end{equation}
associated to the \textbf{Liouville Hamiltonian}
\[{\bf H}:={\bf H}_{{\rm v}_0}={\bf L}_0+\tilde{{\bf L}}_0,\]
with ${\rm v}_0=-z\pl_z$ being the generator of dilations, and $\mc{D}'(\mc{Q})$ is the dual to $\mc{D}(\mc{Q})$. Above $V$ is a non-negative unbounded operator defined using the Gaussian multiplicative chaos, and  when $\gamma<\sqrt{2}$  it is the multiplication by  positive potential given by $V(\varphi)=\lim_{\eps\to 0}\int_0^{2\pi} e^{\gamma \varphi_\eps(\theta)-\frac{\gamma^2}{2}\E[\varphi_\eps(\theta)^2]}\dd \theta$ that belongs to $\bigcap_{\delta>0}L^{\frac{2}{\gamma^2}-\delta}(\Omega_\T)$ , if $\varphi_\eps(\theta)$ is a regularisation at scale $\eps>0$ of $\varphi$. When $\gamma\geq\sqrt{2}$, the interpretation of $V$ can be done using a Girsanov transform, see \cite[Section 5.2]{GKRV20_bootstrap}. The operators $\int_0^{2\pi}e^{-in\theta}e^{\gamma \varphi(\theta)}\dd \theta$ appearing in ${\bf L}_n$ are defined similarly, see \cite{BGKRV}.
Furthermore the definition of the operators $\bL_n,\tilde{\bL}_n:\mc{D}(\mc{Q})\to \mc{D}'(\mc{Q})$ can be extended to $n\in\Z$ by the relation 
\[\cjg \bL_{-n}F,G\cjd_{\mc{H}}:=\cjg F,\bL_n G\cjd_{\mc{H}}\qquad\text{ and }\qquad \cjg \tilde{\bL}_{-n}F,G\cjd_{\mc{H}}=\cjg F,\tilde{\bL}_n G\cjd_{\mc{H}}.\]

Because the vector field $v\pl_z$ is assumed to be holomorphic on a neighborhood of the disc $\D$, it turns out that the family $(f_t)_{t\geq 0}$ actually belongs to $\mc{S}_>$. Therefore   we can rewrite Corollary \ref{cor:annulus_propagator} in this context as:
\begin{corollary}\label{cor:annulus_propagator_t}
Let ${\rm v}=v(z)\pl_z$ be a holomorphic Markovian vector field defined on a neighborhood of the disc $\D$ with $v(0)=0$. If $\omega:=-{\rm Re}(v'(0))>0$ is large enough, in the sense \eqref{omegalargeenough}, and if  
 $(f_t)_{t\geq 0}\subset \mc{S}_>$ is the flow of holomorphic maps defined by $\partial_t f_t=v(f_t)$, $f_0(z)=z$, we  have the following expression 
 \[ e^{-t\bH_{\rm v}}F(\tilde{\varphi}_1)=\frac{e^{-\frac{\rm c_L}{12}W(f_t,g_{f_t})}}{\sqrt{2}\pi}
\int \mc{A}_{\mathbb{A}_{f_t},g_{f_t},\bzeta_t}(\tilde{\varphi}_1,\tilde{\varphi}_2)F(\tilde{\varphi}_2) \,\dd\mu_0(\tilde{\varphi}_2)
\]
where $\bzeta_t(e^{{\rm i}\theta})=(e^{{\rm i}\theta},f_t(e^{{\rm i}\theta}))$, $g_{f_t}$ is any family of admissible metrics on $\A_{f_t}$ 
depending smoothly on $t$, with $W(f,g)$ given by \eqref{def_of_W}.
\end{corollary}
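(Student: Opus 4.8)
The plan is to obtain this identity by simply specialising Corollary \ref{cor:annulus_propagator} to the one-parameter family $f=f_t$ and combining it with the semigroup identity \eqref{def:propagatorvLiouville}; the entire content of the proof is to check that the flow genuinely lands in the class $\mc{S}_+$ to which Corollary \ref{cor:annulus_propagator} applies, and that an admissible metric can be chosen to depend smoothly on $t$.

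First I would verify that $f_t\in\mc{S}_+$ for every $t>0$. The Markovian assumption $\Re(\bar z v(z))<0$ on $\T$ together with $v(0)=0$ gives, as recalled after \eqref{eq:def_markovian}, that $(f_t)_{t\geq0}$ is a family of univalent holomorphic maps with $f_t(0)=0$ and $f_t(\D)\subset\D^\circ$ for $t>0$; by the argument principle $f_t$ is then a biholomorphism onto its image, so $f_t\in\mc{S}$. Since $v$ is holomorphic on a neighbourhood of $\D$ and still points strictly inward along the circles $\{|z|=1+\eps\}$ for $\eps$ small by continuity of $\Re(\bar z v(z))$, the flow extends holomorphically across $\T$ and maps $(1+\eps)\D$ into itself, so that $f_t\in\mc{S}_\eps$ for some $\eps>0$ and therefore $f_t\in\mc{S}_+$. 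This is precisely the observation flagged just before the statement.

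With $f_t\in\mc{S}_+$ in hand, Corollary \ref{cor:annulus_propagator} applies verbatim with $f=f_t$, and after dividing through by $\sqrt2\,\pi\,e^{\frac{c_{\rm L}}{12}W(f_t,g_{f_t})}$ and substituting ${\bf T}_{f_t}=e^{-t{\bf H}_v}$ from \eqref{def:propagatorvLiouville} (valid since $\omega=-v'(0)>0$ is assumed large enough), it reads
\[
e^{-t{\bf H}_v}F(\tilde\varphi_1)=\frac{e^{-\frac{c_{\rm L}}{12}W(f_t,g_{f_t})}}{\sqrt2\,\pi}\int \mc{A}_{\A_{f_t},g_{f_t},\bzeta_{f_t}}(\tilde\varphi_1,\tilde\varphi_2)F(\tilde\varphi_2)\,\dd\mu_0(\tilde\varphi_2),
\]
which, writing $\A_t:=\A_{f_t}$, $g_t:=g_{f_t}$ and $\bzeta^t:=\bzeta_{f_t}$ with $\bzeta^t(e^{i\theta})=(e^{i\theta},f_t(e^{i\theta}))$, is exactly the claimed formula.

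The remaining point, and the only mildly non-formal one, is the smoothness of $g_t$ in $t$. Here I would first note that the choice of admissible metric is immaterial: the left-hand side ${\bf T}_{f_t}$ is metric-independent, so the combination $e^{-\frac{c_{\rm L}}{12}W(f_t,g)}\mc{A}_{\A_t,g,\bzeta^t}$ is the same for every admissible $g$ on $\A_t$ by the Weyl-anomaly bookkeeping built into $W$ (cf.\ the derivation of Corollary \ref{cor:annulus_propagator} from Proposition \ref{compTf} and Proposition \ref{Weyl}). One may therefore take the explicit family $g_{f_t}=e^{-2\chi_{f_t}}g_\A$ with $\chi_{f_t}=h\chi$ as in \eqref{defg_tadmissible}, which is smooth in $f$ and hence in $t$ since $t\mapsto f_t$ is smooth into $\mc{S}_+$. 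The expected obstacle, such as it is, is upgrading the \emph{local} smooth choice of $\chi_f$ near a fixed $f_0$ to one smooth along the entire trajectory $t\mapsto f_t$; this is resolved by a standard partition-of-unity patching in $t$ using continuity of the flow, and causes no genuine difficulty because the value of the right-hand side does not depend on which admissible metric is used.
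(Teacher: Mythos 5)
Your proposal is correct and follows essentially the same route as the paper: the paper treats this corollary as an immediate rewriting of Corollary \ref{cor:annulus_propagator}, noting only that holomorphy of $v$ on a neighborhood of $\D$ forces $(f_t)_{t>0}\subset\mc{S}_+$ and then substituting ${\bf T}_{f_t}=e^{-t{\bf H}_v}$ from \eqref{def:propagatorvLiouville}. Your additional verifications (the inward-pointing extension across $\T$ giving $f_t\in\mc{S}_\eps$, and the metric-independence making the smooth-in-$t$ choice of $g_t$ harmless) simply make explicit what the paper leaves implicit.
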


This corollary will be used in the next section to differentiate the propagator $\mathbf{T}_f$.

\section{Differentiability of propagators}\label{Differentiability_propagator}
 
Here we investigate the differentiability of the map $f\mapsto \mathbf{T}_f$. Recall the definition of $\mc{S}_\eps$ in \eqref{S_eps_S_+}.

\begin{lemma}\label{lemopen}
The set $\mc{S}_\eps$ is  open in ${\rm Hol}^\bullet_\eps(\D)$, for all $\eps>0$. 
\end{lemma}

\begin{proof}
Let $f_0\in \mc{S}_\eps$. Since the mapping $\theta\mapsto f_0(e^{{\rm i}\theta})$ has non vanishing derivative we get $ \inf_{\theta\in [0,2\pi]}|f'_0(e^{{\rm i}\theta})|>0$ and therefore we can find a neighborhood $\mc{O}$ of $f_0$ such that $m:=\inf_{f\in\mc{O}}\inf_{\theta\in[0,2\pi]}|f'(e^{{\rm i}\theta})|>0$. By Taylor's formula, we can then find two constants $c,\delta>0$ such that for all $f\in\mc{O}$ and $z,z'\in\T$
$$|f(z)-f(z')| \geq c|z-z'|$$
  for $|z-z'|\leq \delta$ and all $f\in\mc{O}$. Let now $\eta:=\inf_{|z-z'|\geq\delta}|f_0( z)-f_0(z')|>0$ and $d:={\rm dist}(f_0(\T),\T)>0$. We stress that $\sup_{z\in \T}|f(z)|\leq  \|f\|_{0,0}\leq  \|f\|_{\eps,0}$. Let $B$ be the ball of radius $\frac{\eta\wedge d}{4 }/\big(1+\frac{\eta\wedge d}{4  }\big)$ centered at $f_0$ in ${\rm Hol}^\bullet_\eps(\D)$, in particular $\|f-f_0\|_{0,0}\leq \frac{\eta\wedge d}{4  }$ for $f\in B$. Then $\mc{O}\cap  B$ is a nonempty  open neighborhood of $f_0$ contained in $\mc{S}_\eps$.
\end{proof}
 
We consider the  vector space $\mc{C}_{\rm exp}(\R\times \Omega_\T)$ 
of functions $F:\R\times \Omega_\T\to \C$ so that there is $N>0$ such that $F(c,\varphi)=F(c,x_1,y_1, \cdots, x_N,y_N)$ for all $c\in\R$ with $F$ smooth, and 
\[ \forall k\in \N, \forall \alpha,\beta \in \N^{N}, \exists L\geq 0, \forall M\geq 0,  \exists C>0, 
\quad  |\pl_c^k \pl_{x}^\alpha\pl_y^\beta F(c,\varphi)|\leq Ce^{-M|c|}\cjg \varphi\cjd_N^{L}
\]
if $\pl_{x}^\alpha=\pl_{x_1}^{\alpha_1}\dots \pl_{x_N}^{\alpha_N}$ and $\cjg \varphi\cjd_N=(1+\sum_{|n|\leq N}|\varphi_n|^2)^{1/2}$ is the Japanese bracket. Also, by an abuse of notation, we still denote by $\cjg \cdot,\cdot  \cjd_{\mc{H}}$ the duality pairing between $\mc{D}(\mc{Q})$ and $\mc{D}'(\mc{Q})$.

The main result of this section is the following:
\begin{theorem}\label{th:differentiabiliteTf}
The mapping $f\in \mc{S}_\eps\mapsto \bT_f  \in \mc{L}(\mc{D}(\mc{Q}),\mc{D}(\mc{Q}))$   is differentiable with differential  
$$v\in  {\rm Hol}^\bullet_\eps(\D)\mapsto D_v\bT_f \in \mc{L}(\mc{D}(\mc{Q}),\mc{D}(\mc{Q}))$$
 characterised by 
$$\forall F\in \mc{D} (\mc{Q}),G\in \mc{D}'(\mc{Q}),\quad \cjg D_v\bT_fF,G\cjd_{\mc{H}} := -\mc{Q}_{\rm w}(F,\bT^*_f G) $$
where $\mc{Q}_{\rm w}$ is the closure of the $\mc{Q}$-continuous bilinear form defined on $\mc{C}_{\rm exp}(\R\times \Omega_\T)\times\mc{C}_{\rm exp}(\R\times \Omega_\T)$ by
\begin{equation} 
 \mc{Q}_{\rm w}(F,G):=\sum_nw_n\cjg\mathbf{L}_nF,G\cjd_{\mc{H}} +\sum_n\bbar{w}_n\cjg\widetilde{\mathbf{L}}_nF,G\cjd_{\mc{H}} 
\end{equation}
with ${\rm w}=w(z)\pl_z=v(z)/f'(z)\partial_z$ and $v(z)/f'(z)=-\sum_{n\geq 0} w_nz^{n+1}$ for $|z|=1$.  Equivalently,  $D_v\bT_f=-\bT_f\mathbf{H}_{\rm w}:\mc{D}(\mc{Q})\to \mc{D}(\mc{Q})$  with the operator  $\mathbf{H}_{\rm w}:\mc{D}(\mc{Q})\to \mc{D}'(\mc{Q})$ defined as in \eqref{defopHv}.
\end{theorem}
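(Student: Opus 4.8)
The plan is to reduce the statement to the generator computation of \cite{BGKRV} via the composition rule of Proposition \ref{comprule}, using a large dilation to push the relevant vector field into the Markovian regime where that computation applies. Fix $f\in\mc{S}_\epsilon$ and a tangent vector $v\in{\rm Hol}^\bullet_\epsilon(\D)$, and set $w:=v/f'$. Since $f$ is a biholomorphism onto its image, $f'$ does not vanish near $\D$, so $w$ is holomorphic there, and $w(0)=0$ because $v(0)=0$; write $w(z)=-\sum_{n\geq0}a_nz^{n+1}$. Choose $\lambda>0$ so large that $w_\lambda:=w-\lambda z$ is a Markovian vector field satisfying the hypotheses of \cite[Theorem 1.1]{BGKRV}: condition \eqref{eq:def_markovian} holds since $\Re(\bar z\,w_\lambda(z))=\Re(\bar z\,w(z))-\lambda<0$ on $\T$ for $\lambda$ large, while the combination $w_\lambda(z)-w_\lambda'(0)z=w(z)-w'(0)z$ controlling the smallness condition is independent of $\lambda$ and $\Re(-w_\lambda'(0))=\Re(a_0)+\lambda\to\infty$. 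Let $(\Phi_s)_{s\geq0}\subset\mc{S}_+$ be the flow of $w_\lambda$ and $\rho_s(z):=e^{-s\lambda}z$ the contracting dilation flow. By \eqref{def:propagatorvLiouville}--\eqref{defopHv} one has $\bT_{\Phi_s}=e^{-s\bH_{w_\lambda}}$ with $\bH_{w_\lambda}=\bH_w+\lambda\bH$, and $\bT_{\rho_s}=e^{-s\lambda\bH}$.

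Next I introduce the curve $F_s:=f\circ\Phi_s\circ\rho_s^{-1}$, where $\rho_s^{-1}(z)=e^{s\lambda}z$. A continuity argument as in Lemma \ref{lemopen} shows that for small $s$ one has $F_s\in\mc{S}_{\epsilon'}$ for some $\epsilon'>0$, that $s\mapsto F_s$ is a smooth curve in ${\rm Hol}^\bullet_{\epsilon'}(\D)$ with $F_0=f$, and that by the chain rule its velocity is $\partial_sF_s|_{s=0}=f'\cdot(w_\lambda+\lambda z)=f'\cdot w=v$. The point of the construction is the exact relation $F_s\circ\rho_s=f\circ\Phi_s$, which through Proposition \ref{comprule} yields the operator identity
\[
\bT_{F_s}\,e^{-s\lambda\bH}=\bT_f\,e^{-s\bH_{w_\lambda}},\qquad s\geq0,
\]
in which the artificial dilation has been isolated on the left.

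I then differentiate matrix elements, fixing $F,G$ in the core $\mc{C}_{\rm exp}$. Pairing the identity with $G$ and using that $e^{-s\bH_{w_\lambda}}$ is the strongly continuous semigroup of \cite{BGKRV} with generator $-\bH_{w_\lambda}$ (and that $\bT_f^*G\in\mc{D}(\mc{Q})$ by the smoothing of the adjoint propagator) gives
\[
\frac{d}{ds}\Big|_{s=0}\cjg\bT_{F_s}e^{-s\lambda\bH}F,G\cjd_{\mc{H}}=-\mc{Q}_{w_\lambda}(F,\bT_f^*G)=-\mc{Q}_{w}(F,\bT_f^*G)-\lambda\cjg\bH F,\bT_f^*G\cjd_{\mc{H}}.
\]
On the other hand, decomposing $\cjg\bT_{F_s}e^{-s\lambda\bH}F,G\cjd_{\mc{H}}=\cjg\bT_{F_s}F,G\cjd_{\mc{H}}+\cjg(e^{-s\lambda\bH}-{\rm Id})F,\bT_{F_s}^*G\cjd_{\mc{H}}$ and using the continuity $s\mapsto\bT_{F_s}^*$ (Proposition \ref{boundTf}) together with $\tfrac1s(e^{-s\lambda\bH}-{\rm Id})F\to-\lambda\bH F$ in $\mc{H}$, the last term is differentiable with derivative $-\lambda\cjg\bH F,\bT_f^*G\cjd_{\mc{H}}$. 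Subtracting, the $\lambda$-terms cancel and I obtain the $\lambda$-independent directional derivative
\[
\frac{d}{ds}\Big|_{s=0}\cjg\bT_{F_s}F,G\cjd_{\mc{H}}=-\mc{Q}_{w}(F,\bT_f^*G)=\cjg-\bT_f\bH_wF,G\cjd_{\mc{H}}.
\]
As the right-hand side depends on the curve only through $v=\partial_sF_s|_{s=0}$ and is linear in $v$ (via $w=v/f'$), this identifies the weak directional derivative with $-\bT_f\bH_w$.

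Finally, two points remain, and this is where the real work lies. One must show that $-\bT_f\bH_w$ maps $\mc{D}(\mc{Q})$ continuously into $\mc{D}(\mc{Q})$ (a priori $\bH_w$ only maps $\mc{D}(\mc{Q})\to\mc{D}'(\mc{Q})$), and then upgrade the pointwise identity above to a genuine derivative of $f\mapsto\bT_f$ in the topology of $\mc{L}(\mc{D}(\mc{Q}),\mc{D}(\mc{Q}))$. Both rest on the \emph{smoothing property} of the annulus propagator: since $\A_f$ has positive modulus one may factor $\bT_f=\bT_h\,e^{-t_0\bH}$ for some $t_0>0$ and $h=f(e^{t_0}\cdot)\in\mc{S}_{\epsilon'}$, and the geometric decay of the coefficients $a_n$ of the analytic field $w$ should make $e^{-t_0\bH}\bH_w:\mc{D}(\mc{Q})\to\mc{D}(\mc{Q})$ bounded; composing with the bounded $\bT_h$ gives $\bT_f\bH_w\in\mc{L}(\mc{D}(\mc{Q}),\mc{D}(\mc{Q}))$. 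The same estimates, applied to the manifestly differentiable factor $\bT_fe^{-s\bH_{w_\lambda}}e^{s\lambda\bH}$ and to the second-order Taylor remainder of $s\mapsto F_s$, upgrade the weak derivative to norm-differentiability and yield continuity of $v\mapsto D_v\bT_f=-\bT_f\bH_w$, hence Fréchet differentiability on the open set $\mc{S}_\epsilon$. I expect the main obstacle to be exactly these quantitative smoothing bounds: controlling the unbounded factor $e^{s\lambda\bH}$ against the contraction built into $\bT_f$ uniformly in $s$, and quantifying how $e^{-t_0\bH}$ absorbs $\bH_w$ through the analyticity of $w$.
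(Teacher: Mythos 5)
Your conjugation trick is clever and the computation it supports is essentially correct: $F_s:=f\circ\Phi_s\circ\rho_s^{-1}$ does satisfy $\partial_sF_s|_{s=0}=v$, the identity $\bT_{F_s}e^{-s\lambda\bH}=\bT_fe^{-s\bH_{w_\lambda}}$ follows from Proposition \ref{comprule}, and the cancellation of the $\lambda$-terms in the matrix elements is valid, giving $\tfrac{d}{ds}\big|_{s=0}\cjg\bT_{F_s}F,G\cjd_{\mc{H}}=-\mc{Q}_w(F,\bT_f^*G)$ on the core. This is a genuinely different route from the paper: the paper perturbs along the straight line $f+tv$, writes $\bT_{f+tv}=\bT_f\circ\bT_{f_t}$ with $f_t=f^{-1}\circ(f+tv)$, and, precisely because $f_t$ is in general \emph{not} the flow of any autonomous vector field, develops the piecewise-autonomous approximation of a non-autonomous flow to obtain the integrated identity \eqref{integrated1}. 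Your $\lambda$-shift also parallels the paper's own final extension step, where a general direction is written as a difference of two directions satisfying the coercivity condition \eqref{condQnice}.

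However, there is a genuine gap, and it is not the one you flag. What you have computed is the derivative of $s\mapsto\bT_{F_s}$ along \emph{one particular curve} tangent to $v$; the assertion that this "identifies the weak directional derivative" because the answer "depends on the curve only through $v$" is circular: derivatives along different curves with the same tangent vector are guaranteed to agree only once the map is known to be differentiable, which is exactly what is being proved. Concretely, you must show $\bT_{f+tv}-\bT_{F_t}=o(t)$ in $\mc{L}(\mc{D}(\mc{Q}),\mc{D}(\mc{Q}))$, knowing only $F_t=(f+tv)+O(t^2)$ in ${\rm Hol}^\bullet_\epsilon(\D)$. The natural factorization $\bT_{f+tv}=\bT_{F_t}\circ\bT_{G_t}$ with $G_t:=F_t^{-1}\circ(f+tv)$ is unavailable, because $G_t={\rm Id}+O(t^2)$ need not lie in $\mc{S}$ at all ($G_t(\D)\subset\D^\circ$ can fail in either direction, and ${\rm Id}\notin\mc{S}$); and even when it does, one would need $\|\bT_{G_t}-{\rm Id}\|_{\mc{D}(\mc{Q})\to\mc{D}(\mc{Q})}=o(t)$, an estimate of exactly the type the theorem is meant to produce (the paper's Lemma \ref{estTft} only yields $O(t^{1/2})$, and only from $W^1$ to $W^0$, and only for coercive flows). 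The alternative would be a local Lipschitz bound for $f\mapsto\bT_f$ in $\mc{L}(\mc{D}(\mc{Q}),\mc{D}(\mc{Q}))$ with respect to the seminorms \eqref{seminorm}, but the paper only provides continuity in $\mc{L}(\mc{H},\mc{H})$ (Proposition \ref{boundTf}), and obtaining Lipschitz regularity is comparable in difficulty to the theorem itself. This curve-versus-line discrepancy is the very difficulty that forces the paper into non-autonomous flows, so it cannot be dismissed; on top of it, the norm-$o(t)$ remainder control that you defer to "quantitative smoothing bounds" is, in the paper, the bulk of the proof (Lemmas \ref{estTfstar}, \ref{estTft}, \ref{commut} and the estimates on the remainders $R^1_t,R^2_t$), together with the promotion of right-derivatives to derivatives and the passage from coercive to arbitrary directions.
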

 
 The next subsection is devoted to the proof of this result.

 \subsection{Proof of Theorem \ref{th:differentiabiliteTf}}\label{app:differen}
 
\subsubsection{Quadratic forms}

First, we  extend the definition of quadratic forms $\mc{Q}_{\rm v}$ for  ${\rm v}=v(z)\partial_z $ a holomorphic vector field and we study the regularity properties of this map as a function of $v$.

Consider the quadratic form $\mc{Q}_0$ with domain $  \mc{D}(\mc{Q}_0)$ defined by \eqref{FQ} with $\mu=0$ (i.e. the potential term is turned off). Recall the definition of the operators ${\bf L}_n^0$ in \eqref{defL0}  and that there exist some constant $C>0$ such that for all $n \in \Z$ and all $F,G\in\mc{C}_{\rm exp}(\R\times \Omega_\T)$  (see  \cite[Lemma 2.1]{BGKRV})
\begin{equation}\label{bilinearboundLn}
\begin{split} 
| \cjg {\bf L}^0_n F,G \cjd_{\mc{H}}|\leq C(1+|n|)^{3/2}\mc{Q}_0(F)^{1/2}\mc{Q}_0(G)^{1/2},\\
| \cjg \tilde{{\bf L}}^0_n F,G \cjd_{\mc{H}}|\leq C(1+|n|)^{3/2}\mc{Q}_0(F)^{1/2}\mc{Q}_0(G)^{1/2}  
\end{split} \end{equation}
The operator ${\bf L}_n^0$ and $\tilde{\bf L}_n^0$ thus extend as bounded operators 
${\bf L}_n^0,\tilde{\bf L}_n^0: \mc{D}(\mc{Q}_0)\to \mc{D}'(\mc{Q}_0)$. As a direct consequence, the following holds:

\begin{corollary}\label{corfq1}
Let $v(z)=-\sum_{n\geq -1}v_nz^{n+1}\in  {\rm Hol}_\eps(\D)$  and ${\rm v}=v(z)\pl_z$  then the operator 
$$
\mathbf{H}^0_{\rm v}:=\sum_{n=-1}^\infty {\rm Re}(v_n)(\mathbf{L}^0_n +\widetilde{\mathbf{L}}^0_n  ) +{\rm i}\,{\rm Im}(v_n)(\mathbf{L}^0_n -\widetilde{\mathbf{L}}^0_n  ) $$
extends as a bounded operator $\mathbf{H}^0_{\rm v}: \mc{D}(\mc{Q}_0)\to \mc{D}'(\mc{Q}_0)$ with norm bounded by $C\|v\|_{\eps=0,3}$ for some $C>0$ independent of $v$.
\end{corollary}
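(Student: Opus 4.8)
The plan is to reduce the claim to a purely scalar summability estimate. Since $\mc{D}'(\mc{Q}_0)$ is the dual of $\mc{D}(\mc{Q}_0)$ for the pairing induced by $\mc{Q}_0$, it is enough to prove the bilinear bound
\[ |\cjg \mathbf{H}^0_v F, G\cjd_{\mc{H}}| \leq C\|v\|_{\epsilon=0,3}\,\mc{Q}_0(F)^{1/2}\mc{Q}_0(G)^{1/2} \]
for all $F,G\in \mc{C}_{\rm exp}$, the dense core on which the operators $\mathbf{L}^0_n,\widetilde{\mathbf{L}}^0_n$ and the form $\mc{Q}_0$ are already defined. The bilinear bounds \eqref{bilinearboundLn} have done the hard analytic work: they factor out $\mc{Q}_0(F)^{1/2}\mc{Q}_0(G)^{1/2}$, so that the only remaining task is to absorb the polynomial growth $(1+|n|)^{3/2}$ in $n$ into the weighted $\ell^2$ norm of the coefficients $(v_n)$.

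Concretely, I would first expand $\cjg \mathbf{H}^0_v F, G\cjd_{\mc{H}}$ as the series $\sum_{n\geq -1}\big[{\rm Re}(v_n)(\cjg \mathbf{L}^0_n F,G\cjd_{\mc{H}}+\cjg \widetilde{\mathbf{L}}^0_n F,G\cjd_{\mc{H}}) + i\,{\rm Im}(v_n)(\cjg \mathbf{L}^0_n F,G\cjd_{\mc{H}}-\cjg \widetilde{\mathbf{L}}^0_n F,G\cjd_{\mc{H}})\big]$, then apply the triangle inequality together with $|{\rm Re}(v_n)|+|{\rm Im}(v_n)|\leq \sqrt 2\,|v_n|$ and \eqref{bilinearboundLn} for both $\mathbf{L}^0_n$ and $\widetilde{\mathbf{L}}^0_n$. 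This bounds the pairing by a constant times $\big(\sum_{n\geq -1}(1+|n|)^{3/2}|v_n|\big)\mc{Q}_0(F)^{1/2}\mc{Q}_0(G)^{1/2}$. Writing $(1+|n|)^{3/2}=(1+|n|)^{3}(1+|n|)^{-3/2}$ and applying Cauchy--Schwarz gives
\[ \sum_{n\geq -1}(1+|n|)^{3/2}|v_n| \leq \Big(\sum_{n\geq -1}(1+|n|)^{-3}\Big)^{1/2}\Big(\sum_{n\geq -1}(1+|n|)^{6}|v_n|^2\Big)^{1/2}. \]
By \eqref{seminorm} with $\epsilon=0$ the second factor is exactly $\|v\|_{\epsilon=0,3}$, while the first is a finite numerical constant because $\sum_{n\geq -1}(1+|n|)^{-3}<\infty$; this yields the desired estimate.

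The remaining point is to upgrade this pairing bound into the operator statement $\mathbf{H}^0_v\in\mc{L}(\mc{D}(\mc{Q}_0),\mc{D}'(\mc{Q}_0))$ with norm $\leq C\|v\|_{\epsilon=0,3}$. Applying the same computation to the tails $\sum_{n>N}$ shows that the partial sums defining $\mathbf{H}^0_v$ are Cauchy in operator norm (the tail of $\sum(1+|n|)^{-3}$ tends to $0$), so the series converges, and the bound then extends from $\mc{C}_{\rm exp}$ to all of $\mc{D}(\mc{Q}_0)$ by density. I do not expect a genuine obstacle here; the only thing requiring attention is the exponent bookkeeping, namely that the growth $(1+|n|)^{3/2}$ of \eqref{bilinearboundLn} forces $6-2\cdot\tfrac32=3>1$, which is precisely why the index $k=3$ (any $k>2$ would suffice) makes the series summable and explains the appearance of $\|v\|_{\epsilon=0,3}$.
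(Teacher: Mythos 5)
Your proof is correct and takes exactly the route the paper intends: Corollary \ref{corfq1} is stated there as a ``direct consequence'' of \eqref{bilinearboundLn}, and your argument (triangle inequality over $n$, Cauchy--Schwarz with the split $(1+|n|)^{3/2}=(1+|n|)^{3}(1+|n|)^{-3/2}$, and the identification $\sum_{n\geq -1}(1+|n|)^{6}|v_n|^2=\|v\|_{\epsilon=0,3}^2$ from \eqref{seminorm}) is precisely the bookkeeping the paper leaves implicit. The tail/density remarks at the end correctly justify convergence of the series in $\mc{L}(\mc{D}(\mc{Q}_0),\mc{D}'(\mc{Q}_0))$ and the extension from $\mc{C}_{\rm exp}$, so nothing is missing.
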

This is a control of the free field part. Now we study the case with the Liouville potential, i.e. when $\mu>0$. For this, we consider  ${\rm v}=v(z)\partial_z $ with  $v \in  {\rm Hol}_\eps(\D)$ and the bilinear form on $\mc{C}_{\rm exp}(\R\times \Omega_\T)$, denoted by $(F,G)\mapsto  \langle V_{\rm v} F,G\rangle_{\mc{H}}$ where $V_{\rm v}:=\frac{1}{2\pi}\int_{0}^{2 \pi} e^{\gamma\varphi(\theta)}\d\varrho_v(\theta)$ with $\d\varrho_v(\theta)=-\Re(e^{-{\rm i}\theta}v(e^{{\rm i}\theta}))\d\theta$. Notice that the GMC potential $V_{\rm v}$ makes sense as a random variable when $\gamma<\sqrt{2}$. For $ \sqrt{2}\leq \gamma<2$, this bilinear form can be given a meaning via the Cameron-Martin theorem as in \cite[Section 5.2]{GKRV20_bootstrap}.

\begin{corollary}\label{fqcont}
Let $v(z)=-\sum_{n\geq -1}v_nz^{n+1}\in  {\rm Hol}_\eps(\D)$ and ${\rm v}=v(z)\pl_z$. Then  the quadratic form
\[\forall F,G \in\mc{D}(\mc{Q}),\quad \mc{Q}_{\rm v}(F,G):=\cjg \mathbf{H}^0_{\rm v}F,G\cjd_{\mc{H}}+\mu \langle V_{\rm v} F,G\rangle_{\mc{H}}\]
 is $\mc{Q} $-continuous  with bound
$$ | \mc{Q}_{\rm v}(F,G)|\leq C \|v\|_{\eps=0,3}\mc{Q}(F,F)^{1/2}\mc{Q}(G,G)^{1/2}.$$
In particular,  the mapping  $v\in{\rm Hol}_\eps(\D)  \mapsto \mc{Q}_{\rm v}\in \mathcal{L}(\mc{D}(\mc{Q} )\times \mc{D}(\mc{Q} ))$  is continuous if ${\rm v}=v(z)\pl_z$.
Set $\omega:={\rm Re}(v_0)$. We say that $v$ satisfies the coercivity condition if
 \begin{equation}\label{condQnice}
D(v):=\omega -C\big(|{\rm Im}(v_0)|^2+\sum_{n\geq  -1,n\not=0}(1+|n|)^{6} |v_n|^2\big)^{1/2}>0.  
\end{equation}
 The coercivity condition
implies that the quadratic form $\mc{Q}_{\rm v}$ is a coercive closed form, namely
\[
{\rm Re}\,\mc{Q}_{\rm v}(F,F)\ge D(v)\mc{Q}(F,F),\qquad F\in\mc{D}(\mc{Q}),
\]
with coercivity constant $D(v)$. In that case, there exists a unique contraction semigroup $e^{-t \mathbf{H}_{\rm v}}$ associated to $\mc{Q}_{\rm v}$. 
\end{corollary}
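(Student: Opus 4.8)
The statement of Corollary~\ref{fqcont} has three parts: (i) the $\mc{Q}$-continuity bound with the constant $C\|v\|_{\epsilon=0,3}$, (ii) continuity of $v\mapsto\mc{Q}_v$, and (iii) the coercivity condition implying that $\mc{Q}_v$ is coercive and closed, hence generates a contraction semigroup. The plan is to obtain (i) by splitting $\mc{Q}_v$ into its free-field part $\cjg\mathbf{H}^0_vF,G\cjd_{\mc{H}}$ and its potential part $\mu\cjg V_vF,G\cjd_{\mc{H}}$. The free-field part is already handled: Corollary~\ref{corfq1} gives $|\cjg\mathbf{H}^0_vF,G\cjd_{\mc{H}}|\leq C\|v\|_{\epsilon=0,3}\mc{Q}_0(F)^{1/2}\mc{Q}_0(G)^{1/2}$, and since $\mc{Q}_0(F)\leq\mc{Q}(F)$ (the potential term in \eqref{FQ} is nonnegative), this is bounded by $C\|v\|_{\epsilon=0,3}\mc{Q}(F)^{1/2}\mc{Q}(G)^{1/2}$. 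The remaining work is the potential term.

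For the potential part I would control $\cjg V_vF,G\cjd_{\mc{H}}$ by the potential part of the quadratic form. Writing $V_v=\frac{1}{2\pi}\int_0^{2\pi}e^{\gamma\varphi(\theta)}\,\d\varrho_v(\theta)$ with $\d\varrho_v(\theta)=-\Re(e^{-i\theta}v(e^{i\theta}))\,\d\theta$, the key point is the pointwise bound
\[
|\d\varrho_v(\theta)|=|\Re(e^{-i\theta}v(e^{i\theta}))|\,\d\theta\leq \|v\|_{\infty,\T}\,\d\theta\leq C\|v\|_{\epsilon=0,3}\,\d\theta,
\]
using that the sup-norm on $\T$ is dominated by the $\|\cdot\|_{\epsilon=0,3}$ seminorm (Sobolev embedding on the circle, with room to spare from the $(1+|n|)^3$ weight). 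Thus $|\cjg V_vF,G\cjd_{\mc{H}}|\leq C\|v\|_{\epsilon=0,3}\cjg VF,G\cjd_{\mc{H}}$ where $V$ is the positive reference potential appearing in \eqref{FQ}. By Cauchy--Schwarz for the nonnegative form $\cjg V\cdot,\cdot\cjd$ this is $\leq C\|v\|_{\epsilon=0,3}\cjg VF,F\cjd^{1/2}\cjg VG,G\cjd^{1/2}\leq C\|v\|_{\epsilon=0,3}\mc{Q}(F)^{1/2}\mc{Q}(G)^{1/2}$, again because the potential term sits inside $\mc{Q}$. When $\gamma\geq\sqrt{2}$ the object $V_v$ is interpreted via the Cameron--Martin/Girsanov argument of \cite[Section 5.2]{GKRV20_bootstrap} as in the reference potential, and the same domination goes through. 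Combining the two parts gives (i), and continuity (ii) is then immediate from linearity of $v\mapsto\mc{Q}_v$ together with the bound, applied to $v-v'$.

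For (iii), I would show that the coercivity condition \eqref{condQnice} forces $\Re\mc{Q}_v+\lambda\|\cdot\|^2_{\mc{H}}$ to dominate $\mc{Q}$ for a suitable shift. Writing $\omega=\Re(v_0)$, the term $\Re(v_0)(\mathbf{L}_0+\widetilde{\mathbf{L}}_0)=\omega\mathbf{H}$ contributes $\omega\,\mc{Q}(F)$ to the real part, the leading coercive term; all other Fourier modes $v_n$, $n\neq 0$, together with $i\,\mathrm{Im}(v_0)$, are bounded using the per-mode version of \eqref{bilinearboundLn} and the potential bound above, producing an error controlled by $C\big(|\mathrm{Im}(v_0)|^2+\sum_{n\neq 0}(1+|n|)^5|v_n|^2\big)^{1/2}\mc{Q}(F)$. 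The $(1+|n|)^5$ weight is exactly what absorbs the $(1+|n|)^{3/2}$ factor squared from \eqref{bilinearboundLn} summed against $\ell^2$ coefficients. The hypothesis $\omega^2>C^2(\cdots)$ then makes the coefficient of $\mc{Q}(F)$ strictly positive, giving sectoriality/coercivity of $\mc{Q}_v$; closedness follows since the form is a bounded perturbation of the closed form $\omega\mc{Q}$ on the fixed domain $\mc{D}(\mc{Q})$. The existence of the contraction semigroup $e^{-t\mathbf{H}_v}$ is then the standard conclusion of the Lumer--Phillips/Hille--Yosida theory for coercive closed sectorial forms. The main obstacle I anticipate is the potential term for $\gamma\in[\sqrt{2},2)$, where $V_v$ is not a genuine multiplication operator and the domination $|\d\varrho_v|\leq C\|v\|_{\epsilon=0,3}\,\d\theta$ must be carried through the Girsanov reinterpretation carefully; but since this only uses positivity of the measure $\d\varrho_v$ relative to $\d\theta$ (a pointwise scalar bound), it should reduce directly to the corresponding estimate already established for the reference potential $V$.
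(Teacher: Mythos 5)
Your proposal is correct and follows essentially the same route as the paper: the free-field part via Corollary \ref{corfq1}, the potential part by dominating the measure $\d\varrho_v$ by $\sup_\theta|\Re(e^{-i\theta}v(e^{i\theta}))|\,\d\theta$ together with Cauchy--Schwarz, coercivity by splitting off the $\omega\mathbf{H}$ term (the paper writes $\mc{Q}_v=\omega\mc{Q}+\mc{Q}_{\tilde v}$ with $\tilde v=v+\omega z\pl_z$ and applies the continuity bound to $\mc{Q}_{\tilde v}$), and then the standard m-accretive/Hille--Yosida machinery for the contraction semigroup. The only cosmetic slip is your intermediate inequality $|\cjg V_vF,G\cjd_{\mc{H}}|\leq C\|v\|_{\epsilon=0,3}\cjg VF,G\cjd_{\mc{H}}$, whose right-hand side need not be nonnegative; the paper states the bound directly in the Cauchy--Schwarz form $|\cjg V_vF,G\cjd_{\mc{H}}|\leq S\,\cjg VF,F\cjd_{\mc{H}}^{1/2}\cjg VG,G\cjd_{\mc{H}}^{1/2}$, which is exactly what you use in the following step anyway.
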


\begin{proof}
Recall that we denote by $V$ the quadratic form $V_{\rm v}$ with ${\rm v}=-z\pl_z$. Since $v$ is continuous we have
\begin{equation}\label{inegQ}
 |\langle V_{\rm v} F,G\rangle_{\mc{H}}|\leq C_v \langle V  F,F\rangle_{\mc{H}}^{1/2} \langle V  G,G\rangle_{\mc{H}}^{1/2}
\end{equation}
with $C_v=\sup_{\theta\in[0,2\pi]}|{\rm Re}(e^{-{\rm i}\theta}v(e^{{\rm i}\theta}))|$. Coupling this bound with Corollary \ref{corfq1}, we get the $\mc{Q} $-continuity of the quadratic form $\mc{Q}_{\rm v}$ with norm $C\|v\|_{\eps=0,3}$ for some $C>0$. Hence  the mapping   $v\in  {\rm Hol}_\eps(\D) \mapsto \mc{Q}_{\rm v}\in \mc{D}'(\mc{Q})\otimes \mc{D}'(\mc{Q})$ is continuous.

Set $\omega:={\rm Re}(v_0)$ and $\tilde{{\rm v}}:={\rm v}+\omega z\partial_z=\tilde{v}(z)\pl_z$. On $\mc{C}_{\rm exp}(\R\times \Omega_\T)\times \mc{C}_{\rm exp}(\R\times \Omega_\T)$, we have
\[ \mc{Q}_{\rm v}=\omega  \mc{Q} + \mc{Q}_{\tilde {\rm v} } .\]
Using the bound for $ \mc{Q}_{\tilde{\rm v}}$   we get that 
\[
{\rm Re}\,\mc{Q}_{\rm v}(F,F)
=
\omega\mc{Q}(F,F)+{\rm Re}\,\mc{Q}_{\tilde{\rm v}}(F,F)
\geq
(\omega-C\|\tilde v\|_{\eps=0,3})\mc{Q}(F,F).
\]
 Hence the quadratic form is coercive and $\mc{Q}$-continuous if $\omega-C\|\tilde v\|_{\eps=0,3}>0$, and this is equivalent to the bound \eqref{condQnice}.
 This means that $\mc{Q}_{\rm v}$ is strictly m-accretive \cite[Chapter VIII.6]{Reed-Simon}. By Theorem VIII.16 and the following lemma in \cite{Reed-Simon}, there is a unique closed operator extending $\bH_{\rm v}$ defined in a dense domain $\mc{D}(\bH_{\rm v})\subset\mc{D}(\mc{Q})$, with $(\bH_{\rm v}-\lambda)^{-1}$ invertible  if ${\rm Re}(\la)<0$, and resolvent 
 bound $\|(\bH_{\rm v}-\lambda)^{-1}\|_{\mc{H}\to \mc{H}}\leq (-{\rm Re}(\la))^{-1}$. By the Hille-Yosida theorem, $\bH_{\rm v}$ is the generator of a contraction semigroup denoted $e^{-t\bH_{\rm v}}$.
\end{proof}

Let us  now state a result about the action of the flow related to some vector field $v$ on $\mc{S}_>$,  the proof of which is essentially contained in \cite[Lemma 2.2]{BGKRV}.
\begin{lemma}\label{flowS}
Let ${\rm v}=v(z)\partial_z$, with $v\in {\rm Hol}_\eps(\D)$, be a holomorphic vector field. If $v$ satisfies the coercivity condition \eqref{condQnice} then $v$ is Markovian. If $v$ is Markovian then the flow $\partial_t f_t=v(f_t)$ preserves $\mathcal{S}$, namely $f_0\in \mc{S}_>$ implies $f_t\in \mc{S}_>$ for $t>0$. If $v\in {\rm Hol}^\bullet_\eps(\D)$ then the condition $f_0\in \mathcal{S}_\eps$ implies $f_t\in \mathcal{S}_\eps$ for all $t>0$. 
\end{lemma}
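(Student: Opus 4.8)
The plan is to establish the three assertions of the lemma separately, deducing them from the flow structure recalled in Section~\ref{sec:Markovian_operator} together with the coercivity bound \eqref{condQnice}; throughout I use the standing normalization $v(0)=0$ of Section~\ref{sec:Markovian_operator}, which pins the unique zero of $v$ at the origin (so $v_{-1}=0$).

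First I would check that $v$ is Markovian. Writing $v(z)=-\sum_{n\geq-1}v_nz^{n+1}$ and taking $z=e^{i\theta}\in\T$, one computes
\[
\Re(\bar z\,v(z))=-\Re\Big(\sum_{n\geq-1}v_ne^{in\theta}\Big)=-\omega-\Re\Big(\sum_{n\geq-1,\,n\neq0}v_ne^{in\theta}\Big),\qquad \omega:=\Re(v_0).
\]
The remaining sum is controlled by Cauchy--Schwarz against the weight $(1+|n|)^{5}$:
\[
\Big|\sum_{n\neq0}v_ne^{in\theta}\Big|\leq\Big(\sum_{n\neq0}(1+|n|)^{-5}\Big)^{1/2}\Big(\sum_{n\geq-1,\,n\neq0}(1+|n|)^{5}|v_n|^2\Big)^{1/2}.
\]
Since the first factor is a finite absolute constant, up to enlarging the universal constant $C$ in \eqref{condQnice} the coercivity condition yields $\sum_{n\neq0}|v_n|<\omega$, hence $\Re(\bar z\,v(z))\leq-\omega+\sum_{n\neq0}|v_n|<0$ on $\T$, i.e. $v$ is Markovian.

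Second, for the preservation of $\mathcal{S}_+$, I would observe that by uniqueness for the flow ODE the solution of $\partial_tf_t=v(f_t)$ with datum $f_0$ is $f_t=\Phi_t\circ f_0$, where $\Phi_t$ is the flow issued from the identity. As $v$ is Markovian with $v(0)=0$, the discussion of Section~\ref{sec:Markovian_operator} (following \cite[Theorem~2.8]{BGKRV}) gives that, for $t>0$, $\Phi_t$ is univalent with $\Phi_t(\D)\subset\D^\circ$ and $\Phi_t(0)=0$, so $\Phi_t\in\mathcal{S}$; moreover $\Phi_t$ extends analytically across $\T$ because $v$ is holomorphic on a neighbourhood of $\overline\D$, so $\Phi_t\in\mathcal{S}_+$. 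It then suffices to note that $\mathcal{S}_+$ is stable under composition: if $f_0\in\mathcal{S}_+$ then $f_0(\T)$ is a compact subset of $\D^\circ$ on a neighbourhood of which $\Phi_t$ is holomorphic, so $\Phi_t\circ f_0$ is analytic near $\T$; and $\Phi_t\circ f_0\in\mathcal{S}$ by the semigroup property of $\mathcal{S}$ under composition. Hence $f_t=\Phi_t\circ f_0\in\mathcal{S}_+$.

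Third, for the sharper statement $f_0\in\mathcal{S}_\epsilon\Rightarrow f_t\in\mathcal{S}_\epsilon$ one must upgrade the previous step to the holomorphy domain $(1+\epsilon)\D$. The natural route is to show that coercivity makes the flow contract the \emph{whole} disk $(1+\epsilon)\overline\D$: using $v(0)=0$ one has $\Re(\bar\zeta\,v(\zeta))=|\zeta|^2\,\Re\big(v(\zeta)/\zeta\big)$ with $\Re(v(\zeta)/\zeta)=-\omega-\Re\big(\sum_{n\geq1}v_n\zeta^n\big)$, and one would try to make this negative uniformly on $\{|\zeta|\leq1+\epsilon\}$, so that $s\mapsto|\Phi_s(\zeta)|$ is strictly decreasing, $\Phi_s$ maps $(1+\epsilon)\overline\D$ into itself and stays holomorphic on $(1+\epsilon)\D^\circ$; combined with $f_0\in{\rm Hol}^\bullet_\epsilon(\D)$ this would give $f_t=\Phi_t\circ f_0\in{\rm Hol}^\bullet_\epsilon(\D)$, whence $f_t\in\mathcal{S}_\epsilon$ by the second step. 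The main obstacle lies exactly here: the coercivity bound \eqref{condQnice} is expressed through the $\epsilon=0$ seminorms of \eqref{seminorm}, whereas the uniform negativity of $\Re(v(\zeta)/\zeta)$ on $(1+\epsilon)\overline\D$ — and the fact that $f_0((1+\epsilon)\D)$ remains inside the domain of $v$ — involve the heavier $(1+\epsilon)$-weighted quantities. Reconciling these two scales, and thereby controlling both the flow domain and the image $f_0((1+\epsilon)\D)$, is the delicate point, which I would carry out following the argument of \cite[Lemma~2.2]{BGKRV}.
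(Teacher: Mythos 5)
First, a point of comparison: the paper contains no proof of Lemma \ref{flowS} at all — it only asserts that the proof ``is essentially contained in \cite[Lemma 2.2]{BGKRV}''. Your first two steps therefore go beyond what the paper records, and they are correct. The Cauchy--Schwarz bound $\sum_{n\neq0}|v_n|\leq\big(\sum_{n\neq0}(1+|n|)^{-5}\big)^{1/2}\big(\sum_{n\neq0}(1+|n|)^{5}|v_n|^2\big)^{1/2}<\omega$ does yield Markovianity, granting (as the paper does implicitly in the proof of Corollary \ref{fqcont}) that coercivity carries $\omega>0$ and that $C$ dominates the absolute constant in front. Likewise the decomposition $f_t=\Phi_t\circ f_0$ by ODE uniqueness, the analytic extension of $\Phi_t$ across $\T$, and the stability of $\mathcal{S}_+$ under composition settle the second assertion; the only terse spot is the extension step, whose real justification is not merely that $v$ is holomorphic near $\overline{\D}$ but that the strict inequality $\Re(\bar z v(z))<0$ persists on an annulus around $\T$ by continuity, so trajectories started slightly outside $\T$ can never exit the domain of $v$, making $\Phi_t$ holomorphic on $(1+\delta)\D$ for some $\delta>0$.

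The genuine gap is your third step, which you flag but do not close: you reduce the $\mathcal{S}_\epsilon$-statement to ``the flow contracts $(1+\epsilon)\overline{\D}$'' and then defer to \cite[Lemma 2.2]{BGKRV}. Your scale-mismatch worry is not a removable technicality: the third assertion cannot be derived from \eqref{condQnice} as written. Take $v(z)=-z-\rho^{-N}z^{N+1}$ with $1<\rho<1+\epsilon$ fixed and $N$ large. This $v$ lies in ${\rm Hol}^\bullet_\epsilon(\D)$, is Markovian, and satisfies \eqref{condQnice} since $(1+N)^{5}\rho^{-2N}\to0$; yet its flow is explicitly $\Phi_t(z)=ze^{-t}\big(1+\rho^{-N}z^{N}(1-e^{-Nt})\big)^{-1/N}$, with branch-point singularities on the circle of radius $\rho(1-e^{-Nt})^{-1/N}$. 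Composing with $f_0(z)=rz\in\mathcal{S}_\epsilon$, where $\rho/(1+\epsilon)<r<1$, the singularities of $f_t=\Phi_t(r\,\cdot)$ enter $(1+\epsilon)\D^\circ$ as soon as $t$ exceeds a small positive threshold, so $f_t\notin{\rm Hol}_\epsilon(\D)$ and a fortiori $f_t\notin\mathcal{S}_\epsilon$, while $f_t\in\mathcal{S}_+$ throughout (consistent with your step two). So no completion of your sketch exists under the quoted hypothesis: one needs the rescaled condition that $z\mapsto v(e^{s}z)$ is coercive for all $s\in[0,\log(1+\epsilon)]$ — precisely the type of hypothesis the paper itself imposes in \eqref{wtcoercif} and in Lemma \ref{commut} — under which your step-one computation, run on each circle $|z|=e^{s}$, shows $v$ points inward there, so $\Phi_t$ preserves every disk $e^{s}\D$ and is univalent and holomorphic on $(1+\epsilon)\D$. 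Even then one must still address that $f_0\in\mathcal{S}_\epsilon$ does not force $f_0((1+\epsilon)\D)\subset(1+\epsilon)\D$, so the composition $\Phi_t\circ f_0$ on the larger disk needs a further argument. In short: steps one and two are solid and self-contained, but step three is not a proof, and the statement it targets in fact requires a stronger hypothesis than the one quoted in the lemma.
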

Finally we have the following:
\begin{lemma}\label{lemmaweird}
 If $f\in \mc{S}_\eps$ and $v\in{\rm Hol}^\bullet_\eps(\D)$, we set $f_t(z)=f^{-1}(f(z)+tv(z))$. Then, for some $\eps'\in (0,\eps)$ and $\delta>0$ sufficiently small, the map 
 \[h: (s,t)\in [0,\delta]^2\mapsto \frac{v\circ f_t^{-1}(e^s\cdot)}{f'(e^s\cdot)}\in {\rm Hol}^\bullet_{\eps'}(\D)\] 
 is continuous.
If $v/f'$ obeys the coercivity condition   \eqref{condQnice}, then $h(s,t)$ satisfies this condition as well for $s,t\in [0,\delta]$ with $\delta>0$ small enough, for some uniform coercivity constant $D$.
\end{lemma}
\begin{proof}
Since $f:\D^\circ \to f(\D^\circ)$ is a biholomorphism and extends biholomorphically over a neighborhood of the closed unit disk $\D$, we deduce that for $\eps^{(2)}>0$ small enough, the restriction of $f$ to $(1+\eps^{(2)})\D^\circ$ is a biholomorphism onto its image $f((1+\eps^{(2)})\D^\circ)$. If $\eps^{(1)}<\eps^{(2)}$ and $\delta^{(1)}>0$ is small enough, then, for all $t\in [0,\delta^{(1)}]$,  $f+tv$ is a biholomorphism on $(1+\eps^{(1)})\D^\circ$ with image contained in $f((1+\eps^{(2)})\D^\circ)$, and the map $t\mapsto f+tv\in {\rm Hol}^\bullet_{\eps^{(1)}}(\D)$ is continuous. Therefore, the map $f_t$ is well defined and the map $t\in [0,\delta^{(1)}]\mapsto f_t\in {\rm Hol}^\bullet_{\eps^{(1)}}(\D)$ is continuous.

Now we claim that there exists $r\in (0,\eps^{(1)})$ such that $(1+r)\D\subset f_t((1+\eps^{(1)})\D)$ for all $t\leq \delta^{(2)}$, for some $\delta^{(2)}>0$. We give a proof by contradiction. If not true, there are two sequences $(t_n)_n$ and $(x_n)_n$ such that $t_n\to 0$ as $n\to \infty$, $x_n\in\D$ and $(1+r)x_n\not\in f_{t_n}((1+\eps^{(1)})\D)$.  This means that $\forall z\in (1+\eps^{(1)})\D$, 
\begin{equation}\label{notequality}
f((1+r)x_n)\not =f(z)+t_nv(z).
\end{equation}
Since $\D$ is compact, we may assume that $x_n\to x\in \D$. Let us set $g_n(z)=f(z)-f((1+r)x_n)$ and $h_n(z)=t_nv(z)$ defined on $(1+\eps^{(1)})\D$. For $|z|=1+\eps^{(1)}$ and because $v$ is bounded, we have $|h_n(z)|\leq t_nM$ for some $M>0$ uniformly in $z$. Because $f$ is injective and continuous, we have $|g_n(z)|\geq \min_{|z'|=1+\eps^{(1)}}|f(z')-f((1+r)x)|-\alpha_n$ with $\alpha_n\to 0$ as $n\to\infty$ and $\min_{|z'|=1+\eps^{(1)}}|f(z')-f((1+r)x)|>0$. Therefore, for $n$ large enough so as to make $t_n$ small enough, we have 
\[\forall z \textrm{ such that }|z|=1+\eps^{(1)},\quad |(g_n+h_n)(z)-g_n(z)|=|h_n(z)|<|g_n(z)|\]
and by Rouch\'e's theorem, this implies that $g_n$ and $g_n+h_n$ have the same number of zeros inside the disk $(1+\eps^{(1)})\D$. Since $g_n$ admits one unique such a zero, so does $g_n+h_n$. This is a contradiction with \eqref{notequality}.

Finally,  we set $\delta=\min(\delta^{(1)},\delta^{(2)})$. The map $t\in [0,\delta]\mapsto f_t^{-1}\in {\rm Hol}^\bullet_{r}(\D)$ is thus continuous, and so is the map $h$ if we set $\eps'=e^{-\delta}(1+r)-1$. Since the map $v\mapsto D(v)$ (where $D$ is the constant appearing in \eqref{condQnice}) is continuous in ${\rm Hol}^\bullet_{\eps'}(\D)$, the coercivity of $h(s,t)$ for $s,t$ small enough follows by continuity.
\end{proof}

This setup for quadratic forms will be useful in the next subsection to express the differential of $\mathbf{T}_f$. Actually the remaining part of the proof follows a general argument so that we will formulate a general setup in order for this result to be used in similar contexts. From now on, we make the following assumptions, which are valid (and already proven) in our setup. 
\begin{assumption}\label{assu:gen}
We assume that:
\begin{enumerate}
\item\label{itemH}  There exists a measured space $(M,\mu_0)$ such that, if  $\mc{H}=L^2(M,\mu_0)$ is the Hilbert space of square integrable functions, we are given a symmetric quadratic form $\mc{Q}$ with domain  $\mc{D}(\mc{Q})\subset \mc{H}$, generating a continuous contraction semigroup of selfadjoint operators $(e^{-t\mathbf{H}})_{t\geq 0}$ on $\mc{H}$,  with selfadjoint generator $\mathbf{H}$. The semigroup also maps $ \mc{D}'(\mc{Q})\to \mc{D}(\mc{Q})$ continuously.
\item\label{itemDef} We are given a closed subset $\mathcal{AS}$     of $\mc{S}$, stable by composition, and then we consider the spaces $\mc{AS}_\eps=  \mathcal{AS}\cap\mc{S}_\eps$, $\mc{AS}_>:=  \mathcal{AS}\cap\mc{S}_>$, all of them equipped with the induced topology. Let $\mc{A}{\rm Hol}^\bullet(\D)\subset {\rm Hol}^\bullet(\D)$ be the tangent space of $\mathcal{AS}$ and $\mc{A}{\rm Hol}_\eps^\bullet(\D):=\mc{A}{\rm Hol}^\bullet(\D)\cap {\rm Hol}_\eps^\bullet(\D)$.
\item\label{itemFlow} The subspace  $\mathcal{AS}$   is preserved by the flow. More precisely, consider  $v\in  \mc{A}{\rm Hol}_\eps^\bullet(\D)$ Markovian, then the solution $f_t$ to the flow $\partial_tf_t=v(f_t)$ with $f_0\in \mathcal{AS}_\eps$ (resp. $f_0(z)=z$) belongs to  $\mathcal{AS}_\eps$ for all $t\geq 0$ (resp. for all $t>0$). 
\item\label{itemDila} If $f\in  \mathcal{AS}_\eps$ then for $t_0>0$ small enough, $e^{t_0}f(e^{t_0}\cdot)\in  \mathcal{AS}_{\eps'}$, where $(1+\eps')e^{t_0}=1+\eps $.
\item\label{itemCompo}  We are given a  mapping $f\in \mc{AS}\mapsto \big( \mathbf{T}_f:L^\infty(M,\mu_0)\to L^\infty(M,\mu_0)\big)$, obeying the composition rule $\mathbf{T}_{f_1}\circ\mathbf{T}_{f_2}=\mathbf{T}_{f_1\circ f_2}$.
\item\label{itemCont} For $f\in \mc{AS}_>$, the operator $\mathbf{T}_f$ extends to a bounded operator on $\mc{H}$, i.e. $\mathbf{T}_f\in \mc{L}(\mc{H},\mc{H})$ and the mapping $f\in  \mc{AS}_>\mapsto \mathbf{T}_f\in \mc{L}(\mc{H},\mc{H})$ is continuous.
\item\label{itemQcont} There exists a  linear map $v\in \bigcup_{\eps>0}\mc{A}{\rm Hol}_\eps^\bullet(\D)\mapsto \mc{Q}_{\rm v}$ where ${\rm v}=v(z)\pl_z$ and $\mc{Q}_{\rm v}$ is a continuous bilinear form on $\mc{D}(\mc{Q})^2$, equipped with the $\mc{Q}$-norm. For all  $\eps>0$, the restriction of this  map $v\in \mc{A}{\rm Hol}_\eps^\bullet(\D)\mapsto \mc{Q}_{\rm v}$ is  $\|\cdot\|_{\eps,k}$-continuous (for some $k\geq 0$).
\item\label{itemCoer} Let ${\rm v}=v(z)\pl_z$ with $v(z)=-\sum_{n\geq 0}v_nz^{n+1}\in  \mc{A}{\rm Hol}_\eps(\D)$. If $\omega:={\rm Re}(v_0)>0$ is large enough, in the sense that there exists $C>0$ and $s>0$ such that $v$ satisfies
\[
D(v):=\omega-C\big(|{\rm Im}(v_0)|^2+\sum_{n\geq -1,n\not=0}(1+|n|)^s|v_n|^2\big)^{1/2}>0
\]
 then  the bilinear form $\mc{Q}_{\rm v}$ generates a contraction semigroup $(e^{-t\mathbf{H}_{\rm v}})_{t>0}$ on $\mc{H}$, for some bounded operator $\mathbf{H}_{\rm v}:\mc{D}(\mc{Q})\to \mc{D}'(\mc{Q})$. Furthermore, we have in this case
\begin{equation*} 
\forall F\in \mc{D}(\mc{Q}),\quad {\rm Re}(\mc{Q}_{\rm v}(F,F))\geq D(v)\mc{Q} (F,F).
\end{equation*}
Finally, for $v(z)=-z$, we have $\mc{Q}_{\rm v}=\mc{Q}$.
\item\label{itemTf=} If $v\in  \mc{A}{\rm Hol}_\eps^\bullet(\D)$ satisfies the coercivity condition of item \eqref{itemCoer}, and $f_t$ denotes its flow with $f_0(z)=z$, then $e^{-t\mathbf{H}_{\rm v}}=\mathbf{T}_{f_t}$.
\item\label{itemWeird1} If $f\in \mc{AS}_\eps$ and $v\in\mc{A}{\rm Hol}^\bullet_\eps(\D)$ such that $v/f'$ satisfies the coercivity condition of item \eqref{itemCoer}, we set $f_t(z)=f^{-1}(f(z)+tv(z))$. We assume that for $\delta>0$ small enough and for some fixed $\eps'\in (0,\eps)$,  the map 
\[h: s,t\in [0,\delta]\mapsto \frac{v\circ f_t^{-1}(e^sz)}{f'(e^sz)}\in\mc{A}{\rm Hol}^\bullet_{\eps'}(\D)\] 
is continuous. 
\item\label{itemWeird2} If $f\in \mc{AS}_\eps$ and $v\in\mc{A}{\rm Hol}^\bullet_\eps(\D)$, then $-\omega zf'(z)+v(z)\in \mc{A}{\rm Hol}^\bullet_\eps(\D)$ for all $\omega>0$ large enough.
\end{enumerate}
\end{assumption}

All the above items are valid in our context with $\mathcal{AS}=\mathcal{S}$. We have slightly enlarged the setup in order to derive a similar result for the amplitudes of surfaces with a Neumann boundary, used to prove the conformal bootstrap for the boundary Liouville CFT \cite{GRW2026}.   We prove the following result. 
\begin{theorem}\label{th:gendiff}
Under Assumption \ref{assu:gen}, the mapping $f\in \mc{AS}_\eps\mapsto \bT_f  \in \mc{L}(\mc{D}(\mc{Q}),\mc{D}(\mc{Q}))$   is differentiable with differential  
\[v\in  \mc{A}{\rm Hol}^\bullet_\eps(\D)\mapsto D_{\rm v}\bT_f \in \mc{L}(\mc{D}(\mc{Q}),\mc{D}(\mc{Q}))\]
 characterised, for ${\rm v}=v(z)\pl_z$, by 
\[\forall F\in \mc{D} (\mc{Q}),G\in \mc{D}'(\mc{Q}),\quad \cjg D_{\rm v}\bT_fF,G\cjd_{\mc{H}} := -\mc{Q}_{{\rm w}}(F,\bT^*_f G) \]
where ${\rm w}=w(z)\pl_z$ with $w(z)\pl_z=v(z)/f'(z)\partial_z$.  Equivalently,  $D_v\bT_f=-\bT_f\mathbf{H}_{{\rm w}}:\mc{D}(\mc{Q})\to \mc{D}(\mc{Q})$.
\end{theorem}
 
Here, to prove Theorem \ref{th:differentiabiliteTf}, we apply the above Theorem with $\mathcal{AS}=\mathcal{S}$.  In a forthcoming work related to the study of amplitudes with mixed Neumann/Dirichlet boundaries, we will apply this result with  $\mathcal{AS}$ given by those $f\in\mc{S}$ such that $f(\bar{z})=\bar{f}(z)$. It is not clear  to us how to interpret the technical conditions of items \eqref{itemWeird1} and  \eqref{itemWeird2} but they are satisfied for all the applications we have in mind. From now on, we focus on the proof of Theorem \ref{th:gendiff}.
 
\subsubsection{Non-autonomous flows}\label{naf}
Note that in case we have a flow $\partial_tf_t=v(f_t)$ for some Markovian vector field $v(z)\partial_z\in \mc{A}{\rm Hol}^\bullet_\eps(\D)$ (see subsection \ref{sec:Markovian_operator}) then, by item  \eqref{itemTf=} of Assumption \ref{assu:gen}, the operator $\mathbf{T}_{f_t}F$ coincides with the semigroup $e^{-t\mathbf{H}_{\rm v}}F$. It is then easy to differentiate $\mathbf{T}_{f_t}F$ at $t>0$ with respect to $t$ using quadratic forms. Since  $f_t(z)=z+tv(z)+o(t)$, this amounts to differentiating $\bf T$ at $f(z)=z$ in the direction $v$.

So, if one aims to differentiate $\mathbf{T}_{f+tv}F$ with respect to $t$ where  $f\in \mc{AS}_\eps$ and $v\in  \mc{A}{\rm Hol}^\bullet_\eps(\D)$, it is natural to rewrite $\mathbf{T}_{f+tv}F=\mathbf{T}_f\circ \mathbf{T}_{f_t}F$ with $f_t(z):=f^{-1}(f(z)+tv(z))$  for $t\in [0, \delta)$. The ideal situation would be if we could express $f_t$ as the solution of some autonomous flow to differentiate this operator. However, this is too much to ask, as there may be some $f$ for which there is no such a flow. To overcome this issue, we observe that $f_t$ can be seen as the holomorphic solution of some explicit non-autonomous flow
\begin{equation}\label{naflow}
\partial_t f_t=w_t(f_t)\text{ with initial condition } f_0(z)=z,\quad \text{where }w_t(z)=\frac{v(f_t^{-1}(z))}{f'(z)}.
\end{equation}

Therefore $w_t$ can be seen as the infinitesimal vector field driving our flow. We have to make sure it is Markovian and generates (infinitesimally) a semigroup. This is the reason why we make the following assumptions: we consider the case    $v\in \mc{A}{\rm Hol}^\bullet_\eps(\D)$  such that $v/f' $ belongs to $ \mc{A}{\rm Hol}_\eps^\bullet(\D)$ and satisfies the coercivity condition of item  \eqref{itemCoer}.   Under this assumption and using item \eqref{itemWeird1}, for some $\eps'<\eps$, we can then find $\delta>0$ and $\eta>0$ such that 
\begin{align}\label{wtcoercif}
\forall t\leq \delta,\forall s\in [0,\eta]\quad  z\mapsto w_t(e^s z) \in \mc{A}{\rm Hol}_{\eps'}^\bullet(\D) \text{ satisfies the coercivity condition of item  \eqref{itemCoer} }\\
\text{for some uniform }  D>0\nonumber
\\
\text{ for some }\eps''\in (\eps',\eps),\,\, 
\text{ the mapping }  t\in [0, \delta]\mapsto  f_t^{-1}\in\mc{S}_{\eps''}\text{ is continuous}.\label{condmapc0} 
\end{align}
The argument for \eqref{condmapc0}  is the same as in the proof of Lemma \ref{lemmaweird}. Then  one may check that $(f_t(\D))_{t\in [0,\delta]}$ is decreasing for the inclusion and the restriction of $f_t$ to the unit circle $\T$ is an analytic diffeomorphism.

The strategy is then to approximate the non-autonomous flow by a piecewise autonomous flow in order to differentiate over each piece.  For this, fix   $n\geq 0$ and  set $t_k:=k\delta/n$ for $0\leq k\leq n$. We construct  recursively the flow $f_t^n$ for $t\in [0,\delta]$ as follows: we set $f^n_t=f^{n,k}_t$ for $t\in [t_k,t_{k+1}]$ where  $f^{n,k}$ is constructed inductively on $k$ as the solution of the autonomous flow
$$\forall t\in [t_k,t_{k+1}],\quad \partial_t f^{n,k}_t=w_{t_k}(f^{n,k}_t)\quad \text{ with initial condition } f^{n,k}_{t_k}(z)= f^{n,k-1}_{t_k}(z).$$
 
 \begin{lemma} 
For each $F\in\mc{D}(\mc{Q})$, the mapping $t\in [0,\delta]\mapsto \bT_{f^n_t}F$  converges   in $C([0,\delta],L^2)\cap L^2([0,\delta],\mc{D}(\mc{Q}))$, as $n\to \infty$, to the mapping $t\in [0,\delta]\mapsto \bT_{f_t}F $ satisfying, for ${\rm w}_t=w_t(z)\pl_z$,
\begin{equation}\label{integrated1}
\forall G\in\mc{D}(\mc{Q}), \quad \cjg \bT_{f_t}F,G\cjd_{\mc{H}}-\cjg F,G\cjd_{\mc{H}}=-\int_0^t\mc{Q}_{{\rm w}_r}( \bT_{f_r}F,G)\, \dd r.
\end{equation}
\end{lemma}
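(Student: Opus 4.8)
The plan is to exploit the piecewise-autonomous structure of $f^n_t$ to reduce each slab to the coercive semigroups of Corollary \ref{fqcont}, establish uniform energy bounds, and then pass to the limit. First I would record that on $[t_k,t_{k+1}]$ the map $f^{n,k}_t$ equals $\Phi^{w_{t_k}}_{t-t_k}\circ f^n_{t_k}$, where $\Phi^{w_{t_k}}_s$ is the autonomous holomorphic flow of $w_{t_k}$; by the coercivity assumption \eqref{wtcoercif} and Lemma \ref{flowS} this flow stays in $\mc{S}_+$, so the composition rule (Proposition \ref{comprule}) together with the identification \eqref{def:propagatorvLiouville} gives $\bT_{f^n_t}=e^{-(t-t_k)\mathbf{H}_{w_{t_k}}}\circ\bT_{f^n_{t_k}}$ on each slab. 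Differentiating the autonomous semigroup against its generating form and summing over the slabs yields the piecewise integral identity
\begin{equation}\label{eq:piecewise}
\cjg \bT_{f^n_t}F,G\cjd_{\mc{H}}-\cjg F,G\cjd_{\mc{H}}=-\int_0^t\mc{Q}_{w_{r_n}}(\bT_{f^n_r}F,G)\,dr,\qquad G\in\mc{D}(\mc{Q}),
\end{equation}
where $w_{r_n}:=w_{t_k}$ for $r\in[t_k,t_{k+1})$ is the time-frozen field.

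Next I would derive uniform bounds. The energy identity for the coercive semigroups $e^{-s\mathbf{H}_{w_{t_k}}}$, summed over the slabs and combined with the uniform coercivity of the forms $\mc{Q}_{w_{t_k}}$ (guaranteed by \eqref{wtcoercif}, with a constant $c_0>0$ independent of $k$ and $n$), gives
\begin{equation}\label{eq:energy}
\|\bT_{f^n_t}F\|_{\mc{H}}^2+2c_0\int_0^t\mc{Q}(\bT_{f^n_r}F)\,dr\le \|F\|_{\mc{H}}^2.
\end{equation}
Thus $(\bT_{f^n_\cdot}F)_n$ is bounded in $L^2([0,\delta],\mc{D}(\mc{Q}))$; moreover the $\mc{Q}$-continuity of $\mc{Q}_{w_{r_n}}$ (Corollary \ref{fqcont}) bounds $\partial_t\bT_{f^n_t}F=-\mathbf{H}_{w_{r_n}}\bT_{f^n_t}F$ in $L^2([0,\delta],\mc{D}'(\mc{Q}))$ uniformly, so by the Lions embedding I can extract a subsequence with $\bT_{f^n_\cdot}F\rightharpoonup u$ weakly in $L^2([0,\delta],\mc{D}(\mc{Q}))$, the limit lying in $C([0,\delta],\mc{H})$.

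To identify $u$ I would argue geometrically. The frozen scheme is the standard freezing approximation of the non-autonomous flow \eqref{naflow}, and since $t\mapsto w_t$ is continuous (from \eqref{condmapc0} and continuity of inversion and composition), a Grönwall estimate in the Banach space of functions holomorphic and bounded on $(1+\epsilon')\D$—the seminorms \eqref{seminorm} being controlled by Cauchy estimates—gives $f^n_t\to f_t$ in $\mc{S}_{\epsilon'}$, uniformly in $t\in[0,\delta]$. By continuity of $f\mapsto\bT_f$ in $\mc{L}(\mc{H})$ (Proposition \ref{boundTf}), applied locally uniformly, this yields $\bT_{f^n_t}F\to\bT_{f_t}F$ in $\mc{H}$ uniformly in $t$, hence $u(t)=\bT_{f_t}F$ and the asserted $C([0,\delta],\mc{H})$ convergence. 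Passing to the limit in \eqref{eq:piecewise} then produces \eqref{integrated1}: the left side converges pointwise in $t$, while on the right I split $\mc{Q}_{w_{r_n}}(\bT_{f^n_r}F,G)-\mc{Q}_{w_r}(\bT_{f_r}F,G)$ into a term controlled by $\|w_{r_n}-w_r\|_{\epsilon=0,3}\to 0$ (Corollary \ref{fqcont}) and a term handled by the weak convergence of $\bT_{f^n_r}F$ against the fixed functional $\mc{Q}_{w_r}(\cdot,G)$.

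Finally I would upgrade the weak $L^2([0,\delta],\mc{D}(\mc{Q}))$ convergence to strong convergence by the energy method. Reading \eqref{eq:energy} as an identity and letting $n\to\infty$, the endpoint term $\|\bT_{f^n_t}F\|_{\mc{H}}^2$ converges by the $C([0,\delta],\mc{H})$ convergence just obtained, which forces $\int_0^t\Re\,\mc{Q}_{w_{r_n}}(\bT_{f^n_r}F)\,dr$ to converge to the corresponding quantity for $u=\bT_{f_\cdot}F$; since coercivity makes $\Re\,\mc{Q}_{w_r}(\cdot)$ comparable to the $\mc{D}(\mc{Q})$-norm, convergence of these energies together with weak convergence gives strong convergence in $L^2([0,\delta],\mc{D}(\mc{Q}))$. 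I expect the main obstacle to be exactly this non-autonomous, frozen-coefficient step: one must pass to the limit in a bilinear form whose coefficient varies with $n$ while only weak compactness is available, and then convert that weak convergence into the energy-norm convergence claimed in the statement.
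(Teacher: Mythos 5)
Your proposal is correct, but the decisive step is carried out by a genuinely different mechanism than in the paper. You share with the paper the piecewise-autonomous integral identity, the telescoped a priori energy bound, and the identification of the limit via Gr\"onwall ($\sup_t\|f^n_t-f_t\|_{\epsilon',k}\to 0$) combined with the continuity of $f\mapsto\bT_f$ on $\mc{H}$. Where you diverge is in how convergence in $C([0,\delta],L^2)\cap L^2([0,\delta],\mc{D}(\mc{Q}))$ is obtained: the paper never invokes compactness. It writes, for two indices $n,m$, the quantity $\|\Delta^{n,m}_t\|_{\mc{H}}^2+2\int_0^t\mc{Q}_{w^n_r}(\Delta^{n,m}_r,\Delta^{n,m}_r)\,dr$ with $\Delta^{n,m}_t=\bT_{f^n_t}F-\bT_{f^m_t}F$, bounds the cross term by $\sup_t\|w^n_t-w^m_t\|_{\epsilon',3}$ times controlled energies, absorbs the rest by coercivity, and concludes that the sequence is \emph{Cauchy} in the strong norm; the smallness of $\sup_t\|w^n_t-w^m_t\|_{\epsilon',3}$ is then checked by hand from the structure $w_t=v(f_t^{-1}(\cdot))/f'$. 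This only uses the energy \emph{inequality} and elementary estimates. Your route instead extracts a weak limit in $L^2([0,\delta],\mc{D}(\mc{Q}))$, passes to the limit in the frozen weak formulation, and upgrades weak to strong convergence by convergence of energies. That is a legitimate and more standard PDE compactness scheme, and it generalizes easily, but it carries extra justificatory burden that the paper's argument avoids: you need the energy relation to be an \emph{identity} (not just the inequality the paper derives), both for the frozen scheme (available because $\mathbf{H}_{w_{t_k}}$ is sectorial, so its semigroup is analytic) and for the limit equation (via the Lions--Magenes lemma, using $u\in L^2(\mc{D}(\mc{Q}))$, $u'\in L^2(\mc{D}'(\mc{Q}))$); you must handle the non-symmetry of $\Re\,\mc{Q}_{w_r}$ in the weak-to-strong step (both $\int\Re\,\mc{Q}_{w_r}(h_n,h)$ and $\int\Re\,\mc{Q}_{w_r}(h,h_n)$ converge under weak convergence, so the argument closes, but this must be said); and you need a subsequence-uniqueness remark to recover full-sequence convergence. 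In exchange, your approach cleanly separates compactness from identification, whereas the paper's Cauchy estimate delivers strong (and uniform-in-$t$) convergence in one stroke.
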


\begin{proof} To begin with, observe that for all $t\in [0,\delta]$ and $F\in \mc{D}(\mc{Q})$, $\bT_{f^n_t}F$ converges toward $\bT_{f_t}F$ in $L^2$ as $n\to \infty$. Indeed, it is direct  to show that for each $k\in\N$, $\sup_{t\in [0,\delta]}\|f^n_{t}-f_{t}\|_{\eps',k}\to 0$ as $n\to\infty$ (write the flow for $\partial_z^k f^n_t$ and use the Gronwall lemma as well as  \eqref{condmapc0}). 
From this, the convergence of $\bT_{f^n_t} F$  toward $\bT_{f_t} F$ in $L^2$ follows from item (6).

On each interval $[t_k,t_{k+1}]$, the flow is autonomous so that, using the items (7) and (8), we have
\[\forall G\in\mc{D}(\mc{Q}), \quad   \cjg \bT_{f^n_t}F,G\cjd_{\mc{H}}-\cjg\bT_{f^n_{t_k}}F,G\cjd_{\mc{H}}=
-\int_{t_k}^t\mc{Q}_{{\rm w}_{t_k}}( \bT_{f^n_r}F,G)\,\dd r.\]
Differentiating $\|{\bf T}_{f_t^n}F\|^2_{\mc{H}}$ with respect to $t$ and using item (8), one obtains the a priori estimate 
\[ \|\bT_{f^n_t}F\|_{\mc{H}}^2+2\int_{t_k}^t{\rm Re}\mc{Q}_{{\rm w}_{t_k}}(\bT_{f^n_r}F,\bT_{f^n_r}F)\,dr\leq \|\bT_{f^n_{t_k}}F\|_{\mc{H}}^2.\]
 Bootstrapping this estimate on each consecutive interval $[t_k,t_{k+1}]$ produces the relation
\begin{equation}\label{estT1}
\sup_{[0,\delta]}\|\bT_{f^n_t}F\|_{\mc{H}}^2+ 2\int_{0}^{\delta}{\rm Re}\mc{Q}_{{\rm w}^n_r}(\bT_{f^n_r}F,\bT_{f^n_r}F)\,dr\leq \|F\|_{\mc{H}}^2
\end{equation}
 where we have set $w^n_t(z):=w_{t_k}(z) $ for $t\in  [t_k,t_{k+1}]$.   Also, using the Chasles relation,  we have  
\begin{equation}\label{chasles}
\forall G\in\mc{D}(\mc{Q}), \quad   \cjg\bT_{f^n_t}F,G\cjd_{\mc{H}}-\cjg  F,G\cjd_{\mc{H}}= 
-\int_0^t\mc{Q}_{{\rm w}^n_{r}}( \bT_{f^n_r}F,G)\, {\rm d}r.
\end{equation}
The following formula holds for $n,m$ (differentiate in $t$ and use \eqref{chasles})
\[ \cjg {\bf T}_{f^n_t}F,{\bf T}_{f^m_t}F\cjd_{\mc{H}}=\|F\|_{\mc{H}}^2-\int_0^t \mc{Q}_{{\rm w}^n_r}({\bf T}_{f^n_r}F,{\bf T}_{f_r^m}F) {\rm d}r-\int_0^t \mc{Q}_{{\rm w}^m_r}({\bf T}_{f^m_r}F, \bT_{f^n_r}F) {\rm d}r.\] 
This implies, setting  $\Delta^{n,m}_t=\bT_{f^n_t}F-\bT_{f^m_t}F$,
\begin{align*}
\|\Delta^{n,m}_t\|_{\mc{H}}^2&+2\int_0^t {\rm Re}\mc{Q}_{{\rm w}^n_{r}}( \Delta^{n,m}_r,\Delta^{n,m}_r)\,{\rm d}r\\
=&-2\int_0^t{\rm Re}(\mc{Q}_{{\rm w}^n_{r}}-\mc{Q}_{{\rm w}^m_{r}})( \bT_{f^m_r}F,\Delta^{n,m}_r) \,{\rm d}r\\
\leq & 2C\sup_{t\in [0,\delta]}\|w^n_{t}-w^m_{t}\|_{\eps',k}\int_0^t\mc{Q}(\bT_{f^m_r}F,\bT_{f^m_r}F)^{1/2}\mc{Q}(\Delta^{n,m}_r,\Delta^{n,m}_r)^{1/2}\, {\rm d}r
\end{align*} 
 where we have used \eqref{itemQcont}  in the last line. Now we can use the coercivity of the quadratic form $\mc{Q}_{{\rm w}^n_{t}}$, in particular there is a constant $C'>0$ such that $C'\mc{Q}\leq {\rm Re}\mc{Q}_{{\rm w}^n_{t}}$ for all $t\in[0,\delta]$. We deduce that
\begin{align*}
\|\Delta^{n,m}_t\|_{\mc{H}}^2&+2C'\int_0^t \mc{Q} ( \Delta^{n,m}_r,\Delta^{n,m}_r)\, \dd r\\
\leq & \frac{1}{\lambda}C^2\sup_{t\in [0,\delta]}\|w^n_{t}-w^m_{t}\|_{\eps',k}^2 \int_0^t\mc{Q}(\bT_{f^m_r}F,\bT_{f^m_r}F) \, \dd r+\lambda\int_0^t \mc{Q} ( \Delta^{n,m}_r,\Delta^{n,m}_r)\, \dd r
\end{align*}
for arbitrary $\lambda>0$. Choosing $\lambda$ small enough so as to make $2C'-\lambda>0$ and using \eqref{estT1}, we deduce that there is another constant $C>0$ (independent on $n,t$) such that
\begin{equation}
\sup_{t\in [0,\delta]}\|\Delta^{n,m}_t\|_{\mc{H}}^2 +\int_0^\delta \mc{Q}( \Delta^{n,m}_r,\Delta^{n,m}_r)\, \dd r\leq C\sup_{t\in [0,\delta]}\|w^n_{t}-w^m_{t}\|_{\eps',k}^2\|F\|_{\mc{H}}^2.
\end{equation}
Observe that $\sup_{t\in [0,\delta]}\|w^n_{t}-w^m_{t}\|_{\eps',k}$ converges to $0$ as $n,m\to\infty$. Indeed,
observe that for $n \leq m$ ($C$ is a constant which may change from line to line)  
 \begin{align*}
 \sup_{t\in [0,\delta]}\|w^n_{t}-w^m_{t}\|_{\eps',k} & \leq C  \sup_{[0,\delta]}\|w^n_{t}-w^m_{t}\|_{\eps'',0}. \\
 & \leq C  \sup_{t\in [0,\delta]} \sup_{|z| = 1+\eps''} | w^n_{t}(z)-w^m_{t}(z)   |. \\
 & \leq C  \sup_{s,t \in [0,\delta], |t-s| \leq \frac{\delta}{n} } \sup_{|z| = 1+\eps''} | f_t^{-1}(z)-f_s^{-1}(z)    | 
 \end{align*} 
 where in the last line we have used the fact that $v$ is Lipschitz and that there exists $\eta >0$ such that $|f'(z)|> \eta$ on $|z| \leq 1+ \eps''$. By assumption on $\eps''$ (see \eqref{condmapc0}), the last quantity goes to $0$ as $n,m\to\infty$. This shows that the mapping $t\mapsto \bT_{f^n_t}F$ is Cauchy in $C([0,\delta], L^2)$  and    in $L^2([0,\delta],\mc{D}(\mc{Q}))$, from which our claim follows.
\end{proof}

\subsubsection{Differentiability}
Now we need to introduce some further material to differentiate the flow. For $s\in \R$ we introduce the (Sobolev like) norm
$$|F|_{\mathbf{H},s}^2:= \cjg\mathbf{H}^sF,F\cjd_{\mc{H}}.$$
Note that this quantity perfectly makes sense via the spectral theorem (for appropriate $F$). We consider the space $W^s$ as the closure of the space of $F\in \mc{H}$ such that  $|F|_{\mathbf{H},s}<\infty$
 with respect to the norm $|\cdot|_{\mathbf{H},s}$. Note that for $s=1$ we have $|F|_{\mathbf{H},1}^2=\mc{Q}(F,F)$ and $W^1=\mc{D}(\mc{Q})$. Observe the obvious inequality for $s\in \R$
\begin{equation}\label{ineqfq}
\forall F\in W^{2-s},G\in W^s,\quad |\mc{Q}(F,G)| \leq |F|_{\mathbf{H},2-s}|G|_{\mathbf{H},s}.
\end{equation}
 
Now we prepare the proof with a few preliminary results. Recall that we have assumed  \eqref{wtcoercif}  and   \eqref{condmapc0}. Furthermore, by  item \eqref{itemDila}, we may assume that $e^{t_0}f(e^{t_0}\cdot)\in \mc{AS}_{\eps'}$ with $e^{t_0}(1+\eps')=(1+\eps)$ for some  $t_0>0$ small enough. Without loss of generality, we may also assume that $\eta$ given by \eqref{wtcoercif} satisfies $\eta< t_0$. 
\begin{lemma}\label{estTfstar}
For $f\in \mc{AS}_\eps$, there   is some operator $\bT_{f}^\eta$ such that $\bT_{f}=\bT_{f}^\eta   e^{-\eta\mathbf{H}}$ with  the operator norm estimates for $s,s'\in\R$ 
\begin{align*}
\|\bT_{f}\|_{W^s\to W^{s'}}<+\infty  \quad \text{ and }\quad  \|\bT_{f}^\eta\|_{W^s\to W^{s'}}<+\infty.
\end{align*}
The same property and operator norm estimates hold for the adjoints $\bT_{ f}^*$ and $(\bT_{ f}^\eta)^*$.
\end{lemma}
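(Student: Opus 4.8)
The plan is to reduce the whole statement to two inputs that are already available: the $\mc{H}$-boundedness of the propagators (Proposition \ref{boundTf}) and the smoothing of the dilation semigroup $e^{-t\mathbf{H}}$, glued together by the composition rule of Proposition \ref{comprule} which lets me peel off dilations. The first thing I would record is the dilation smoothing. Writing $d_t(z):=e^{-t}z\in\mc{S}_+$, formula \eqref{def:propagatorvLiouville} gives $\bT_{d_t}=e^{-t\mathbf{H}}$, and since the Liouville Hamiltonian has a strictly positive spectral gap, ${\rm Sp}(\mathbf{H})\subset[\tfrac12 Q^2,\infty)$, the spectral theorem yields for every $t>0$ and all $s,s'\in\R$
\[
\|e^{-t\mathbf{H}}\|_{W^s\to W^{s'}}^2=\sup_{\lambda\ge Q^2/2}\lambda^{s'-s}e^{-2t\lambda}<\infty,
\]
because $\lambda\mapsto\lambda^{s'-s}e^{-2t\lambda}$ is bounded on $[Q^2/2,\infty)$. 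Thus $e^{-t\mathbf{H}}$ (which is self-adjoint) maps $W^s$ into $W^{s'}$ boundedly for \emph{every} pair $(s,s')$; note that it is precisely the positivity of the gap that allows the case $s'<s$, i.e. a genuine loss of regularity, which is what I will need on the incoming side.

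Next I would prove the general statement that $\bT_g\in\mc{L}(W^s,W^{s'})$ for every $g\in\mc{S}_+$ and all $s,s'$, and likewise for $\bT_g^*$. By Lemma \ref{lemopen} there is $t_0>0$ such that $\hat g:=e^{t_0}g(e^{t_0}\cdot)\in\mc{S}_{\eps'}\subset\mc{S}_+$; unwinding the definition gives the factorization $g=d_{t_0}\circ\hat g\circ d_{t_0}$, so by Proposition \ref{comprule}
\[
\bT_g=e^{-t_0\mathbf{H}}\,\bT_{\hat g}\,e^{-t_0\mathbf{H}}.
\]
Proposition \ref{boundTf} gives $\bT_{\hat g}\in\mc{L}(\mc{H},\mc{H})=\mc{L}(W^0,W^0)$, so reading the composition as $W^s\xrightarrow{e^{-t_0\mathbf{H}}}W^0\xrightarrow{\bT_{\hat g}}W^0\xrightarrow{e^{-t_0\mathbf{H}}}W^{s'}$ and using the displayed dilation bound on both ends yields $\bT_g\in\mc{L}(W^s,W^{s'})$. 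Taking adjoints of this identity and using that $e^{-t_0\mathbf{H}}$ is self-adjoint and $\bT_{\hat g}^*$ is bounded on $\mc{H}$ gives the same conclusion for $\bT_g^*$.

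Finally, for the stated factorization I would set $\bT_f^\eta:=\bT_h$ with $h(z):=f(e^\eta z)$. Since $\eta<t_0$ and Lemma \ref{lemopen} (applied to $f$) gives that $f$ is univalent on $e^{t_0}\D$ with $f(e^{t_0}\D)\subset\D^\circ$, the map $h$ is a univalent map of $\D$ into $\D^\circ$ fixing $0$ and extending holomorphically past $\T$, i.e. $h\in\mc{S}_{\eps''}\subset\mc{S}_+$ with $1+\eps''=e^{-\eta}(1+\eps)>1$. The relation $f=h\circ d_\eta$ together with Proposition \ref{comprule} gives $\bT_f=\bT_h\,e^{-\eta\mathbf{H}}=\bT_f^\eta e^{-\eta\mathbf{H}}$, while the two norm bounds $\|\bT_f\|_{W^s\to W^{s'}}<\infty$ and $\|\bT_f^\eta\|_{W^s\to W^{s'}}<\infty$ are just the general statement applied to $g=f$ and $g=h$; the adjoint statements follow identically. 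The only genuinely delicate points are confirming that the positive spectral gap of $\mathbf{H}$ legitimately produces boundedness of $e^{-t\mathbf{H}}$ between \emph{all} the scales $W^s$ (including regularity loss), and checking via Lemma \ref{lemopen} that peeling off a dilation keeps us inside $\mc{S}_+$. Both are light once set up, so the lemma is essentially a bookkeeping consequence of the composition rule and the $\mc{H}$-boundedness of Proposition \ref{boundTf}.
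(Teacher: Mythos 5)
Your proof is correct and follows essentially the same route as the paper: both rely on Lemma \ref{lemopen} to peel off dilations, the composition rule $\bT_{f_1\circ f_2}=\bT_{f_1}\bT_{f_2}$ with $\bT_{e^{-t}\cdot}=e^{-t\mathbf{H}}$, the $\mc{H}$-boundedness from Proposition \ref{boundTf}, and the spectral theorem (with the gap $\mathbf{H}\geq Q^2/2$) for the $W^s\to W^{s'}$ smoothing of $e^{-t\mathbf{H}}$. Your choice $\bT_f^\eta:=\bT_{f(e^\eta\cdot)}$ coincides, via the composition rule, with the paper's $e^{-t_0\mathbf{H}}\bT_{f_{t_0}}e^{-(t_0-\eta)\mathbf{H}}$, so the difference is purely presentational.
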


\begin{proof} Consider the mapping $q_t(z):=e^{-t}z$. With $t_0$ and $\eps'$ defined as above, then $f=q_{t_0}\circ f_{t_0} \circ q_{t_0}$ where the function $f_{t_0}(z)=e^{t_0}f(e^{t_0}z)$ belongs to $\mc{A}\mc{S}_{\eps'}$. It follows, using item \eqref{itemCompo}, that $\bT_f=e^{-t_0\mathbf{H}}\bT_{f_{t_0}}e^{-t_0\mathbf{H}}$ since $\bT_{q_{t_0}}=e^{-t_0\mathbf{H}}$ by item \eqref{itemTf=}. Also, since $ f_{t_0}\in\mc{AS}_{\eps'}$, $f_{t_0}|_{|\T}$ is an analytic diffeomorphism and $f_{t_0}(\D)\subset \D^\circ$.  By item \eqref{itemH}, the operator $\bT_{f_{t_0}}$ maps continuously $W^0\to W^0$ (recall $W^0=L^2=\mc{H}$).

On the other hand and by the spectral theorem, it is readily seen that $e^{-t_0\mathbf{H}}:W^s\to W^{s'}$ continuously for any $s,s'\in\R$. We deduce that  $\bT_f=e^{-t_0\mathbf{H}}\bT_{f_{t_0}}e^{-t_0\mathbf{H}}$ maps continuously $W^s\to W^{s'}$.  The same property holds for the adjoint  $\bT_{ f}^*$.
Finally we can proceed similarly with the operator $\bT_{ f}^\eta:=  e^{-t_0\mathbf{H}}\bT_{f_{t_0}}e^{-(t_0-\eta)\mathbf{H}}  $, which satisfies $\bT_{f}= \bT_{f}^\eta e^{-\eta\mathbf{H}}$.
\end{proof}

Now we consider $\delta>0$ given by  \eqref{wtcoercif}. We claim:
\begin{lemma}\label{estTft}
There exists a constant $C>0$ such that for $t\in (0,\delta)$
\[\|\bT_{f_t}-{\rm Id}\|_{W^{1}\to W^{0} }\leq C t^{1/2}.\]
\end{lemma}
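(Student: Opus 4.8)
The plan is to run a parabolic energy estimate on the difference $\psi_t := \bT_{f_t}F - F$, exploiting that the integrated identity \eqref{integrated1} of the previous lemma is exactly the weak formulation of the non-autonomous evolution $\partial_t\bT_{f_t}F = -\mathbf{H}_{w_t}\bT_{f_t}F$. Fix $F\in\mc{D}(\mc{Q})=W^1$ and set $u_t:=\bT_{f_t}F$. By the previous lemma, $t\mapsto u_t$ lies in $C([0,\delta],L^2)\cap L^2([0,\delta],\mc{D}(\mc{Q}))$, and \eqref{integrated1} reads $\cjg\partial_t u_t,G\cjd_{\mc{H}}=-\mc{Q}_{w_t}(u_t,G)$ for every $G\in\mc{D}(\mc{Q})$, i.e. $\partial_t u_t=-\mathbf{H}_{w_t}u_t$ in $\mc{D}'(\mc{Q})=W^{-1}$. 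Since $M:=\sup_{[0,\delta]}\|w_t\|_{\epsilon=0,3}<\infty$ by \eqref{condmapc0} and $\mathbf{H}_{w_t}:W^1\to W^{-1}$ is bounded with norm $\le C\|w_t\|_{\epsilon=0,3}$ by Corollary \ref{fqcont}, I would first record that $\partial_t u_t\in L^2([0,\delta],W^{-1})$; consequently $\psi_t=u_t-F$ satisfies $\psi\in L^2([0,\delta],W^1)$, $\partial_t\psi\in L^2([0,\delta],W^{-1})$ and $\psi_0=0$.

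The heart of the argument is then the energy identity. Applying the Lions--Magenes lemma for the Gelfand triple $W^1\subset W^0\subset W^{-1}$, the map $t\mapsto\|\psi_t\|_{\mc{H}}^2$ is absolutely continuous with $\tfrac{d}{dt}\|\psi_t\|_{\mc{H}}^2=2\Re\cjg\partial_t\psi_t,\psi_t\cjd_{\mc{H}}$, the bracket being the $W^{-1}$--$W^1$ duality. Substituting $\partial_t\psi_t=-\mathbf{H}_{w_t}u_t$ and $u_t=\psi_t+F$, the balance would read
\begin{equation*}
\tfrac12\tfrac{d}{dt}\|\psi_t\|_{\mc{H}}^2=-\Re\mc{Q}_{w_t}(\psi_t,\psi_t)-\Re\mc{Q}_{w_t}(F,\psi_t).
\end{equation*}
Here I would invoke the uniform coercivity of the forms $\mc{Q}_{w_t}$ (Corollary \ref{fqcont} together with \eqref{wtcoercif}, which yields $\Re\mc{Q}_{w_t}(\psi_t,\psi_t)\ge C'\mc{Q}(\psi_t)$ with $C'>0$ independent of $t\in[0,\delta]$) and the continuity bound $|\mc{Q}_{w_t}(F,\psi_t)|\le CM\,\mc{Q}(F)^{1/2}\mc{Q}(\psi_t)^{1/2}$. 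Young's inequality absorbs the cross term into the dissipative one, giving $\tfrac12\tfrac{d}{dt}\|\psi_t\|_{\mc{H}}^2\le\tfrac{(CM)^2}{4C'}\mc{Q}(F)$; integrating from $0$ to $t$ with $\psi_0=0$ produces $\|\psi_t\|_{\mc{H}}^2\le\tfrac{(CM)^2}{2C'}\,t\,\mc{Q}(F)$, which is precisely $\|\bT_{f_t}-\mathrm{I}\|_{W^1\to W^0}\le Ct^{1/2}$.

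I expect the main obstacle to be purely technical: rigorously justifying the energy identity $\tfrac{d}{dt}\|\psi_t\|^2=2\Re\cjg\partial_t\psi_t,\psi_t\cjd$ by verifying the Lions--Magenes hypotheses ($\psi\in L^2 W^1$, $\partial_t\psi\in L^2 W^{-1}$, already secured above), and confirming that the coercivity and continuity constants of $\mc{Q}_{w_t}$ are genuinely uniform in $t\in[0,\delta]$, which follows from the continuity $t\mapsto f_t\in\mc{S}_{\epsilon'}$ of \eqref{condmapc0} and the uniform coercivity \eqref{wtcoercif}. It is worth emphasising why the sharp statement requires the energy method rather than testing \eqref{integrated1} directly: the symmetric bound of Corollary \ref{fqcont} only controls $\cjg(\bT_{f_t}-\mathrm{I})F,G\cjd$ by $Ct^{1/2}\|F\|_{\mc{H}}\,\mc{Q}(G)^{1/2}$, i.e. it yields $\|\bT_{f_t}-\mathrm{I}\|_{W^0\to W^{-1}}\le Ct^{1/2}$ only; the gain of one derivative to reach $W^1\to W^0$ comes exactly from feeding the $W^1$-regularity of the initial datum $F$ into the energy balance, where the dissipative term $-C'\mc{Q}(\psi_t)$ absorbs the source $\mc{Q}_{w_t}(F,\psi_t)$.
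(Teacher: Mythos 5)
Your proof is correct and follows essentially the same route as the paper's: both start from \eqref{integrated1}, differentiate (or integrate) $\|\bT_{f_t}F-F\|_{\mc{H}}^2$, split $\bT_{f_r}F=(\bT_{f_r}F-F)+F$, and use the uniform coercivity and $\mc{Q}$-continuity of $\mc{Q}_{w_r}$ with Young's inequality to absorb the cross term. The only differences are cosmetic — you absorb the cross term pointwise in time (discarding the dissipative term) where the paper absorbs it after integration by choosing $\lambda$ small, and you add the Lions--Magenes justification for the energy identity, which the paper performs formally.
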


\begin{proof}
From \eqref{integrated1}, we have 
\[\partial_t\|\bT_{f_t}F-F\|_{\mc{H}}^2=2{\rm Re}\cjg\partial_t\bT_{f_t}F,\bT_{f_t}F-F\cjd_{\mc{H}}=-2{\rm Re}\, \mc{Q}_{{\rm w}_t}(\bT_{f_t}F,\bT_{f_t}F-F ),\] 
producing
\[ \|\bT_{f_t}F-F\|_{\mc{H}}^2=-2\int_0^t{\rm Re}\, \mc{Q}_{{\rm w}_r}(\bT_{f_r}F,\bT_{f_r}F-F )\,\dd r.\]
Now we consider a constant $C$ such that, for $t\in [0,\delta]$,  $C^{-1}\mc{Q}\leq {\rm Re}\mc{Q}_{{\rm w}_t}$ (by item \eqref{itemCoer} and \eqref{wtcoercif}) and  $\forall F,G\in\mc{D}(\mc{Q})$,    $|\mc{Q}_{{\rm w}_t}(F,G)|\leq C |F|_{\mathbf{H},1}|G|_{\mathbf{H},1}$ (by item  \eqref{itemWeird1} and item \eqref{itemQcont}). Therefore
\begin{align*}
\|\bT_{f_t}F-F\|_{\mc{H}}^2+2\int_0^t{\rm Re\, }\mc{Q}_{{\rm w}_r}(\bT_{f_r}F-F,\bT_{f_r}F-F )\,\dd r=&-2\int_0^t{\rm Re}\, \mc{Q}_{{\rm w}_r}(F,\bT_{f_r}F-F )\,\dd r\\
\leq & \frac{t}{ \lambda }|F|_{\mathbf{H},1}^2+ C^2\lambda \int_0^t  |\bT_{f_r}F-F|_{\mathbf{H},1}^2  \, \dd r
\end{align*}
for arbitrary $\lambda>0$.
We deduce
\[ \|\bT_{f_t}F-F\|_{\mc{H}}^2+(2C^{-1}- C^2\lambda )\int_0^t |\bT_{f_r}F-F|_{\mathbf{H},1}^2 \,dr\leq \frac{t}{ \lambda }|F|_{\mathbf{H},1}^2.\]
We complete the proof by choosing $\lambda$ small enough so as to make $(2C^{-1}- C^2\lambda)>0$.
\end{proof}

\begin{lemma}\label{commut}
 Let $t>0$ and $v\in \mc{A}{\rm Hol}^\bullet_\eps(\D)$, with $t<\ln(1+\epsilon)$, such that $v (e^s \cdot)$ satisfies the coercivity condition \eqref{condQnice} for $s\in [0,t]$.  Then the following holds true with ${\rm v}_t=v_t(z)\pl_z$ for $v_t(z)=e^{-t}v(e^{t}z)$
\[\forall F,G\in \mc{D}(\mc{Q}),\quad  \mc{Q}_{\rm v}(F,e^{-t\mathbf{H}}G)= \mc{Q}_{{\rm v}_t}(e^{-t\mathbf{H}}F, G).\]
\end{lemma}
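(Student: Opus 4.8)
The plan is to reduce the claimed identity of quadratic forms to a genuine identity of \emph{bounded} operators, which can be read off directly from the fact that $\bT$ is a representation of the semigroup (Proposition~\ref{comprule}), and then differentiate at the origin. Throughout, write $q_s(z)=e^{-s}z$ for the dilation, so that $\bT_{q_s}=e^{-s\mathbf{H}}$ with $\mathbf{H}=\mathbf{H}_{v_0}$, $v_0=-z$. Given the vector field $v$, let $f_s$ be its flow ($\partial_s f_s=v\circ f_s$, $f_0=\mathrm{Id}$) and let $\tilde v:=e^{-t}v\circ e^{t}$, i.e. $\tilde v(z)=e^{-t}v(e^{t}z)$, with flow $g_s$. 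A direct computation of coefficients gives $\tilde v_n=e^{tn}v_n$, so that $v\circ e^{t}=e^{t}\tilde v$; since the coercivity condition \eqref{condQnice} is invariant under multiplication by a positive scalar, the hypothesis (coercivity of $z\mapsto v(e^{s}z)$ for $s\in[0,t]$) gives coercivity of $v$ itself (take $s=0$) and of $\tilde v$ (take $s=t$). By Lemma~\ref{flowS} and \eqref{def:propagatorvLiouville} we then have $f_s,g_s\in\mc{S}_+$ with $\bT_{f_s}=e^{-s\mathbf{H}_v}$ and $\bT_{g_s}=e^{-s\mathbf{H}_{\tilde v}}$.

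The key observation is the \emph{map-level} conjugation identity
\[ q_t\circ f_s = g_s\circ q_t \qquad\text{on }\D,\ s\ge 0. \]
Indeed, $\tilde v=(q_t)_*v$ is the pushforward of $v$ under $q_t$, so by the standard naturality of flows under diffeomorphisms its flow is $g_s=q_t\circ f_s\circ q_{-t}$; composing on the right with $q_t$ yields the displayed equality (both sides are holomorphic on $\D$, and agree wherever the intermediate compositions are defined, hence everywhere by analytic continuation). Applying $\bT$ and using Proposition~\ref{comprule} turns this into an identity of bounded operators on $\mc{H}$:
\[ e^{-t\mathbf{H}}\,\bT_{f_s} = \bT_{g_s}\,e^{-t\mathbf{H}}, \qquad s\ge 0, \]
valid including the endpoint $s=0$ (both sides reduce to $e^{-t\mathbf{H}}$). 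Pairing against $F,G\in\mc{D}(\mc{Q})$ and using that $e^{-t\mathbf{H}}$ is self-adjoint gives, for every $s\ge0$,
\[ \cjg \bT_{f_s}F,\ e^{-t\mathbf{H}}G\cjd_{\mc{H}} = \cjg \bT_{g_s}\,e^{-t\mathbf{H}}F,\ G\cjd_{\mc{H}}. \]

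It remains to differentiate this scalar identity at $s=0^{+}$. Since $e^{-s\mathbf{H}_v}$ and $e^{-s\mathbf{H}_{\tilde v}}$ are the contraction semigroups associated with the coercive closed forms $\mc{Q}_v$ and $\mc{Q}_{\tilde v}$ (Corollary~\ref{fqcont}), the standard form-generator property (as in the Hille–Yosida/\,sesquilinear-form theory of \cite{Reed-Simon}) yields, for $F,H\in\mc{D}(\mc{Q})$, $\tfrac{d}{ds}\cjg e^{-s\mathbf{H}_v}F,H\cjd_{\mc{H}}|_{s=0^+}=-\mc{Q}_v(F,H)$, and likewise for $\tilde v$. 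Taking $H=e^{-t\mathbf{H}}G\in\mc{D}(\mc{Q})$ on the left and using $e^{-t\mathbf{H}}F\in\mc{D}(\mc{Q})$ on the right (recall $e^{-t\mathbf{H}}$ preserves the form domain $\mc{D}(\mc{Q})$), the two sides give
\[ -\mc{Q}_v\big(F,\ e^{-t\mathbf{H}}G\big) = -\mc{Q}_{\tilde v}\big(e^{-t\mathbf{H}}F,\ G\big), \]
which is exactly the assertion with $\tilde v=e^{-t}v\circ e^{t}$. (Intuitively this is the relation $e^{-t\mathbf{H}}\mathbf{L}_n=e^{nt}\mathbf{L}_n e^{-t\mathbf{H}}$ coming from $[\mathbf{H},\mathbf{L}_n]=-n\mathbf{L}_n$ together with $\tilde v_n=e^{nt}v_n$; the point of the argument above is to obtain it without ever invoking the unbounded operator $e^{t\mathbf{H}}$.)

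The main obstacle I anticipate is not the algebra but the two regularity/bookkeeping points: first, justifying that $g_s$ is genuinely the flow of $\tilde v$ and stays in $\mc{S}_+$ for $s\ge0$, so that $\bT_{g_s}=e^{-s\mathbf{H}_{\tilde v}}$ is legitimate (this is where coercivity of $\tilde v$, extracted above from the hypothesis, is essential, and where one must check the domains of the compositions $q_t\circ f_s$ and $g_s\circ q_t$ are controlled — $v$ being holomorphic on a neighborhood of $\overline{\D}$ makes $t$ small enough for $f_s$ to be defined slightly beyond $\T$); and second, the legitimacy of differentiating the pairing at $s=0^+$ and identifying the one-sided derivative with $-\mc{Q}_v$ resp. $-\mc{Q}_{\tilde v}$, which requires the $\mc{Q}$-continuity from Corollary~\ref{fqcont} and the form-sense differentiability of the semigroups at the origin.
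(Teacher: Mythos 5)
Your proof is correct and follows essentially the same route as the paper: both establish the conjugation identity $q_t\circ f_s=\bar f_s\circ q_t$ (your $g_s$ is the paper's $\bar f_s$, your $\tilde v$ its $v_t$), convert it via the composition rule of Proposition \ref{comprule} into the operator identity $e^{-t\mathbf{H}}e^{-s\mathbf{H}_v}=e^{-s\mathbf{H}_{v_t}}e^{-t\mathbf{H}}$, and then differentiate the pairing at $s=0^+$ using the form-generator property of the semigroups. The only cosmetic difference is that you derive the map-level identity from naturality of flows under pushforward, whereas the paper verifies directly that $\bar f_s(z)=e^{-t}f_s(e^tz)$ solves the flow equation for $v_t$.
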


\begin{proof}
For such a $t$, and $s>0$ small enough, we have
$$e^{-t\mathbf{H}} e^{-s\mathbf{H}_{\rm v}}=\bT_{q_t}\circ \bT_{h_s}$$ with $q_t(z)=e^{-t}z$ and $h_s$ is the flow defined by $\partial_s h_s(z)=v(h_s(z))$ with $h_0(z)=z$. By the composition rule, namely item \eqref{itemCompo}, we have 
$\bT_{q_t}\circ \bT_{h_s}=\bT_{q_t \circ  h_s}=\bT_{e^{-t} h_s}$. Consider now $h^t_s(z):=e^{-t} h_s(e^tz)$, which is the solution of the flow $\partial_s h^t_s(z)=v_t(h^t_s(z))$ with $\bar h^t_0(z)=z$ and $v_t(z):=e^{-t}v(e^tz)$. Therefore we have
$$\bT_{q_t}\circ \bT_{h_s}=\bT_{e^{-t} h_s}=\bT_{h^t_s\circ q_t }=  \bT_{h^t_s} \circ\bT_{q_t}=  e^{-s\mathbf{H}_{{\rm v}_t}}e^{-t\mathbf{H}}.$$
Now, by item \eqref{itemCoer} and for $v$ satisfying the coercivity condition, for all  $F,G\in \mc{D}(\mc{Q})$
\[\partial_s|_{s=0}\langle e^{-s\mathbf{H}_{\rm v}}F,G\rangle_{\mc{H}}=-\mc{Q}_{\rm v}(F,G).\]
Differentiating  in this way  the relation $e^{-t\mathbf{H}} e^{-s\mathbf{H}_{\rm v}}=e^{-s\mathbf{H}_{{\rm v}_t}}e^{-t\mathbf{H}}$, we prove the claim.
\end{proof}

\begin{remark}
The above statement is a particular case of a more general (beautiful) property of this  non-commutative semigroup: for Markovian $v, w\in {\rm Hol}_\eps(\D)$
\[e^{-t\mathbf{H}_{\rm v}}e^{-s\mathbf{H}_{\rm w}}= e^{-s\mathbf{H}_{{\rm w}_t}}e^{-t\mathbf{H}_{\rm v}}\] 
where $w_t(z):=\frac{w(f_t^{-1}(z))}{(f_t^{-1})'(z)}$ and $f_t$ is the flow $\partial_tf_t(z)=v(f_t(z))$ and $f_0(z)=z$. This relation uses the fact that, if $g_s$ is  the solution  of the flow $\partial_sg_s(z)=w(g_s(z))$ and $g_0(z)=z$ then $g^t_s:=f_t\circ g_s\circ f_t^{-1}$ is the solution of the flow $\partial_sg^t_s(z)=w_t(g^t_s(z))$ and $g^t_0(z)=z$ (notice that in the case $v=w$ then $w_t(z)=w(z)$, which makes our relation consistent with the commutative case).  
\end{remark}

Using the first two above lemmas we deduce $\bT_f \bT_{f_t}:W^1\to W^1$  for $t\geq 0$ small enough, therefore $\bT_{f+tv}\in \mc{L}(\mc{D}(\mc{Q}),\mc{D}(\mc{Q}))$ using $f\circ f_t=f+tv$ and \eqref{itemCompo}.
Also, using  \eqref{integrated1}, we can write for all $F\in\mc{D}(\mc{Q})$ and $G\in\mc{D}(\mc{Q})$
\begin{align*}
\cjg \bT_{f_t}F,G\cjd_{\mc{H}}-\cjg F,G\cjd_{\mc{H}}=&-\int_0^t\mc{Q}_{{\rm w}_r}( \bT_{f_r}F,G)\,\dd r\\
=&-t\mc{Q}_{{\rm w}_0}( F,G)-\int_0^t(\mc{Q}_{{\rm w}_r}(  F,G)-\mc{Q}_{{\rm w}_0}( F,G))\,dr -\int_0^t\mc{Q}_{{\rm w}_r}( \bT_{f_r}F-F,G)\,\dd r \\
=:&-t\mc{Q}_{{\rm w}_0}( F,G)-R^1_t(G)-R^2_t(G).
\end{align*}
In particular, for $G\in\mc{D}'(\mc{Q})$ (hence $\bT_f ^*G\in \mc{D}(\mc{Q})$ by Lemma \ref{estTfstar})
\begin{align*}
\cjg\bT_f \bT_{f_t}F,G\cjd_{\mc{H}}-\cjg\bT_fF,G\cjd_{\mc{H}}=&\cjg \bT_{f_t}F,\bT_f ^*G\cjd_{\mc{H}}-\cjg F,\bT_f^*G\cjd_{\mc{H}} \\
=& -t\mc{Q}_{{\rm w}_0}( F,\bT_f^*G)-R^1_t(\bT_f^*G)-R^2_t(\bT_f^*G).
\end{align*}
The term $\mc{Q}_{{\rm w}_0}( F,\bT_f^*G)$ expresses the differential: indeed, using the $\mc{Q}$-continuity of $\mc{Q}_{{\rm w}_0}$ (see item \eqref{itemQcont}) and Lemma \ref{estTfstar}, we have 
\[|\mc{Q}_{{\rm w}_0}( F,\bT_f^*G)|\leq C |F|_{\mathbf{H},1}|\bT_f^*G|_{\mathbf{H},1}\leq   C |F|_{\mathbf{H},1}|G|_{\mathbf{H},-1}.\]

By Riesz representation theorem, there exists an element denoted $D_{\rm v}\mathbf{T}_fF\in W^1$ such that $\cjg D_{\rm v}\mathbf{T}_fF,G\cjd_{\mc{H}}=-\mc{Q}_{{\rm w}_0}( F,\bT_f^*G)$ for all $G\in W^{-1}$.
We have to show that the remainder terms $R_t^1,R_t^2$ are $o(t)$ in operator norm $W^1\to W^{1}$. 
We have, using  Lemma \ref{commut} and Lemma \ref{estTfstar},
\begin{align*}
|R^2_t(\bT_f^*G)|& =\, |\int_0^t\mc{Q}_{{\rm w}_r}( \bT_{f_r}F-F,e^{-\eta\mathbf{H}}(\bT_f^\eta)^*G)\,\dd r|
\\
&= \, |\int_0^t\mc{Q}_{e^{-\eta} w_r(e^\eta\cdot)\pl_z}( e^{-\eta\mathbf{H}}(\bT_{f_r}F-F), (\bT_f^\eta)^*G)\,\dd r|
\\
& \leq  C\int_0^t | e^{-\eta\mathbf{H}}(\bT_{f_r}F-F)|_{\mathbf{H},1}| (\bT_f^\eta)^*G|_{\mathbf{H},1}\,\dd r
\end{align*}
 where we have used item \eqref{itemQcont} in the last line. Then, using Lemmas \ref{estTfstar} and \ref{estTft}, we deduce
\[\begin{split}
|R^2_t(\bT_f^*G) |
& \leq  C\int_0^t | \bT_{f_r}F-F|_{\mathbf{H},0}|  G|_{\mathbf{H},-1}\,\dd r\\
& \leq C\int_0^t r^{1/2}\,\dd r  | F|_{\mathbf{H},1}|G|_{\mathbf{H},-1}\\
& =Ct^{3/2}| F|_{\mathbf{H},1}|G|_{\mathbf{H},-1}.
\end{split}\]
Concerning $R^1_t$, we use the following lemma which shows that 
$$|R^1_t(\bT_f^*G)|\leq  C t^2| F|_{\mathbf{H},1}|\bT_f^*G|_{\mathbf{H},1}\leq  C t^2| F|_{\mathbf{H},1}|G|_{\mathbf{H},-1},$$ where we have used Lemma \ref{estTfstar} in the last inequality.  
  
\begin{lemma}
There exists  $C>0$ such that for all   $t\in [0,\delta)$ 
\[\forall F,G \in  \mc{D}(\mc{Q}) ,\quad |\mc{Q}_{{\rm w}_{t}}(F,G) -\mc{Q}_{{\rm w}_{0}}(F,G)|\leq Ct\mc{Q}(F,F)^{1/2}\mc{Q}(G,G)^{1/2}.\]
\end{lemma}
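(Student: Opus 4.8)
The plan is to reduce the whole estimate to a single bound on the seminorm of $w_t-w_0$, exploiting that the quadratic form depends linearly on its vector-field argument. First I would record that the map $v\mapsto\mc{Q}_v(F,G)$ is additive: in the definition $\mc{Q}_v(F,G)=\cjg\mathbf{H}^0_vF,G\cjd_{\mc{H}}+\mu\cjg V_vF,G\cjd_{\mc{H}}$ from Corollary \ref{fqcont}, the operator $\mathbf{H}^0_v$ depends linearly on the Fourier coefficients $v_n$, while $V_v$ depends linearly on $v$ through the measure $\d\varrho_v(\theta)=-\Re(e^{-i\theta}v(e^{i\theta}))\d\theta$. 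Hence $\mc{Q}_{w_t}(F,G)-\mc{Q}_{w_0}(F,G)=\mc{Q}_{w_t-w_0}(F,G)$, and the continuity estimate of Corollary \ref{fqcont} gives
\[|\mc{Q}_{w_t}(F,G)-\mc{Q}_{w_0}(F,G)|\leq C\|w_t-w_0\|_{\epsilon=0,3}\,\mc{Q}(F,F)^{1/2}\mc{Q}(G,G)^{1/2}.\]
So the entire lemma comes down to proving $\|w_t-w_0\|_{\epsilon=0,3}\leq Ct$ for $t\in[0,\delta]$.

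Next I would pass from the Fréchet seminorm to a sup norm on a circle. Recalling from \eqref{naflow} that $w_t(z)=v(f_t^{-1}(z))/f'(z)$ and $f_0=\mathrm{Id}$, one has $w_t(z)-w_0(z)=\bigl(v(f_t^{-1}(z))-v(z)\bigr)/f'(z)$, a holomorphic function on a neighborhood of $\overline{\D}$ vanishing at $0$ (since $f_t(0)=0$ and $v(0)=0$). Choosing $\epsilon''<\epsilon'$ as in subsection \ref{naf}, so that $|f'|\geq\eta$ on $\{|z|\leq1+\epsilon''\}$, I would use the Cauchy estimate $|(w_t-w_0)_n|\leq(1+\epsilon'')^{-(n+1)}\sup_{|z|=1+\epsilon''}|w_t-w_0|$ together with the convergence of $\sum_{n\geq0}(1+n)^6(1+\epsilon'')^{-2(n+1)}$ to deduce, from the definition \eqref{seminorm},
\[\|w_t-w_0\|_{\epsilon=0,3}\leq C\sup_{|z|=1+\epsilon''}|w_t(z)-w_0(z)|\leq \frac{C\,\mathrm{Lip}(v)}{\eta}\sup_{|z|=1+\epsilon''}|f_t^{-1}(z)-z|,\]
where the last step uses that $v$, being holomorphic, is Lipschitz on the relevant compact set.

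Finally I would establish the flow estimate $\sup_{|z|=1+\epsilon''}|f_t^{-1}(z)-z|\leq Ct$. The vector fields $w_t$ are uniformly bounded on a fixed compact neighborhood of $\overline{\D}$ for $t\in[0,\delta]$ (because $f_t^{-1}$ stays in a compact region by the continuity \eqref{condmapc0}, $v$ is bounded there, and $|f'|\geq\eta$), so integrating $\partial_tf_t=w_t(f_t)$ yields $|f_t(w)-w|\leq Ct$ uniformly in $w$. Writing $w=f_t^{-1}(z)$, that is $z=f_t(w)$, gives $f_t^{-1}(z)-z=-(f_t(w)-w)$, whence the desired $O(t)$ bound, provided $f_t^{-1}(z)$ remains in the region where the flow estimate holds (which it does for $t\leq\delta$ after possibly shrinking $\delta$, by continuity of $t\mapsto f_t^{-1}$ and $f_0^{-1}=\mathrm{Id}$).

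The conceptual steps (additivity in $v$ plus the continuity estimate of Corollary \ref{fqcont}) are essentially free; the only genuine work is the last paragraph, namely the uniform-in-$t$ control of the non-autonomous flow and its inverse on the enlarged circle $\{|z|=1+\epsilon''\}$, together with choosing the nested domains consistently so that $f_t^{-1}(z)$ never leaves the region where the flow estimate is valid. This is the main, though routine, obstacle.
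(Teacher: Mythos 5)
Your proof is correct and takes essentially the same route as the paper's: both rest on the linearity of $v\mapsto\mc{Q}_v$ combined with the continuity bound $|\mc{Q}_v(F,G)|\leq C\|v\|_{\epsilon=0,3}\,\mc{Q}(F,F)^{1/2}\mc{Q}(G,G)^{1/2}$ of Corollary \ref{fqcont}. The only difference is presentational: the paper bounds $\mc{Q}_{\partial_s w_s}$ uniformly in $s$ and integrates over $[0,t]$, whereas you bound the finite difference $\|w_t-w_0\|_{\epsilon=0,3}\leq Ct$ directly via Cauchy estimates and the flow estimate — the same underlying regularity of $t\mapsto w_t$, just made more explicit.
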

\begin{proof}
Let ${\rm w}'_t:=(\pl_tw_t)(z)\pl_z$. Item \eqref{itemQcont} above implies that  for $t\in [0,\delta)$, $|\mc{Q}_{{\rm w}'_t}(F,G)|\leq  C\mc{Q}(F,F)^{1/2}\mc{Q}(G,G)^{1/2}$. 
It follows that
\[|\mc{Q}_{{\rm w}_t}(F,G) -\mc{Q}_{{\rm w}_{0}}(F,G)|\leq \int_0^t |\mc{Q}_{{\rm w}'_s}(F,G)|ds\leq Ct\mc{Q}(F,F)^{1/2}\mc{Q}(G,G)^{1/2}.\qedhere\]
 \end{proof}
So far, we have shown that $f\in \mc{AS}_\eps\mapsto \bT_{f}$ has a right-directional derivative  in operator norm $W^1\to W^{1}$, denoted $D_r\bT_f(v)$, for all $v\in \mc{A}{\rm Hol}^\bullet_\eps(\D)$ such that  $v/f' $ satisfies the coercivity condition \eqref{condQnice}. Furthermore, it is characterized by  $\cjg D_r\bT_f(v)F,G\cjd_{\mc{H}}=-\mc{Q}_{{\rm w}_{0}}( F,\bT_f^*G)$ for $F\in W^1$ and $G\in W^{-1}$. 

The next step is to promote this right-directional derivative to a directional derivative.
For this, we use a corollary of the first form of the mean value theorem: it states that if a function $f:[a,b]\to E$, with $E$ normed vector space,   is continuous on $[a,b] $  with a right derivative on $(a,b)$ and, if the right derivative is continuous at $x_0\in (a,b)$, then $f$ is differentiable at $x_0$. So, we notice that the mapping $(f,v)\in \mc{AS}_\eps\times \mc{A}{\rm Hol}^\bullet_\eps(\D)\mapsto    \mc{Q}_{{\rm w}_{0}}( \cdot,\bT_f^*\cdot) \in \mc{L}(W^1\times W^{-1})$ is continuous.  
  Indeed,  for all $f$ in a neighborhood of $f_0\in\mc{AS}_\eps$,  
 $$\bT_f=e^{-t_0\mathbf{H}}\bT_{f_{t_0}}e^{-t_0\mathbf{H}}$$ 
 with $f_{t_0}(z) =e^{t_0}f(e^{t_0}z)\in \mathcal{AS}_{\eps'} $ with $1+\eps'=(1+\eps)e^{-t_0}$.  Since the mapping $f\in\mc{AS}_\eps \mapsto f_{t_0}\in \mc{AS}_{\eps'} $ is continuous, we get the continuity of $f\in\mc{AS}_\eps\mapsto \bT_{f_{t_0}}\in \mc{L}(L^2,L^2)$ by item \eqref{itemCont}. Now recall that $e^{-t_0\mathbf{H}}:W^{-1}\rightarrow L^2$ as well as $e^{-t_0\mathbf{H}}: L^2\rightarrow W^{1} $ so that we get the continuity of the mapping $f\in\mc{AS}_\eps\mapsto \bT_f\in \mc{L}(W^{-1},W^{1})$, hence of the mapping  $f\in\mc{AS}_\eps\mapsto \bT^*_f\in \mc{L}(W^{-1},W^{1})$. 
 Consequently, using item \eqref{itemQcont}, the mapping $(f,v)\in \mc{AS}_\eps\times \mc{A}{\rm Hol}^\bullet_\eps(\D)\mapsto    \mc{Q}_{{\rm w}_0}( \cdot,\bT_f^*\cdot) \in W^{-1}\otimes W^1$ is continuous. And so is the mapping $ (f,v)\in \mc{AS}_\eps\times \mc{A}{\rm Hol}^\bullet_\eps(\D)\mapsto  D_r\bT_f(v)\in\mc{L}(W^1,W^1)$.
Hence, we deduce that the mapping 
 $f\in \mc{AS}_\eps\mapsto \bT_{f}$ has a  directional derivative  in operator norm $W^1\to W^{1}$ for any direction $v\in  \mc{A}{\rm Hol}^\bullet_{\eps}(\D)$  such that $v/f'$ satisfies the coercivity condition \eqref{condQnice}. 

Now we extend the directional differentiability to arbitrary $v\in  \mc{A}{\rm Hol}^\bullet_{\eps}(\D)$. For such a $v$ we set $w(z):=-\omega zf'(z)+v(z)$ with $\omega>0$ chosen so as to make $w$ satisfy the coercivity condition \eqref{condQnice}  (note  that $w-v$  satisfies this condition too). By item \eqref{itemWeird2}, both $w$ and $w-v$ belong to $\mc{A}{\rm Hol}^\bullet_{\eps}(\D)$.
Recall that  $D\bT_f(v)\in \mc{L}(W^1,W^1)$  is characterized by  $\cjg D_r\bT_f(v)F,G\cjd_{\mc{H}}=-\mc{Q}_{{\rm w}_{0}}( F,\bT_f^*G)$ for $F\in W^1$ and $G\in W^{-1}$. We then get the following in $\mc{L}(W^1,W^1)$,  for some $s\in (0,1)$, 
 \begin{align*}
 \bT_{f+tv}-\bT_f-tD\bT_f(v)=& -(\bT_{f+tw}-\bT_{f+tv}-tD\bT_f(w-v))+\bT_{f+tw}-\bT_f-tD\bT_f(w)
 \\
=& -t\int_0^1 D\bT_{f+tw+st(v-w)}(w-v)\dd s+tD\bT_f(w-v)+o(t)
 \end{align*}
 where we have used that $\bT_f$ is differentiable in the direction $w$  and $w-v$. Also, by continuity of $f\mapsto D\bT_f(v-w)$, we deduce that $-t\int_0^1D\bT_{f+tw+st(v-w)}(w-v)\dd s+tD\bT_f(w-v)=o(t)$  in $\mc{L}(W^1,W^1)$ and therefore $\bT_f$ is differentiable in the direction $v$ for $v\in  \mc{A}{\rm Hol}^\bullet_{\eps}(\D)$. Since the directional differential is continuous on $\mc{AS}_\eps$ then $\bT$ is differentiable on $\mc{AS}_\eps$. This completes the proof of Theorem \ref{th:gendiff}, hence of Theorem \ref{th:differentiabiliteTf}.

 \subsection{Adjoint and differentiability of  annuli amplitudes }
 
We have seen above that the holomorphic annuli $\A_f$ for $f\in \mc{S}$ generate the non-negative Virasoro elements 
${\bf L}_n,\tilde{\bf L}_n$ for $n\geq 0$ under differentiation of their amplitudes. One may ask if the negative elements 
${\bf L}_n,\tilde{\bf L}_n$ for $n\leq 0$ can also appear as variations of annuli amplitudes and this is indeed the case if one computes the variations of the adjoint of the operator associated to $\A_f$ for $f\in \mc{S}$. Mixing both positive modes and negative modes is also possible, up to adding a central element (a constant times the identity) coming from the choice of metric on the annulus.

First, we are going to compute the adjoint of the operator $\bT_f$ for   $f\in \mc{S}_>$. Let $\iota:\hat{\C}\to\hat{\C}$ be the anticonformal involution $z\mapsto1/\bar{z}$. Let $\D^*=\iota(\D)$. Given $f$ holomorphic on $\D$ (resp. $\D^*$), we define $f^*:=\iota\circ f\circ\iota$ holomorphic on $\D^*$ (resp. $\D$). If $f$ fixes 0, then $f^*$ fixes $\infty$, and we have $(f^*)'(\infty)=1/\overline{f'(0)}$.

We claim

\begin{proposition}\label{propadjoint}
For $f\in \mc{S}_>$ and $g_f$ any admissible metric on $\A_f$, the operator $\bT_f^*:\mc{H}\to \mc{H}$ has the following expression
\begin{equation}
\sqrt{2}\pi e^{\frac{c_\mathrm{L}}{12}W(f,g_f)}  \bT_f^*G(\tilde{\varphi}_2)= \int \mc{A}_{\mathbb{A}_{f^*}    ,g_{f^*}, \bzeta_{f^*}}(\tilde{\varphi}_1,\tilde{\varphi}_2)G(\tilde{\varphi}_1) \,\dd\mu_0(\tilde{\varphi}_1)
\end{equation}
with the constant $W(f,g_f)$ given by \eqref{def_of_W}
and  $\mathbb{A}_{f^*} =\D^*\setminus (f^*(\D^*))^\circ$, $g_{f^*}=e^{-2{\chi_f}_*}g_{\mathbb{A}} $ where  $h_*(z)=\log \Big|\frac{z(f^*)'(z)}{f^*(z)}\Big|$ and ${\chi_f}_*=\chi_f\circ\iota$ (recall \eqref{defg_tadmissible}) and the parametrisation $\bzeta_{f^*}$ is given by $\zeta_1(e^{{\rm i}\theta})=e^{{\rm i}\theta}$ and $\zeta_2(e^{{\rm i}\theta})=f^*(e^{{\rm i}\theta})$.
\end{proposition}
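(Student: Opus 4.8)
The plan is to read off the integral kernel of $\bT_f$ from Corollary \ref{cor:annulus_propagator}, transpose it to obtain the adjoint, and then reinterpret the transposed kernel as the amplitude of the reflected annulus $\mathbb{A}_{f^*}=\iota(\mathbb{A}_f)$. By Corollary \ref{cor:annulus_propagator}, for $f\in\mc{S}_+$ the operator $\bT_f$ is the integral operator on $\mc{H}$ whose kernel (with output variable $\tilde\varphi_1$ and integration variable $\tilde\varphi_2$) is
\[
K_f(\tilde\varphi_1,\tilde\varphi_2)=\frac{1}{\sqrt2\,\pi}\,e^{-\frac{c_{\rm L}}{12}W(f,g_f)}\,\mc{A}_{\mathbb{A}_f,g_f,\bzeta_f}(\tilde\varphi_1,\tilde\varphi_2).
\]
The adjoint with respect to the Hermitian product of $\mc{H}$ is then the integral operator with kernel $(\tilde\varphi_2,\tilde\varphi_1)\mapsto\overline{K_f(\tilde\varphi_1,\tilde\varphi_2)}$. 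The first key observation I would make is that, for real boundary data and $F=1$, every ingredient of the annulus amplitude is real: the free-field factor $\mc{A}^0_{\mathbb{A}_f,g_f}$ is the exponential of a real quadratic form in $\tilde{\bs{\varphi}}$ (recall \eqref{amplifree} and that $\mathbf{D}_{\mathbb{A}_f},\mathbf{D}$ are real symmetric), the normalisation $Z_{\mathbb{A}_f,g_f}$ is real, and the expectation in \eqref{amplitude} involves the real Dirichlet GFF, its real harmonic extension, and the positive GMC measure. Hence $\mc{A}_{\mathbb{A}_f,g_f,\bzeta_f}(\tilde\varphi_1,\tilde\varphi_2)\in\R$ and $W(f,g_f)\in\R$, so $\overline{K_f}=K_f$ and
\[
\sqrt2\,\pi\, e^{\frac{c_{\rm L}}{12}W(f,g_f)}\,\bT_f^*G(\tilde\varphi_2)=\int \mc{A}_{\mathbb{A}_f,g_f,\bzeta_f}(\tilde\varphi_1,\tilde\varphi_2)\,G(\tilde\varphi_1)\,\dd\mu_0(\tilde\varphi_1).
\]

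It then remains to prove the identity $\mc{A}_{\mathbb{A}_f,g_f,\bzeta_f}=\mc{A}_{\mathbb{A}_{f^*},g_{f^*},\bzeta_*}$. Here I would set up the anticonformal involution $\iota(z)=1/\bar z$ and record the elementary geometric facts: $\iota$ fixes $\T$ pointwise, $\iota(f(\T))=f^*(\T)$, and $\iota(\D^c)=\D^\circ$ gives $\iota(\mathbb{A}_f)=\D^*\setminus\iota(f(\D^\circ))=\D^*\setminus f^*(\D^c)=\mathbb{A}_{f^*}$. Moreover $\iota^*g_{\mathbb{A}}=g_{\mathbb{A}}$, and checking $h_*=h\circ\iota$ from the definitions (both are the harmonic functions carrying the prescribed boundary values, harmonicity being preserved by anticonformal maps) yields $\iota_*g_f=g_{f^*}$. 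Thus $\iota:(\mathbb{A}_f,g_f)\to(\mathbb{A}_{f^*},g_{f^*})$ is an orientation-reversing isometry carrying $\bzeta_f=(\mathrm{id},f|_\T)$ to $\bzeta_*=(\mathrm{id},f^*|_\T)=(\iota\circ\mathrm{id},\iota\circ f|_\T)$. I would then invoke the anticonformal counterpart of the diffeomorphism invariance of Proposition \ref{Weyl}(2): since $\iota$ is an isometry, the Dirichlet GFF and GMC on $\mathbb{A}_f$ push forward to those on $\mathbb{A}_{f^*}$; the harmonic extensions satisfy $P_{\mathbb{A}_{f^*},\bzeta_*}\tilde{\bs\varphi}\circ\iota=P_{\mathbb{A}_f,\bzeta_f}\tilde{\bs\varphi}$ (boundary values match because $\iota$ fixes $\T$ and intertwines the inner-curve parametrizations); the DN maps coincide in parametrized coordinates; and $\det(\Delta_{g,D})$, the vanishing scalar curvature, and the vanishing geodesic curvature (both $g_f,g_{f^*}$ admissible) are preserved. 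Because the amplitude is real, the complex conjugation that would in general accompany an orientation reversal is trivial, and the two amplitudes agree, completing the proof.

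The main obstacle is making the anticonformal covariance rigorous, since the excerpt only states diffeomorphism invariance for orientation-preserving maps (Proposition \ref{Weyl}(2)). I expect to establish it directly from the probabilistic Definition \ref{def:amp}, pushing the Dirichlet GFF and GMC forward along $\iota$ and tracking the deterministic factor $\mc{A}^0$ via the equality $\mathbf{D}_{\mathbb{A}_{f^*}}=\mathbf{D}_{\mathbb{A}_f}$, which follows from the invariance of the DN quadratic form under anticonformal maps exactly as in Lemma \ref{DNmapInv} (the Dirichlet energy $\int|dP\tilde{\bs\varphi}|^2_g\,{\rm dv}_g$ is insensitive to orientation). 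An equally safe route is to bypass the amplitude and reflect the probabilistic representation of $\bT_f$ in Definition \ref{defTf} directly through $\iota$. Either way the delicate points are confirming that orientation reversal exchanges the incoming and outgoing roles precisely as the operator adjoint does, and that the reflected conformal factor $h_*$ matches the metric $g_{f^*}$ prescribed in the statement.
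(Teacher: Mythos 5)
Your proposal is correct and follows essentially the same route as the paper's proof: read off the kernel of $\bT_f$ from Corollary \ref{cor:annulus_propagator}, pass to the transposed kernel for the adjoint, and identify it with the amplitude of $\mathbb{A}_{f^*}$ by applying the reflection $\iota$ and computing $\iota_*(g_f)=g_{f^*}$. In fact the paper simply invokes the diffeomorphism invariance of Proposition \ref{Weyl} with $\psi=\iota$ despite the orientation-preserving hypothesis in its statement, so your two points of extra care --- the reality of the amplitude kernel (making the Hermitian conjugation trivial) and the need to justify anticonformal covariance directly from the probabilistic definition, where every ingredient is purely Riemannian and hence orientation-insensitive --- supply details that the paper leaves implicit.
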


\begin{proof}
Since $\bT_f$ has an integral kernel with respect to the measure $\mu_0$, then $\bT^*_f$ has an integral kernel too. More explicitly, the kernel of $\bT_f$ (for $f\in \mc{S}_>$) is given by Corollary \ref{cor:annulus_propagator}
\[
\sqrt{2}\pi e^{\frac{c_\mathrm{L}}{12}W(f,g_f)}{\bf T}_fF(\tilde{\varphi}_1)= 
\int \mc{A}_{\mathbb{A}_f,g_f,\bzeta_f}(\tilde{\varphi}_1,\tilde{\varphi}_2)F(\tilde{\varphi}_2) \,\dd\mu_0(\tilde{\varphi}_2)
.\]
 Therefore
\[\sqrt{2}\pi e^{\frac{c_\mathrm{L}}{12}W(f,g_f)}{\bf T}_f^*G(\tilde{\varphi}_2)= 
\int \mc{A}_{\mathbb{A}_f,g_f,\bzeta_f}(\tilde{\varphi}_1,\tilde{\varphi}_2)G(\tilde{\varphi}_1) \,\dd\mu_0(\tilde{\varphi}_1).
\]
Now we use the diffeomorphism invariance of amplitude (Proposition \ref{Weyl}) with the diffeomorphism $\iota$
$$ \mc{A}_{\mathbb{A}_f,g_f,\bzeta_f}(\tilde{\varphi}_1,\tilde{\varphi}_2)=\mc{A}_{\mathbb{A}_{f^*}    ,\iota_*(g_f),\iota\circ \bzeta_f}(\tilde{\varphi}_1,\tilde{\varphi}_2)$$
where   $\iota_*(g_f)$ is the pushforward of the metric $g_f$ defined by \eqref{defg_tadmissible}. A straightforward computation gives $\iota_*(g_f)=  g_{f^*}$.
\end{proof}

\begin{definition}[\textbf{Annulus}]\label{def:annulus}
An annulus with analytic parametrised boundary, one incoming and one outgoing,
 can be seen as a pair $\bs{f}:=(f_{\rm out},f_{\rm in})$ where $f_{\rm out},f_{\rm in}$ belong to ${\rm Hol}_\eps(\A)$ (recall the definition in Section \ref{sec:funct_space}) for some $\eps\in (0,1)$ such that $f_{\rm out}^*\in {\rm Hol}_\eps(\A)$ and the curves $f_{\rm in}(\T)$ and $f_{\rm out}^*(\T)$ are not  intersecting and form two disjoint Jordan curves.  We call $\A_{\bs{f}}$ the subset of $\hat \C$ corresponding to the connected component of   $\hat\C\setminus (f_{\rm in}(\T)\cup f^*_{\rm out}(\T))$ admitting both $f_{\rm in}(\T)$ and  $f^*_{\rm out}(\T)$ as boundary components.  The parametrisation  $\bzeta_{\bs{f}}$ of the boundary components is then given by  $\zeta_1(e^{{\rm i}\theta})=f_{\rm out}^*(e^{{\rm i}\theta})$ and $\zeta_2(e^{{\rm i}\theta})=f_{\rm in}(e^{{\rm i}\theta})$, which are respectively incoming and outgoing, and we put on $\A_{\bs{f}}$ the complex structure induced by $\C$. 
\end{definition}

\begin{definition}[\textbf{Equivalent annuli}]
  Two annuli with analytic parametrised boundary $\bs{f}^1$ and $\bs{f}^2$ are said to be equivalent  if there is a holomorphic map $\Phi$ defined on a neighborhood of $\A_{\bs{f}^1}$ such that $\Phi(\A_{\bs{f}^1})=\A_{\bs{f}^2}$ and such that $\Phi\circ f^1_{\rm in}=f^2_{\rm in}$ and $\Phi\circ (f^1_{\rm out})^*=(f^2_{\rm out})^*$.  
\end{definition}

\begin{definition}[\textbf{Model form}]
 We call 
  {\it annulus in model form}  an annulus with analytic parametrised boundary $\bs{f}:=(f_\mathrm{out},f_\mathrm{in})\in {\rm Hol}^\bullet_\eps(\D)\times  {\rm Hol}_\eps^\bullet (\D)$ which are biholomorphisms 
  $f_{\rm in}:(1+\eps)\D\to f_{\rm in}((1+\eps)\D)$ and $f_{\rm out}:(1+\eps)\D\to f_{\rm out}((1+\eps)\D)$  for some $\eps>0$. If in addition $f_\mathrm{in}$,  $f_\mathrm{out}$ belong to $\mc{S}_\eps$, i.e. $f_{\rm in}(\D)\subset \D^\circ$ and $f_{\rm out}(\D)\subset \D^\circ$, 
  we say that the model form splits, in the sense that the unit circle splits the annulus in two annuli $\A_{f_{\rm in}}$ and $\iota(\A_{f_{\rm out}})$.
\end{definition}

Notice that the normalization $f_\mathrm{in}(0)=0$ and $f^*_\mathrm{out}(\infty)=\infty$  holds true for model forms. 
\begin{lemma}\label{lem_model_form}
Let $\bs{f}:=(f_{\rm out},f_{\rm in})$ be an annulus with analytic parametrisation. Then there exists a biholomorphism $\Phi_{\bs{f}}: \A_{\bs{f}}\to \A_{\bs{\psi}}$ such that 
$\bs{\psi}=(\psi_{\rm out},\psi_{\rm in})=((\Phi_{\bs{f}} \circ f^*_{\rm out})^*,\Phi_{\bs{f}} \circ f_{\rm in})$ is an equivalent annulus in model form. The map $\Phi_{\bs{f}}$ and $\bs{\psi}$ are uniquely defined if one requires that $(\psi^*_{\rm out})'(\infty)=\lambda$ for $\lambda>0$ fixed. For such fixed  $\lambda$, the map 
\[ \bs{f}=(f_{\rm out},f_{\rm in})\mapsto \bs{\psi}=((\Phi_{\bs{f}} \circ f^*_{\rm out})^*,\Phi_{\bs{f}} \circ f_{\rm in})\]
is differentiable. If in addition $\bar{f}_{\rm in}(z)=f_{\rm in}(\bar{z})$ and  $\bar{f}_{\rm out}(z)=f_{\rm out}(\bar{z})$ for all $z$, then $\bbar{\Phi}_{\bs{f}}(\bar{z})=\Phi_{\bs{f}}(z)$ for all $z$ and $\bs{\psi}$ also satisfies $\bbar{\psi}_{\rm out}(\bar{z})=\psi_{\rm out}(z)$ and $\bbar{\psi}_{\rm in}(\bar{z})=\psi_{\rm in}(z)$.
\end{lemma}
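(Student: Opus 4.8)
The plan is to realize $\A_{\bs f}$ as the complement of two disks inside a closed genus-$0$ surface, uniformize that surface, and read off $\Phi_{\bs f}$ and $\bs\psi$ from the uniformizing map; the existence and uniqueness are then soft, and differentiability will come from an implicit function theorem applied to the welding equations. Throughout I fix a base point $\bs f_0$ and work for $\bs f$ in a small neighborhood, on which all the domains below can be taken of a fixed definite size.

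\textbf{Existence and uniqueness.} I would glue to $\A_{\bs f}$ one copy of the unit disk $\D_{\rm in}$ along the incoming boundary via $f_{\rm in}$ (identifying $e^{i\theta}\in\partial\D_{\rm in}$ with $f_{\rm in}(e^{i\theta})$) and one copy of the exterior disk $\D_{\rm out}=\D^*$ along the outgoing boundary via $f^*_{\rm out}$. Since $f_{\rm in},f^*_{\rm out}$ are the restrictions to $\T$ of biholomorphisms of the annular neighborhood $\A_{\delta,\delta^{-1}}$ (recall Definition \ref{def:annulus} and the convention on ${\rm Hol}_\delta(\A)$), the transition maps across the two seams are analytic, so the glued surface $\hat\Sigma:=\D_{\rm in}\cup_{f_{\rm in}}\A_{\bs f}\cup_{f^*_{\rm out}}\D_{\rm out}$ carries a well-defined complex structure; topologically it is a sphere (Euler characteristic $1+0+1=2$), hence $\hat\Sigma\cong\hat\C$. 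Let $\Psi:\hat\Sigma\to\hat\C$ be a biholomorphism, normalized by $\Psi(0_{\rm in})=0$, $\Psi(\infty_{\rm out})=\infty$ and $(\Psi|_{\D_{\rm out}})'(\infty)=\lambda$. The residual automorphisms of $\hat\C$ fixing $\{0,\infty\}$ are the scalings $z\mapsto az$, $a\in\C^*$, and the last condition fixes $a$ uniquely (both modulus and argument, as $\lambda>0$), so $\Psi$ is unique. Setting $\psi_{\rm in}:=\Psi|_{\D_{\rm in}}$, $\psi^*_{\rm out}:=\Psi|_{\D_{\rm out}}$ and $\Phi_{\bs f}:=\Psi|_{\A_{\bs f}}$, the gluing relations give $\Phi_{\bs f}\circ f_{\rm in}=\psi_{\rm in}$ and $\Phi_{\bs f}\circ f^*_{\rm out}=\psi^*_{\rm out}$ on $\T$; the analytic continuation supplied by $\A_{\delta,\delta^{-1}}$ shows $\psi_{\rm in},\psi_{\rm out}$ are biholomorphisms of a disk $(1+\eps)\D$ fixing $0$, so $\bs\psi=(\psi_{\rm out},\psi_{\rm in})$ is in model form and equivalent to $\bs f$, with $(\psi^*_{\rm out})'(\infty)=\lambda$.

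\textbf{Differentiability.} I would then recast the problem as the vanishing of an analytic map between Banach spaces: on spaces of holomorphic functions continuous up to the boundary (sup norm) on fixed annuli and disks slightly larger than the relevant domains, the welding equations
\[
\Phi\circ f_{\rm in}-\psi_{\rm in}=0,\qquad \Phi\circ f^*_{\rm out}-\psi^*_{\rm out}=0\quad\text{on }\T,
\]
together with the normalizations $\psi_{\rm in}(0)=\psi_{\rm out}(0)=0$ and $(\psi^*_{\rm out})'(\infty)=\lambda$, define an analytic map $G(\bs f;\Phi,\psi_{\rm in},\psi_{\rm out})$ whose zero set is exactly the solution above (composition is an analytic operation on such spaces once one checks the images stay in the outer domains, which holds uniformly near $\bs f_0$). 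The key computation is the partial differential of $G$ in $(\Phi,\psi_{\rm in},\psi_{\rm out})$ at the solution: pulling the linearized equations back by $\Psi$ to the model annulus $\A_{\bs\psi}$ and writing $u=\dot\Phi\circ\Phi^{-1}$, $v_{\rm in}=\dot\psi_{\rm in}\circ\psi_{\rm in}^{-1}$, $v_{\rm out}=\dot\psi^*_{\rm out}\circ(\psi^*_{\rm out})^{-1}$, one obtains a transmission (Riemann–Hilbert) problem on $\hat\C$: find $u$ holomorphic on $\A_{\bs\psi}$ and $v_{\rm in},v_{\rm out}$ holomorphic on the two complementary holes with prescribed jumps $u-v_{\rm in}$ and $u-v_{\rm out}$ across the two boundary curves. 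This is solved by a Cauchy transform and is uniquely solvable modulo the global holomorphic vector fields on $\hat\C$, i.e. the three-dimensional space spanned by $\partial_z,z\partial_z,z^2\partial_z$; the three normalizations remove precisely this ambiguity, so the partial differential is a Banach-space isomorphism.

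The implicit function theorem then produces $(\Phi_{\bs f},\psi_{\rm in},\psi_{\rm out})$ as an analytic, in particular differentiable, function of $\bs f$ in these Banach spaces, and passing to the seminorms of Section \ref{sec:funct_space} gives differentiability of $\bs f\mapsto\bs\psi=((\Phi_{\bs f}\circ f^*_{\rm out})^*,\Phi_{\bs f}\circ f_{\rm in})$ into ${\rm Hol}^\bullet_\eps(\D)^2$, as claimed. The hard part is the invertibility of the linearized welding operator: this is where the Cauchy-transform (Hardy-space) splitting and the exact matching between the three normalization conditions and the three-dimensional space of holomorphic vector fields on $\hat\C$ are essential, while the rest of the argument is formal.
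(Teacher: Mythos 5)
Your proposal is correct, but it reaches the goal by a genuinely different route than the paper, most notably in the differentiability step. For existence and uniqueness you glue two disks to $\A_{\bs{f}}$ abstractly (using that the seams are analytic) and invoke the uniformization theorem for the resulting genus-$0$ surface; the paper instead realizes the same glued complex structure concretely on the fixed sphere $\hat{\C}$, by extending $f_{\rm in},f_{\rm out}$ as \emph{smooth} diffeomorphisms (chosen $C^1$ in $\bs{f}$) and encoding the pushed-forward structure in a Beltrami coefficient $\mu_{\bs{f}}$ supported in $\hat{\C}\setminus\A_{\bs{f}}$, so that $\Phi_{\bs{f}}$ arises as the normalized quasiconformal solution of $\pl_{\bar z}\Phi_{\bs{f}}=\mu_{\bs{f}}\pl_z\Phi_{\bs{f}}$. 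For differentiability the paper then simply cites Ahlfors--Bers: the Beltrami coefficient depends smoothly (indeed analytically) on $\bs{f}$, hence so does the normalized solution. You avoid quasiconformal theory altogether and instead run an implicit function theorem on the welding equations, identifying the linearization with an additive Riemann--Hilbert (transmission) problem on $\hat{\C}$ solved by Cauchy transforms, whose kernel is exactly the three-dimensional space of global holomorphic vector fields $\{(a+bz+cz^2)\pl_z\}$ killed by the three normalizations — this count, including the vector-field (rather than function) interpretation that allows quadratic growth at $\infty$, is correct and is the heart of your argument. What each approach buys: the paper's route works even for merely smooth boundary parametrizations and outsources all hard analysis to a standard citation, while yours stays within holomorphic function theory and yields analyticity of $\bs{f}\mapsto\bs{\psi}$ directly, at the cost of relying on analyticity of the seams and of Banach-space bookkeeping (fixed domains, loss of domain for the Cauchy transform) that you acknowledge but do not carry out. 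Two points you gloss over are fillable but should be flagged: (i) identifying the IFT branch with the normalized uniformizing map requires knowing that a zero of the welding equations near the base solution glues to a genuine biholomorphism of the sphere (a Hurwitz/degree-one argument), and (ii) the joint analyticity of $(\Phi,f)\mapsto\Phi\circ f|_{\T}$ needs the uniform containment $f(\T)\subset{\rm dom}(\Phi)$, which you correctly restrict to a small neighborhood of $\bs{f}_0$; this local character matches the paper, whose proof is likewise local around a base point $\bs{f}^0$.
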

\begin{proof} 
For $\eps>0$ small, let $\bs{f}^0=(f^0_{\rm out},f^0_{\rm in})\in {\rm  Hol}_\eps(\A)\times {\rm  Hol}_\eps(\A)$ be an annulus with analytic parametrisation and we will consider $\bs{f}=(f_{\rm out},f_{\rm in})$ in a small neighborhood of $\bs{f}^0$. First, consider an extension 
of $f_{\rm in}$, $f_{\rm out}$ in $(1+\eps)\D$ as smooth diffeomorphisms to their image. In a small neighborhood of $\bs{f}^0$, this can be done in a way that it depends in a $C^1$ fashion on $\bs{f}$ in the Fr\'echet topology of ${\rm  Hol}_\eps(\A)$. 
Notice that if in addition $\bar{f}_{\rm in}(z)=f_{\rm in}(\bar{z})$ and  $\bar{f}_{\rm out}(z)=f_{\rm out}(\bar{z})$ for all $z$ (we say they are symmetric by complex conjugation), 
we can take their extension to also satisfy this property.
There is a Beltrami differential $\mu_{\bs{f}}\in C^\infty(\hat{\C})$ such that 
\[ (f_{\rm in})_*|\dd z|^2=e^{w_{\bs{f}}}|\dd z+\mu_{\bs{f}} \dd\bar{z}|^2 \textrm{ in }f_{\rm in}(\D) , \quad 
(f^*_{\rm out})_*|\dd z|^2=e^{w_{\bs{f}}}|\dd z+\mu_{\bs{f}} \dd\bar{z}|^2 \textrm{ in }f^*_{\rm out}(\D^*) \]
where $w_{\bs{f}}\in C^\infty(f_{\rm in}(\D) \cup f_{\rm out}^*(\D^*) )$ and $\mu_{\bs{f}}$ has support in 
$f_{\rm in}(\delta \D)\cup f_{\rm out}^*(\delta^{-1}\D^*)\subset \hat{\C}\setminus \A_{\bs{f}}$. If in addition $f_{\rm in}, f_{\rm out}$ are symmetric by complex conjugation, then $\mu_{\bs{f}}$ is as well, i.e. $\bar{\mu}_{\bs{f}}(z)=\mu_{\bs{f}}(\bar{z})$.
The Beltrami coefficient $\mu_{\bs{f}}$ encodes a complex structure $J_{{\bs f}}$ on $\hat{\C}$ obtained by gluing the disk $\D$ to $\A_{\bs{f}}$ by identifying the boundary $\T$ to $f_{\rm in}(\T)$ via $f_{\rm in}$ and then glue another disk $\D^*$ to $\A_{\bs{f}}$ by identifying the boundary $\T$ to $f^*_{\rm out}(\T)$ via $f_{\rm out}$. The resulting Riemann surface is the sphere with two marked points $0,\infty$ and the complex structure $J_{{\bs f}}$. There is a uniformising biholomorphism
map $\Phi_{\bs{f}}:(\hat{\C},J_{\bs{f}})\to (\hat{\C},J_{\C})$ such that $\Phi_{\bs{f}}(0)=0$, $\Phi_{\bs{f}}(\infty)=\infty$, which is quasiconformal for the usual complex structure of $\hat{\C}$ 
\[ \pl_{\bar{z}}\Phi_{\bs{f}}=\mu_{\bs{f}} \pl_z \Phi_{\bs{f}}\]
and two such maps can only differ by a M\"obius transform fixing $0,\infty$, i.e. a map $z\mapsto \alpha z$ for $\alpha\in \C$. Notice that 
$\psi_{\rm in}=\Phi_{\bs{f}}\circ f_{\rm in}$ and $\psi_{\rm out}=(\Phi_{\bs{f}} \circ f^*_{\rm out})^*$ are in $\mc{S}_{\eps}$.
Post composing by a M\"obius transform depending smoothly on $\bs{f}$, we can thus fix a last condition on $\Phi_{\bs{f}}$ to make it uniquely defined, by asking that $\psi_{\rm out}^*:=\Phi_{\bs{f}} \circ f^*_{\rm out}$ satisfies $(\psi^*)'_{\rm out}(\infty)=\lim_{z\to \infty}(\psi_{\rm out}^*)'(z)=\lambda$ if $\lambda>0$ is a fixed number. 
The map $\Phi_{\bs{f}}$ solves a Beltrami equation with coefficients depending smoothly on $\bs{f}$ (in fact analytically), thus the solution $\Phi_{\bs{f}}$ is differentiable with respect to $\bs{f}$ by \cite{Ahlfors_Bers,NagBook}. If in addition $\mu_{\bs{f}}$ is symmetric by conjugation, then by uniqueness of the solution we must have $\bbar{\Phi}_{\bs{f}}(\bar{z})=\Phi_{\bs{f}}(z)$ for all $z\in \C$.
\end{proof}

For an annulus in model form $\bs{f}$, we consider the operator $\bT_{\bs{f}}$ defined by 
 \begin{equation}\label{def_Tf_general}
\bT_{\bs{f}}:=\bT_{f_{\rm out}}^*\circ \bT_{f_{\rm in}}.
\end{equation}
The operator $\bT_{\bs{f}}$ possesses an integral kernel that can be expressed in terms of annuli amplitudes:

\begin{lemma}\label{lemmamero}
Let  $\bs{f}=(f_{\rm out},f_{\rm in})$ be an annulus in model form which splits. The operator $\bT_{\bs{f}}$ of \eqref{def_Tf_general} has an integral kernel, which can be expressed in terms of annuli amplitudes as 
\begin{equation}\label{kernel*}
\sqrt{2}\pi e^{\frac{c_\mathrm{L}}{12}W({\bs{f}},g_{\bs{f}})}\bT_{\bs{f}}F(\tilde{\varphi}_1)=  \int \mc{A}_{\A_{\bs{f}},g_{\bs{f}},\bzeta_{\bs{f}}}(\tilde{\varphi}_1,\tilde{\varphi}_2)F(\tilde{\varphi}_2) \,\dd\mu_0(\tilde{\varphi}_2)
\end{equation}
if $g_{\bs{f}}$ is any admissible metric on the annulus $(\A_{\bs{f}},J_\C,\bs{\zeta}_{\bs{f}})$ and
\begin{equation}
W({\bs{f}},g_{\bs{f}}) := W(f_{\rm in},g_{\bs{f}})+W(f_{\rm out},\iota^*g_{\bs{f}})
\end{equation}
with $W(f,g)$ defined by \eqref{def_of_W}.
\end{lemma}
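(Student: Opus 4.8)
The plan is to assemble the integral kernel of $\bT_{\bs{f}} = \bT_{f_{\rm out}}^* \circ \bT_{f_{\rm in}}$ out of the two kernels already computed in Corollary \ref{cor:annulus_propagator} and Proposition \ref{propadjoint}, and then to recognize the resulting composition as a gluing of amplitudes along the unit circle, which is exactly the content of Proposition \ref{glue1}. First I would use the splitting hypothesis: since $\bs{f}$ is an annulus in model form which splits, the unit circle $\T$ lies inside $\A_{\bs{f}}$ and cuts it into the two holomorphic annuli $\A_{f_{\rm in}} = \D \setminus f_{\rm in}(\D^\circ)$ and $\iota(\A_{f_{\rm out}}) = \A_{f_{\rm out}^*}$, with $f_{\rm in}, f_{\rm out} \in \mc{S}_\eps$. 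The incoming boundary $\zeta_1 = f_{\rm in}$ of $\A_{\bs{f}}$ is the interior boundary of $\A_{f_{\rm in}}$, while the outgoing boundary $\zeta_2 = f_{\rm out}^*$ is the exterior boundary of $\A_{f_{\rm out}^*}$, and $\T$ itself is the common cutting curve.

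Next I would write down the two building-block kernels. By Corollary \ref{cor:annulus_propagator}, for $f_{\rm in} \in \mc{S}_+$,
\[
\sqrt{2}\pi\, e^{\frac{c_\mathrm{L}}{12}W(f_{\rm in},g_{\bs{f}})}\, \bT_{f_{\rm in}}H(\tilde\varphi) = \int \mc{A}_{\A_{f_{\rm in}},g_{\bs{f}},\bzeta_{f_{\rm in}}}(\tilde\varphi,\tilde\varphi')\,H(\tilde\varphi')\,\dd\mu_0(\tilde\varphi'),
\]
and by Proposition \ref{propadjoint} applied to $f_{\rm out}$,
\[
\sqrt{2}\pi\, e^{\frac{c_\mathrm{L}}{12}W(f_{\rm out},\iota^*g_{\bs{f}})}\, \bT_{f_{\rm out}}^*G(\tilde\varphi_2) = \int \mc{A}_{\A_{f_{\rm out}^*},\iota^*g_{\bs{f}},\bzeta_*}(\tilde\varphi,\tilde\varphi_2)\,G(\tilde\varphi)\,\dd\mu_0(\tilde\varphi),
\]
where I choose the single admissible metric $g_{\bs{f}}$ on $\A_{\bs{f}}$ and restrict it to the two pieces, so that the metric on $\A_{f_{\rm out}^*}$ is $\iota^*g_{\bs{f}}$ (the restriction to the piece on which the earlier kernel formulas used the pushforward under $\iota$). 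Composing, I would plug the kernel of $\bT_{f_{\rm in}}$ into $\bT_{f_{\rm out}}^*$, producing a double integral over $\mu_0$ with an intermediate boundary field, say $\tilde\varphi_1'$, on the cutting circle $\T$. The prefactors multiply to give $e^{\frac{c_\mathrm{L}}{12}(W(f_{\rm in},g_{\bs{f}})+W(f_{\rm out},\iota^*g_{\bs{f}}))} = e^{\frac{c_\mathrm{L}}{12}W(\bs{f},g_{\bs{f}})}$, which is precisely the claimed constant.

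The key step — and the one I expect to be the main obstacle — is to identify the intermediate $\mu_0$-integral over the field on $\T$ as the gluing of the two amplitudes into the single amplitude $\mc{A}_{\A_{\bs{f}},g_{\bs{f}},\bzeta_{\bs{f}}}$. This is where Proposition \ref{glue1} enters: since $g_{\bs{f}}$ restricted to each piece need not be admissible along $\T$, one must invoke the version of the gluing proposition for non-admissible metrics (the variant proved at the end of Section 2 using the Weyl anomaly to pass through admissible representatives). The gluing constant $C = \frac{1}{\sqrt 2\,\pi}$ from Proposition \ref{glue1} combines with the two factors of $\frac{1}{\sqrt 2\,\pi}$ coming from the two building-block kernels to yield the single overall factor $\frac{1}{\sqrt 2\,\pi}$ on the left-hand side of \eqref{kernel*}. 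I would then verify that the boundary parametrizations match up: the gluing of $\bzeta_{f_{\rm in}}$ and $\bzeta_*$ along $\T$ produces exactly $\bzeta_{\bs{f}} = (\zeta_1,\zeta_2) = (f_{\rm in}, f_{\rm out}^*)$, and that the glued metric is compatible with $J_\C$ on $\A_{\bs{f}}$ since $g_{\mathbb{A}} = |dz|^2/|z|^2$ is invariant under $z \mapsto 1/\bar z$. Finally, since the left-hand side is independent of the choice of admissible metric by the Weyl covariance (Proposition \ref{Weyl}) absorbed into $W(\bs{f},g_{\bs{f}})$, the identity \eqref{kernel*} holds for any admissible $g_{\bs{f}}$, completing the proof.
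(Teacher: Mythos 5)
Your proposal is correct and follows essentially the same route as the paper's (very terse) proof: split $\A_{\bs{f}}$ along $\T$ into $\A_{f_{\rm in}}$ and $\iota(\A_{f_{\rm out}})$, represent the two factors of $\bT_{\bs{f}}=\bT_{f_{\rm out}}^*\circ\bT_{f_{\rm in}}$ by their amplitude kernels via Corollary \ref{cor:annulus_propagator} and Proposition \ref{propadjoint}, and identify the intermediate $\mu_0$-integral as the gluing of Proposition \ref{glue1}, with the constants and Weyl factors combining exactly as you computed. Your extra care about the metric being non-admissible along the cutting circle $\T$ (handled by the non-admissible version of the gluing proposition, or equivalently by first taking $g_{\bs{f}}=g_\A$ near $\T$ and then applying Weyl covariance) is a detail the paper leaves implicit, but it is the same argument.
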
 

\begin{proof} Since $(\A_{\bs{f}},g_{\bs{f}},\bs{\zeta}_{\bs{f}})$ is the gluing of the annulus $(\A_{f_{\rm in}},g_{\bs{f}}, ({\rm Id},f_{\rm in}))$ with $(\iota(\A_{f_{\rm out}}),g_{\bs{f}}, ({\rm Id},f_{\rm out}^*))$ along the unit circle $\T$,
 the result is a direct consequence of  Proposition \ref{glue1}
 and Proposition \ref{propadjoint}.
\end{proof}

Next we want to differentiate the operator $\bT_{\bs{f}}$ and amplitudes of  annuli with analytic parametrised boundaries. For this, we first extend  the definition of the operator $\mathbf{H}_{\rm v}$ in \eqref{defopHv} to the case of a holomorphic vector field in the annulus $\mathbb{A}_\delta$.
\begin{lemma}\label{lemHwmero}
Let $v\in {\rm Hol}_\eps (\A)$ of the form $v(z)=-\sum_{n\in\Z}v_nz^{n+1}$ with $\eps>0$ small. Then the series
\[\mathbf{H}_{\rm v}:=\sum_{n\in\Z}  v_n\bL_n+\sum_{n\in\Z}  \bar{v}_n\tilde{\bL}_n\]
converges in $\mathcal{L}(\mc{D}(\mc{Q}),\mc{D}'(\mc{Q}))$ if ${\rm v}=v(z)\pl_z$.
\end{lemma}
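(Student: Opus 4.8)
The plan is to reduce the statement to the one-sided bounds already established in Corollaries \ref{corfq1} and \ref{fqcont}, the only genuinely new feature being that $v\in{\rm Hol}_\delta(\A)$ carries a full two-sided Laurent expansion rather than a Taylor expansion. The guiding principle is that each individual term $v_n\bL_n+\bar v_n\tilde\bL_n$ is bilinearly $\mc{Q}$-bounded with a constant growing only polynomially in $|n|$, whereas the Laurent coefficients $v_n$ of a function holomorphic on $\{\delta\leq|z|\leq\delta^{-1}\}$ decay exponentially; the two effects combine to yield an absolutely convergent series of bounded bilinear forms on $\mc{D}(\mc{Q})\times\mc{D}(\mc{Q})$.

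First I would record that the free-field estimate \eqref{bilinearboundLn} holds for every $n\in\Z$: for $n>0$ it is \cite[Lemma 2.1]{BGKRV}, and for $n<0$ it follows from the adjoint relations $(\bL^0_n)^*=\bL^0_{-n}$, $(\tilde\bL^0_n)^*=\tilde\bL^0_{-n}$ together with the symmetry of the weight $(1+|n|)^{3/2}$. Since the Liouville potential term in $\mc{Q}$ is non-negative, one has $\mc{Q}_0(F)\leq\mc{Q}(F)$, so these bounds hold verbatim with $\mc{Q}_0$ replaced by $\mc{Q}$. It then remains to control the potential part $\bL_n-\bL^0_n=\tfrac{\mu}{2}\int_0^{2\pi}e^{in\theta}e^{\gamma\varphi(\theta)}\d\theta$ uniformly in $n$: splitting $e^{in\theta}$ into the real and imaginary signed weights $\cos n\theta$, $\sin n\theta$, each of sup-norm $1$, the inequality \eqref{inegQ} gives $|\langle(\bL_n-\bL^0_n)F,G\rangle_{\mc{H}}|\leq C\langle VF,F\rangle_{\mc{H}}^{1/2}\langle VG,G\rangle_{\mc{H}}^{1/2}\leq C\mc{Q}(F)^{1/2}\mc{Q}(G)^{1/2}$ with $C$ independent of $n$, and likewise for $\tilde\bL_n$. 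Combining, for all $F,G\in\mc{C}_{\rm exp}$ and all $n\in\Z$,
\[
|\langle\bL_n F,G\rangle_{\mc{H}}|+|\langle\tilde\bL_n F,G\rangle_{\mc{H}}|\leq C(1+|n|)^{3/2}\mc{Q}(F)^{1/2}\mc{Q}(G)^{1/2}.
\]

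Next I would sum. For $F,G\in\mc{C}_{\rm exp}$ the triangle inequality gives
\[
\Big|\Big\langle\sum_{|n|\leq N}\big(v_n\bL_n+\bar v_n\tilde\bL_n\big)F,G\Big\rangle_{\mc{H}}\Big|\leq 2C\,\mc{Q}(F)^{1/2}\mc{Q}(G)^{1/2}\sum_{|n|\leq N}|v_n|(1+|n|)^{3/2},
\]
so everything hinges on the finiteness of $\sum_{n\in\Z}|v_n|(1+|n|)^{3/2}$. Because $v\in{\rm Hol}_\delta(\A)$ with $1+\eps=\delta^{-1}$, the seminorm \eqref{seminorm} at $k=0$ is finite, i.e. $\sum_{n\in\Z}(1+\eps)^{2|n|}|v_n|^2<\infty$, and by Cauchy--Schwarz
\[
\sum_{n\in\Z}|v_n|(1+|n|)^{3/2}\leq\Big(\sum_{n\in\Z}(1+\eps)^{2|n|}|v_n|^2\Big)^{1/2}\Big(\sum_{n\in\Z}(1+\eps)^{-2|n|}(1+|n|)^{3}\Big)^{1/2}<\infty,
\]
the second factor converging since the geometric factor $(1+\eps)^{-2|n|}$ dominates the polynomial $(1+|n|)^3$. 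Hence the partial sums $\sum_{|n|\leq N}(v_n\bL_n+\bar v_n\tilde\bL_n)$ are Cauchy in $\mc{L}(\mc{D}(\mc{Q}),\mc{D}'(\mc{Q}))$, their tails being bounded by $2C\sum_{|n|>N}|v_n|(1+|n|)^{3/2}\to0$; by density of $\mc{C}_{\rm exp}$ in $\mc{D}(\mc{Q})$ the limit defines $\mathbf{H}_v\in\mc{L}(\mc{D}(\mc{Q}),\mc{D}'(\mc{Q}))$.

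The main obstacle I anticipate is purely the uniform-in-$n$ control of the GMC potential part for negative as well as positive modes: one must ensure that the Cameron--Martin/Girsanov interpretation of the operators $\int_0^{2\pi}e^{\pm in\theta}e^{\gamma\varphi}\d\theta$ used in \cite{BGKRV,GKRV20_bootstrap} produces bilinear forms bounded by $\langle V\cdot,\cdot\rangle_{\mc{H}}$ with a constant genuinely independent of $n$ (in the regime $\gamma\geq\sqrt2$ this reduces to the fact that the signed weights $\cos n\theta,\sin n\theta$ have total variation and sup-norm controlled uniformly). Once that uniform bound is in hand the summation is routine, the exponential decay of the Laurent coefficients rendering the precise power $3/2$ immaterial.
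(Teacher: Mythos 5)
Your proposal is correct and follows essentially the same route as the paper: the paper's proof likewise combines the free-field bound \eqref{bilinearboundLn} (stated there for all $n\in\Z$) with the potential bound \eqref{inegQ} and the summability of $|v_n|(1+|n|)^{3/2}$ coming from the decay of Laurent coefficients, yielding $|\langle\mathbf{H}_v F,G\rangle_{\mc{H}}|\leq C\|v\|_{\eps,3}\,\mc{Q}(F)^{1/2}\mc{Q}(G)^{1/2}$. The uniform-in-$n$ control of the GMC part that you flag as a potential obstacle is exactly what \eqref{inegQ} provides, since the densities $\cos n\theta,\sin n\theta$ have sup-norm $1$ independently of $n$, so your argument is complete as written.
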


\begin{proof} As in the proof of Corollary \ref{fqcont}, we can use the bounds \eqref{bilinearboundLn} and \eqref{inegQ} 
to get the estimate
\[|\cjg \mathbf{H}_{\rm v} F,G\cjd_{\mc{H}}|\leq C\|{ v}\|_{\eps ,3}\mc{Q}(F)^{1/2}\mc{Q}(G)^{1/2}\]
for some constant $C>0$, which amounts to claiming the convergence of $\mathbf{H}_{\rm v} $ as a linear map $\mc{D}(\mc{Q})\to\mc{D}'(\mc{Q})$.
 \end{proof}
For $v(z)=-\sum_{n=0}^\infty v_nz^{n+1}$, we can write for ${\rm v}^*=v^*(z)\pl_z$ with 
 $v^*(z):=-z^2\bar{v}(1/\bar{z})=-\sum_{n=-\infty}^0\bar{v}_{-n}z^{n+1}$  
 \begin{equation}\label{H_adjoint}
 {\bf H}^*_{\rm v}=\sum_{n=0}^\infty (\bar{v}_n{\bf L}_{-n}+v_n\tilde{{\bf L}}_{-n})={\bf H}_{{\rm v}^*}.
 \end{equation}
As an easy consequence of Theorem \ref{th:differentiabiliteTf}, we obtain the following:
  \begin{proposition}\label{p:differentiabiliteTform}
The mapping $\bs{f}\in (\mc{S}_\eps)^2\mapsto \bT_{\bs{f}} \in \mc{L}(\mc{D}(\mc{Q}),\mc{D}(\mc{Q}))$   is differentiable with differential  
\[(v_{\rm out},v_{\rm in})\in  {\rm Hol}^\bullet_\eps(\D)^2\mapsto D_{\bf v}\bT_{\bs{f}} \in \mc{L}(\mc{D}(\mc{Q}),\mc{D}(\mc{Q}))\]
 characterised, for ${\bf v}=({\rm v}_{\rm out},{\rm v}_{\rm in})=(v_{\rm out}\pl_z,v_{\rm in}\pl_z)$, by 
\[\forall F\in \mc{D} (\mc{Q}),G\in \mc{D}'(\mc{Q}),\quad \cjg D_{\bf v}\bT_{\bs{f}}F,G\cjd_{\mc{H}} := -\mc{Q}_{{\rm w}_{\rm in}}(F,\bT^*_{\bs{f}} G)  -\overline{\mc{Q}_{{\rm w}_{\rm out}}(G,\bT_{\bs{f}}F) }\]
with $w_{\rm in}(z)=v_{\rm in}(z)/f_{\rm in}'(z) $ and $w_{\rm out}(z)=v_{\rm out}(z)/f_{\rm out}'(z)$ and, for ${\rm w}=w(z)\pl_z$ with $w(z)=-\sum_{n=0}^\infty w_nz^{n+1}$, 
$\mc{Q}_{{\rm w}}$ is the closure of the $\mc{Q}$-continuous bilinear form defined on $\mc{C}_{\rm exp}(\R\times \Omega_\T)\times\mc{C}_{\rm exp}(\R\times \Omega_\T)$ by
\begin{equation} 
 \mc{Q}_{{\rm w}}(F,G):=\sum_nw_n\cjg\mathbf{L}_nF,G\cjd_{\mc{H}} +\sum_n\bbar{w}_n\cjg\widetilde{\mathbf{L}}_nF,G\cjd_{\mc{H}}. 
\end{equation}
Equivalently,  $D_{\bf v}\bT_{\bs{f}}=-\bT_{\bs{f}}\mathbf{H}_{{\rm w}_{\rm in}}- \mathbf{H}_{{\rm w}_{\rm out}}^*\bT_{\bs{f}}:\mc{D}(\mc{Q})\to \mc{D}(\mc{Q})$  with the operator  $\mathbf{H}_{{\rm w}}:\mc{D}(\mc{Q})\to \mc{D}'(\mc{Q})$ defined as in  Lemma \ref{lemHwmero}.
\end{proposition}

Now we focus on the differentiability of annulus amplitudes when we parametrise one boundary with a biholomorphic map on an annulus (instead of a disk). The setup is similar to Subsection \ref{sub:intk}.  Let us   denote by ${\rm BiHol}_\eps (\A)$  the set of biholomorphisms on the annulus $\A_{\delta,\delta^{-1}}$ with $\delta=(1+\eps)^{-1}$ and $\eps>0$, which is an open subset of ${\rm Hol}_\eps(\A)$ equipped with the seminorms \eqref{seminorm}. 
Then we consider the open subset 
\begin{equation}\label{defSA}
\mc{S}_\eps(\A):=\{f\in {\rm BiHol}_\eps (\A)\,\,\, |\,f(\T)\subset\D^\circ\}
\end{equation}
 For $f\in \mc{S}_\eps(\A)$, we  define the curves $\mc{C}:=f(\T)\subset \D^\circ$,  the simply connected domain $\D_f$  as the bounded connected component of $\C\setminus \mc{C}$   and the annulus $\A_f$ by  $\mathbb{A}_f:=\D\setminus \D_f$.
 We equip the annulus $\mathbb{A}_f$ with the metric $g_{\mathbb{A}}$ (recall \eqref{gA})
and we consider the boundary parametrisations $\zeta_1(e^{{\rm i}\theta})=e^{{\rm i}\theta}$ for $\pl_1\A_f=\T$ 
and $\zeta_2(e^{{\rm i}\theta})=f(e^{{\rm i}\theta})$ for 
$\pl_2\mathbb{A}_f=\mc{C}$. 
The metric $g_{\mathbb{A}}$ has vanishing scalar curvature, it is not admissible for $\mathbb{A}_f$ near $\mc{C}$ 
but it is admissible near $\T$. Let us consider the harmonic function $h$ defined on a neighborhood of $ \mc{C}$ by \eqref{defh_t}
so that the metric $g_\mathbb{A}$ near $\mc{C}$ pulls-back through $f$ to 
$f^*g_\mathbb{A}= e^{2h\circ f} g_{\mathbb{A}}.$
Any admissible metric $g$ on $\mathbb{A}_f$ with parametrisation $\bzeta_f=(\zeta_1,\zeta_2)$ of the boundary as above can be written  in the form
\begin{equation}\label{def_gf}
 g_f=e^{-2 \chi_f}g_{\mathbb{A}}
 \end{equation}
where $\chi_f \in C^\infty(\mathbb{A}_f)$ is equal to $h$ near $\mc{C}$ and $0$ near $\T$. A possible choice, 
that depends smoothly on $f$ near a given $f_0\in \mc{S}_\eps(\A)$, is to take $\chi_f=h\chi$ where $\chi \in C^\infty(\mathbb{A}_f)$ is equal to $1$ near $\mc{C}$ and $0$ near $\T$ and does not depend on $f$ but only on $f_0$.

Given $f\in\mc{S}_\eps(\A) $,  we consider the operator $ \mc{A}_{\A_{f},g_f,\bs{\zeta}_{f}} : \mc{H}\to \mc{H}$  associated to the amplitude of the annulus $\A_f$, namely 
\[ (\mc{A}_{\A_{f},g_f,\bs{\zeta}_{f}}F)(\tilde{\varphi}_1):= \int \mc{A}_{\A_{f},g_f,\bs{\zeta}_{f}}(\tilde{\varphi}_1,\tilde{\varphi}_2)F(\tilde{\varphi}_2) \,\dd\mu_0(\tilde{\varphi}_2).\]
We study the differentiability of this operator at the specific point $f_0(z)=rz$ where  $r<1$ is fixed. Notice that $g_{f_0}=g_\A$ since $h(z)=0$ in that case (recall \eqref{defh_t}).

This result will be instrumental to differentiate the amplitudes with respect to holomorphic vector fields on the annulus since amplitudes always possess such regular annuli $\A_{f_0}=\A_{r}$ near their boundary circles.

\begin{theorem}\label{derivative_meromorphic_vf}
The operator $\mc{A}_{\A_{f},g_f,\bs{\zeta}_{f}}$ is bounded as  a map $\mc{D}(\mc{Q})\to \mc{D}(\mc{Q})$ and
the mapping $f\in\mc{S}_\eps(\A) \mapsto  \mc{A}_{\A_{f},g_f,\bs{\zeta}_{f}}\in\mc{L}( \mc{D}(\mc{Q}), \mc{D}(\mc{Q}))$ is differentiable at $f=f_0$ with $f_0(z)=rz$ for $0<r<1$, with differential given by 
\[ D_{\rm v}\mc{A}_{\A_{f_0},g_{\A},\bs{\zeta}_{f_0}}(F)=-c_{\rm L}\Big(\frac{{\rm Re}(v_0)}{12 r}+
D_{\rm v}S_{\rm L}^0(\A_{f_0},g_{f_0},g_{\A})\Big)\mc{A}_{\A_{f_0},g_{\A},\bs{\zeta}_{f_0}}(F)-\mc{A}_{\A_{f_0},g_{\A},\bs{\zeta}_{f_0}} \big( {\bf H}_{{\rm v}/r} F\big)\]
for $v=\sum_{n\in \Z}v_nz^{n+1}\in {\rm Hol}_\eps(\A)$ with ${\rm v}=v(z)\pl_z$.
\end{theorem}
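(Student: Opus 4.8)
The plan is to reduce the annulus $\A_f$ to an annulus in model form, where the single moving inner parametrization gets redistributed between two disk-contractions, and then to invoke the differentiability of $\bs g\mapsto\bT_{\bs g}$ already established in Proposition~\ref{p:differentiabiliteTform}. Concretely, to $f\in\mc{S}_\delta(\A)$ I associate the annulus with analytic parametrization $\bs f=(\mathrm{Id},f)$ in the sense of Definition~\ref{def:annulus}, whose domain is exactly $\A_f$ and whose boundary parametrizations are $\bs\zeta_f$. Lemma~\ref{lem_model_form} then produces a biholomorphism $\Phi_{\bs f}\colon\A_f\to\A_{\bs\psi}$, depending differentiably on $f$, onto the model form $\bs\psi=(\psi_{\rm out},\psi_{\rm in})=(\Phi_{\bs f}^*,\Phi_{\bs f}\circ f)\in(\mc{S}_\eps)^2$ (here $f_{\rm out}=\mathrm{Id}$). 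Using the diffeomorphism invariance and Weyl covariance of amplitudes (Proposition~\ref{Weyl}) to transport $g_f$ to an admissible metric on $\A_{\bs\psi}$, together with Lemma~\ref{lemmamero}, I would write
\[ \mc{A}_{\A_f,g_f,\bs\zeta_f}=\lambda(f)\,\bT_{\bs\psi(f)},\qquad \bT_{\bs\psi}=\bT_{\psi_{\rm out}}^*\circ\bT_{\psi_{\rm in}}, \]
where $\lambda(f)$ is an explicit scalar collecting the factor $e^{\frac{c_{\rm L}}{12}W(\bs\psi,g_{\bs\psi})}$ from Lemma~\ref{lemmamero} and the Weyl anomaly of the metric transport. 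Boundedness $\mc{D}(\mc{Q})\to\mc{D}(\mc{Q})$ is then immediate from Lemma~\ref{estTfstar}, since both $\bT_{\psi_{\rm in}}$ and $\bT_{\psi_{\rm out}}^*$ map $W^s\to W^{s'}$ for all $s,s'$.

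At $f_0(z)=rz$ the model form is the symmetric one, $\Phi_{f_0}(z)=z/\sqrt r$ and $\bs\psi_0=(\sqrt r\,z,\sqrt r\,z)$, so that $\bT_{\bs\psi_0}$ is (a multiple of) the dilation semigroup $e^{-t_0\mathbf{H}}$ with $t_0=-\log r$. Differentiating the factorization by the chain rule gives
\[ D_v\mc{A}_{\A_{f_0},g_\A,\bs\zeta_{f_0}}=(D_v\lambda)\,\bT_{\bs\psi_0}+\lambda(f_0)\big(-\bT_{\bs\psi_0}\mathbf{H}_{w_{\rm in}}-\mathbf{H}_{w_{\rm out}}^*\bT_{\bs\psi_0}\big), \]
where I applied Proposition~\ref{p:differentiabiliteTform} with $w_{\rm in}=(D_v\psi_{\rm in})/\sqrt r$ and $w_{\rm out}=(D_v\psi_{\rm out})/\sqrt r$. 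Since $\psi_{\rm in},\psi_{\rm out}$ stay in $\mc{S}_\eps$, both $w_{\rm in}$ and $w_{\rm out}$ lie in ${\rm Hol}^\bullet_\eps(\D)$ and carry only non-negative modes; hence $\mathbf{H}_{w_{\rm in}}$ is a non-negative combination of the $\bL_n,\widetilde{\bL}_n$ while $\mathbf{H}_{w_{\rm out}}^*=\mathbf{H}_{w_{\rm out}^*}$ (by \eqref{H_adjoint}) carries only non-positive modes. This is precisely the mechanism by which a single moving boundary generates the full Virasoro algebra.

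The crux is to identify $\dot\Phi:=D_v\Phi_{\bs f}|_{f_0}$ and to show that $w_{\rm in},w_{\rm out}$ reassemble into $v/r$. I would characterize $\dot\Phi$ not through the Beltrami equation but through holomorphy: writing $D_v\psi_{\rm in}(z)=\dot\Phi(rz)+r^{-1/2}v(z)$ (so that the $v$-contribution to $w_{\rm in}$ is exactly $v/r$) and taking $D_v\psi_{\rm out}$ to be the corresponding $\iota$-reflected variation of $\Phi_{\bs f}^*$, the requirements $D_v\psi_{\rm in}\in{\rm Hol}^\bullet_\eps(\D)$ and $D_v\psi_{\rm out}\in{\rm Hol}^\bullet_\eps(\D)$ force $\dot\Phi$ (holomorphic on $\A_r$) to split $v/r$ into its part holomorphic in $\D$ and its part holomorphic in $\hat{\C}\setminus\{0\}$, the normalization $(\psi_{\rm out}^*)'(\infty)=\lambda$ fixing the remaining constant. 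One then gets $w_{\rm in}=[v/r]_{\ge 0}$ and $w_{\rm out}^*$ carrying $[v/r]_{<0}$, up to the $n=0$ mode.

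Finally I would commute the non-positive block $\mathbf{H}_{w_{\rm out}}^*$ through $\bT_{\bs\psi_0}\propto e^{-t_0\mathbf{H}}$, using $e^{t\mathbf{H}}\bL_n e^{-t\mathbf{H}}=e^{-nt}\bL_n$, to move it to the right of $\bT_{\bs\psi_0}$ and recombine the two blocks into $\bT_{\bs\psi_0}\mathbf{H}_{v/r}$, hence into $-\mc{A}_{\A_{f_0},g_\A,\bs\zeta_{f_0}}(\mathbf{H}_{v/r}F)$. The reordering of the $\bL_0,\widetilde{\bL}_0$ modes together with $D_v\lambda$ — that is, the derivative of $W$ in \eqref{def_of_W} and of $S_{\rm L}^0$ in \eqref{SL0} — should produce the scalar central term $-c_{\rm L}\big(\tfrac{\Re(v_0)}{12r}+D_vS_{\rm L}^0(\A_{f_0},g_{f_0},g_\A)\big)\mc{A}_{\A_{f_0},g_\A,\bs\zeta_{f_0}}(F)$. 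This bookkeeping — pinning down $\dot\Phi$ by the holomorphic splitting, and tracking the $n=0$ mode and the central extension through both the conjugation by the dilation semigroup and the Weyl anomaly — is the main obstacle; the remaining steps are a direct assembly of Propositions~\ref{Weyl} and \ref{p:differentiabiliteTform} with Lemmas~\ref{lem_model_form} and \ref{lemmamero}.
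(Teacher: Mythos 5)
Your proposal is correct and follows essentially the same route as the paper's proof: reduce to the model form via Lemma \ref{lem_model_form}, factor the amplitude as a scalar times $\bT_{\bs f}$ using diffeomorphism invariance (Proposition \ref{Weyl}) and Lemma \ref{lemmamero}, differentiate via Proposition \ref{p:differentiabiliteTform}, and recombine the positive and negative mode blocks by commuting through the dilation semigroup, with the central term coming from the derivative of the scalar prefactor. The only (cosmetic) differences are your symmetric normalization $a=\sqrt r$ and your derivation of the in/out splitting through holomorphy constraints on $\dot\Phi$, where the paper instead differentiates the composition identity $f_t=(f^*_{\mathrm{out},t})^{-1}\circ f_{\mathrm{in},t}$ to get the relation \eqref{vinout}, which suffices for the recombination without identifying the splitting explicitly.
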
 

\begin{proof}  Let $ v\in{\rm Hol}_\eps (\A)$ for $\eps>0$ small and define, for $v$ small enough (for the distance induced by the seminorms \eqref{seminorm}), $f_v:=f_0+v$, which belongs to $\mc{S}_\eps(\A) $.
Let us consider the annulus $\A_{f_v}$ with boundaries parametrised by $\bs{\zeta}_{f_v}:=({\rm Id},f_v|_{\T})$, and its model form $\bs{f}_v=(f_{{\rm out},v},f_{{\rm in},v})$, normalised by requiring $(f_{{\rm out},v}^*)'(\infty)=a/r$ for some $a>0$ such that $a<1$ and $r/a<1$. Note that $f_{{\rm in},0}(z)=az$ and $f_{{\rm out},0}(z)=(r/a)z$ due to our normalisation and both of them belong to $\mathcal{S}_{\eps}$ for $\eps>0$ so that the model form splits for $v$ small enough. By Lemma \ref{lem_model_form}, there is a biholomorphism $\Phi$ defined over a neighborhood of $\A_{f_v}$ such that $f_{{\rm in},v}=\Phi\circ f_v$ and $f_{{\rm out},v}^*=\Phi$ on $\T$. Note that $f_{{\rm out},v}^*$ is holomorphic on $\D^*$ and the last equality entails that it extends holomorphically  over a neighborhood of the complement of $\D_{f_v}$. Furthermore, by Lemma \ref{lem_model_form},  the map  $v\mapsto (f_{{\rm out},v},f_{{\rm in},v})$ is differentiable near $v=0$ as  a map  ${\rm Hol}_\eps (\A)\to \mathrm{Hol}^\bullet_{\eps}(\D)^2$. We can apply the diffeomorphism invariance  in Proposition \ref{Weyl} to the diffeomorphism $f_{{\rm out},v}^*$ and we get
\[ \mc{A}_{\A_{f_v},g_{f_v},\bs{\zeta}_{f_v}}=  \mc{A}_{\A_{\bs{f}_v},(f_{{\rm out},v}^*)_*g_{f_v},\bs{\zeta}_{\bs{f}_v}}.\]
 Now using   Lemma \ref{lemmamero} we get
 \begin{equation} \label{Amp_Tf}
  \mc{A}_{\A_{f_v},g_{f_v},\bs{\zeta}_{f_v}}(F)=C(v)\bT_{\bs{f}_v} F
  \end{equation}
with 
\[C(v):= \sqrt{2}\pi e^{\frac{c_\mathrm{L}}{12}W({\bs{f}_v},(f_{{\rm out},v}^*)_*g_{f_v})}.\]
The function $C(v)$ is  differentiable  at $v=0$ and, by Theorem \ref{p:differentiabiliteTform}, $ \bT_{\bs{f}_v}$ is differentiable in $\mc{L}(\mc{D}(\mc{Q}),\mc{D}(\mc{Q}))$. This shows that the operator $F\mapsto \mc{A}_{\A_{f},g_{f},\bs{\zeta}_{f}}(F)$ is differentiable  in $\mc{L}(\mc{D}(\mc{Q}),\mc{D}(\mc{Q}))$. 

Now we express its directional derivative in the direction $v$. We shortcut the expression $f_0+tv$ as 
\[f_t:=f_0+tv\] 
and we consider the model form $(f_{{\rm out},t},f_{{\rm in},t})$ of the annulus $({\rm Id},f_0+tv|_{\T})$ (which splits for small $t$), normalized by $(f_{{\rm out},t}^*)'(\infty)=a/r$. Let us denote 
\[v_{\rm in}:=\lim_{t\to 0}(f_{{\rm in},t}-f_{{\rm in},0})/t\quad \text{ and }\quad v_{\rm out}:=\lim_{t\to 0}(f_{{\rm out},t}-f_{{\rm out},0})/t,\]
which satisfy both $v_{\rm in}(0)=v_{\rm out}(0)=0$, and let ${\rm v}_{\rm in}=v_{\rm in}(z)\pl_z$, ${\rm v}_{\rm out}=v_{\rm out}(z)\pl_z$.
   Then, differentiating the relation $f_t=(f_{{\rm out},t}^*)^{-1}\circ f_{{\rm in},t}$ at $t=0$ we get
\begin{equation}\label{vinout}
 v(z)=\frac{r}{a}v_{\rm in}(z)+raz^2\overline{v_{\rm out}(1/(r\bar{z}))}.
\end{equation}
Also, we write $C(t)$ for $C(tv)$. By differentiating at $t=0$ the relation 
\[  \mc{A}_{\A_{f_t},g_{f_t},\bs{\zeta}_{f_t}}(F)=C(t)\bT_{\bs{f}_t} F,\]
 we obtain, using Proposition \ref{p:differentiabiliteTform} and the relation $e^{\log(r){\bf H}}{\bf H}_{\rm v}= {\bf H}_{r{v}(\cdot/r)\pl_z}e^{\log(r){\bf H}}$,
\begin{align*}
 \pl_t   \mc{A}_{\A_{f_t},g_{f_t},\bs{\zeta}_{f_t}}(F)|_{t=0}= 
 &
C'(0)e^{\log(r){\bf H}}F- C(0)\big(\frac{1}{a}e^{\log(r){\bf H}}{\bf H}_{{\rm v}_{\rm in}}+ {\bf H}^*_{(a/r){\rm v}_{\rm out}}e^{\log(r){\bf H}}\big)F
 \\
= &
 C'(0)e^{\log(r){\bf H}}F-C(0)e^{\log(r){\bf H}} \big(\frac{1}{a} {\bf H}_{{\rm v}_{\rm in}}+ {\bf H}^*_{a{ v}_{\rm out}(\cdot/r)\pl_z} \big)F
   \\
= &
 C'(0)e^{\log(r){\bf H}}F-C(0)e^{\log(r){\bf H}} \big( \frac{1}{a}{\bf H}_{{\rm v}_{\rm in}}+ {\bf H}_{a z^2\overline{v_{\rm out}(\cdot/r\bar{z})}\pl_z} \big)F.
\end{align*}
Now by \eqref{Amp_Tf} we have $C(0)e^{\log(r){\bf H}}= \mc{A}_{\A_{f_0},g_{\A},\bs{\zeta}_{f_0}}$ and we compute using \eqref{def_of_W}, $g_{\bs{f}_t}:=(f_{{\rm out},t}^*)_*g_{f_t}$  (in particular $g_{\bs{f}_0}=g_\A$)
\[\begin{split} 
\frac{C'(0)}{C(0)}=&-\frac{c_{\rm L}}{12}{\rm Re}(\frac{v_{\rm in}'(0)}{a}+\frac{a}{r}v_{\rm out}'(0))-c_{\rm L}
\pl_t(S_{\rm L}^0(\A_{f_{\rm in,t}},g_{\bs{f}_t},g_\A)+S_{\rm L}^0(\iota(\A_{f_{\rm out,t}}),g_{\bs{f}_t},g_\A))|_{t=0}\\
=&-\frac{c_{\rm L}}{12}\frac{{\rm Re}(v_0)}{r}-c_{\rm L}
\pl_tS_{\rm L}^0(\A_{\bs{f}_t},g_{\bs{f}_t},g_{\bs{f}_0})|_{t=0}\\
=& -\frac{c_{\rm L}}{12}\frac{{\rm Re}(v_0)}{r}-c_{\rm L}
\pl_tS_{\rm L}^0(\A_{f_t},g_{f_t},g_{\A})|_{t=0}
\end{split}\]
where $v(z)=\sum_{n\in \Z}v_nz^{n+1}$.
We obtain the result using \eqref{H_adjoint} and \eqref{vinout}.
\end{proof} 

\subsection{Differentiability of surface amplitudes}
In this section, we explain how the differentiability theorem for annuli implies differentiability for general surfaces amplitudes with respect to the boundary parametrisation. We shall only focus on the case with incoming boundary for simplicity, the case with outgoing boundary can be deduced from this by composing the parametrisation $\zeta_j$ by the reflection of orientation map $z\in \T\mapsto 1/z\in \T$ and 
analysing its action on the Hilbert space $\mc{H}$. This will be explained in detail in \cite{BGKR2}.

We consider an admissible Riemannian surface $S_g=(\Sigma,g,{\bf x},\bs{\zeta})$ o
 with $b$ incoming boundary components, and $\bs{\alpha}$ some weights attached to ${\bf x}$. In order to give a sense to the differentiability of amplitudes of such Riemannian surfaces with respect to the boundary parametrisation $\bs{\zeta}$, 
we introduce a closed Riemann surface $(\hat{\Sigma},\hat{J})$ with marked points $\hat{{\bf x}}=(\bf{x}',\bf{x})$ called the \textbf{filling} of $\Sigma$ as follows: for $\delta<1$ close enough to $1$ so that $\zeta_j$ extends holomorphically in $\A_{1,\delta^{-1}}$,
 we glue $b$ disks $\mc{D}_1=\delta^{-1}\D,\dots, \mc{D}_b=\delta^{-1}\D$ to $\Sigma$ by setting  
\[ \hat{\Sigma}=(\Sigma \sqcup \bigsqcup_{j=1}^b \mc{D}_j)/\sim \]
where the equivalence relation is $u \sim \zeta_j(z)$ if $u=\zeta_j(z)$ for $z\in \A_{1,\delta^{-1}}$ and $u\in \zeta_j(\A_{1,\delta^{-1}})$.
 The complex structure $\hat{J}$ is  the one induced by the complex structure of $\Sigma$ and the canonical structure $|\dd z|^2$ on $\mc{D}_j=\delta^{-1}\D$, and the new marked points ${\bf x}'=(x'_1,\dots,x'_b)$ are the centers (i.e. the point $z=0$) of $\mc{D}_1,\dots,\mc{D}_b$. 

We define the Riemannian 
metric on $\hat{\Sigma}\setminus \{{\bf x}'\}$ \[ \hat{g}= g \textrm{ in }\Sigma, \quad \hat{g}=g_\A=\frac{|\dd z|^2}{|z|^2} \textrm{ in each }\mc{D}_j\setminus\{x_j'\}=\D\setminus \{0\}.\] 
Since $g$ is admissible on $\Sigma$, $\hat{g}$ is well-defined and smooth on $\hat{\Sigma}\setminus \{{\bf x}'\}$,  and it is compatible with the complex structure $\hat{J}$.
Up to taking $|\delta-1|$ possibly smaller, we can assume that $\zeta_j$ has a holomorphic extension  
$\zeta_j:\A_{\delta,\delta^{-1}}\to \hat{\Sigma}$ (it is biholomorphic onto its image, which is a collar neighborhood of $\pl \Sigma$)
 for all $j=1,\dots,b$.
For $\bs{f}\in {\rm Hol}_\eps(\A)^b$ close to ${\rm Id}=({\rm Id},\dots,{\rm Id})$ in ${\rm Hol}_\eps(\A)^b$ for some small $\eps>0$, 
we define the surface 
\[ \Sigma^{\bs f}=\hat{\Sigma}\setminus( \cup_{j=1}^b \mc{D}_j(f_j))\] 
 for $\mc{D}_j(f_j)\subset \mc{D}_j$ the open region containing $0$ 
and bounded by $f_j(\T)$,  and use the boundary parametrisation $\bs{\zeta}^{\bs{f}}=(\zeta_1\circ f_1,\dots,\zeta_b\circ f_b)$.  Consider 
$g^{\bs{f}}$  and admissible metrics on $\Sigma^{\bs{f}}$ with $g^{\bs{f}}=g$ outside a small neighborhood of $\pl \Sigma$,
 We define 
\[S_g\cdot \bs{f}:=(\Sigma^{\bs{f}},g^{\bs{f}},{\bf x},\bs{\zeta}^{\bs{f}})\] 
and view the family $\bs{f}$ of biholomorphisms as acting on the Riemannian surface $S_g$, although only the Riemann surface $S\cdot\bs{f}:=(\Sigma^{\bs{f}},\hat{J},{\bf x},\bs{\zeta}^{\bs{f}})$ is canonically defined if $S:=(\Sigma,J,{\bf x},\bs{\zeta})$. The action of $\bs{f}$ on the Riemannian surface $S_g$ involves to make a choice of admissible metric $g^{\bs{f}}$ for each $\bs{f}$. 

\begin{theorem}\label{diff_amplitudes}
Let $S_g=(\Sigma,g,{\bf x},\bs{\zeta})$ be an admissible Riemannian surface  with $b$ incoming boundary circles, $\bs{\alpha}$ some weights attached to ${\bf x}$, and let 
$(\hat{\Sigma},\hat{J})$ be the filling of $\Sigma$.
With the notations introduced just above, there is a neighborhood $\mc{U}$ of  ${\rm Id}=({\rm Id},\dots,{\rm Id})$ in ${\rm Hol}_\eps(\A)^b$ such that, if $\bs{f}\in \mc{U}\mapsto g^{\bs{f}}$ is a $C^1$ family of admissible metrics on $\Sigma^{\bs{f}}$ equal to $g$ outside a small neighborhood of $\pl \Sigma$, then 
\[   \mc{A}_{S_g\cdot ,\bs{\alpha}}:\bs{f} \in \mc{U}\mapsto \mc{A}_{S_g\cdot \bs{f},\bs{\alpha}}\in \mc{L}(\mc{D}(\mc{Q})^{\otimes b},\C)\]
is differentiable with differential at ${\rm Id}$ given by 
\[\begin{split} 
D_{\bv} \mc{A}_{S_g\cdot ,\bs{\alpha}}({\rm Id})F=&- \mc{A}_{S_g,\bs{\alpha}} \big( {\bf H}_{{\rm v}_1}\otimes {\rm Id}\otimes \dots \otimes {\rm Id}\big)F-\dots-\mc{A}_{S_g,\bs{\alpha}}\big( {\rm Id}\otimes\dots \otimes {\rm Id}\otimes {\bf H}_{{\rm v}_b}\big)F\\
& -c_{\rm L}\Big(\sum_{j=1}^b\frac{{\rm Re}(v_{j0})}{12}+
D_{\bv} \mc{S}_{S_g\cdot }({\rm Id}) \Big)\mc{A}_{S_g,\bs{\alpha}}F
\end{split}\]
for $\bv=({\rm v}_1,\dots,{\rm v}_b)$ with ${\rm v}_j=v_j(z)\pl_z$, $v_j\in {\rm Hol}_\eps(\A)$ and $v_j(z)=\sum_{n\in \Z}v_{jn}z^{n+1}$, and where $\mc{S}_{S_g\cdot }:\mc{U}\to \R$ is the map  defined by $\mc{S}_{S_g\cdot \bs{f}}:=S_{\rm L}^0(\Sigma^{\bs{f}},g^{\bs{f}},\hat{g})$.
\end{theorem}
\begin{proof}
Let $r\in (\delta,1)$ and denote $\lambda_r: z\mapsto rz$ and $\lambda_{r^{-1}}=\la_r^{-1}:z\mapsto r^{-1}z$ its inverse. 
Up to applying the Weyl anomaly formula, we can assume that $g^{\bs{f}}=g$ outside $\zeta_j(\A_{r,r^{-1}})$ with
 $g=(\zeta_j)_*g_\A$ near $\zeta_{j}(r^{-1}\T)$ and that $g^{\bs{f}}=(\zeta_j)_*(g_{f_j})$ only depends on $f_j$ on the annulus $\zeta_j(\A_{r,r^{-1}})\cap \Sigma^{\bs{f}}$ 
near $\pl_j\Sigma^{\bs{f}}$. 
We decompose $\Sigma^{\bs{f}}=\Sigma^0\cup \bigcup_{j=1}^b(\mc{D}_j(\la_r^{-1})\setminus \mc{D}_j(f_j))$ where $\Sigma^0=\Sigma
\setminus \cup_{j=1}^b\zeta_j^{-1}(\A_{1,r^{-1}})=\hat{\Sigma}\setminus \cup_{j=1}^b\mc{D}_j(\la_r^{-1})$ is the surface $\Sigma$ with $b$  annuli $\zeta_j(\A_{1,r^{-1}})$ removed 
at each boundary, and $\bs{\zeta}^0=(\zeta^0_{1},\dots,\zeta^0_{b})$ is the parametrisation of the boundary of $\Sigma^0$ given by 
$\zeta^0_{j}(e^{{\rm i}\theta}):=\zeta_j(r^{-1}e^{{\rm i}\theta})$.
With the parametrisation $\bs{\zeta}_{f_j}=(\la_r^{-1},f_j)$, we observe that the surface 
$(\mc{D}_j(\la_r^{-1})\setminus \mc{D}_j(f_j),g_{f_j},\bs{\zeta}_{f_j})$  is equivalent (by $\lambda_r:z\mapsto rz$) 
to the annulus $(\A_{rf_j}, (\lambda_r)_*g_{f_j}, r\bs{\zeta}_{f_j})$ and $(\la_r)_*\hat{g}=g_\A$. By Proposition \ref{glue1}, we get 
\[ \mc{A}_{S_g\cdot \bs{f},\bs{\alpha}}=\mc{A}_{\Sigma^0,g,{\bf x},\bs{\alpha},\bs{\zeta}^0}\circ \bigotimes_{j=1}^b\mc{A}_{\A_{rf_j},(\la_r)_*g_{f_j}, r\bs{\zeta}_{f_j}}.\]
Now we can apply Theorem \ref{derivative_meromorphic_vf} to $\mc{A}_{\A_{rf_j},(\la_r)_*g_{f_j}, r\bs{\zeta}_{f_j}}$: 
this gives the desired result since 
\[ \sum_{j=1}^bS_{\rm L}^0(\A_{rf_j},(\la_r)_*g_{f_j}, g_\A)=S_{\rm L}^0(\Sigma^{\bs{f}},g^{\bs{f}},\hat{g})\qedhere.\]
\end{proof}


\appendix

 \section{Sequence of Green functions}

\begin{lemma}\label{limit_Green}
Let $\Sigma$ be a closed surface or a surface with boundary and $g_n$ be a sequence of smooth Riemannian 
metrics  converging  to $g$ in $C^\infty(\Sigma)$ as $n\to \infty$. Consider the Green functions $G_{g_n}$ and $G_g$ of $\Delta_{g_n}$ and $\Delta_g$, with either Neumann or Dirichlet boundary condition on $\pl \Sigma$ if $\Sigma$ has a non-empty boundary and $ {\rm dv}_{g_n},{\rm dv}_g$ the volume densities associated to $g_n,g$. For each $N\in \N$, if $F_n\in H^N(\Sigma)$ is a sequence converging to $F$ in $H^N(\Sigma)$, let 
\[u_n(x)= \int_{\Sigma}G_{g_n}(x,x')F_n(x'){\rm dv}_{g_n}(x'), \quad u(x):=\int_{\Sigma}G_g(x,x')F(x'){\rm dv}_{g}(x')\]
then we have as $n\to \infty$
\[  \|u_n-u\|_{H^N(\Sigma)}\to 0.\]
\end{lemma}
\begin{proof}
We consider the case without boundary or when  boundary condition is Neumann, as the Dirichlet case is similar but even simpler since the Laplacian has no kernel in that case.
Since $g_n\to g$ in $C^\infty(\Sigma)$, we have $\|\Delta_{g_n}-\Delta_{g}\|_{H^2(\Sigma)\to L^2(\Sigma)}\to 0$. Let 
$\Pi_0$ be the $L^2$-projection on constants with respect to ${\rm dv}_g$, $\Pi_0f=\int_\Sigma f{\rm dv}_g$ and similarly $\Pi_0^{(n)}f=\frac{1}{{\rm v}_g(\Sigma)}\int_\Sigma f{\rm dv}_{g_n}$. Let $R_g=\Delta_{g}^{-1}(1-\Pi_0)$ be the operator on $L^2(\Sigma,{\rm dv}_g)$ with integral kernel given by the Green function $G_g$ of $\Delta_g$ on $\Sigma$, satisfying $\Delta_gR_g=1-\Pi_0$. We have 
\[ \Delta_{g_n}R_g=1-\Pi_0+(\Delta_{g_n}-\Delta_{g})R_g=1-\Pi_0^{(n)}+(\Pi_0^{(n)}-\Pi_0)+(\Delta_{g_n}-\Delta_{g})R_g\]
Multiplying on the left by $1-\Pi_0^{(n)}$ and using $\Pi_0^{(n)}\Delta_{g_n}=0$, this gives 
\[\Delta_{g_n}R_g=(1-\Pi_0^{(n)})(1+(\Pi_0^{(n)}-\Pi_0)+(\Delta_{g_n}-\Delta_{g})R_g)=:(1-\Pi_0^{(n)})(1+S_n).\]
We observe that for each $N\in \N$, $\|S_n\|_{H^N(\Sigma)\to H^N(\Sigma)}\to 0$ as $n\to \infty$ thus $(1+S_n)$ is invertible on 
$H^N(\Sigma,{\rm dv}_g)$ for $n$ large enough and we deduce that the inverse of $\Delta_{g_n}$ on the range of $1-\Pi_0^{(n)}$ is 
\[ R_{g_n}=(1-\Pi_0^{(n)})R_g(1+S_n)^{-1}(1-\Pi_0^{(n)})\]
and this shows that $R_{g_n}\to R_{g}$ in the $\mc{L}(H^N(\Sigma),H^{N+2}(\Sigma))$ operator norm as $n\to \infty$. The desired 
result follows since $R_{g_n}$ has $G_{g_n}(x,x')$ as integral kernel with respect to the ${\rm dv}_{g_n}$ measure. 
\end{proof}

\bibliographystyle{alpha}
\bibliography{virasoro}
\end{document}